\documentclass[11pt, reqno]{amsart}

\usepackage{amsmath,amssymb,amsxtra}
\usepackage{mathtools}
\usepackage{cases}
\usepackage{geometry}
\usepackage{enumitem}
\usepackage{xcolor}
\usepackage{graphicx}
\usepackage[colorlinks=true]{hyperref}
\hypersetup{urlcolor=blue, citecolor=red, linkcolor=blue}

\newcommand{\epsl}{\varepsilon}
\newcommand{\fai}{\varphi}
\newcommand\wan[1]{\widetilde{#1}}
\newcommand\mao[1]{\widehat{#1}}

\newcommand{\cl}[1]{\overline{#1}}
\newcommand{\ul}[1]{\underline{#1}}

\newcommand{\ppd}[1]{\partial_{#1}}

\newcommand{\lle}{\lesssim}
\newcommand{\gge}{\gtrsim}

\newcommand{\me}{\mathrm{e}}
\newcommand{\mi}{\mathrm{i}}
\newcommand{\medi}[1]{#1 \kern-1.25ex{-}}

\newcommand{\bN}{\mathbb{N}}
\newcommand{\bZ}{\mathbb{Z}}

\newcommand{\bR}{\mathbb{R}}

\newcommand{\bS}{\mathbb{S}}

\newcommand{\hti}[1]{\mathcal{#1}}
\newcommand{\gti}[1]{\mathfrak{#1}}

\newcommand\abs[1]{\lvert #1 \rvert}
\newcommand\norm[1]{\lVert #1 \rVert}
\newcommand\spa[1]{\langle #1 \rangle}
\newcommand{\flo}[1]{\lfloor #1 \rfloor}
\newcommand\set[1]{\{ #1 \}}

\newcommand\Abs[1]{\left\lvert #1 \right\rvert}
\newcommand\Norm[1]{\left\lVert #1 \right\rVert}

\newcommand\Brac[1]{\left( #1 \right)}
\newcommand\SBrac[1]{\left[ #1 \right]}
\newcommand\thmref[1]{Theorem~\ref{#1}}

\newcommand\lemmaref[1]{Lemma~\ref{#1}}
\newcommand\cororef[1]{Corollary~\ref{#1}}
\newcommand{\hhat}[1]{\widehat{#1}}
\newcommand{\BB}{\mathcal{B}}
\newcommand{\TT}{\mathcal{T}}
\newcommand{\QQ}{\mathcal{Q}}
\newcommand{\QU}{\mathcal{QU}}

\newcommand{\FF}{\mathcal{F}}
\newcommand{\II}{\mathcal{I}}
\newcommand{\bbT}{\mathbf{T}}
\newcommand{\bbI}{\mathbf{I}}
\newcommand{\III}{\mathfrak{I}}
\newcommand{\JJJ}{\mathfrak{J}}
\newcommand{\TTT}{\mathfrak{T}}

\newcommand{\va}{V^{\iota_1,p_1}_{k_1}}
\newcommand{\vb}{V^{\iota_2,p_2}_{k_2}}
\newcommand{\vvc}{V^{\iota_3,p_3}_{k_3}}

\newcommand{\Vb}{V^{\iota_2,p_2}_{k_2,j_2}}

\newcommand{\Vaa}{V^{\iota_1,p_1}_{k_1,j_1,n_1}}
\newcommand{\Vbb}{V^{\iota_2,p_2}_{k_2,j_2,n_2}}
\newcommand{\Vcc}{V^{\iota_3,p_3}_{k_3,j_3,n_3}}

\newcommand{\Vff}{V^{\iota_6,p_6}_{k_6,j_6,n_6}}

\newcommand{\vx}[1]{V^{\iota_{#1},p_{#1}}_{k_{#1}}}
\newcommand{\Vxx}[1]{V^{\iota_{#1},p_{#1}}_{k_{#1},j_{#1},n_{#1}}}

\DeclareMathOperator{\re}{Re}

\DeclareMathOperator{\supp}{supp}
\DeclareMathOperator{\dif}{d\!}

\DeclareMathOperator{\dive}{div}

\allowdisplaybreaks
\newtheorem{theorem}{Theorem}[section]
\newtheorem{lemma}{Lemma}[section]
\newtheorem{proposition}{Proposition}[section]
\newtheorem{remark}{Remark}[section]

\newtheorem{corollary}{Corollary}[section]
\newtheorem*{main-theorem}{Main Theorem}
\newtheorem{mainthm}{Theorem}
\newtheorem*{remarkk}{Remark}

\numberwithin{equation}{section}

\author{Han Cui}
\address[Han Cui]{Yau Mathematical Sciences Center, Tsinghua University}
\email{cuih@mail.tsinghua.edu.cn}

\author{Linhao Shi}
\address[Linhao Shi]{Institute of Mathematical Sciences,
The Chinese University of Hong Kong, Hong Kong}
\email{linhao.shi99@gmail.com}

\begin{document}

\title[Global Existence and Scattering of a Class of Two-Dimensional EPK Systems]{Global Existence and Scattering of a Class of Two-Dimensional Euler--Poisson--Korteweg Systems}

\begin{abstract}
For a class of two-dimensional Euler--Poisson--Korteweg systems of quantum hydrodynamic type, we establish global existence and scattering for small, smooth, spatially localized perturbations of a constant equilibrium with irrotational velocity. 
This extends the three-dimensional well-posedness result of Song [Math. Ann. 392 (2025), 2157--2224] to two dimensions. Our proof follows the space-time resonance approach developed by Deng, Ionescu, Pausader, and Pusateri for the three-dimensional gravity-capillary water-wave system. The new ingredients are twofold. First, to allow non-neutral initial data, we exploit the absence of space resonances and of time resonances---including those arising after the second and third iterations---and the improved approximate-finite-speed-of-propagation bounds. Second, in the energy estimates, we adapt the almost orthogonality argument used in the works of Ionescu--Pausader and Guo--Ionescu--Pausader to the regime in which the input and output frequencies are all comparable to $1$. In this regime, time resonances may occur and the relevant curvature quantity may vanish.
\end{abstract}

\maketitle

\section{Introduction}
\subsection{Setting and Reformulation}
The Euler--Poisson--Korteweg (EPK) system
\begin{align*}
	\rho_t+\dive(\rho u)&=0,\\
	(\rho u)_t+\dive(\rho u \otimes u)&=-\nabla p+\rho \nabla \phi+\rho \nabla K(\rho),\\
	\Delta \phi&=\rho-1,
\end{align*}
with
\[
K(\rho)=m(\rho) \Delta \rho+\frac 12 m'(\rho) \abs{\nabla \rho}^2,
\]
describes the dynamics of a liquid--vapor mixture in a diffuse-interface model under electric forces. Here $K(\rho)$ arises from the Korteweg tensor. The particular case $m(\rho)=\kappa/\rho$ corresponds to the so-called quantum hydrodynamics (QHD) system (which is related to the nonlinear Schr\"odinger--Poisson equation via the Madelung transform). For further background and references, we refer the reader to \cite{MR2481754,MR2039696} and to the discussion of previous results in Section~\ref{sec:preresu}.

In this paper, we study the long-time existence and behavior of solutions to the Cauchy problem for the two-dimensional Euler--Poisson--Korteweg system, thereby extending the well-posedness result of \cite{zbMATH08045545} to two dimensions. For simplicity, we take $p(\rho)=\frac{1}{2}\rho^2$ (indeed, our argument applies whenever $p'(1)>0$). We assume that the initial velocity field is irrotational, a condition that is preserved by the flow. Let $\psi$ be the potential such that $u=\nabla \psi$. We consider perturbations around the constant state $(\rho,\psi,\phi)=(1,0,0)$. In this setting, the EPK system becomes
\begin{subequations}\label{meq0}
	\begin{align}
	\rho_t+\dive(\rho \nabla \psi)&=0,\\
	\psi_t+\frac 12 \abs{\nabla \psi}^2&=-(\rho-1)+\phi+K(\rho),\\
	\Delta \phi&=\rho-1.
	\end{align}	
\end{subequations}
We assume that $m \in C^\infty((0,\infty))$ and $m(1)>0$, and set $\wan \rho=\rho-1$. To eliminate the explicit singularities in the Fourier multipliers, we use the variables
\[
n=\Lambda^{-1} \wan \rho, \quad \wan\psi=\Lambda \psi
\]
($\Lambda=\abs{D}$\footnote{We use the convention $\mao{m(D)f}(\xi)=m(\xi)\mao f(\xi)$.}). Henceforth, for notational simplicity, we write $\rho, \psi, m$ in place of $n, \wan \psi, m(\cdot+1)$, respectively. With this convention, the system for $(\rho,\psi)$ reads
 \begin{subequations}\label{meq}
  \begin{gather}
   	\rho_t+R [(\Lambda \rho+1)R\psi]=0, \label{eqrho}\\
 	\psi_t+\frac 12 \Lambda \abs{R\psi}^2+(1-\Delta)\rho-\Lambda [m(\Lambda \rho) \Delta \Lambda \rho+\frac 12 m'(\Lambda \rho) \abs{\nabla \Lambda \rho}^2]=0, \label{eqpsi}
 \end{gather}
 where \(R=\nabla/\Lambda\) denotes the vector of Riesz transforms.
 \end{subequations}
The initial data are given by
\begin{equation}\label{initial}
	\rho(x,0)=\rho_0(x), \quad \psi(x,0)=\psi_0(x).
\end{equation}

We assume that $\rho, \psi \colon \bR^2 \times I \to \bR$. We establish global existence and scattering for the Euler--Poisson--Korteweg system in the QHD-type case $m(0)+m'(0)=0$. For general Korteweg tensors, we also prove long-time existence. Our analysis is based on the space-time resonance method, which has been successfully applied to many significant models; see, for example, the works \cite{MR3024265,MR3283401,MR3450481,zbMATH06702025,MR3665671,MR3784694,MR3810467,MR3914181,MR4157108,MR4526823,zbMATH08045545,zbMATH08053129}, which are particularly relevant to this paper. The present work follows the approach of \cite{MR3784694}. Compared with \cite{MR3784694}, this work overcomes the slow decay of the low-frequency components of the solution, thereby allowing for non-neutral initial data, and provides an alternative treatment of the small modulation cases in the energy estimates.

\subsection{Main Results}
We now state the main results. Set $\gamma_0=2^{-1/2}m(0)^{-1/4}$. We first introduce the $Z$-norm to quantify the spatial localization:
\[
\norm{f}_{Z_\delta}=\sup_{k \in \bZ, j \ge k_-,0 \le n \le j+1}2^{10k_+}2^{\delta k}2^{(1-20\delta)(j+k)}2^{-(1/2-18\delta)n}\norm{A^{(j)}_nQ^{(k)}_jP_kf}_2,
\]
where $0 <\delta \le 10^{-5}$ and $k_{\pm}$ denotes $\max \set{\pm k,0}$. ($A^{(j)}_nQ^{(k)}_jP_kf=\wan \fai^{-j-1,0}_{-n}(2^{100}(\abs{D}-2\gamma_0))\fai_j^{k_-}(x)\psi_k(D)f$; see Section~\ref{notations}.) $\Omega$ denotes the rotational vector field $x_1 \ppd{x_2}-x_2 \ppd{x_1}$ and $\Omega^{[0,M]}f$ denotes $\Omega^af$ with $a \in \set{0,1,\cdots,M}$.
Let
\[
U=\rho-\mi q(D) \psi,
\]
where
\[
q(\xi)=\frac 1 {\sqrt{m(0)\abs{\xi}^4+\abs{\xi}^2+1}},
\]
and
\[
V=\me^{-\mi t\Lambda(D)} U,
\]
where
\[
\Lambda(\xi)=\sqrt{m(0)\abs \xi^4+\abs \xi^2+1}
\]
is the dispersion relation. Set $N_{0,\delta}=\flo{10^5/\delta^4}$, $N_{1,\delta}=\flo{10^3/\delta^3}$, $N_{2,\delta}=\flo{600/\delta^3}$, and $N_{3,\delta}=N_{2,\delta}-\flo{100/\delta}$.

We first state our main result on global existence and scattering in the QHD-type case. In the following theorem, we fix $\delta=10^{-5}$ and suppress the dependence on $\delta$ in the notation.
\begin{mainthm}\label{qhdthm}
		Suppose that $m(0)+m'(0)=0$. Assume that
	\[
	\norm{(\Lambda^{\delta}+\Lambda^{N_0})U_0}_2+\norm{\Lambda^{\delta}\Omega^{N_1}U_0}_2+\norm{\Omega^{[0,N_2]}U_0}_Z=\epsl_0 \le \cl{\epsl},
	\footnote{The parameters $N_{0}$, $N_{1}$, and $N_{2}$ are not optimized.}
	\]
	where
	\[
	U_0=\rho_0-\mi q(D) \psi_0,
	\]
	for some sufficiently small constant $\cl{\epsl}$. Then the Cauchy problem \eqref{meq}-\eqref{initial} admits a unique global solution $(\rho,\psi)$ satisfying the following uniform bound
	\[
	 \spa{t}^{-\delta^4}\Brac{\norm{(\Lambda^{\delta}+\Lambda^{N_0})U(t)}_2+\norm{\Lambda^{\delta}\Omega^{N_1}U(t)}_2}+\norm{\Omega^{[0,N_2]}V(t)}_Z \lle \epsl_0,
	\]
	for $t \in [0,\infty)$. Moreover, the solution scatters in the $Z$-norm: there exists $V_\infty$ such that
	\[
	\lim_{t \to \infty}\norm{\Omega^{[0,N_2]}(\me^{-\mi t \Lambda(D)}U(t)-V_\infty)}_Z=0.
	\]
\end{mainthm}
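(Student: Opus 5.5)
The proof will be a bootstrap (continuity) argument in the space-time resonance framework of \cite{MR3784694}. Local well-posedness in the high-regularity space is standard for this dispersive system, so it suffices to establish the following a priori estimate. Suppose that on $[0,T]$
\[
\spa{t}^{-\delta^4}\Brac{\norm{(\Lambda^{\delta}+\Lambda^{N_0})U(t)}_2+\norm{\Lambda^{\delta}\Omega^{N_1}U(t)}_2}+\norm{\Omega^{[0,N_2]}V(t)}_Z \le \epsl_1=\epsl_0^{2/3}.
\]
Then the same quantity is in fact $\lle \epsl_0$ on $[0,T]$.

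We first write the equation for $U$ as $\ppd t U - \mi\Lambda(D)U = \mathcal N_2(U,\cl U)+\mathcal N_{\ge 3}$. Here the quadratic part has symbols computed from \eqref{meq}, and we perform a quartic normal form/symmetrization for the quasilinear terms. The linear estimates come next. From the $Z$-norm bootstrap we derive the dispersive bound
\[
\norm{P_k\me^{\mi t\Lambda}f}_\infty\lle \spa t^{-1+}2^{-N k_+}\norm f_Z .
\]
The weak decay near the degenerate sphere $\abs\xi=2\gamma_0$, where the Hessian of $\Lambda$ degenerates, is captured by the $A^{(j)}_n$ localization. We also derive the improved low-frequency bound: since $\Lambda(\xi)\approx 1+\abs\xi^2/2$ near $0$, low frequencies decay like $t^{-1}$ but with weak $L^2$ control. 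We supplement these with approximate finite speed of propagation bounds.

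\textbf{Energy estimates.} We will use a quasilinear energy built from \eqref{meq}, with $\Omega^a$ derivatives commuting with the flow. This gives $\frac{d}{dt}E\lle$ cubic terms, which we rewrite via a normal form in Fourier space. In the high-frequency and frequency-separated regimes the phase $\Phi=\Lambda(\xi)\mp\Lambda(\xi-\eta)\mp\Lambda(\eta)$ is bounded below, or the symbol has a null gain, so integration by parts in time yields $\spa t^{-1-}$ integrability. The difficult regime is all frequencies comparable to $1$, where time resonances exist and the curvature quantity may vanish. There we plan an almost orthogonality argument in the style of Ionescu--Pausader / Guo--Ionescu--Pausader. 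We decompose in modulation $\abs\Phi\sim 2^{l}$ and angular sectors, and use a $TT^*$ bound on the resulting bilinear operators. For small modulation we exploit the $\Omega$ regularity to gain in the angular variable, and use the $Z$-norm localization in $n$ to handle the vanishing curvature. The outcome is growth at most $\spa t^{\delta^4}$.

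\textbf{$Z$-norm estimate.} This is the main obstacle. In the Duhamel formula $V(t)=V_0+\int_0^t \me^{-\mi s\Lambda}\mathcal N\,ds$ we must show the nonlinearity converges in $Z$. The hypothesis $m(0)+m'(0)=0$ is what makes the quadratic symbol vanish at low frequencies, compensating the slow $L^2$ decay of low frequencies for non-neutral data. We would split into space nonresonant interactions, handled by integration by parts in $\eta$ (space resonances are absent), and time nonresonant ones, handled by normal forms in $s$. After normal forms, cubic and quartic terms reappear. For these we verify, following the abstract, that no time resonances arise after the second and third iterations. We check this by explicit analysis of the phase of the dispersion relation $\Lambda$, in particular near the sphere $2\gamma_0$, with the finite-speed bounds controlling the spatial localization $Q_j$. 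I expect this iterated resonance analysis, with the $2^{-(1/2-18\delta)n}$ weight, to be the hardest step. Once the $Z$-norm increments are shown to be summable like $\spa s^{-1-\delta^2}$ in $Z$, they give both the bootstrap closure and Cauchy convergence, hence the scattering limit $V_\infty$.
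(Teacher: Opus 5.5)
The gap is in how you use the hypothesis $m(0)+m'(0)=0$. You invoke it where it does nothing, and you never use it where it is essential.

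\textbf{The low-frequency mechanism you rely on does not exist.} The hypothesis gives the quadratic dispersive nonlinearity no low-frequency null structure. The Korteweg coefficient only enters the $\rho\rho$ terms, and those carry factors $\Lambda\rho$ for every $m$. The term $\frac12\Lambda\abs{R\psi}^2$ coming from $\frac12\abs{\nabla\psi}^2$ has symbol $\abs{\xi}$ times a zero-order factor. So when one input is at low frequency there is no gain, and $\gti m_q$ satisfies only $\norm{\gti m_q}_{S_{k,k_1,k_2}}\lle 2^{\cl k}2^{4\cl k_+}$.

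Your linear bound is also too optimistic at low frequency. The $Z$-norm gives $\spa{t}^{-1+}$ decay only with the loss $2^{-(1-39\delta)k}$ for $k<0$ (see \eqref{decaycor}), or $\spa{t}^{-1/2+}$ without loss (Corollary~\ref{decayestlow}). So the compensation you invoke for non-neutral data is not there.

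The paper instead argues as follows. The condition $\min\set{\abs{\xi},\abs{\xi-\eta},\abs{\eta}}\max\set{\abs{\xi},\abs{\xi-\eta},\abs{\eta}}\ll1$ forces $\abs{\Phi}\gge1$, so one integrates by parts in time. The resulting cubic terms still decay only like $t^{-1}\cdot t^{-1/2}$, against a growth $t^2$ coming from the time integration and the weight $2^{(1-20\delta)j}$. To close, the paper exploits the absence of second and third time resonances, the absence of space resonances between separated frequencies, and improved approximate-finite-speed-of-propagation bounds (Sections~\ref{subsec:3.4.2} and \ref{section:cubic}). You mention iterated non-resonance, but only as a check layered on top of a mechanism that is not there.

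\textbf{The hypothesis is needed exactly where you treat things as routine: the high-frequency energy estimate.} The low--high interactions there are time resonant. For instance, $-\Lambda(\xi)+\Lambda(\eta)+\Lambda(\xi-\eta)$ vanishes at suitable angles once $\abs{\xi-\eta}>\tau(\abs{\eta})\approx\gamma_0^2/\abs{\eta}$, so the phase is not bounded below. Integrating by parts in time then puts $\ppd t$ on a high-frequency input and loses up to two derivatives, since $T_\Sigma$ is of second order. Symmetrization recovers only one.

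The paper closes this with the depletion factor $\mao{\xi-\eta}\cdot\mao{\xi+\eta}$ in the multiplier $\gti d$, which is small where $\Phi$ is small by \eqref{depletion}. That every quadratic term in \eqref{energymain1} is of type $\gti d$, $\gti m$ or $\gti m'$ rests on $m(0)+m'(0)=0$. The condition makes $\Sigma_0\Sigma^1$ of order zero, so $T_{\Sigma_1}$ loses no derivatives, and it also makes the term (6) acceptable. When the low-frequency input lies near $0$ or $2\gamma_0$ and lacks optimal decay, the small-modulation part further needs Lemma~\ref{l2lemma}. That lemma is a $TT^*$ argument based on $\abs{\mao{\Upsilon}}\gge1$ at high frequency (Lemma~\ref{upsilon}).

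Because your plan never uses the hypothesis validly, as written it would give global existence for every Korteweg tensor. The paper obtains only the lifespan $\epsl_0^{-(100\delta)^{-1}}$ in that generality.

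\textbf{Two smaller corrections.}
\begin{enumerate}
\item $\Lambda$ has no degenerate sphere, since $\lambda''>0$ and $\lambda'(r)/r>0$. The sphere $\abs{\xi}=2\gamma_0$ is the output sphere of the space-time resonance $\abs{\xi-\eta}=\abs{\eta}=\gamma_0$ (Lemma~\ref{resoregion}); this is what $A^{(j)}_n$ and the weight $2^{-(1/2-18\delta)n}$ encode.
\item The almost orthogonality in the $\simeq1$ regime is orthogonality in time, not an angular gain from $\Omega$. It works by separating the two branches $\eta\approx p^\pm_{\iota_1\iota_2}(\xi)$ and using $\abs{\nabla_\xi\Phi(\xi,p^\pm_{\iota_1\iota_2}(\xi))}\gge1$.
\end{enumerate}
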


For general Korteweg tensors, we also obtain a long-time existence result.\begin{mainthm}\label{generalthm}
		Assume that
	\[
	\norm{(\Lambda^{\delta}+\Lambda^{N_{0,\delta}})U_0}_2+\norm{\Lambda^{\delta}\Omega^{N_{1,\delta}}U_0}_2+\norm{\Omega^{[0,N_{2,\delta}]}U_0}_{Z_\delta}=\epsl_0 \le \cl{\epsl}_\delta,
	\]
	where
	\[
	U_0=\rho_0-\mi q(D) \psi_0,
	\]
	for some sufficiently small constant $\cl{\epsl}_\delta$. Then the Cauchy problem \eqref{meq}-\eqref{initial} admits a unique solution $(\rho,\psi)$ on the time interval $[0,\epsl_0^{-(100\delta)^{-1}}]$, and the solution satisfies the uniform bound
	\[
	\norm{(\Lambda^{\delta}+\Lambda^{N_{0,\delta}})U(t)}_2+\norm{\Lambda^{\delta}\Omega^{N_{1,\delta}}U(t)}_2+\norm{\Omega^{[0,N_{2,\delta}]}V(t)}_{Z_\delta}\lle_\delta \epsl_0.
	\]
\end{mainthm}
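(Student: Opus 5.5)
The plan is a bootstrap, run in parallel with the argument for Theorem~\ref{qhdthm}; the only difference is that without the structural cancellation $m(0)+m'(0)=0$ the cubic and higher terms lose one derivative of smallness at low frequencies. We only aim for the time span $T=\epsl_0^{-(100\delta)^{-1}}$.

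\textbf{Setup.} First, local well-posedness in the energy space $H^{N_{0,\delta}}$ (with the rotational and $Z$-type weights) follows from the symmetrizable quasilinear structure of \eqref{meq}, for instance through the Madelung-type change of variables. We then assume, on $[0,T']$ with $T'\le T$, the bootstrap hypotheses
\[
\norm{(\Lambda^{\delta}+\Lambda^{N_{0,\delta}})U(t)}_2+\norm{\Lambda^{\delta}\Omega^{N_{1,\delta}}U(t)}_2\le \epsl_1\spa{t}^{\delta^4},\qquad
\norm{\Omega^{[0,N_{2,\delta}]}V(t)}_{Z_\delta}\le \epsl_1,
\]
with $\epsl_1=\epsl_0^{2/3}$, and improve both to $\epsl_0$. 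The $Z_\delta$ and energy bounds together yield the linear dispersive estimate: roughly $t^{-1}$ decay for frequencies away from the degenerate sphere $\abs{\xi}=2\gamma_0$, where the Hessian of $\Lambda$ degenerates, and weaker decay near that sphere, quantified through the $A^{(j)}_n$ pieces.

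\textbf{Energy estimates.} We write the equation for $U$ as $\partial_t U-\mi\Lambda(D) U=\mathcal N_2+\mathcal N_{\ge 3}$ and construct a quasilinear energy with a cubic normal-form correction, using the absence of quadratic time resonances on the relevant portions of phase space. This gives
\[
\frac{d}{dt}E\lle \epsl_1^3\spa{t}^{-1+\delta^4/2}+R(t).
\]
Here $R(t)$ collects the quartic terms, which are controlled by $\spa{t}^{-1-}$ decay. The remaining case is where all frequencies are comparable to $1$ and time resonances do occur. For it we use the almost-orthogonality argument of Ionescu--Pausader and Guo--Ionescu--Pausader. In the general case, the genuinely cubic terms with no cancellation can only be bounded by $\epsl_1^2\spa{t}^{-1}$ times the energy. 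Integrating up to $T$ then gives growth by a factor $T^{C\epsl_1^2}$, which is harmless on this time span.

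\textbf{$Z_\delta$ estimate.} We use Duhamel for $V$, decomposing the quadratic term dyadically in time, frequency and modulation. The case analysis is as follows:
\begin{itemize}
\item Space non-resonant interactions are handled by integration by parts in $\xi$.
\item Time non-resonant interactions are handled by normal forms in $s$; after the second and third iterations the remaining interactions are again non-resonant.
\item Approximate finite speed of propagation controls the large-$j$ pieces.
\end{itemize}
In the QHD case the low-frequency behaviour of the symbols makes the cubic terms summable in time. In the general case we only obtain a bound of the form $\epsl_1^2\int_0^T\spa{s}^{-1+C\delta}\,ds\lle \epsl_1^2T^{C\delta}$. This is at most $\epsl_0$ precisely when $T\le \epsl_0^{-(100\delta)^{-1}}$ and $C\delta$ is small compared with the loss allowed, and that is what fixes the lifespan.

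\textbf{Main obstacle.} We expect the hardest step to be the low-frequency part of the $Z_\delta$ bound for non-neutral data, where $\hat V(0)\neq0$. There the decay is slow, and the phases $\Lambda(\xi)\pm\Lambda(\xi-\eta)\pm\Lambda(\eta)$ must be checked to stay away from zero, including after the iterated normal forms. Our first attempt would be to exploit $\Lambda(0)=1$, which means the $(+,+)$ and $(-,-)$ phases are bounded below by about $1$. For the mixed phases we would use the explicit dispersion relation to show that a resonance can occur only with all frequencies comparable to $1$, and there we would rely on the bounds near $\abs{\xi}=2\gamma_0$ supplied by the $Z_\delta$ norm.
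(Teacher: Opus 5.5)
There is a genuine gap in your energy estimate. It comes from misplacing the role of $m(0)+m'(0)=0$, which has nothing to do with cubic terms at low frequency. The condition acts on the quadratic, high-frequency part of the paralinearized system of Section~\ref{paralinear}:
it makes the linear part $\Sigma_1$ of $\Sigma$ a symbol of order $0$ (for general $m$ it has order $2$);
it makes the time-derivative term $(6)$ there acceptable;
and it ensures that the high--low interactions $T_{\mathfrak d}(G,W^1)$ carry the depletion factor $\widehat{\xi-\eta}\cdot\widehat{\xi+\eta}$.
These interactions are time resonant far from the unit scale. For $\iota_1=+$, $|\xi-\eta|\simeq1$ and $|\eta|\gg1$ one has $\Phi\approx\pm\Lambda(\xi-\eta)-|\xi-\eta|^2-2\eta\cdot(\xi-\eta)$, which vanishes on a hypersurface where $\xi-\eta$ is almost orthogonal to $\eta$. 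Your split into non-resonant interactions (cubic normal-form correction) and resonant ones with all frequencies comparable to $1$ (almost orthogonality) therefore misses the decisive region. Near that hypersurface, dividing by $\Phi$ and then differentiating $W$ in time brings in the second-order terms $T_{\Sigma_{\ge1}}W$ and the transport term $T_{R\psi\cdot\mathrm{i}\xi}W$, and symmetrization recovers at most one derivative. In Theorem~\ref{qhdthm} the rest is paid for by the depletion factor, which is $O(|\eta|^{-1})$ on the resonant set. For general $m$ this factor is absent, so the modified energy you describe does not close, and the proposal offers no mechanism for the derivative loss. As a sanity check: if your scheme did close, it would give energy growth only $T^{C\varepsilon_1^2}$, i.e.\ essentially global energy control for general tensors, which is exactly what the paper says it cannot obtain.

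The missing idea is that on the time span $T=\varepsilon_0^{-(100\delta)^{-1}}$ you do not need to normal-form the derivative-losing terms at all. In the general paralinearized equation of Section~\ref{gekotensors}, the quadratic terms $T_{\mathfrak m}(G,W^1)$ satisfy $\|\mathfrak m\|_{S_{k,k_1,k_2}}\lesssim 2^{k_1\wedge k_2}2^{5(k_1\wedge k_2)_+}$. The factor $2^{k_1\wedge k_2}$ absorbs the low-frequency loss in the decay bound $\|e^{\mathrm{i}t\Lambda(D)}V_k\|_\infty\lesssim\varepsilon_1\langle t\rangle^{-(1-21\delta)}2^{-(1-39\delta)k}$, $k\le0$, from Corollary~\ref{decayestcom}. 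A direct $L^2\times L^\infty$ energy estimate then gives $\frac{d}{dt}\|W\|_2^2\lesssim\varepsilon_1\langle t\rangle^{-1+C\delta}\|W\|_2^2$, hence a total loss $T^{C\delta}=\varepsilon_0^{-C/100}$. This, not the $Z_\delta$ estimate, is what fixes the lifespan.

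Integration by parts in time is needed only for the terms $T_{\mathfrak m'}$ without null structure when $\underline k+\overline k\ll0$. There $|\Phi|\gtrsim1$, and the derivative loss is harmless because $\mathfrak m'$ decays rapidly at high frequency. The part $P_{\le100}U$ is handled by $L^2\times L^\infty$ when $\underline k>-200$ or $k_1\simeq k_2$, and by a normal form otherwise; no almost orthogonality is needed.

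Your attribution of the lifespan to cubic terms in the $Z_\delta$ bound is also misplaced. The dispersive estimates of Section~4 never use $m(0)+m'(0)=0$ and give summable $2^{-\delta^4 m}$ bounds for general $m$. Moreover, the bound $\varepsilon_1^2T^{C\delta}$ you assert for those terms does not come from a crude estimate: decay $t^{-1}\cdot t^{-1/2}$, set against the time integral and the weight $2^{(1-20\delta)j}$, gives $T^{1/2}$-type growth. Getting it requires the full second- and third-resonance analysis, after which there is no loss at all.
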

\begin{remarkk}
	\begin{enumerate}
		\item The conditions imposed on the initial data allow for non-neutral perturbations, due to the presence of the $\delta$-fractional derivatives in the norms and the derivatives in the $Z$-norm. However, treating such general data leads to the issue of low frequency. The low-frequency components of the solution have a slow decay rate in the absence of a null structure, and they still exhibit non-optimal decay even when the null structure is present; see Section~\ref{sec:newingre} for details.
		\item The reason we can obtain global results only in the QHD-type case lies in the energy estimates. When we exploit the absence of time resonances (or perform the normal form transformation), we will encounter loss of derivatives, which can be handled by the symmetrization argument. However, the dispersive operator $\Lambda(D)$ is of second order and the symmetrization argument can recover at most one derivative. We therefore need to exploit the depletion factors to close the estimates. For a general Korteweg tensor, some quadratic nonlinearities may not contain the depletion factors, whereas in the QHD-type case this obstruction is absent.
	\end{enumerate}
\end{remarkk}

Since the proof of \thmref{qhdthm} involves all the main difficulties, we only present this proof in detail. The proof of \thmref{generalthm} is simpler and follows from a combination of relatively straightforward energy estimates and the dispersive estimates. (See Section~\ref{gekotensors} for an explanation of the energy estimates.) Henceforth we assume $m(0)=1$. We fix $\delta=10^{-5}$ except in Section~\ref{gekotensors}.

\subsection{Previous Results}\label{sec:preresu}
The Euler--Poisson--Korteweg system is closely related to both the Euler--Poisson and Euler--Korteweg systems.  We first recall the results for the Euler--Poisson system, obtained by removing the Korteweg term.  For the repulsive electron model in three dimensions, Guo \cite{MR1637856} proved the global existence of smooth solutions for small irrotational perturbations of a constant equilibrium.  In two dimensions, global stability of the constant equilibrium for small irrotational data was established independently by Ionescu and Pausader \cite{MR3024265} and Li and Wu \cite{zbMATH06377403}. In one dimension, Guo, Han, and Zhang \cite{MR3595365} showed that small-amplitude smooth solutions persist globally without forming shocks.  For the ion model, Guo and Pausader \cite{MR2775116} established global smooth dynamics in three dimensions, while the two-dimensional small-data result was recently proved in \cite{arXiv:2510.18655}.

When the electric potential is suppressed, one recovers the Euler--Korteweg system.  Benzoni-Gavage, Danchin, and Descombes developed the well-posedness theory first in one dimension and then in several space dimensions \cite{MR2226932,MR2354691}.  For small perturbations of suitable global reference solutions, their estimates also give an almost global lower bound on the lifespan.  Audiard and Haspot \cite{zbMATH06702025} established global well-posedness for small irrotational data in dimensions $d \ge 3$.

When the Korteweg tensor is induced by the Bohm potential, we refer to the resulting class of models as quantum hydrodynamic (QHD) systems. For the QHD form of the EPK system in one dimension, local and global existence results, including exponential convergence to steady states under suitable assumptions, were established in \cite{MR2016508,MR2086047,MR2206278,MR2228690}. On a multidimensional torus, Li and Marcati \cite{MR2039696} proved local existence for large regular irrotational data with positive density and global existence with exponential convergence for small perturbations of a stationary state satisfying a subsonic stability condition.

We next discuss weak solutions of the QHD systems. For the collisional QHD form of the EPK system, Antonelli and Marcati \cite{MR2481754} used the connection with the Schr\"odinger equation to construct global finite-energy weak solutions in three dimensions for arbitrarily large energy data generated by wave functions. They subsequently extended this construction to two dimensions \cite{zbMATH06101981}, where they also addressed the additional difficulty arising from the logarithmic Poisson kernel. In a genuinely hydrodynamic framework, Antonelli, Marcati, and Zheng established the existence of global weak solutions for the QHD form of the Euler--Korteweg system in one dimension \cite{MR4244267}.  In their subsequent multidimensional work, they constructed global weak solutions for two classes of data: two-dimensional data with quantized point vortices and multidimensional spherically symmetric data \cite{MR4562812}. In both the one-dimensional and multidimensional settings, they also obtained dispersive estimates and stability results.

For the three-dimensional EPK system, Song \cite{zbMATH08045545} established global well-posedness and scattering for small, smooth, localized perturbations of a constant equilibrium, and proved convergence to Euler--Poisson solutions in the vanishing-capillarity limit. We also note that the beam equation studied in \cite{zbMATH08053129} has the same dispersion relation as the EPK system. The present paper addresses the corresponding two-dimensional problem: it proves global existence and scattering in the QHD-type case and establishes a long lifespan for general Korteweg tensors, while allowing non-neutral perturbations.

\subsection{New Ingredients}\label{sec:newingre}
The analysis in this paper relies on the space-time resonance method, which is used to estimate space-time oscillatory multilinear operators, such as the bilinear operator
\begin{equation*}
	T_{\gti m}(f,g)=\hti F^{-1} \int \me^{\mi s \Phi(\xi,\eta)} \gti m(\xi,\eta) \mao f(\xi-\eta,s) \mao g(\eta,s) \dif \eta \dif s,
\end{equation*}
where $\hti F^{-1}$ denotes the inverse Fourier transform. In our setting, the phase function $\Phi$ has a special form $-\Lambda(\xi) \pm\Lambda(\xi-\eta) \pm\Lambda(\eta)$, which we call the resonance function, also referred to as the modulation. Such resonance functions play a central role in the space-time resonance method. The space-time resonance method is based on the stationary phase approach and uses integration by parts in frequency or time to gain decay. This strategy cannot be directly implemented on the space resonant set $\set{\nabla_\eta \Phi(\xi,\eta)=0}$ or the time resonant set $\set{\Phi(\xi,\eta)=0}$. Their intersection is called the space-time resonant set. Near these resonant sets, one usually needs a more refined analysis. For a more detailed discussion of the space-time resonance method, we refer to \cite{zbMATH06025001,MR3784694}.

The new ingredients are twofold.

First, the low-frequency components of the solution exhibit slow decay due to the absence of a null structure. At the level of decay, the choice of the $Z$-norm yields the bound $\norm{P_kU(t)}_\infty \lle  2^{-k}t^{-1}$ \footnote{$P_k$ denotes the Littlewood--Paley projection; see Section~\ref{notations}.} (for simplicity we use a rough bound), which involves a frequency loss of $2^{-k}$. This loss is harmful in the low-frequency regime and can be compensated by a null structure (the derivative factors in the multiplier). On the other hand, one also has a rough bound $\norm{P_kU(t)}_\infty \lle t^{-1/2}$, which avoids the frequency loss but provides a weaker decay rate. The term $\abs{\nabla \psi}^2$ in \eqref{meq0} lacks the null structure. This difficulty arises primarily in Section~\ref{sec:badfre}, Section~\ref{subsec:3.4.2}, and, most notably, Section~\ref{section:cubic}. The key structure is that
\[
\min\{\abs{\xi}, \abs{\xi-\eta} , \abs{\eta}\}\max\{\abs{\xi}, \abs{\xi-\eta} , \abs{\eta}\} \ll 1 \implies \abs{\Phi(\xi,\eta)} \gge 1,
\]
where $\Phi$ is the resonance function. This lower bound allows us to exploit the absence of the first, second, and third time resonances. Here second and third time resonances refer to time resonances that arise after the second and third iterations, respectively; see Section~\ref{notations}. We also make essential use of the absence of space resonances and the improved approximate-finite-speed-of-propagation bounds. We expect that our argument may also be useful for studying global solutions to the two-dimensional Euler--Poisson system for electrons with non-neutral initial data. Moreover, even in the presence of the null structure, the $\delta$-fractional derivatives in the $Z$-norm cause the low-frequency components of the solution to exhibit non-optimal decay, which leads to additional arguments in Section~\ref{sec:badfre} and \ref{sec:l2lemma}.

The second new point is an alternative treatment of the small modulation cases in the energy estimates. In \cite{MR3784694}, the Fourier integral operator technique is employed to address the small modulation cases, based on the non-degeneracy of the curvature quantity $\mao \Upsilon(\xi,\eta)=\nabla^2_{\xi,\eta}\Phi(\xi,\eta)[\mao{\nabla_\xi \Phi}^\perp,\mao{\nabla_\eta \Phi}^\perp]$\footnote{For a vector $v=(v_1,v_2) \in \bR^2$, we write $v^\perp=(-v_2,v_1)$ and, when $v \ne 0$, $\mao v=v/\abs v$.}. However, in the region $\abs{\xi} \simeq \abs{\xi-\eta} \simeq \abs{\eta} \simeq 1$, $\mao \Upsilon$ may vanish. In that case, \cite{MR3784694} estimates the size of the set $\set{\mao \Upsilon \approx 0, \Phi \approx 0}$, which relies on the non-degeneracy of $\det [(\nabla \Upsilon)^T,(\nabla \Phi)^T ]$\footnote{$\Upsilon$ denotes the same expression with unnormalized gradients.}. In our setting, the non-degeneracy is more difficult to verify due to the additional parameter $m(0)$. Instead, we adapt the almost orthogonality argument used in \cite{MR3450481,MR3283401} to the case $\abs{\xi} \simeq \abs{\xi-\eta} \simeq \abs{\eta} \simeq 1$; see Section~\ref{subsec:3.4.1}. Here we sketch the almost orthogonality argument. For
\begin{align*}
Tf(\xi)={}&\int \me^{\mi s \Phi(\xi,\eta)} \gti m(\xi,\eta) f(\eta,s) \dif \eta \dif s\\
={}&\int \Brac{\int \me^{\mi s \Phi(\xi,\eta)} \gti m(\xi,\eta) f(\eta,s) \dif \eta} \dif s
=:\int S_s f(s) \dif s,
\end{align*}
we aim to estimate $\norm{Tf}_{L^2}$. The key is to exploit the almost orthogonality in time to gain extra decay. For two different times $s$ and $s'$, we show that $(S_sf(s),S_{s'}f(s'))_{L^2}$ is negligible. It suffices to estimate the kernel of $(S_{s'})^*S_s$:
\[
K_{s,s'}(\eta,\eta')=\int \me^{\mi(s\Phi(\xi,\eta)-s'\Phi(\xi,\eta'))} \gti m(\xi,\eta) \cl{\gti m(\xi,\eta')} \dif \xi.
\]
We would like to integrate by parts in $\xi$ to obtain rapid decay of the kernel. Hence the key condition is a lower bound on
\[
s\nabla_\xi \Phi(\xi,\eta)-s'\nabla_\xi \Phi(\xi,\eta').
\]
When $\eta$ is essentially determined by $\xi$ due to the restrictions in $\gti m$, such a lower bound may follow. In our setting, the same mechanism applies, but each $\xi$ may correspond to two choices of $\eta$. We therefore insert suitable cutoff functions into $T$, which requires additional arguments.

\subsection{Sketch of the Proof}
Next, we sketch the proof of \thmref{qhdthm}. The local well-posedness of \eqref{meq}-\eqref{initial} follows from the paralinearized formulation \eqref{energymain} (see \cite{MR2805065}). The continuous propagation of the vector-field $L^2$-norm and the $Z$-norm can be proved by arguments similar to those in \cite[Proposition~2.1]{MR3784694} and \cite[Section~3]{MR3283401}, respectively. Hence our task is to prove a priori estimates for $U$ by a bootstrap argument. The bootstrap assumptions are
\begin{subequations}
\begin{gather}
	\norm{(\Lambda^\delta+\Lambda^{N_0})U(t)}_2+\norm{\Lambda^{\delta}\Omega^{N_1}U(t)}_2 \le \epsl_1 \spa{t}^{\delta^4}, \label{energyest}\\
	\norm{\Omega^{[0,N_2]}V(t)}_Z \le \epsl_1, \label{zest}
\end{gather}
\end{subequations}
for $t \in [0,T]$, where $\epsl_1=100\epsl_0$. Our goal is to prove that
\begin{subequations}
\begin{gather}
\norm{(\Lambda^{\delta}+\Lambda^{N_0})U(t)}_2+\norm{\Lambda^{\delta}\Omega^{N_1}U(t)}_2 \le (\epsl_1/2) \spa{t}^{\delta^4}, \label{desireboot1}\\
\norm{\Omega^{[0,N_2]} V(t)}_Z \le \epsl_1/2. \label{desireboot2}
\end{gather}
\end{subequations}
for $t \in [0,T]$.

The energy estimate \eqref{desireboot1} is proved for high-frequency and low-frequency components in different ways. For high-frequency energy, loss of derivatives arises and paralinearization is introduced to isolate the derivative-loss terms. The difficult case occurs when the input fails to exhibit optimal decay. We perform integration by parts in time for the large modulation cases, where the resulting loss of derivatives is handled by the symmetrization argument and the depletion factors. The small modulation cases are handled by the Fourier integral operator technique. For low-frequency energy, there is no issue of loss of derivatives, so the dispersive formulation is used. When the input and output frequencies are $\simeq 1$ and the input does not exhibit optimal decay, we employ the almost-orthogonality argument. When low frequencies appear in the inputs or outputs, the analysis relies on the absence of first, second, and third time resonances, which is similar to, but simpler than, the analysis in Section~\ref{section:cubic}.

For the dispersive estimate \eqref{desireboot2}, we distinguish between cases according to the modulation. In the small and intermediate modulation cases, no low frequency can occur among the inputs or outputs. We use the argument in \cite{MR3784694} to treat these cases. The non-resonant regime is excluded by exploiting the absence of space and time resonances and the improved approximate-finite-speed-of-propagation bounds, while in the resonant regime the factor $2^{-(1/2-18\delta)n}$ in the $Z$-norm plays a central role. For the large modulation cases (where low frequencies may appear in the inputs or outputs), integration by parts in time converts quadratic terms to cubic terms. Although the resulting terms are cubic, in the absence of a null structure we obtain only the decay rate $t^{-1} \cdot t^{-1/2}$, which is not sufficient to compensate for the growth of $t^2$: one factor of $t$ comes from the time integration, while the other comes from the weight $2^{(1-20\delta)j}$ in the $Z$-norm. The cubic terms are then handled through the absence of second and third time resonances, the absence of space resonances, and the improved approximate-finite-speed-of-propagation bounds.

\subsection{Organization}
The rest of this paper is organized as follows. Section~2 contains some
preliminaries. Sections~3 and~4 are devoted to the proofs of the energy estimate \eqref{desireboot1} and the dispersive estimate \eqref{desireboot2}, respectively.

\section{Preliminaries}
\subsection{Notation}\label{notations}
We set $a \wedge b:=\min \{a,b\}$ and $a \vee b:=\max \{ a,b \}$. Let $k_+:=k \vee 0$, $k_-:=(-k) \vee 0$, and $\wan k:=-k_-$. For a nonzero vector $a$,  we denote its unit vector by $\mao a := a/|a|$. Moreover, for a vector $a=(x,y) \in \bR^2$, we define $a^\perp:=(-y,x)$. The Fourier transform is defined by
\[
\FF(f)(\xi):=\mao f(\xi)=\frac 1{(2\pi)^2}\int f(x) \me^{-\mi x \cdot \xi} \dif x
\]
and the inverse Fourier transform is defined by
\[
\check f(x):= \int f(\xi) \me^{\mi x \cdot \xi} \dif \xi.
\]
We also use the convention 
\[
\mathcal F (m(D)f)(\xi):=m(\xi) \widehat f(\xi).
\]
For $M \ge 0$, we write $\Omega^{[0,M]} f$ for $\Omega^a f$ with $a \in [0,M] \cap \bZ$, and write $\Omega_{\xi,\eta} := \Omega_\xi + \Omega_\eta$. Define $B(x_0,r):=\set{x \in \bR^2 \colon \abs{x-x_0}<r}$ and $A(x_0;r_1,r_2):=\set{x \in \bR^2 \colon r_1<\abs{x-x_0}<r_2}$.

We denote by $E_t f := \me^{\mi t \Lambda(D)} f$ the evolution of a profile $f$ under the dispersive semigroup. 
When no confusion arises, we abbreviate $E_t f$  as $Ef$ or $E^+ f$ and $E_{-t} f$ as $E^- f$. For the solution $U$ and the profile $V$, we set $U^+:=U$, $V^+:=V$, $U^-:=\cl U$, and $V^-:=\cl V$. For an expression of the form  $U^\iota$, $V^\iota$, or $E^{-\iota} f$, we refer to $\iota$ as its sign.
We define the resonance function (modulation)
$
\Phi_{\iota_1 \iota_2}(\xi,\eta) := -\Lambda(\xi) + \iota_1 \Lambda(\xi-\eta) + \iota_2 \Lambda(\eta),
$
where $\iota_1,\iota_2 \in \set{+,-}$. (In some situations, the subscripts $\iota_1, \iota_2$ may be omitted.) We denote $\nabla_\eta \Phi$ by $\Xi$ and $\Omega_\eta \Phi$ by $\Theta$. Similarly, we define the second resonance function
$
\wan \Phi_{\iota_1 \iota_2 \iota_3} := -\Lambda(\xi) + \iota_1 \Lambda(\xi-\eta) + \iota_2 \Lambda(\eta-\sigma) + \iota_3 \Lambda(\sigma),
$
with gradient $\wan \Xi := \nabla_{\eta,\sigma} \wan \Phi$, and the third resonance function
$
\cl \Phi_{\iota_1\iota_2\iota_3\iota_4} := -\Lambda(\xi) + \iota_1 \Lambda(\xi-\eta) + \iota_2 \Lambda(\eta-\sigma) + \iota_3 \Lambda(\sigma-\zeta) + \iota_4 \Lambda(\zeta).
$
We say that a second (resp. third) time resonance occurs when the second (resp. third) resonance function vanishes.

We next introduce the dyadic decompositions.
Let $\varphi$ be a smooth bump function supported in $[-3/2,3/2]$ and equal to $1$ on $[-5/4,5/4]$. Set $\psi(\xi) := \varphi(\xi) - \varphi(2\xi)$. For any $l \in\bZ$, define
\[
\varphi_l(\xi) := \varphi(\xi/2^l), \quad 
\psi_l(\xi) := \psi(\xi/2^l).
\]
For $I \subset \bR$, set
\[
\fai_I:=\sum_{k \in I} \psi_k.
\]
In particular, $\fai_{\le \lambda}:=\fai_{(-\infty,\lambda]}$, $\fai_{<\lambda}:=\fai_{(-\infty,\lambda)}$, $\fai_{\ge \lambda}:=\fai_{[\lambda,\infty)}$, and $\fai_{>\lambda}:=\fai_{(\lambda,\infty)}$.
Moreover, for integers $ k \ge l$, let
\[
\varphi_k^l(\xi) :=
\begin{cases}
\psi_k(\xi), &\text{if } k>l,\\
\varphi_l(\xi), &\text{if } k=l,
\end{cases}
\]
and for integers $l_1 \le k \le l_2$, let
\[
\wan{\fai}_k^{l_1,l_2}(\xi) :=
\begin{cases}
\varphi_{\le l_1}(\xi), &\text{if } k=l_1,\\
\psi_k(\xi), &\text{if } l_1<k<l_2,\\
\varphi_{\ge l_2}(\xi), &\text{if } k=l_2.
\end{cases}
\]
For simplicity, we use the same notation for the corresponding radial functions on $\bR^2$.
Define $P_kf:=\psi_k(D)f$ for $k \in \bZ$, $Q^{(k)}_j f:=\fai^{k_-}_{j}(x)f$ for integers $j \ge k_-$, and $A^{(j)}_n f:=\wan \fai^{-j-1,0}_{-n}(2^{100}(\abs{D}-2\gamma_0))f$ for integers $0 \le n \le j+1$. Set $\hti A=\set{(k,j,n) \in \bZ^3 \colon j \ge k_-, 0 \le n \le j+1}$. We write $P_I$ for $\fai_I(D)$. Let
\[
Q_{jk} := Q^{(k)}_j P_k, \quad Q^*_{jk} := P_{[k-2,k+2]} Q^{(k)}_j P_k.
\]
We use the shorthand
\[
f_k := P_k f, \quad f_{k,j} := Q^*_{jk} f, \quad f_{k,j,n} := A^{(j)}_n Q^*_{jk} f, \quad f_{k,n}:=A_nP_kf, \quad f^{p} := \Omega^p f.
\]
Throughout this paper, we frequently use the decompositions $f=\sum_{(k,j,n) \in \hti A}f_{k,j,n}$ and $f_k=\sum_{(j,n) \colon (k,j,n) \in \hti A} f_{k,j,n}$. For brevity, we often omit the condition $(k,j,n) \in \hti A$. For dyadic parameters, $a \lle b$ means that $a \le b+C$ for some constant $C$.

Unless otherwise specified, we use $\xi_i$ and $\xi$ to denote the frequencies of the $i$-th input and the output frequency, respectively. We also write $\xi_{i_1\cdots i_n}$ for $\xi_{i_1}+\cdots +\xi_{i_n}$. We denote by $k_I$ the frequency localization parameter of $\xi_I$. For instance, $k$, $k_i$, and $k_{ij}$ denote the localization parameters of $\xi$, $\xi_i$, and $\xi_{ij}$, respectively. We use $\ul k$ and $\cl k$ to denote the minimum and maximum among all frequency localization parameters involved in a multilinear operator. In particular,
$
\ul k := \min\{k,k_1,k_2\}$ and $\cl k := \max\{k,k_1,k_2\}
$
in the bilinear case. We define
\[
\Phi^{\alpha_1,\cdots,\alpha_n}_{\iota_0\iota_1 \cdots\iota_n}:=-\iota_0\Lambda(\xi_{\alpha_1\cdots\alpha_n})+\iota_1\Lambda(\xi_{\alpha_1})+\cdots+\iota_n\Lambda(\xi_{\alpha_n}),
\]
where $\iota_i \in \set{+,-}$ for $0 \le i \le n$.

Fixing $t$, we take functions $q_0, \dots, q_{L+1}$ from $\mathbb{R}$ to $[0,1]$ such that
\begin{itemize}
\item $\abs{L-\log_2(2+t)} \leq 2$;
\item $\sum_{m=0}^{L+1} q_m(s)=1_{[0,t]}(s)$;
\item $\supp q_0 \subset [0,2]$, $\supp q_{L+1} \subset [t-2,t]$, $\supp q_m \subset [2^{m-1},2^{m+1}]$, $q_m \in C^1(\mathbb{R})$ and $\int_0^t \abs{q_m'(s)}\dif s \lesssim 1$ for $1 \leq m \leq L$.
\end{itemize}
Set $D=10^6$ and $D_1=10^3$. The cases $0 \le m \le D^4$ and $m=L+1$ are typically easier and will be omitted.

\subsection{Multilinear Operators}\label{multilinear}
We first define notation for multilinear operators. The inputs, such as $V^\iota$, typically carry sign labels; in what follows, we denote by $\iota_i$ the sign of the input $f_i$. We denote by $T_{\gti m}$ and $I_{\gti m}$ the multilinear operators and the oscillatory multilinear operators, respectively, given by
\begin{gather*}
	\FF T_{\gti m}(f_1,\cdots,f_n)(\xi):=\int_{\xi_1+\cdots+\xi_n=\xi} \gti m(\xi_1,\cdots,\xi_n) \mao{f_1}(\xi_1) \cdots \mao{f_n}(\xi_n) \dif \xi_1 \cdots \dif \xi_n,\\
	\FF I_{\gti m,t}(f_1,\cdots,f_n)(\xi):=\int_{\xi_1+\cdots+\xi_n=\xi} \me^{\mi t \Psi} \gti m(\xi_1,\cdots,\xi_n) \mao{f_1}(\xi_1) \cdots \mao{f_n}(\xi_n) \dif \xi_1 \cdots \dif \xi_n,
	\end{gather*}
where the phase function $\Psi=-\Lambda(\xi)+\iota_1\Lambda(\xi_1)+\cdots+\iota_n\Lambda(\xi_n)$. Sometimes, we write $I_{\gti m,m}$ instead of $I_{\gti m,t}$ to indicate that the variable $t$ in $I$ satisfies $t \simeq 2^m$. Their dual forms are denoted by $\bbT_{\gti m}$ and $\bbI_{\gti m}$, respectively, and are given by
\begin{gather*}
	\bbT_{\gti m}(f_1,\cdots,f_{n+1}):=\int_{\eta_1+\cdots+\eta_{n+1}=0} \gti m(\eta_1,\cdots,\eta_{n}) \mao{f_1}(\eta_1) \cdots \mao{f_{n+1}}(\eta_{n+1})  \dif \eta_1 \cdots \dif \eta_{n+1},\\
	\bbI_{\gti m,t}(f_1,\cdots,f_{n+1}):=\int_{\eta_1+\cdots+\eta_{n+1}=0} \me^{\mi t \Psi} \gti m(\eta_1,\cdots,\eta_{n}) \mao{f_1}(\eta_1) \cdots \mao{f_{n+1}}(\eta_{n+1})  \dif \eta_1 \cdots \dif \eta_{n+1},
\end{gather*}
where the phase function $\Psi=-\Lambda(\eta_{n+1})+\iota_1 \Lambda(\eta_1)+\cdots+\iota_n \Lambda(\eta_n)$. For bilinear and trilinear operators, we use the parametrizations $(\xi_1,\xi_2)=(\xi-\eta,\eta)$ and $(\xi_1,\xi_2,\xi_3)=(\xi-\eta,\eta-\sigma,\sigma)$, respectively. For the corresponding dual forms, we use the parametrizations $(\eta_1,\eta_2,\eta_3)=(\xi-\eta,\eta,-\xi)$ and $(\eta_1,\eta_2,\eta_3,\eta_4)=(\xi-\eta,\eta-\sigma,\sigma,-\xi)$, respectively. We use the same notation $\gti m$ for the multiplier under these parametrizations.

For a multiplier $\gti m(\xi_1,\cdots,\xi_n)$, we define the symbol norm $\norm{\gti m}_S:=\norm{\check{\gti m}}_1$. For a set $K$ of localization parameters, we use the $S_K$-norm to indicate that the cutoff functions associated with the localization parameters are inserted into $\gti m$. For instance,
\[
\norm{\gti m(\xi_1,\xi_2)}_{S_{k,k_1,k_2}}:=\norm{\gti m \psi_k(\xi_1+\xi_2)\psi_{k_1}(\xi_1)\psi_{k_2}(\xi_2)}_S
\]
and
\[
\norm{\gti m(\xi_1,\xi_2,\xi_3)}_{S_{k,k_1,k_2,k_3,k_{23}}}:=\norm{\gti m  \psi_k(\xi_1+\xi_2+\xi_3)\psi_{k_1}(\xi_1)\psi_{k_2}(\xi_2)\psi_{k_3}(\xi_3)\psi_{k_{23}}(\xi_{23})}_S.
\] 

We recall the following multilinear estimate.
\begin{theorem}
	For $p,p_1, \cdots, p_n \in [1,\infty]$ satisfying $1/p_1+\cdots+1/p_n=1/p$, we have
	\[
	\norm{T_{\gti m}(f_1,\cdots, f_n)}_p \lle \norm{\gti m}_S \norm{f_1}_{p_1} \cdots \norm{f_n}_{p_n}.
	\]
\end{theorem}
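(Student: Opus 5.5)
The plan is to express $T_{\gti m}$ as a superposition of translates by writing the multiplier through its inverse Fourier transform. Since $\norm{\gti m}_S=\norm{\check{\gti m}}_1$ and $\check{\gti m}(y_1,\dots,y_n)=\int \gti m(\xi_1,\dots,\xi_n)\me^{\mi (y_1\cdot\xi_1+\cdots+y_n\cdot\xi_n)}\dif\xi_1\cdots\dif\xi_n$, Fourier inversion (with the paper's normalization $\FF$ carrying the factor $(2\pi)^{-2}$ per variable) gives
\[
\gti m(\xi_1,\dots,\xi_n)=c_n\int \check{\gti m}(y_1,\dots,y_n)\,\me^{-\mi(y_1\cdot\xi_1+\cdots+y_n\cdot\xi_n)}\dif y_1\cdots\dif y_n
\]
for a harmless constant $c_n$.

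I first prove the statement for $f_i$ Schwartz and $\gti m$ with $\check{\gti m}\in L^1$. Insert the representation above into the definition of $\FF T_{\gti m}$. The factor $\me^{-\mi y_i\cdot\xi_i}\mao{f_i}(\xi_i)$ is the Fourier transform of the translate $f_i(\cdot-y_i)$. Moreover, for a product of functions the Fourier transform is the $\xi_1+\cdots+\xi_n=\xi$ convolution integral, up to the normalizing constant. Exchanging the $y$- and $\xi$-integrals by Fubini, which is justified because $\check{\gti m}\in L^1$ and $\mao f_i\in L^1$, yields the pointwise identity
\[
T_{\gti m}(f_1,\dots,f_n)(x)=C\int \check{\gti m}(y_1,\dots,y_n)\,f_1(x-y_1)\cdots f_n(x-y_n)\dif y_1\cdots\dif y_n .
\]
The constant $C$ absorbs the normalizations and is independent of the $f_i$.

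Next take the $L^p$ norm in $x$. By Minkowski's integral inequality,
\[
\norm{T_{\gti m}(f_1,\dots,f_n)}_p\le C\int\abs{\check{\gti m}(y)}\,\norm{f_1(\cdot-y_1)\cdots f_n(\cdot-y_n)}_p\dif y .
\]
Hölder's inequality with $1/p_1+\cdots+1/p_n=1/p$, together with translation invariance of the $L^{p_i}$ norms, bounds the inner norm by $\prod_i\norm{f_i}_{p_i}$. This gives the claim with implicit constant $C$. The cases $p=\infty$ and $p<1$ do not arise, since all exponents lie in $[1,\infty]$. Minkowski's inequality is valid for $p\ge1$.

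Finally, I extend to general $f_i$ by density. For $p_i<\infty$ approximate $f_i$ in $L^{p_i}$ by Schwartz functions and use multilinearity together with the bound just proved. For $p_i=\infty$, I take the integral formula as the definition of $T_{\gti m}$ for $L^\infty$ inputs, which is the natural meaning here because the operator is applied to localized functions. The bound then holds directly, since the kernel representation never used decay of $f_i$.

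The only delicate step is the bookkeeping of the $2\pi$ normalizations and the justification of Fubini. Neither affects the structure of the argument, and the constant is absorbed into $\lle$.
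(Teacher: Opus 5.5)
Your proof is correct and follows the standard route. You write $T_{\gti m}(f_1,\dots,f_n)(x)=C\int\check{\gti m}(y_1,\dots,y_n)f_1(x-y_1)\cdots f_n(x-y_n)\dif y$ and then apply Minkowski and H\"older; the paper gives no proof of its own but recalls the estimate from \cite[Section~3.1]{MR3665671}, where it is obtained by this same kernel-plus-H\"older argument. One small slip: the case $p=\infty$ does arise (exactly when all $p_i=\infty$), but Minkowski's integral inequality holds trivially there, so your argument covers it unchanged.
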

To estimate the symbol norms, we use the following criterion for localized multipliers.
\begin{proposition}
	Suppose that $\gti m(\xi_1,\cdots,\xi_n)$ is supported in $B(0,\lambda_1) \times \cdots \times B(0,\lambda_n)$. Assume that $\lambda_i^{\abs \alpha} \abs{\partial^\alpha_{\xi_i} \gti m} \le M$ for $1 \le i \le n$ and $\abs \alpha \le 3n$. Then $\norm{\gti m}_S \lle M$.
\end{proposition}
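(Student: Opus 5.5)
The plan is to reduce to unit scales and then bound the $L^1$ norm of $\check{\gti m}$ by a weighted $L^2$ norm. The weight will be a \emph{sum} of pure powers $\abs{x_i}^s$, one for each block. This matters because Plancherel then produces only pure derivatives $\partial^\alpha_{\xi_i}\gti m$ in a single block at a time, which is exactly what the hypothesis controls; no mixed derivatives across blocks are needed.

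\emph{Step 1 (rescaling).} Set $\gti m_\lambda(\xi_1,\dots,\xi_n):=\gti m(\lambda_1\xi_1,\dots,\lambda_n\xi_n)$. The inverse Fourier transform of $\gti m_\lambda$ is a dilate of $\check{\gti m}$ with the $L^1$-preserving normalization, so $\norm{\gti m_\lambda}_S=\norm{\gti m}_S$. The function $\gti m_\lambda$ is supported in $B(0,1)^n\subset\bR^{2n}$ and satisfies $\abs{\partial^\alpha_{\xi_i}\gti m_\lambda}\le M$ for $\abs\alpha\le 3n$. Hence it suffices to treat $\lambda_1=\cdots=\lambda_n=1$.

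\emph{Step 2 (weighted Cauchy--Schwarz).} Fix the integer $s=n+1$, which satisfies $s\le 3n$. Put $w(x)=1+\sum_{i=1}^n\abs{x_i}^s$ for $x=(x_1,\dots,x_n)\in(\bR^2)^n$. Then
\[
\norm{\check{\gti m}}_1\le \Norm{w^{-1}}_{L^2(\bR^{2n})}\,\Norm{w\,\check{\gti m}}_{L^2(\bR^{2n})}.
\]
For the first factor, use $w\ge\prod_i(1+\abs{x_i}^s)^{1/n}$ (AM--GM, up to a constant). This gives
\[
\int w^{-2}\lle\prod_{i=1}^n\int_{\bR^2}(1+\abs{x_i}^s)^{-2/n}\dif x_i,
\]
and each factor is finite because $2s/n>2$.

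\emph{Step 3 (Plancherel with pure derivatives).} Bound $\norm{w\check{\gti m}}_2\le\norm{\check{\gti m}}_2+\sum_i\norm{\abs{x_i}^s\check{\gti m}}_2$. Since $\abs{x_i}^s\lle\sum_{\abs\alpha=s}\abs{x_i^\alpha}$ with $\alpha$ a multi-index in the two coordinates of $x_i$ only, Plancherel gives
\[
\norm{\abs{x_i}^s\check{\gti m}}_2\lle\sum_{\abs\alpha=s}\norm{\partial^\alpha_{\xi_i}\gti m}_2.
\]
Each term on the right involves derivatives in the single block $\xi_i$ of order $s\le 3n$, so it is bounded pointwise by $M$. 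It is supported in $B(0,1)^n$, which has volume $O(1)$, so its $L^2$ norm is $\lle M$. Likewise $\norm{\check{\gti m}}_2\lle\norm{\gti m}_2\lle M$. Combining Steps 2 and 3 yields $\norm{\gti m}_S\lle M$.

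The only point that needs care is the choice of weight in Step 2. A product weight $\prod_i(1+\abs{x_i})^{s}$ would require mixed derivatives across blocks, which the hypothesis does not provide. The additive weight $w$ avoids this, at the cost of the AM--GM step to check integrability; that step forces $s>n$, which is compatible with the available $3n$ derivatives.
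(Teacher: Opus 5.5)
Your argument is correct, but it follows a different route from the usual one. The paper gives no proof of its own and only points to \cite[Section~3.1]{MR3665671}. The standard argument behind criteria of this form is pointwise rather than $L^2$-based. For fixed $x=(x_1,\dots,x_n)$ one picks the block $i$ maximizing $\lambda_i\abs{x_i}$ and integrates by parts $N$ times in $\xi_i$ along $x_i/\abs{x_i}$. This gives $\abs{\check{\gti m}(x)}\lle M\prod_j\lambda_j^2\,(1+\max_j\lambda_j\abs{x_j})^{-N}$, which after rescaling is integrable over $\bR^{2n}$ once $N>2n$, so $3n$ derivatives suffice with room to spare.

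Both routes rest on the same observation, $\max_i\abs{x_i}\gtrsim\abs{x}$, which is what allows pure derivatives in one block at a time. In your proof it appears as the additive weight together with the AM--GM step; you could equally use $w\gtrsim(1+\abs{x})^{s}$ directly.

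The trade-off is as follows. Your weighted Cauchy--Schwarz/Plancherel argument needs only $s=n+1$ derivatives per block, controlled in $L^2$. The pointwise argument needs $2n+1$ directional derivatives, controlled only in $L^1$. It also yields explicit pointwise decay of the kernel, which is convenient for weighted variants such as the bound on $\norm{\spa{y}^{1-20\delta}\FF^{-1}(\cdots)}_1$ used in the remainder estimates. Your method adapts to those as well by raising $s$.
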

The following two properties are useful for estimating the symbol norms.
\begin{gather*}
	\norm{\gti m_1 \gti m_2}_S \lle \norm{\gti m_1}_S \norm{\gti m_2}_S,\\
	\norm{\gti m \circ L}_S \lle_L \norm{\gti m}_S,
\end{gather*}
where $L$ is an invertible linear transformation.
For further discussion of these results, see \cite[Section~3.1]{MR3665671}.

We introduce the symbol classes
\[
\gti S^m:=\set{f \in C^\infty(\bR^2 \setminus \set{0}) \colon \abs{\xi}^k \abs{D^kf} \le C_k \spa{\xi}^m},
\]
and $\gti S:=\bigcup_{m \ge 0} \gti S^{m}$. We also define the Coifman--Meyer multiplier class
\[
\gti M^0:=\set{m \in C^\infty(\bR^4 \setminus \set{0}) \colon (\abs{\xi}+\abs{\eta})^k\abs{D^k m} \le C_k}.
\]

We define the space-time integral operators
\begin{gather*}
\FF\BB_{\gti m}(f_1,f_2)(\xi)
:= \int  \me^{\mi s\Phi_{\iota_1\iota_2}} \gti m(\xi,\eta) \mao f_1(\xi-\eta,s)\mao f_2(\eta,s)\, \dif \eta \dif s,\\
\begin{aligned}
	\FF\TT_{\gti m}(f_1,f_2,f_3)(\xi)
:={}&\int  \FF I_{\gti m,s}(f_1(s),f_2(s),f_3(s))(\xi) \dif s\\
={} &\int \me^{\mi s\wan{\Phi}_{\iota_1\iota_2\iota_3}} \gti m(\xi,\eta,\sigma) \mao f_1(\xi-\eta,s) \mao f_2(\eta-\sigma,s)\mao f_3(\sigma,s) \dif\eta \dif\sigma \dif s,
\end{aligned}
\\
\II_{\gti m,m}(f_1,\cdots,f_n)
:= \int q'_m(s) I_{\gti m,s}(f_1(s),\cdots,f_n(s)) \dif s.
\end{gather*}
When the multiplier is omitted, we take it to be $\gti m_q$ for $\BB$ and $\gti m_c$ for $\TT$. Here $\gti m_q$ and $\gti m_c$ are defined in Section~\ref{dispersiveformu}. Moreover, we define the quadrilinear space-time integral operator
\[
\QQ_{\gti m}(f_1,f_2,f_3,f_4) :=\int  I_{\gti m,s}(f_1(s),f_2(s),f_3(s),f_4(s)) \dif s.
\]
In some situations, we use multiple subscripts to encode localizations. For instance,
\[
\FF\BB_{m,l}(f_1,f_2)(\xi)=\int q_m(s) \me^{\mi s \Phi_{\iota_1\iota_2}} \fai^{l_0}_l(\Phi)\gti m_q(\xi,\eta)  \mao{f_1}(\xi-\eta,s) \mao{f_2}(\eta,s) \dif \eta \dif s.
\]
$m$ and $l$ denote the localization parameters for time and the first resonance function, respectively, throughout the paper.
We also adopt the convention that a superscript $'$ means division of the multiplier by $-\mi \Phi$. For instance,
\[
\FF\BB'_m(f_1,f_2)(\xi):=\int q_m(s) \me^{\mi s \Phi_{\iota_1\iota_2}} \frac{\gti m_q(\xi,\eta)}{-\mi \Phi} \mao{f_1}(\xi-\eta,s) \mao{f_2}(\eta,s) \dif \eta \dif s.
\]
We define the following space-time integrals 
\begin{gather*}
\TTT_{\gti m}(f_1,\cdots,f_n):=\int \bbT_{\gti m}(f_1(s),\cdots,f_n(s)) \dif s,\\
\gti R_{m,\gti m}(f_1,\cdots,f_n):=\int q'_m(s) \bbT_{\gti m}(f_1(s),\cdots,f_n(s)) \dif s,\\
\III_{\gti m}(f_1,\cdots,f_n):=\int \bbI_{\gti m,s}(f_1(s),\cdots,f_n(s)) \dif s,\\
\JJJ_{m,\gti m}(f_1,\cdots,f_n):=\int q'_m(s) \bbI_{\gti m,s}(f_1(s),\cdots,f_n(s)) \dif s,
\end{gather*}
which distinguish between oscillatory and non-oscillatory contributions, and also between bulk and boundary terms arising from integration by parts in time.

\subsection{Resonance Analysis}
Recall that the dispersion relation is $\Lambda(\xi)=\sqrt{\abs{\xi}^4+\abs{\xi}^2+1}$. We first collect some properties of $\lambda(r)=\sqrt{r^4+r^2+1}$. We have
\begin{gather*}
	\lambda'(r)=\frac{2r^3+r}{\sqrt{r^4+r^2+1}},\\
\lambda''(r)=\frac{2r^6 + 3r^4 + 6r^2 + 1}{(r^4 + r^2 + 1)^{3/2}}.
\end{gather*}
The convexity of $\lambda$ plays an important role in the analysis. Let $\gamma_0=1/\sqrt 2$. One can then verify that
\[
\lambda(2\gamma_0)=2\lambda(\gamma_0).
\]
For fixed $r>0$, the function $f(s)=\lambda(r+s)-\lambda(s)$ is strictly increasing. Moreover, $f(0)<\lambda(r)$ and $f(\infty)>\lambda(r)$. Therefore, there exists a unique $\tau(r)$ such that
\[
\lambda(r+\tau(r))-\lambda(\tau(r))=\lambda(r).
\]
The function $\tau$ is useful in the study of time resonances. We have
\[
\tau'(r)=\frac{\lambda'(r)-\lambda'(r+\tau(r))}{\lambda'(r+\tau(r))-\lambda'(\tau(r))}<0.
\]
Hence the function $\tau$ is strictly decreasing. Then we have
\begin{gather*}
	\tau(r)=\frac{\gamma_0^2}r+O\Brac{\frac 1{r^2}} \quad \text{as } r \to \infty,\\
	\tau(r)=\frac{\gamma_0^2}r+O\Brac{r} \quad \text{as } r \to 0.
\end{gather*} 
\begin{lemma}\label{timereso}
Suppose that vectors $a,b \ne 0$ and $\abs{b} \le \tau(\abs a)$. The following inequality holds:
\[
	\Lambda(a)+\Lambda(b)-\Lambda(a+b) \gge [\tau(\abs{a})-\abs b]\abs{a}.
\]
\end{lemma}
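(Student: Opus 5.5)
The plan is to reduce the vector inequality to a one-dimensional statement about $\lambda$ by pushing $\abs{a+b}$ to its largest possible value, and then to use the uniform convexity of $\lambda$.

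Set $r=\abs a>0$ and $s=\abs b\in(0,\tau(r)]$.

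\textbf{Step 1: reduction to the collinear case.} Since $\lambda'(\rho)=(2\rho^3+\rho)/\lambda(\rho)\ge 0$, the function $\lambda$ is nondecreasing on $[0,\infty)$. Combined with $\abs{a+b}\le r+s$, this gives $\Lambda(a+b)=\lambda(\abs{a+b})\le\lambda(r+s)$. Hence
\[
\Lambda(a)+\Lambda(b)-\Lambda(a+b)\ge g(s):=\lambda(r)+\lambda(s)-\lambda(r+s).
\]
It therefore suffices to prove $g(s)\gtrsim(\tau(r)-s)\,r$ for $0<s\le\tau(r)$.

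\textbf{Step 2: writing $g$ as an integral.} By the definition of $\tau$, we have $g(\tau(r))=0$. Moreover $g'(u)=\lambda'(u)-\lambda'(r+u)$. By the fundamental theorem of calculus,
\[
g(s)=g(s)-g(\tau(r))=\int_s^{\tau(r)}\bigl(\lambda'(r+u)-\lambda'(u)\bigr)\dif u=\int_s^{\tau(r)}\int_u^{u+r}\lambda''(v)\dif v\dif u .
\]

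\textbf{Step 3: a uniform lower bound for $\lambda''$.} The key input is that $\lambda''\ge c_0>0$ on all of $[0,\infty)$ for some absolute constant $c_0$. This follows from the explicit formula
\[
\lambda''(\rho)=\frac{2\rho^6+3\rho^4+6\rho^2+1}{(\rho^4+\rho^2+1)^{3/2}}.
\]
The right-hand side is continuous and strictly positive, equals $1$ at $\rho=0$, and tends to $2$ as $\rho\to\infty$. Hence its infimum over $[0,\infty)$ is positive.

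\textbf{Step 4: conclusion.} Inserting the bound from Step 3 into the double integral of Step 2, the inner integral is at least $c_0 r$ for every $u$. Integrating in $u$ over $[s,\tau(r)]$ gives
\[
g(s)\ge c_0\,r\,(\tau(r)-s),
\]
which is the claimed inequality.

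The only point needing care is the uniform positivity of $\lambda''$ (Step 3), in particular that the bound does not degenerate as $\rho\to0$ or $\rho\to\infty$. This is settled directly by the limiting values $1$ and $2$ computed above, so I do not expect a genuine obstacle.
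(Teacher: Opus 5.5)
Your proposal is correct and follows essentially the same route as the paper: reduce to the collinear quantity $\lambda(\abs a)+\lambda(\abs b)-\lambda(\abs a+\abs b)$ using monotonicity of $\lambda$, rewrite it via the definition of $\tau$ as $F(\tau(\abs a))-F(\abs b)$ with $F(x)=\lambda(\abs a+x)-\lambda(x)$, and use $F'\gtrsim \abs a$. Your Step 3 (the uniform bound $\lambda''\ge c_0>0$ from the explicit formula) supplies the justification for $F'\gtrsim\abs a$ that the paper only asserts.
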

\begin{proof}
We have
	\begin{align*}
		&\Lambda(a)+\Lambda(b)-\Lambda(a+b)\\
		\ge{} &\lambda(\abs a)+\lambda(\abs{b})-\lambda(\abs a+\abs b)\\
		={} &\lambda(\abs a+\tau(\abs a))-\lambda(\tau(\abs a))-(\lambda(\abs a+\abs b)-\lambda(\abs{b})).
	\end{align*}
	Let $F(x)=\lambda(\abs a+x)-\lambda(x)$. Then $F' \simeq \abs{a}$. Hence the conclusion follows.
\end{proof}
Throughout this subsection, we assume that $m \ge D^4$.
\begin{lemma}\label{resoregion}
	Let $k, k_1, k_2$ be such that $-\delta^2 m \le \ul k \le \cl k \le \delta^2m$. Assume that $\xi$, $\xi-\eta$, and $\eta$ lie in the annuli $A(0;2^{k-1},2^{k+1})$, $A(0;2^{k_1-1},2^{k_1+1})$, and $A(0;2^{k_2-1},2^{k_2+1})$, respectively. Set $\varrho=\abs{\xi}$, $\varrho_1=\abs{\xi-\eta}$, and $\varrho_2=\abs{\eta}$. If
	\[
	\abs{\Phi_{\iota_1\iota_2}(\xi,\eta)} \le \kappa_0 \quad \text{and} \quad \abs{\nabla_\eta \Phi_{\iota_1\iota_2}(\xi,\eta)} \le \kappa_1,
	\]
	with $\kappa_0, \kappa_1 \le 2^{-2\delta^2m}$, then $(\iota_1,\iota_2)=(+,+)$,
	\[
	\abs{\varrho-2\gamma_0} \lle \kappa_0+\kappa_1^2 2^{2\delta^2m}, \abs{\varrho_1-\gamma_0} \vee \abs{\varrho_2-\gamma_0} \lle \kappa_0+\kappa_12^{\delta^2 m}, \text{ and } \abs{\varrho_1-\varrho_2} \vee \abs{2\varrho_1-\varrho} \lle \kappa_1.
	\]
	Similarly, if
	\[
	\abs{\Phi_{\iota_1\iota_2}(\xi,\eta)} \le \kappa_0 \quad \text{and} \quad \abs{\nabla_\xi \Phi_{\iota_1\iota_2}(\xi,\eta)} \le \kappa_1,
	\]
	with $\kappa_0, \kappa_1 \le 2^{-2\delta^2m}$, then $(\iota_1,\iota_2)=(-,+)$,
	\[
	\abs{\varrho_2-2\gamma_0} \lle \kappa_0+\kappa_1^2 2^{2\delta^2m}, \abs{\varrho-\gamma_0} \vee \abs{\varrho_1-\gamma_0} \lle \kappa_0+\kappa_12^{\delta^2 m}, \text{ and } \abs{\varrho_1-\varrho} \vee \abs{2\varrho_1-\varrho_2} \lle \kappa_1.
	\]
\end{lemma}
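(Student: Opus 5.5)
The plan is to treat the first statement (small $\nabla_\eta\Phi$) by hand, in three steps: exclude sign combinations, get the rough geometry from the gradient, then refine using $\Phi$. The second statement will then follow by relabelling. Throughout, $\lambda'>0$, $\lambda'$ is strictly increasing, $\lambda''\simeq 1$ on $[0,\infty)$, $\lambda'(r)\simeq r\langle r\rangle^2$, and all radii lie in $[2^{-\delta^2m-1},2^{\delta^2m+1}]$.

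\emph{Step 1: excluding sign combinations.} If $(\iota_1,\iota_2)=(-,-)$, then $\Phi\le -3$, contradicting $\abs{\Phi}\le\kappa_0$. If $\iota_1=-\iota_2$, write
\[
\nabla_\eta\Phi=-\iota_1\bigl(\lambda'(\varrho_1)\widehat{\xi-\eta}+\lambda'(\varrho_2)\mao\eta\bigr).
\]
Smallness of this vector forces $\abs{\lambda'(\varrho_1)-\lambda'(\varrho_2)}\le\kappa_1$, hence $\abs{\varrho_1-\varrho_2}\lle\kappa_1$. It then gives $\abs{\Lambda(\xi-\eta)-\Lambda(\eta)}\lle \lambda'(\cdot)\abs{\varrho_1-\varrho_2}$, and with $\lambda'\lle 2^{3\delta^2 m}$ this is $\lle\kappa_1 2^{3\delta^2m}\ll1$. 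Consequently $\abs{\Phi}\ge\Lambda(\xi)-o(1)\ge 1/2$, a contradiction. So $(\iota_1,\iota_2)=(+,+)$.

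\emph{Step 2: rough geometry.} From $\abs{\lambda'(\varrho_1)\widehat{\xi-\eta}-\lambda'(\varrho_2)\mao\eta}\le\kappa_1$ I get two things:
\begin{itemize}
\item the radii are close: $\abs{\varrho_1-\varrho_2}\lle\kappa_1$, using $\lambda''\gge1$;
\item the directions are close: the angle $\theta$ between $\xi-\eta$ and $\eta$ satisfies $\theta\lle \kappa_1/\lambda'(\varrho_2)\lle\kappa_1 2^{\delta^2m}$.
\end{itemize}
Since the two vectors are nearly parallel, $0\le\varrho_1+\varrho_2-\varrho\lle \varrho_2\theta^2$.

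\emph{Step 3: refinement via $\Phi$.} Write
\[
\Phi=-\lambda(\varrho)+\lambda(\varrho_1)+\lambda(\varrho_2),\qquad s=\tfrac{\varrho_1+\varrho_2}{2}.
\]
By Taylor expansion, $\lambda(\varrho_1)+\lambda(\varrho_2)=2\lambda(s)+O(\kappa_1^2)$. Also $\lambda(\varrho)=\lambda(2s)+O(\lambda'\varrho_2\theta^2)$. Hence
\[
\abs{h(2s)}\lle\kappa_0+\kappa_1^2+(\text{error}),\qquad h(r):=2\lambda(r/2)-\lambda(r).
\]
The key elementary fact I would verify is that $h(0)=1$, $h\to-\infty$, and $h$ has a unique, simple zero at $2\gamma_0$, where $\lambda(2\gamma_0)=2\lambda(\gamma_0)$. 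A first, crude pass already gives $s\simeq1$. With $s\simeq1$ the error becomes $\lle \kappa_1^2 2^{2\delta^2m}$ (this crude pass, where $s$ is not yet known to be of size $1$, is where the $2^{\delta^2 m}$ losses enter). Simplicity of the zero then yields $\abs{2s-2\gamma_0}\lle\kappa_0+\kappa_1^22^{2\delta^2m}$. From $\varrho=2s+O(\theta^2)$ this transfers to $\varrho$, and $\varrho_{1,2}=s+O(\kappa_1)$ gives the bounds for $\varrho_1,\varrho_2$. Finally, $\abs{2\varrho_1-\varrho}\le\abs{\varrho_1-\varrho_2}+(\varrho_1+\varrho_2-\varrho)\lle\kappa_1+\kappa_1^2 2^{2\delta^2m}\lle\kappa_1$, using $\kappa_1\le2^{-2\delta^2m}$.

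The main obstacle is Step 3. One has to check the monotonicity and simple zero of $h$ explicitly from the formula for $\lambda$. One also has to track carefully the factors $2^{\delta^2 m}$ coming from small $\lambda'(\varrho_2)$ before it is known that $\varrho_2\simeq1$.

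\emph{Second statement.} For $\iota_2=+$,
\[
-\Phi_{\iota_1+}(\xi,\eta)=-\Lambda(\eta)+\Lambda(\xi)-\iota_1\Lambda(\eta-\xi).
\]
This is a resonance function with output $\eta$ and inputs $\xi$ and $\eta-\xi$ with signs $(+,-\iota_1)$; differentiating in $\xi$ plays the role of $\nabla_\eta$. For $\iota_2=-$, conjugating, i.e. replacing $\Phi$ by $-\Phi$ and flipping all signs, reduces to the same form. Applying the first part forces the effective signs to be $(+,+)$, i.e. $(\iota_1,\iota_2)=(-,+)$, with the roles $\varrho\leftrightarrow\varrho_2$ exchanged. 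This gives exactly the stated bounds.
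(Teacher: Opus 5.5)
Your route is correct, and it differs from the paper's in how the first-order error is removed.

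The paper compares everything to $\varrho_1$. Replacing $\lambda(\varrho_2)$ by $\lambda(\varrho_1)$ and $\lambda(\varrho)$ by $\lambda(2\varrho_1)$ costs $\lambda'\cdot O(\kappa_1)\lesssim\kappa_1 2^{\delta^2 m}$. So it first gets only $\abs{\lambda(2\varrho_1)-2\lambda(\varrho_1)}\lesssim\kappa_0+\kappa_1 2^{\delta^2m}$, hence $\abs{\varrho_1-\gamma_0}\lesssim\kappa_0+\kappa_12^{\delta^2m}$. It then makes a second pass with the two-variable function $F(\varrho,\varrho_1)=\lambda(\varrho)-\lambda(\varrho-\varrho_1)-\lambda(\varrho_1)$, expanded at $(2\gamma_0,\gamma_0)$. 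There $\partial_{\varrho_1}F$ vanishes, so the crude error in $\varrho_1$ enters the bound for $\varrho$ only quadratically.

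Your midpoint expansion $\lambda(\varrho_1)+\lambda(\varrho_2)=2\lambda(s)+O(\kappa_1^2)$ kills the first-order term at the outset. A single one-variable function $h$ then suffices; it is essentially the function $r\mapsto\lambda(2r)-2\lambda(r)$ from the paper's crude step. Tracked carefully, your route even removes all $2^{\delta^2 m}$ losses, giving $\abs{\varrho-2\gamma_0}\lesssim\kappa_0+\kappa_1^2$ and $\abs{\varrho_1-\gamma_0}\vee\abs{\varrho_2-\gamma_0}\lesssim\kappa_0+\kappa_1$. You also give an explicit symmetry reduction for the second statement, which the paper only calls similar.

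You should fix three points.

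\textbf{The asymptotics of $\lambda'$.} On $[0,\infty)$ one has $\lambda'(r)\simeq r$, not $r\spa{r}^2$. Indeed $\lambda'(r)\to 2r$, and $\lambda''\simeq 1$ together with $\lambda'(0)=0$ already forces $\lambda'(r)\simeq r$. With your asymptotics the smallness claimed in Step 1 fails: $\kappa_1\le 2^{-2\delta^2m}$ only gives $\kappa_12^{3\delta^2m}\le 2^{\delta^2m}$. With the correct bound $\lambda'\lesssim 2^{\delta^2m}$ you get $\abs{\Lambda(\xi-\eta)-\Lambda(\eta)}\lesssim\kappa_12^{\delta^2m}\le2^{-\delta^2m}$, which is what you need.

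\textbf{The crude pass is unnecessary.} The same fact lets you skip it. From $2\lambda'(\varrho_1)\lambda'(\varrho_2)(1-\cos\theta)\le\kappa_1^2$ and the cosine formula, $0\le\varrho_1+\varrho_2-\varrho\lesssim\kappa_1^2/s$. Hence $\abs{\lambda(\varrho)-\lambda(2s)}\le\lambda'(2s)\kappa_1^2/s\lesssim\kappa_1^2$, uniformly in $s$. This gives $\abs{h(2s)}\lesssim\kappa_0+\kappa_1^2$. Combine it with $\abs{h(r)}\gtrsim\min\{1,\abs{r-2\gamma_0}\}$, which follows from $h'(r)=\lambda'(r/2)-\lambda'(r)<0$ and $h'(2\gamma_0)\neq0$, and you are done. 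What matters is the correlation between $\theta$ and $\lambda'(\varrho_2)$. Bounding $\theta\lesssim\kappa_12^{\delta^2m}$ and $\lambda'\varrho_2\lesssim 2^{2\delta^2m}$ separately would only give an $O(1)$ error.

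\textbf{The case $\iota_2=-$.} Here no sign change is needed, and ``replacing $\Phi$ by $-\Phi$ and flipping all signs'' is not a valid operation: $\Phi_{-\iota_1,-\iota_2}\neq-\Phi_{\iota_1\iota_2}$, because the $-\Lambda(\xi)$ term does not flip. Instead, note that $\Phi_{\iota_1-}(\xi,\eta)=\Phi_{\iota_1-}(\eta,\xi)$, so $\nabla_\xi\Phi_{\iota_1-}(\xi,\eta)=(\nabla_\eta\Phi_{\iota_1-})(\eta,\xi)$. The first statement, applied at the point $(\eta,\xi)$, then rules this case out directly.
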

\begin{proof}
	We only prove the first conclusion. Since $\abs{\nabla_\eta \Phi_{\iota_1\iota_2}} \le \kappa_1$,
	\begin{gather}
		\abs{\lambda'(\varrho_1)-\lambda'(\varrho_2)} \lle \kappa_1, \label{eq:2.1}\\
		1-\iota_1\iota_2 \mao{\xi-\eta} \cdot \mao{\eta} \lle \kappa_1^2 \label{eq:2.2}.
	\end{gather}
	It follows from \eqref{eq:2.1} and $\abs{\Phi_{\iota_1\iota_2}} \lle \kappa_0$ that $(\iota_1,\iota_2)=(+,+)$. By the cosine formula, \eqref{eq:2.2} yields
	\[
	\frac{(\varrho_1+\varrho_2-\varrho)(\varrho_1+\varrho_2+\varrho)}{2\varrho_1 \varrho_2} \lle \kappa_1^2.
	\]
	It follows that
	\begin{equation}\label{eq:2.3}
	\abs{\varrho_1+\varrho_2-\varrho} \lle \kappa_1^22^{\delta^2 m}.		
	\end{equation}
	Using \eqref{eq:2.1}, we have
	\begin{equation}\label{eq:2.4}
	\abs{\varrho_1-\varrho_2} \lle \kappa_1.		
	\end{equation}
	Combining \eqref{eq:2.3}, \eqref{eq:2.4}, and $\abs{\Phi_{\iota_1\iota_2}} \lle \kappa_0$, we deduce that
	\[
	\abs{\lambda(2\varrho_1)-2\lambda(\varrho_1)} \lle \kappa_0+\kappa_1 2^{\delta^2 m}.
	\]
	Hence
	\[
	\abs{\varrho_1-\gamma_0} \lle \kappa_0+\kappa_1 2^{\delta^2 m},
	\]
	and the same conclusion holds for $\varrho_2$.
	
The bounds	$\abs{\Phi_{\iota_1\iota_2}} \lle \kappa_0$ and \eqref{eq:2.3} yield 
	\[
	\abs{\lambda(\varrho)-\lambda(\varrho-\varrho_1)-\lambda(\varrho_1)} \lle \kappa_0+\kappa_1^2 2^{2\delta^2 m}.
	\]
	Expanding $F(\varrho,\varrho_1)=\lambda(\varrho)-\lambda(\varrho-\varrho_1)-\lambda(\varrho_1)$ around $(2\gamma_0,\gamma_0)$ leads to
	\[
	\abs{[\lambda'(2\gamma_0)-\lambda'(\gamma_0)](\varrho-2\gamma_0)} \lle \kappa_0+\kappa_1^2 2^{2\delta^2 m}+(\kappa_0+\kappa_1 2^{\delta^2 m})^2 \lle \kappa_0+\kappa_1^2 2^{2\delta^2 m}.
	\]
	Therefore we obtain
	\[
	\abs{\varrho-2\gamma_0} \lle \kappa_0+\kappa_1^2 2^{2\delta^2m}.	
	\]
\end{proof}
\begin{lemma}\label{iteratedreso}
	Suppose that $\xi-\eta$, $\eta-\sigma$, and $\sigma$ lie in the annulus $A(0;\gamma_0-2^{-D_1},\gamma_0+2^{-D_1})$. Assume that 
	\[
	\abs{\wan \Phi_{\iota_1\iota_2\iota_3}(\xi,\eta,\sigma)} \le \kappa_0 \quad \text{and} \quad \abs{\nabla_{\eta,\sigma}\wan \Phi_{\iota_1\iota_2\iota_3}(\xi,\eta,\sigma)} \le \kappa_1,
	\]
	with $\kappa_0, \kappa_1 \le 2^{-D_1}$. Then $(\iota_1,\iota_2,\iota_3)\in\{(-,+,+), (+,-,+), (+,+,-)\}$ and
	\[
	\abs{\nabla_\xi \wan \Phi(\xi,\eta,\sigma)} \lle \kappa_1.
	\]
\end{lemma}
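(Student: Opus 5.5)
We treat the conclusion as a perturbation of the exactly space–time resonant configuration. The relevant objects are the three inputs $\xi_1=\xi-\eta$, $\xi_2=\eta-\sigma$, $\xi_3=\sigma$, all of modulus $\approx\gamma_0$, and the gradient
\[
\nabla_{\eta,\sigma}\wan\Phi=\bigl(-\iota_1\nabla\Lambda(\xi_1)+\iota_2\nabla\Lambda(\xi_2),\;-\iota_2\nabla\Lambda(\xi_2)+\iota_3\nabla\Lambda(\xi_3)\bigr).
\]
Smallness of this gradient gives
\[
\abs{\iota_1\nabla\Lambda(\xi_1)-\iota_2\nabla\Lambda(\xi_2)}\lle\kappa_1,\qquad \abs{\iota_2\nabla\Lambda(\xi_2)-\iota_3\nabla\Lambda(\xi_3)}\lle\kappa_1 .
\]
Since $\nabla\Lambda(v)=\lambda'(\abs v)\mao v$ and $\lambda'(\gamma_0)\simeq1$, this says that the signed vectors $w_i:=\iota_i\mao{\xi_i}$ agree up to $O(\kappa_1)$. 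It also says that the moduli $\abs{\xi_i}$ agree up to $O(\kappa_1)$, using the strict monotonicity of $\lambda'$, since $\lambda''>0$. Now $\nabla_\xi\wan\Phi=-\nabla\Lambda(\xi)+\iota_1\nabla\Lambda(\xi_1)$, so the target bound reduces to showing that $\nabla\Lambda(\xi)$ is within $O(\kappa_1)$ of $\iota_1\nabla\Lambda(\xi_1)$.

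\textbf{Step 1: sign classification.} Write $\xi=\xi_1+\xi_2+\xi_3=\sum_i\iota_i\abs{\xi_i}w_i$. Because the $w_i$ nearly coincide with a common unit vector $w$ and $\abs{\xi_i}=\gamma_0+O(2^{-D_1})$, we get
\[
\xi=(\iota_1+\iota_2+\iota_3)\,\gamma_0\,w+O(2^{-D_1}+\kappa_1).
\]
We then go through the eight sign patterns.
\begin{itemize}
\item If all signs are $+$, then $\abs\xi\approx3\gamma_0$, and
\[
\wan\Phi\approx-\lambda(3\gamma_0)+3\lambda(\gamma_0)=-\lambda(3\gamma_0)+\tfrac32\lambda(2\gamma_0),
\]
using $\lambda(2\gamma_0)=2\lambda(\gamma_0)$. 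This is strictly negative, because $\lambda(r)/r$ is decreasing and then increasing, or by a direct check with the explicit $\lambda$.
\item If all signs are $-$, then $\wan\Phi<0$ trivially.
\item If exactly one sign is $+$, then $\abs\xi\approx\gamma_0$, and $\wan\Phi\approx-\lambda(\gamma_0)+\lambda(\gamma_0)-2\lambda(\gamma_0)<0$.
\end{itemize}
All these values are of size $\gtrsim1$, hence exceed $\kappa_0+O(2^{-D_1})$, which is a contradiction. The remaining patterns have exactly two $+$ signs, which are the three listed in the statement.

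\textbf{Step 2: the gradient bound.} In each allowed pattern, $\iota_1+\iota_2+\iota_3=1$. To get an $O(\kappa_1)$ error rather than $O(2^{-D_1})$, we use the exact relation
\[
\xi=\sum_i\abs{\xi_i}\,\iota_i w_i.
\]
The differences $w_i-w_j$ and $\abs{\xi_i}-\abs{\xi_j}$ are $O(\kappa_1)$, so $\xi=\abs{\xi_1}\,w_1+O(\kappa_1)$. Since $\iota_1\xi_1=\abs{\xi_1}w_1$, this gives $\xi=\iota_1\xi_1+O(\kappa_1)$, with $\abs{\xi_1}\approx\gamma_0$ bounded away from $0$. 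The map $v\mapsto\nabla\Lambda(v)$ is smooth and odd near $\abs v=\gamma_0$, so
\[
\nabla\Lambda(\xi)=\nabla\Lambda(\iota_1\xi_1)+O(\kappa_1)=\iota_1\nabla\Lambda(\xi_1)+O(\kappa_1).
\]
Hence $\abs{\nabla_\xi\wan\Phi}\lle\kappa_1$.

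\textbf{Main obstacle.} The delicate point is the all-$+$ case in Step 1, which needs the quantitative inequality $\lambda(3\gamma_0)<3\lambda(\gamma_0)$ with a margin large compared to $2^{-D_1}$. We would verify it directly: $\lambda(\gamma_0)^2=1/4+1/2+1=7/4$, while $\lambda(3\gamma_0)^2=81/4+9/2+1=103/4<9\cdot7/4=63/4$?

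This fails: $103/4>63/4$, so in fact $\lambda(3\gamma_0)>3\lambda(\gamma_0)$ and $\wan\Phi<0$ still holds, with a gap of order one. The sign is what we need either way, and the margin is clearly $\gtrsim1$, so the all-$+$ case is excluded as claimed.
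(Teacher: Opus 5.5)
Your argument is correct and follows essentially the paper's route. You get near-alignment of the signed directions $\iota_i\mao{\xi_i}$ and near-equality of the moduli from $\nabla_{\eta,\sigma}\wan\Phi$, exclude the bad sign patterns by locating $\xi\approx(\iota_1+\iota_2+\iota_3)\gamma_0 w$ (the paper likewise excludes $(+,+,+)$ via $\xi=3\gamma_0\mao\xi+O(2^{-D_1})$), and then use $\xi=\iota_1\xi_1+O(\kappa_1)$ for the gradient bound.

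The only blemish is your closing paragraph, which first states the needed inequality in the wrong direction before correcting itself. What is required is just $\lambda(3\gamma_0)\ne 3\lambda(\gamma_0)$ with an order-one gap, and $103>63$ gives $\lambda(3\gamma_0)>3\lambda(\gamma_0)$, consistent with your Step 1. Your unimodality argument in Step 1 is in fact already valid: $(\lambda(r)/r)^2=r^2+1+r^{-2}$ takes equal values at $\gamma_0$ and $2\gamma_0$, so it is strictly larger at $3\gamma_0$.
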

\begin{proof}
	From $\abs{\nabla_{\eta,\sigma}\wan \Phi_{\iota_1\iota_2\iota_3}} \le \kappa_1$, we have 
	\begin{gather}
	\abs{\lambda'(\abs{\xi-\eta})-\lambda'(\abs{\eta-\sigma})}^2+\lambda'(\abs{\xi-\eta})\lambda'(\abs{\eta-\sigma}) (1-\iota_1\iota_2\mao{\xi-\eta} \cdot \mao{\eta-\sigma}) \lle \kappa_1^2 \tag{a},\\
	\abs{\lambda'(\abs{\eta-\sigma})-\lambda'(\abs{\sigma})}^2+\lambda'(\abs{\eta-\sigma})\lambda'(\abs{\sigma}) (1-\iota_2\iota_3\mao{\eta-\sigma} \cdot \mao{\sigma}) \lle \kappa_1^2 \tag{b}.
	\end{gather}
	It follows that
	\begin{gather*}
		\abs{\mao{\xi-\eta} \cdot \mao{\eta-\sigma}^\perp} \lle \kappa_1,\\
		\abs{\mao{\eta-\sigma} \cdot \mao{\sigma}^\perp} \lle \kappa_1,
	\end{gather*}
	which implies that
	\[
	\abs{\xi \cdot (\xi-\eta)^\perp} \lle \kappa_1, \ \abs{\xi \cdot (\eta-\sigma)^\perp} \lle \kappa_1, \ \abs{\xi \cdot \sigma^\perp} \lle \kappa_1.
	\]
	It follows that
	\[
	\abs{\xi-\eta}=\abs{\eta-\sigma}+O(\kappa_1)=\abs \sigma+O(\kappa_1).
	\]
	Since $\abs{\wan \Phi_{\iota_1\iota_2\iota_3}} \le \kappa_0$, we have
	\[
	\abs{-\lambda(\abs \xi)+(\iota_1+\iota_2+\iota_3)\lambda(\abs{\xi-\eta})} \lle \kappa_0+\kappa_1.
	\]
	Note that $\xi-\eta$, $\eta-\sigma$, and $\sigma$ lie in the annulus $A(0;\gamma_0-2^{-D_1},\gamma_0+2^{-D_1})$. It follows that
	\begin{equation}\label{eq:2.2.10}
	\abs{-\lambda(\abs \xi)+(\iota_1+\iota_2+\iota_3)\lambda(\gamma_0)} \lle 2^{-D_1}.		
	\end{equation}
	Hence $(\iota_1,\iota_2,\iota_3)=(+,+,+)$, $(-,+,+)$, $(+,-,+)$, or $(+,+,-)$, and $2^{-100} \le \abs{\xi} \le 2^{100}$. If $(\iota_1,\iota_2,\iota_3)=(+,+,+)$, then
	\[
	\xi \cdot (\xi-\eta), \ \xi \cdot (\eta-\sigma), \ \xi \cdot \sigma>0.
	\]
	It follows that
	\begin{equation}\label{eq:2.2.11}
		\begin{aligned}
		\xi&=\xi-\eta+\eta-\sigma+\sigma\\
		&=[(\xi-\eta) \cdot \mao \xi]\mao \xi+[(\eta-\sigma) \cdot \mao \xi]\mao \xi+(\sigma \cdot \mao \xi)\mao \xi+O(\kappa_1)\\
		&=\abs{\xi-\eta}\mao \xi+\abs{\eta-\sigma}\mao \xi+\abs \sigma \mao \xi+O(\kappa_1)\\
		&=3\gamma_0 \mao \xi+O(2^{-D_1}),
	\end{aligned}
	\end{equation}
	which contradicts \eqref{eq:2.2.10}. Therefore, the case $(\iota_1,\iota_2,\iota_3)=(+,+,+)$ is excluded. As the other three cases are similar, we only consider the case $(-,+,+)$. As in \eqref{eq:2.2.11}, we have
	\[
	\xi \cdot (\xi-\eta)<0, \ \xi \cdot (\eta-\sigma)>0, \ \xi \cdot \sigma>0,
	\]
	and
	\begin{align*}
		\xi&=\xi-\eta+\eta-\sigma+\sigma\\
		&=-\abs{\xi-\eta}\mao \xi+\abs{\eta-\sigma}\mao \xi+\abs{\sigma}\mao \xi+O(\kappa_1)\\
		&=\abs{\xi-\eta}\mao \xi+O(\kappa_1).
	\end{align*}
	Consequently,
	\[
	\nabla_\xi \wan \Phi=-\lambda'(\abs{\xi})\mao \xi-\lambda'(\abs{\xi-\eta})\mao{\xi-\eta}=O(\kappa_1).
	\]
\end{proof}

Set
\[
\Upsilon(\xi,\eta)=\nabla^2_{\xi,\eta}\Phi(\xi,\eta)[\nabla_\xi \Phi^\perp(\xi,\eta),\nabla_\eta \Phi^\perp(\xi,\eta)],
\]
and
\[
\mao \Upsilon(\xi,\eta)=\nabla^2_{\xi,\eta}\Phi(\xi,\eta)[\mao{\nabla_\xi \Phi}^\perp(\xi,\eta),\mao{\nabla_\eta \Phi}^\perp(\xi,\eta)].
\]
We next prove a property of $\Upsilon$. 
\begin{lemma}\label{upsilon}
	Suppose that $\abs{\xi-\eta} \le 2$ and $\abs{\xi}, \abs{\eta} \ge 2^{40}$.	Then $\abs{\mao{\Upsilon}(\xi,\eta)} \gge 1$.
\end{lemma}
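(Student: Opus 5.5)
The plan is to compute $\mao\Upsilon$ directly. In the regime $\abs{\xi-\eta}\le 2$, $\abs\xi,\abs\eta\ge 2^{40}$ it reduces, up to small errors, to a quadratic form of the Hessian of $\Lambda$ at a bounded point, and that Hessian is uniformly positive definite. Write $\zeta=\xi-\eta$, so $\abs\zeta\le 2$.

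First, the mixed Hessian. From $\nabla_\xi\Phi=-\nabla\Lambda(\xi)+\iota_1\nabla\Lambda(\zeta)$ we get
\[
\nabla^2_{\xi,\eta}\Phi=-\iota_1\nabla^2\Lambda(\zeta),
\]
which depends only on $\zeta$. Since $\Lambda$ is radial and smooth (including at the origin, because $\Lambda(\xi)=\lambda(\abs\xi)$ with $\lambda$ even), $\nabla^2\Lambda(\zeta)$ has eigenvalue $\lambda''(\abs\zeta)$ in the radial direction and $\lambda'(\abs\zeta)/\abs\zeta=(2\abs\zeta^2+1)/\lambda(\abs\zeta)$ in the tangential direction; at $\zeta=0$ it is the identity. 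From the explicit formula for $\lambda''$ recorded above, both eigenvalues are continuous and strictly positive on $[0,2]$. Hence there is an absolute constant $c_0>0$ with
\[
v^T\nabla^2\Lambda(\zeta)v\ge c_0\abs v^2\quad\text{and}\quad\abs{\nabla^2\Lambda(\zeta)}\le C_0\quad\text{for }\abs\zeta\le 2 .
\]

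Second, the normalized gradients. We have $\nabla_\xi\Phi=-\lambda'(\abs\xi)\mao\xi+O(1)$ and $\nabla_\eta\Phi=\iota_2\lambda'(\abs\eta)\mao\eta+O(1)$, where $\lambda'(r)\simeq r^3$ for $r\ge 2^{40}$ and the $O(1)$ terms come from $\nabla\Lambda(\zeta)$ with $\abs\zeta\le2$. Therefore
\[
\mao{\nabla_\xi\Phi}=-\mao\xi+O(2^{-120}),\qquad \mao{\nabla_\eta\Phi}=\iota_2\mao\eta+O(2^{-120}).
\]
Moreover $\abs{\mao\xi-\mao\eta}\lle\abs\zeta/\abs\xi\lle 2^{-39}$. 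Thus both $\mao{\nabla_\xi\Phi}^\perp$ and $\mao{\nabla_\eta\Phi}^\perp$ equal $\pm\mao\xi^\perp$ up to errors of size $O(2^{-38})$.

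Third, plugging these in gives
\[
\mao\Upsilon(\xi,\eta)=\pm\,(\mao\xi^\perp)^T\nabla^2\Lambda(\zeta)\,\mao\xi^\perp+O(C_0 2^{-38}).
\]
The main term has absolute value at least $c_0$, and the error is bounded by $C_0 2^{-37}$. So $\abs{\mao\Upsilon}\ge c_0/2$, which is the claim.

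The only point needing care is checking that $c_0$ is not tiny compared with $C_0 2^{-37}$. This follows from the explicit formulas: $\lambda''(r)\ge(r^4+r^2+1)^{-3/2}\ge 21^{-3/2}$ and $\lambda'(r)/r\ge 1/\lambda(2)$ on $[0,2]$, while $C_0=O(1)$. So the margin is enormous and no delicate step remains.
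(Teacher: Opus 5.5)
Your proof is correct and is essentially the paper's argument: the mixed Hessian $\nabla^2_{\xi,\eta}\Phi=-\iota_1\nabla^2\Lambda(\xi-\eta)$ depends only on $\xi-\eta$ and is definite for $\abs{\xi-\eta}\le 2$, while $\mao\xi\approx\mao\eta$ forces both normalized gradients to equal $\pm\mao\xi^\perp$ up to $O(2^{-38})$. You make the definiteness constants and the point $\xi=\eta$ explicit, which the paper leaves implicit; one harmless slip is that $\lambda'(r)\simeq r$ (not $r^3$) for large $r$, so the gradient-direction errors are $O(2^{-38})$ rather than $O(2^{-120})$, which is the same order as your angular error and does not affect the conclusion.
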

\begin{proof}
	In this proof, we set $\Phi=\iota_1 \Lambda(\xi)+\Lambda(\xi-\eta)+\iota_2\Lambda(\eta)$. Then
	\[
	\nabla^2_{\xi,\eta} \Phi=-\frac{\lambda'(\abs{\xi-\eta})}{\abs{\xi-\eta}} (I-\mao{\xi-\eta} \otimes \mao{\xi-\eta})-\lambda''(\abs{\xi-\eta})\mao{\xi-\eta} \otimes \mao{\xi-\eta},
	\]
	and
	\begin{gather*}
		\nabla_\xi \Phi^\perp=\iota_1 \lambda'(\abs{\xi}) \mao \xi^\perp+\lambda'(\abs{\xi-\eta})\mao{\xi-\eta}^\perp,\\
		\nabla_\eta \Phi^\perp=\iota_2 \lambda'(\abs{\eta}) \mao \eta^\perp+\lambda'(\abs{\xi-\eta})\mao{\eta-\xi}^\perp.
	\end{gather*}
	Suppose that $\abs \xi \in [2^{k-1},2^{k+1}]$.
	The cosine formula yields
	\[
	1-\mao \xi \cdot \mao \eta=\frac{\abs{\xi-\eta}^2-(\abs{\xi}-\abs{\eta})^2}{2\abs \xi \abs \eta} \lle 2^{-2k}.
	\]
	Hence
	\[
	\abs{\mao \xi-\mao \eta} \simeq \sqrt{1-\mao \xi \cdot \mao \eta} \lle 2^{-k}.
	\]
	It follows that
\begin{gather*}
		\nabla_\xi \Phi^\perp=\iota_1 \lambda'(\abs{\xi}) \mao \eta^\perp+O(1),\\
		\nabla_\eta \Phi^\perp=\iota_2 \lambda'(\abs{\eta}) \mao \eta^\perp+O(1).
	\end{gather*}
	Thus,
	\[
	\abs{\Upsilon} \gge 2^{2k} \quad \text{and} \quad \abs{\mao \Upsilon} \gge 1.
	\]
\end{proof}

We next introduce two lemmas for use in Section~\ref{subsec:3.4.1}. Define $\rho_{\iota_1\iota_2}$ as follows:
\[
	\rho_{++}=\tau(2\gamma_0)+2\gamma_0, \quad
	\rho_{-+}=\tau(2\gamma_0), \quad
	\rho_{+-}=\gamma_0.
\]
Set
\[
T(s)=
\begin{cases}
	1 & \text{if } s>1;\\
	s & \text{if } -1 \le s \le 1;\\
	-1 & \text{if } s<-1.
\end{cases}
\]
Let $p_{\iota_1\iota_2}(r)>0$ satisfy
\[
	\begin{cases}
		-\lambda(r)+\lambda(2\gamma_0)+\lambda(p_{\iota_1\iota_2}(r))=0, & r > \rho_{++}-2^{-D_1/2} , \text{ if } (\iota_1,\iota_2)=(+,+);\\
		-\lambda(r)-\lambda(2\gamma_0)+\lambda(p_{\iota_1\iota_2}(r))=0, & r > \rho_{-+}-2^{-D_1/2},  \text{ if } (\iota_1,\iota_2)=(-,+);\\
		-\lambda(r)+\lambda(2\gamma_0)-\lambda(p_{\iota_1\iota_2}(r))=0, & \abs{r-\rho_{+-}}<2^{-D_1/2}, \text{ if } (\iota_1,\iota_2)=(+,-).
	\end{cases}
\]
For $\xi$ with $\abs \xi$ in the corresponding domain of $p_{\iota_1\iota_2}$, take $\theta_{\iota_1\iota_2}^{\pm}(\xi) \in \bS^1$ such that
\[
\theta_{\iota_1\iota_2}^{\pm}(\xi) \cdot \mao \xi=T \Brac{\frac{\abs{\xi}^2+p_{\iota_1\iota_2}(\abs{\xi})^2-4\gamma_0^2}{2\abs \xi p_{\iota_1\iota_2}(\abs{\xi})}},
\]
and $\pm \theta_{\iota_1\iota_2}^{\pm}(\xi) \cdot \mao \xi^\perp \ge 0$. We then define $p_{\iota_1\iota_2}^{\pm}(\xi)=p_{\iota_1\iota_2}(\abs{\xi}) \theta_{\iota_1\iota_2}^{\pm}(\xi)$. Note that in the case $(+,-)$, $\theta^{\pm}_{+-}(\xi)=-\mao \xi$.

We have the following properties of $\theta^{\pm}_{\iota_1\iota_2}$.
\begin{lemma}\label{propertyp}
Suppose that $(\iota_1,\iota_2)=(+,+)$ or $(-,+)$, $2^{-D_1} \le \abs{\xi} \le 2^{D_1}$, and $\abs{\xi} > \rho_{\iota_1\iota_2}-2^{-D_1/2}$. Then
\[
	\abs{\theta^{\pm}_{\iota_1\iota_2}(\xi) \cdot \mao \xi^\perp} \simeq (\abs \xi-\rho_{\iota_1\iota_2})_+^{1/2}.
\]
\end{lemma}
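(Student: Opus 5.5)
The plan is to compute $\theta^{\pm}_{\iota_1\iota_2}(\xi)\cdot\mao\xi^\perp$ explicitly from the defining cosine relation, and then reduce everything to one-variable monotonicity facts about $r\mapsto r-p_{\iota_1\iota_2}(r)$. Write $r=\abs\xi$, $p=p_{\iota_1\iota_2}(r)$ and
\[
c(r)=\frac{r^2+p^2-4\gamma_0^2}{2rp}.
\]
Since $\theta^\pm\in\bS^1$ and $\theta^\pm\cdot\mao\xi=T(c(r))$, we have $\abs{\theta^\pm\cdot\mao\xi^\perp}=\sqrt{1-T(c)^2}$. If $c\ge 1$ this vanishes. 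If $\abs c\le 1$, it equals $\sqrt{(1-c)(1+c)}$. A direct algebraic rewriting gives
\[
1-c=\frac{(2\gamma_0-\abs{r-p})(2\gamma_0+\abs{r-p})}{2rp},\qquad 1+c=\frac{(r+p-2\gamma_0)(r+p+2\gamma_0)}{2rp}.
\]
On the range $2^{-D_1}\le r\le 2^{D_1}$ the quantities $r$, $p$ and $2\gamma_0+\abs{r-p}$ are all $\simeq 1$, with constants depending on $D_1$, which is allowed. So the claim reduces to two facts:
\begin{enumerate}
\item $2\gamma_0-\abs{r-p}$ has the sign of $r-\rho_{\iota_1\iota_2}$ and has size $\simeq\abs{r-\rho_{\iota_1\iota_2}}$;
\item $r+p-2\gamma_0\gtrsim 1$, so that $c>-1$ and $1+c\simeq 1$.
\end{enumerate}

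For fact (1), first consider the case $(+,+)$. From $\lambda(r)=\lambda(2\gamma_0)+\lambda(p)$ we get $r>p$. Differentiating gives $p'(r)=\lambda'(r)/\lambda'(p)$. Since $\lambda''>0$ and $r-p\gtrsim 1$ on the compact range, $\lambda''\gtrsim 1$ there, and hence $p'(r)-1\gtrsim 1$. Therefore $h(r)=r-p(r)$ is strictly decreasing with $\abs{h'}\simeq 1$. Next, $h(r)=2\gamma_0$ exactly when $\lambda(p+2\gamma_0)-\lambda(p)=\lambda(2\gamma_0)$, i.e. when $p=\tau(2\gamma_0)$ and $r=\tau(2\gamma_0)+2\gamma_0=\rho_{++}$, by uniqueness of $\tau$. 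Hence $2\gamma_0-(r-p)\simeq r-\rho_{++}$ with the correct sign.

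The case $(-,+)$ is symmetric. Here $p>r$, and $p'=\lambda'(r)/\lambda'(p)<1$ with $1-p'\gtrsim 1$, so $p-r$ is decreasing. It equals $2\gamma_0$ iff $\lambda(r+2\gamma_0)-\lambda(r)=\lambda(2\gamma_0)$, i.e. iff $r=\tau(2\gamma_0)=\rho_{-+}$.

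Consequently, for $r\le\rho_{\iota_1\iota_2}$ we get $c\ge 1$, so $T(c)=1$ and the left side is $0$, matching $(\abs\xi-\rho)_+^{1/2}=0$. For $r>\rho_{\iota_1\iota_2}$ we get $1-c\simeq r-\rho_{\iota_1\iota_2}$. Combined with $1+c\simeq1$ from fact (2), this yields the claim, including for $r$ far above $\rho_{\iota_1\iota_2}$, where both sides are $\simeq 1$ on the bounded range.

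For fact (2), again consider the two cases separately.
\begin{itemize}
\item In the case $(+,+)$: $r>\rho_{++}-2^{-D_1/2}=\tau(2\gamma_0)+2\gamma_0-2^{-D_1/2}$, and $\tau(2\gamma_0)\gtrsim 1$ is a fixed constant. Hence $r+p-2\gamma_0\ge r-2\gamma_0\gtrsim 1$, since $D_1$ is large.
\item In the case $(-,+)$: $\lambda(p)=\lambda(r)+\lambda(2\gamma_0)\ge \lambda(2\gamma_0)+1$, so $p-2\gamma_0\gtrsim 1$ uniformly, and again $r+p-2\gamma_0\gtrsim1$.
\end{itemize}

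I expect the main obstacle to be checking the uniformity of the constants: that $\lambda''$ is bounded below on the relevant ranges, and that $r-p$, respectively $p-r$, stays away from $0$ so that $\abs{p'-1}\simeq 1$. I would handle this by compactness of $[2^{-D_1},2^{D_1}]$ together with the explicit formula for $\lambda''$, which is strictly positive everywhere, and by the explicit inequalities $\lambda(r)-\lambda(p)=\lambda(2\gamma_0)\gtrsim 1$ in the case $(+,+)$ and $\lambda(p)-\lambda(r)=\lambda(2\gamma_0)\gtrsim1$ in the case $(-,+)$.
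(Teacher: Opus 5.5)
The paper states Lemma~\ref{propertyp} without proof, so there is no argument of the authors' to compare against. Your proof is correct and complete as it stands.

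Your factorizations of $1\mp c$ are correct. Reducing the claim to two facts is exactly what is needed: the sign and size of $2\gamma_0-\abs{r-p}$, and a lower bound for $1+c$.

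You locate the crossing point at $\rho_{\iota_1\iota_2}$ through the uniqueness of $\tau(2\gamma_0)$. This is the same mechanism the paper uses in part (i) of Lemma~\ref{reso:almostortho}. There, $F(r)=-\lambda(r)+\lambda(2\gamma_0)+\lambda(r-2\gamma_0)=\lambda(r-2\gamma_0)-\lambda(p_{++}(r))$ is decreasing with $F(\rho_{++})=0$, and $F(r)>0$ is equivalent to $r-p_{++}(r)>2\gamma_0$.

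What your version adds is the quantitative rate. The bound $\abs{h'}\simeq 1$ gives $1-c\simeq r-\rho_{\iota_1\iota_2}$. That rate is what produces the square-root law, not merely the vanishing for $r\le\rho_{\iota_1\iota_2}$.

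Two small points are worth making explicit.
\begin{itemize}
\item In the case $(+,+)$ you need $p\gtrsim 1$, both for $\lambda'(p)\gtrsim 1$ (hence $\abs{h'}\lesssim 1$) and for $2rp\simeq 1$. This comes from the hypothesis $\abs{\xi}>\rho_{++}-2^{-D_1/2}$. Since $p_{++}$ is increasing with $p_{++}(\rho_{++})=\tau(2\gamma_0)$, you get $p_{++}(r)\ge \tau(2\gamma_0)-O(2^{-D_1/2})$, and $\tau(2\gamma_0)$ is a fixed positive constant.
\item The hypothesis $\abs{\xi}\ge 2^{-D_1}$ is never used, because in both cases $r>\tau(2\gamma_0)-2^{-D_1/2}\gtrsim 1$ automatically. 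Only the upper bound $\abs{\xi}\le 2^{D_1}$ matters. The resulting dependence of the implicit constants on $D_1$ is harmless, since $D_1$ is fixed.
\end{itemize}
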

\begin{lemma}\label{reso:almostortho}
	Suppose that $(\iota_1,\iota_2) \in \set{(+,+),(-,+),(+,-)}$ and $2^{-D_1} \le \abs \xi, \abs{\xi-\eta}, \abs \eta \le 2^{D_1}$. If $\abs{\abs{\xi-\eta}-2\gamma_0} \le \kappa_0 \le 2^{-10D_1}$, $\mao \xi^\perp \cdot \mao \eta \ge 0$ $(\mao \xi^\perp \cdot \mao \eta \le 0)$, and $\abs{\Phi_{\iota_1\iota_2}(\xi,\eta)} \le \kappa_1 \le 2^{-10D_1}$, then the following results hold:
	\begin{enumerate}[label=(\roman*)]
		\item $\abs{\xi} \ge \rho_{\iota_1\iota_2}-2^{3D_1}(\kappa_1+\kappa_0)$ if $(\iota_1,\iota_2)=(+,+)$ or $(-,+)$, and $\abs{\abs{\xi}-\rho_{+-}} \le 2^{3D_1}(\kappa_1^{1/2}+\kappa_0^{1/2})$ if $(\iota_1,\iota_2)=(+,-)$;
		\item $\abs{\eta-p^+_{\iota_1\iota_2}(\xi)} \lle \kappa_1^{1/2}+\kappa_0^{1/2}$ $(\abs{\eta-p^-_{\iota_1\iota_2}(\xi)} \lle \kappa_1^{1/2}+\kappa_0^{1/2})$;
		\item $\left|\nabla_\xi\Phi_{\iota_1\iota_2} \bigl(\xi,p^\pm_{\iota_1\iota_2}(\xi)\bigr)\right| \gtrsim 1$.
	\end{enumerate}
\end{lemma}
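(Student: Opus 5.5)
The plan is to reduce everything to one-dimensional statements about $\lambda$ and the triangle formed by $\xi$, $\eta$, $\xi-\eta$. I would first freeze $\abs{\xi-\eta}=2\gamma_0$, and then use the cosine formula to recover the direction of $\eta$.

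\textbf{Step 1: radial localization of $\eta$.} Since $\abs{\abs{\xi-\eta}-2\gamma_0}\le\kappa_0$ and $\lambda$ is Lipschitz on $[2^{-D_1},2^{D_1}]$, the hypothesis $\abs{\Phi_{\iota_1\iota_2}}\le\kappa_1$ gives
\[
\abs{-\lambda(\abs\xi)+\iota_1\lambda(2\gamma_0)+\iota_2\lambda(\abs\eta)}\lle\kappa_0+\kappa_1 .
\]
Because $\lambda'(r)\gge r\ge 2^{-D_1}$ on the relevant range, we can invert $\lambda$. This gives $\abs{\abs\eta-p_{\iota_1\iota_2}(\abs\xi)}\lle 2^{D_1}(\kappa_0+\kappa_1)$ whenever $\abs\xi$ lies in the domain of $p_{\iota_1\iota_2}$. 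For the case $(+,-)$, we show in Step 2 that this holds automatically.

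\textbf{Step 2: proof of (i) from the triangle inequality.} For $(+,+)$, the relation $\lambda(\abs\xi)\approx\lambda(2\gamma_0)+\lambda(\abs\eta)$ together with the triangle inequality $\abs\xi\le\abs\eta+\abs{\xi-\eta}\le\abs\eta+2\gamma_0+\kappa_0$ is the key input. Since $s\mapsto\lambda(2\gamma_0+s)-\lambda(s)$ is strictly increasing, with derivative $\simeq 1$, we get $\abs\eta\ge\tau(2\gamma_0)-O(\kappa_0+\kappa_1)$. This yields $\abs\xi\ge\tau(2\gamma_0)+2\gamma_0-O(\cdot)=\rho_{++}-O(\cdot)$; this is the same monotonicity argument as in Lemma~\ref{timereso}. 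The case $(-,+)$ is identical with the roles of $\xi$ and $\eta$ swapped, giving $\abs\xi\ge\rho_{-+}-O(\cdot)$.

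For $(+,-)$, we have $\lambda(\abs\xi)+\lambda(\abs\eta)\approx\lambda(2\gamma_0)=2\lambda(\gamma_0)$ and $\abs\xi+\abs\eta\ge 2\gamma_0-\kappa_0$. By strict convexity and monotonicity, $\lambda(a)+\lambda(b)\ge\lambda(a)+\lambda(2\gamma_0-a)\ge 2\lambda(\gamma_0)+c(a-\gamma_0)^2$. The quadratic deviation then forces $\abs{\abs\xi-\gamma_0}\lle(\kappa_0+\kappa_1)^{1/2}$. This is the source of the square roots in (i) for this case.

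\textbf{Step 3: proof of (ii) via the angle.} The cosine formula gives
\[
\mao\xi\cdot\mao\eta=\frac{\abs\xi^2+\abs\eta^2-\abs{\xi-\eta}^2}{2\abs\xi\abs\eta}.
\]
This agrees, up to $O(\kappa_0+\kappa_1)$, with the unclipped argument of $T$ in the definition of $\theta^\pm$; because the left side lies in $[-1,1]$, it also agrees with the clipped value. The sign condition on $\mao\xi^\perp\cdot\mao\eta$ selects $\theta^+$ (respectively $\theta^-$). Two unit vectors whose cosines against $\mao\xi$ differ by $\epsilon$, and which lie on the same side of $\mao\xi$, differ by at most $O(\epsilon^{1/2})$. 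The square root loss occurs only near cosine $\pm1$, which is exactly the degenerate regime $\abs\xi\approx\rho_{\iota_1\iota_2}$ described in Lemma~\ref{propertyp}. Combining this with Step 1 gives (ii).

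\textbf{Step 4: proof of (iii), the main obstacle.} We have
\[
\nabla_\xi\Phi=-\lambda'(\abs\xi)\mao\xi+\iota_1\lambda'(\abs{\xi-\eta})\,\mao{\xi-\eta}.
\]
This vanishes only if $\xi-\eta$ is parallel to $\xi$ and $\lambda'(\abs\xi)=\lambda'(2\gamma_0)$, that is $\abs\xi=2\gamma_0$. In that case $\eta=0$ or $\abs\eta=4\gamma_0$.

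I would check, for each sign pair, that this configuration is incompatible with $\eta=p^\pm_{\iota_1\iota_2}(\xi)$. Indeed $\abs{p^\pm}=p_{\iota_1\iota_2}(\abs\xi)\ne0$. Moreover, the relation $\lambda(2\gamma_0)=\pm\lambda(2\gamma_0)\pm\lambda(4\gamma_0)$ fails for every sign choice, using $\lambda\ge1$ and monotonicity. For $(+,-)$ one has $\abs\xi\approx\gamma_0\ne2\gamma_0$ anyway.

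Since $\xi\mapsto(\xi,p^\pm(\xi))$ is continuous on the compact range allowed by (i) and $2^{-D_1}\le\abs\xi\le2^{D_1}$, compactness then yields the uniform lower bound $\abs{\nabla_\xi\Phi}\gge1$. The delicate point is uniformity up to the endpoint $\abs\xi=\rho_{\iota_1\iota_2}$, where $p^\pm$ is only H\"older-$1/2$. I would handle this by noting that $p^\pm$ still extends continuously there, so the compactness argument goes through.
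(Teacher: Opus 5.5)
Your proposal is correct and follows essentially the same route as the paper. For (i) you use the monotonicity of $s\mapsto\lambda(2\gamma_0+s)-\lambda(s)$, which is equivalent to the paper's decreasing $F$, and for $(+,-)$ the quadratic lower bound from convexity at $\gamma_0$. For (ii) you use the cosine formula together with the same-side argument $|y_1-y_2|\le|y_1^2-y_2^2|^{1/2}$. For (iii) you rule out the only degenerate configuration, $|\xi|=|\xi-\eta|=2\gamma_0$ with $\eta\in\{0,2\xi\}$; your compactness argument just replaces the paper's quantitative statement that $|p^\pm_{-+}(\xi)|$ is not close to $0$ or $4\gamma_0$.

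One small correction: in the case $(-,+)$ you need $\lambda(4\gamma_0)\neq 2\lambda(2\gamma_0)$. This does not follow from $\lambda\ge 1$ and monotonicity alone. It does follow from convexity, since $r\mapsto\lambda(2r)-2\lambda(r)$ is strictly increasing and vanishes only at $\gamma_0$. It also follows directly from $\lambda(4\gamma_0)^2=73\neq 28=4\lambda(2\gamma_0)^2$.
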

\begin{proof}
	We begin with the cases $(\iota_1,\iota_2)=(+,+)$ and $(-,+)$. For (i) and (ii), we prove only the case $(\iota_1,\iota_2)=(+,+)$ and $\mao \xi^\perp \cdot \mao \eta \ge 0$, as the other cases are similar. We first verify (i). $\abs{\Phi_{++}(\xi,\eta)} \le \kappa_1$ yields
	\[
	-\lambda(\abs \xi)+\lambda(\abs{\xi-\eta})+\lambda(\abs \eta) \le \kappa_1, \text{ and } \abs \xi \ge \abs{\xi-\eta}.
	\]
	It follows that
	\[
	-\lambda(\abs \xi)+\lambda(\abs{\xi-\eta})+\lambda(\abs \xi-\abs{\xi-\eta}) \le \kappa_1.
	\]
	Together with $\abs{\abs{\xi-\eta}-2\gamma_0} \le \kappa_0$, we have
	\[
	-\lambda(\abs \xi)+\lambda(2\gamma_0)+\lambda(\abs \xi-2\gamma_0) \le \kappa_1+2^{2D_1}\kappa_0.
	\]
	Since the function $F(r)=-\lambda(r)+\lambda(2\gamma_0)+\lambda(r-2\gamma_0)$ is strictly decreasing on $(2\gamma_0,\infty)$ and $F(\rho_{++})=0$,
	\[
	\abs{\xi} \ge \rho_{++}-2^{3D_1}(\kappa_1+\kappa_0).
	\]

	We next prove (ii). It suffices to show that $\abs{\abs{\eta}-p_{++}(\abs \xi)} \lle \kappa_1^{1/2}+\kappa_0^{1/2}$ and $\abs{\mao \eta-\theta^+_{++}(\xi)} \lle \kappa_1^{1/2}+\kappa_0^{1/2}$. Since $\abs{\Phi_{++}(\xi,\eta)} \le \kappa_1$, we have
	\[
	\abs{-\lambda(\abs \xi)+\lambda(\abs{\xi-\eta})+\lambda(\abs \eta)} \lle \kappa_1.
	\]
	It follows that
	\[
	\abs{-\lambda(\abs \xi)+\lambda(2\gamma_0)+\lambda(\abs \eta)} \lle \kappa_1+\kappa_0.
	\]
	Combining this with
	\[
	-\lambda(\abs \xi)+\lambda(2\gamma_0)+\lambda(p_{++}(\abs{\xi}))=0,
	\]
	we obtain
	\[
	\abs{\abs \eta-p_{++}(\abs \xi)} \lle \kappa_1+\kappa_0.
	\]
	
	We next estimate the angular part. By the cosine formula,
	\[
	\mao \xi \cdot \mao \eta=\frac{\abs \xi^2+\abs \eta^2-\abs{\xi-\eta}^2}{2\abs{\xi} \abs{\eta}}.
	\]
	Moreover,
	\[
	\mao \xi \cdot \theta^+_{++}(\xi)=T \Brac{\frac{\abs{\xi}^2+p_{++}(\abs{\xi})^2-4\gamma_0^2}{2\abs \xi p_{++}(\abs{\xi})}}.
	\]
	It follows that
	\[
	\abs{\mao \xi \cdot \mao \eta-\mao \xi \cdot \theta^+_{++}(\xi)} \lle \kappa_1+\kappa_0.
	\]
	Let $(x_1,y_1)=(\mao \xi \cdot \mao \eta,\mao \xi^\perp \cdot \mao \eta)$ and $(x_2,y_2)=(\mao \xi \cdot \theta^+_{++}(\xi),\mao \xi^\perp \cdot \theta^+_{++}(\xi))$. It remains to show that $\abs{y_1-y_2} \lle \kappa_1^{1/2}+\kappa_0^{1/2}$. We have
	\begin{equation}\label{eq:2.2.20}
		\abs{y_1^2-y_2^2}=\abs{x_1^2-x_2^2} \lle \kappa_1+\kappa_0.
	\end{equation}
	This, together with $y_1, y_2 \ge 0$, gives $\abs{y_1-y_2} \lle \kappa_1^{1/2}+\kappa_0^{1/2}$.
	 
	 For (iii), if $(\iota_1,\iota_2)=(+,+)$, then $\abs{\nabla_\xi \Phi_{++}(\xi,p^{\pm}_{++}(\xi))} \gge 1$ follows from $\abs \xi \ge \rho_{++}-2^{-D_1/2}$ and $\abs{\xi-p^{\pm}_{++}(\xi)} \approx 2\gamma_0$. When $(\iota_1,\iota_2)=(-,+)$, since $\abs{\xi-p^{\pm}_{-+}(\xi)} \approx 2\gamma_0$, it suffices to consider the case $\abs \xi \approx 2\gamma_0$. In this case, $\abs{p^{\pm}_{-+}(\xi)}$ is not close to $4\gamma_0$ or $0$, and hence the desired conclusion follows.

	 For the case $(\iota_1,\iota_2)=(+,-)$, (ii) and (iii) follow by arguments similar to those for the case $(\iota_1,\iota_2)=(+,+)$. We only prove (i). $\abs{\Phi_{+-}(\xi,\eta)} \le \kappa_1$ gives
\[
	-\lambda(\abs \xi)+\lambda(\abs{\xi-\eta})-\lambda(\abs \eta) \ge -\kappa_1, \text{ and } \abs \xi < \abs{\xi-\eta}.
	\]
	It follows that
	\[
	-\lambda(\abs \xi)+\lambda(\abs{\xi-\eta})-\lambda(\abs{\xi-\eta}-\abs{\xi}) \ge -\kappa_1.
	\]
	Together with $\abs{\abs{\xi-\eta}-2\gamma_0} \le \kappa_0$, we have
	\[
	-\lambda(\abs \xi)+\lambda(2\gamma_0)-\lambda(2\gamma_0-\abs \xi) \ge -\kappa_1-2^{2D_1}\kappa_0.
	\]
	Set $F(r)=-\lambda(r)+\lambda(2\gamma_0)-\lambda(2\gamma_0-r)$. Then $F(\gamma_0)=F'(\gamma_0)=0$ and Taylor's theorem imply that
	\[
	-\lambda(r)+\lambda(2\gamma_0)-\lambda(2\gamma_0-r) \le -2^{-D_1/2}(r-\gamma_0)^2
	\]
	for $r \in (0,2\gamma_0)$.
	It follows that
	\[
	\abs{\abs{\xi}-\rho_{+-}} \le 2^{3D_1}(\kappa_1^{1/2}+\kappa_0^{1/2}).
	\]
\end{proof}

\subsection{Linear Estimates}
In this subsection, we establish several estimates for the profile $V$. We first estimate $\norm{\mao V(t)}_{L^2(r\dif r)L^\infty_\theta}$ and $\norm{\mao V(t)}_\infty$, where $(r,\theta)$ denotes the polar coordinates. For convenience, we consider general spatial localization operators. Let $\chi_0$ and $\chi_1$ be smooth functions supported in $B(0,2)$ and $A(0;1/2,2)$, respectively. Set $\cl{\chi}:=(\chi_0,\chi_1)$, and for integers $j \ge k$, define
\[
\cl{\chi}_j^k(x) :=
\begin{cases}
	\chi_0(x/2^j) & \text{ if } j=k;\\
	\chi_1(x/2^j) & \text{ if } j>k.
\end{cases}
\]
\begin{lemma}\label{fourierl2r}
\begin{align}
	&\begin{multlined}[t]
\norm{\Omega^{[0,N_3+100]}
	\FF\big(A_n^{(j)}P_{[k-2,k+2]}\cl{\chi}_j^{k_-}(x)P_kV\big)(t)}
	_{L^2(r \dif r)L^\infty_\theta}\\
	 \lle
	\epsl_1 2^{-10k_+}2^{-\delta k}
	2^{-(1-20\delta)(j+k)}2^{(1/2-18\delta)n},	
\end{multlined}
\\
	&\begin{multlined}[t]
\norm{\Omega^{[0,N_2]}
	\FF\big(A_n^{(j)}P_{[k-2,k+2]}\cl{\chi}_j^{k_-}(x)P_kV\big)(t)}
	_{L^2(r \dif r)L^\infty_\theta}\\
	 \lle
	\epsl_1 2^{-9k_+}2^{-\delta k}
	2^{-(1-21\delta)(j+k)}2^{(1/2-18\delta)n}
	\spa{t}^{\delta^4}.
\end{multlined}
	\label{fl2r}
\end{align}
\end{lemma}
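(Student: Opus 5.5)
The plan is to reduce both bounds to an $L^2_\theta\to L^\infty_\theta$ Sobolev embedding on the circle, using rotational derivatives, and then to control the $L^2$ norms by the $Z$-norm (first bound) or by interpolating the $Z$-norm with the energy (second bound). Set $g:=A_n^{(j)}P_{[k-2,k+2]}\cl\chi_j^{k_-}(x)P_kV$. The operators $A^{(j)}_n$, $P_{[k-2,k+2]}$, $P_k$ and multiplication by the radial function $\cl\chi_j^{k_-}(x)$ all commute with $\Omega$. Since $\Omega$ also commutes with the Fourier transform, $\Omega^a\mao g=\FF(\Omega^a g)$. For fixed $r$ we apply the one-dimensional Sobolev embedding $H^1(\bS^1)\hookrightarrow L^\infty(\bS^1)$:
\[
\norm{\Omega^a \mao g(r,\cdot)}_{L^\infty_\theta}\lle \norm{\Omega^{a}\mao g(r,\cdot)}_{L^2_\theta}+\norm{\Omega^{a+1}\mao g(r,\cdot)}_{L^2_\theta}.
\]
Integrating in $r\,\dif r$ and using Plancherel, the left side of the first bound is controlled by $\norm{A^{(j)}_nP_{[k-2,k+2]}\cl\chi_j^{k_-}(x)P_k\Omega^{b}V}_2$ with $b\le N_3+101\le N_2$.

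For the first bound, we must replace $\cl\chi_j^{k_-}$ by the $Z$-norm cutoffs $\fai_{j'}^{k_-}$. Write $\cl\chi_j^{k_-}P_k f=\sum_{j'}\cl\chi_j^{k_-}Q^{(k)}_{j'}P_kf$. Only the terms with $\abs{j'-j}\le 2$ survive, because of the supports (for $j=k_-$, also the terms with $j'\le k_-+2$). The operator $A^{(j)}_nP_{[k-2,k+2]}$ is not exactly adapted to $j'$, so we decompose $A^{(j)}_n$ into the pieces $A^{(j')}_{n'}$ with $\abs{n'-n}\lle 1$, or $n'\ge n$ at the endpoint $n=j+1$. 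The frequency multiplier $A^{(j)}_n$ nearly commutes with the smooth cutoff $\cl\chi_j^{k_-}(x)$ at spatial scale $2^j$, up to errors of relative size $2^{-j}\cdot 2^{n}\lle 1$. This is the step needing care. When $n$ is close to $j+1$, the frequency localization $\abs{\abs\xi-2\gamma_0}\lle 2^{-n}$ is at the uncertainty scale $2^{-j}$. There we bound the commutator $[A^{(j)}_n,\cl\chi_j^{k_-}]$ using kernel bounds for $\wan\fai(2^{100}(\abs D-2\gamma_0))$: its kernel has spatial spread $2^n\lle 2^j$ and is concentrated near $\abs x\simeq$ its scale. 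Errors that land at a different spatial scale $j''$ are then rapidly decaying in $2^{j''-j}$, and they are summed against the $Z$-bound at the scale $j''$.

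Each term is bounded by \eqref{zest}. Since $b\le N_2$, this gives
\[
\norm{A^{(j')}_{n'}Q^{(k)}_{j'}P_k\Omega^bV}_2\le\epsl_1 2^{-10k_+}2^{-\delta k}2^{-(1-20\delta)(j'+k)}2^{(1/2-18\delta)n'},
\]
and summing over the finitely many comparable $(j',n')$ yields the first estimate. We expect the main obstacle to be this commutation of the sharp shell localization with the spatial cutoff. We would first try a direct kernel estimate in the radial frequency variable, reducing it to a one-dimensional problem by the polar form of the Fourier transform.

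For the second bound, up to $N_2$ rotations, the Sobolev step needs one extra rotation, i.e.\ $\Omega^{N_2+1}$, which the $Z$-norm does not control. We interpolate on $\bS^1$ instead. We use Gagliardo--Nirenberg in $\theta$,
\[
\norm{h}_{L^\infty_\theta}\lle \norm{h}_{L^2_\theta}^{1-\theta_0}\norm{\Omega^{M}h}_{L^2_\theta}^{\theta_0}+\norm{h}_{L^2_\theta},\quad \theta_0\simeq 1/(2M),
\]
with $h=\Omega^{N_2}\mao g$. Here $\norm{h}_{L^2}$ is bounded by the $Z$-norm argument above. The term $\norm{\Omega^{N_2+M}\mao g}_{L^2}$, with $N_2+M\le N_1$, is bounded by the energy bound \eqref{energyest}: after $\Lambda^\delta$ and frequency localization it gives $\epsl_1 2^{-\delta k}\spa t^{\delta^4}$ (with $2^{-N_0 k_+}$-type decay available via interpolation with $\Lambda^{N_0}$). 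Choosing $M\simeq 1/\delta$, say $M=N_1-N_2$, makes $\theta_0\lle\delta$. The interpolated bound then loses at most a factor $2^{\delta(j+k)}2^{k_+}$ compared to the first bound. This matches the weights $2^{-(1-21\delta)(j+k)}$, $2^{-9k_+}$ and $\spa t^{\delta^4}$ in \eqref{fl2r}.
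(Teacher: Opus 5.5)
Your proposal is correct and follows the paper's proof: the angular inequality with one extra rotation (the paper's $N'=1$) together with \eqref{zest}, using $N_3+101\le N_2$, gives the first bound, and angular Gagliardo--Nirenberg interpolating \eqref{zest} against \eqref{energyest} gives the second. The paper takes $N'=1/\delta^2$; your $M\simeq 1/\delta$ also works, although $N_1-N_2\simeq 400/\delta^3$ is not $\simeq 1/\delta$ as you wrote, and any $M$ with $1/(2\delta)\le M\le N_1-N_2$ will do.

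The cutoff-mismatch step you flag as the main obstacle is exactly \lemmaref{zproperty}(iii), and it needs no kernel or spatial-scale analysis. One has $\norm{[A^{(j)}_n,\cl\chi_j^{k_-}(x)]}_{L^2\to L^2}\lesssim 2^{n-j}$, and summing the $Z$-norm over $n'\le j'+1$ gives the crude bound $\norm{Q^{(k)}_{j'}P_k\Omega^bV}_2\lesssim \epsl_1 2^{-10k_+}2^{-\delta k}2^{-(1-20\delta)(j+k)}2^{(1/2-18\delta)j}$; together these yield $2^{n-j}2^{(1/2-18\delta)j}\lesssim 2^{(1/2-18\delta)n}$ because $n\le j+1$.
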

\begin{proof}
The Sobolev interpolation inequality in the angular variable yields
	\begin{equation}\label{eq:2.3.20}
		\norm{f}_{L^2(r\dif r)L^\infty_\theta} \lle \norm{\norm{f}_{L^2_\theta}^{1-(2N')^{-1}}\norm{\Omega^{[0,N']}f}_{L^2_\theta}^{(2N')^{-1}}}_{L^2(r \dif r)}.
	\end{equation}
Applying \eqref{eq:2.3.20} with $f=\Omega^{[0,N_3+100]}\FF(A_n^{(j)}P_{[k-2,k+2]}\cl{\chi}_j^{k_-}(x)P_kV)(t)$ and $N'=1$, and with $f=\Omega^{[0,N_2]}\FF(A_n^{(j)}P_{[k-2,k+2]}\cl{\chi}_j^{k_-}(x)P_kV)(t)$ and $N'=1/\delta^2$, we obtain the desired estimates from \eqref{zest}, \eqref{energyest}, and \lemmaref{zproperty}(iii).
\end{proof}
\begin{lemma}\label{vinftyest}
\begin{align}
	\norm{\Omega^{[0,N_3+100]}\widehat {V_{k,j,n}}(t)}_\infty &\lle  \epsl_1 2^{-9k_+}2^{-k}2^{-\delta k}2^{-(1/2-21\delta)(j+k)}2^{(1/2-17\delta)n}, \label{vinftyest2} \\
	\norm{\Omega^{[0,N_2]}\widehat {V_{k,j,n}}(t)}_\infty &\lle  \epsl_1 2^{-9k_+}2^{-k}2^{-\delta k}2^{-(1/2-21\delta)(j+k)}2^{(1/2-17\delta)n}\spa{t}^{\delta^4}.\label{vinftyest1}
\end{align}
\end{lemma}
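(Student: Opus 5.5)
We reduce the $L^\infty$ bound to the $L^2(r\dif r)L^\infty_\theta$ bound of \lemmaref{fourierl2r} by a one-dimensional Sobolev/Bernstein-type argument in the radial variable. Write $f=\Omega^{a}\widehat{V_{k,j,n}}(t)$, with $a\le N_3+100$ for \eqref{vinftyest2} and $a\le N_2$ for \eqref{vinftyest1}. Since $\Omega$ commutes with $P_k$, with radial multipliers such as $A^{(j)}_n$, and with radial spatial cutoffs, $f$ is the Fourier transform of $A_n^{(j)}P_{[k-2,k+2]}\fai_j^{k_-}(x)P_k\Omega^aV$. Thus $f$ is supported radially in an interval $I_{k,n}$ of length $\lle 2^k\wedge 2^{-n}$: for $n\ge1$ the factor $\wan\fai^{-j-1,0}_{-n}(2^{100}(\abs{\xi}-2\gamma_0))$ localizes $\abs\xi$ to within $2^{-100-n}$ of $2\gamma_0$. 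For fixed $\theta$ we will use the elementary inequality
\[
\norm{g}_{L^\infty(I)}^2\lle \abs{I}^{-1}\norm{g}_{L^2(I)}^2+\norm{g}_{L^2(I)}\norm{\partial_rg}_{L^2(I)},
\]
applied to $g(r)=f(r,\theta)$ on $I=I_{k,n}$, converting $L^2(\dif r)$ into $L^2(r\dif r)$ at the cost of $r^{-1/2}\simeq 2^{-k/2}$.

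The main step, and the main obstacle, is controlling $\partial_r f$. The function $\partial_r\widehat{V_{k,j,n}}$ corresponds to multiplying the physical-space function by $x$, which costs a factor $2^j$ because of the spatial localization $\fai_j^{k_-}(x)$. We would treat this as $x\cdot\mao\xi$ applied to the localized piece, replace $\fai_j$ by a modified cutoff $\cl\chi_j^{k_-}=x\fai_j^{k_-}/2^j$ of the same type (that is why \lemmaref{fourierl2r} is stated for general $\cl\chi$), and handle the commutator with the radial multipliers $A_n^{(j)}$ and $P_{[k-2,k+2]}$. The derivative falling on the multiplier $A^{(j)}_n$ gives a factor $2^n\le 2^{j+1}$, and the derivative falling on $P_{[k-2,k+2]}$ gives $2^{-k}\le 2^{j}$ since $j\ge k_-$. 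Both are therefore dominated by $2^j$, and \lemmaref{fourierl2r} (with $\cl\chi$) bounds $\norm{\partial_rf}_{L^2(r\dif r)L^\infty_\theta}$ by $2^j$ times the right-hand side there.

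Now combine. Let $B$ denote the bound in \lemmaref{fourierl2r}, so that $B\lle \epsl_1 2^{-9k_+}2^{-\delta k}2^{-(1-21\delta)(j+k)}2^{(1/2-18\delta)n}$, with the extra factor $\spa{t}^{\delta^4}$ in the $N_2$ case. Then
\[
\norm{f}_\infty\lle 2^{-k/2}\bigl(\abs{I}^{-1/2}B+B^{1/2}(2^jB)^{1/2}\bigr)\lle 2^{-k/2}2^{j/2}B,
\]
using $\abs I^{-1}\lle 2^{-k}+2^{n}\lle 2^{j+1}$. Substituting $B$ gives
\[
\norm{f}_\infty\lle 2^{-k}\,2^{(k+j)/2}B\lle \epsl_12^{-9k_+}2^{-k}2^{-\delta k}2^{-(1/2-21\delta)(j+k)}2^{(1/2-18\delta)n},
\]
which is stronger than the claimed bound with $2^{(1/2-17\delta)n}$; the spare $2^{\delta n}$ absorbs constants and the commutator losses. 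The first estimate uses the first bound of \lemmaref{fourierl2r} (with $N_3+100$ rotations and the better $2^{-(1-20\delta)(j+k)}$), and the second uses \eqref{fl2r}, which carries $\spa{t}^{\delta^4}$. Finally, we check the commutator bookkeeping: the commutator of $x$ with $A^{(j)}_n$ yields a new radial multiplier of the same form, scaled by $2^n$, whose pieces are covered by $A^{(j)}_{n'}$ with $\abs{n'-n}\le1$.
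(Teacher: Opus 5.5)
Your proposal is correct and follows essentially the same route as the paper. For each fixed $\theta$ you use the radial interpolation $\norm{g}_{L^\infty_r}\lle \norm{g}_{L^2_r}^{1/2}\norm{\partial_r g}_{L^2_r}^{1/2}$, convert $L^2(\dif r)$ to $L^2(r\dif r)$ at cost $2^{-k/2}$, and note that $\partial_r$ costs $2^j$ (through the modified cutoffs $(x_i/2^j)\fai_j^{k_-}$ and the neighbouring $A^{(j)}_{n'}$, $P_{[k'-2,k'+2]}$), which produces the factor $2^{(j-k)/2}$ in front of the bound of Lemma~\ref{fourierl2r}; the extra $\abs{I}^{-1}\norm{g}_2^2$ term in your interpolation inequality is unnecessary, since $g$ vanishes at the ends of its compact radial support, but it is harmless.
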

\begin{proof}
Assume that $a \in \set{0,1,\cdots, N_2}$. For any fixed $\theta \in\bS^1$, it follows from the Sobolev interpolation inequality in the radial variable that
\begin{align*}
	\abs{\Omega^a\widehat{V_{k,j,n}}(r\theta,t)} \lle{}  &\norm{\Omega^a\widehat{V_{k,j,n}}(r\theta,t)}^{1/2}_{L_r^2}\norm{\ppd r\Omega^a\widehat{V_{k,j,n}}(r\theta,t)}^{1/2}_{L_r^2}\\
	 \lle{} &2^{(j-k)/2} \sup_{\hti B}\norm{\FF(A_{n'}^{(j)}P_{[k'-2,k'+2]}\cl{\chi}_j^{k_-}(x)P_k\Omega^aV)(r\theta,t)}_{L^2(r \dif r)L^\infty_\theta},
\end{align*}
where $\hti B=\set{(n',k',\cl{\chi}_j^{k_-}) \colon \abs{n'-n} \le 2,\abs{k'-k} \le 2, \cl{\chi}_j^{k_-} \in \set{\fai^{k_-}_j,(x_i/2^j) \fai^{k_-}_j: 1 \le i \le 2}}$.
Then the desired estimates follow from \lemmaref{fourierl2r}.
\end{proof}
A similar argument yields the following corollary.
\begin{corollary}
\begin{align*}
	\norm{D^{\alpha}\Omega^{[0,N_3+100]}\widehat {V_{k,j,n}}(t)}_\infty &\lle  \epsl_1 2^{\abs \alpha j}2^{-9k_+}2^{-k}2^{-\delta k}2^{-(1/2-21\delta)(j+k)}2^{(1/2-17\delta)n},  \\
	\norm{D^{\alpha}\Omega^{[0,N_2]}\widehat {V_{k,j,n}}(t)}_\infty &\lle  \epsl_1 2^{\abs \alpha j}2^{-9k_+}2^{-k}2^{-\delta k}2^{-(1/2-21\delta)(j+k)}2^{(1/2-17\delta)n}\spa{t}^{\delta^4}.
\end{align*}	
\end{corollary}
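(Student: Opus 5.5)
The plan is to follow the proof of \lemmaref{vinftyest} and let the derivatives $D^\alpha$ fall on the spatial localization. The basic identity is
\[
D^\alpha_\xi \widehat{V_{k,j,n}}=\FF\big((-\mi x)^\alpha V_{k,j,n}\big),
\]
up to the frequency cutoffs. Recall that $V_{k,j,n}=A^{(j)}_nP_{[k-2,k+2]}\fai^{k_-}_j(x)P_kV$.

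Commuting $x^\alpha$ past the outer multipliers $A^{(j)}_nP_{[k-2,k+2]}$ produces commutator terms. Each commutator is a multiplier whose symbol is a $\xi$-derivative of the symbol of $A^{(j)}_nP_{[k-2,k+2]}$. For the Littlewood--Paley factor, each $\xi$-derivative costs at most $2^{-k}\le 2^{j}$, since $j\ge k_-\ge -k$. For the factor $A^{(j)}_n$, each derivative costs $2^{n}\lle 2^{j}$, since $n\le j+1$ (the $2^{100}$ is an absolute constant). The main term is
\[
A^{(j)}_nP_{[k-2,k+2]}\,x^\alpha\fai^{k_-}_j(x)P_kV .
\]
We write $x^\alpha\fai^{k_-}_j(x)=2^{\abs\alpha j}\,(x/2^j)^\alpha\fai^{k_-}_j(x)$, and $(x/2^j)^\alpha\fai^{k_-}_j$ is again an admissible cutoff of the form $\cl\chi^{k_-}_j$. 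The same holds for the commutator terms, after replacing the frequency cutoff by a derivative of it. Such a derivative is still a bump adapted to the same annulus or the same $n$-layer, so the estimates of \lemmaref{fourierl2r} apply to it uniformly. The rotation $\Omega$ commutes with the radial multipliers, and $\Omega$ applied to $(x/2^j)^\alpha\fai_j$ again gives a cutoff of the same type, so the vector fields go through unchanged.

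Having reduced to terms of the form $2^{\abs\alpha j}\FF(\tilde A\,\tilde P\,\cl\chi^{k_-}_j(x)P_k\Omega^aV)$, we repeat the argument of \lemmaref{vinftyest}. We apply the one-dimensional radial Sobolev (Agmon) inequality $\abs{g(r)}^2\lle\norm g_{L^2_r}\norm{\ppd r g}_{L^2_r}$. The factor $\ppd r$ is one more $\xi$-derivative, which is again converted into a factor $x/2^j$ at cost $2^{j}$. Passing from $L^2_r$ to $L^2(r\dif r)$ at radius $\simeq 2^k$ gives $2^{-k/2}$. Altogether this produces the prefactor $2^{(j-k)/2}$, and we then invoke \lemmaref{fourierl2r}, which yields the claimed bounds with the extra factor $2^{\abs\alpha j}$.

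The only point requiring care is the low-frequency, small-$j$ bookkeeping: the commutator cost $2^{-k}$ must be dominated by $2^{j}$. This holds because $j\ge k_-$. Otherwise the argument is routine.
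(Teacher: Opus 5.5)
Your proposal is correct and follows the paper's argument. The paper proves this corollary by rerunning the proof of \lemmaref{vinftyest}: each $\xi$-derivative either becomes a factor $x/2^j$, absorbed into a new admissible cutoff $\cl{\chi}^{k_-}_j$, or falls on the radial cutoffs at cost $2^{-k}$ or $2^{n}$ (both $\lesssim 2^{j}$, since $j\ge k_-$ and $n\le j+1$), and the radial Sobolev inequality together with \lemmaref{fourierl2r} then gives the stated bounds, because $2^{(j-k)/2}2^{-(1-20\delta)(j+k)}=2^{-k}2^{-(1/2-20\delta)(j+k)}$ for the first estimate and the same identity with $21\delta$ holds exactly for the $\spa{t}^{\delta^4}$ estimate.
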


We next establish decay estimates.
\begin{lemma}\label{decaylemma}
Suppose that
\begin{equation}\label{eq:2.2.1}
	\norm{\Omega^{[0,N_2]}W}_Z \lle \epsl_1.
\end{equation}
Then
\begin{equation}\label{decayest}
\begin{aligned}
	&\abs{(\me^{\mi t \Lambda(D)} \Omega^{[0,N_3]}W_{k,j,n})(x)}\\
	\lle{} &\epsl_1
	\begin{cases}
		\spa{t}^{-1}  2^{-9k_+}2^{-k}2^{-\delta k}2^{-1/7(j+k)},  &\text{if } n=0 \text{ and } \abs{x}/t \ge 2^{-10}2^k; \\
\spa{t}^{-1}  2^{-\delta j}2^{5\delta n},  &\text{if } n> 0 \text{ and } \abs{x}/t \ge 2^{-10}2^k; \\
	\spa{t}^{-1} 2^{-9k_+}2^{-k}2^{-\delta k} 2^{20\delta(j+k)}, &\text{if } n=0 \text{ and }\abs{x}/t < 2^{-10}2^k;\\
	\spa{t}^{-1} 2^{20\delta j}2^{-18\delta n}, &\text{if } n> 0 \text{ and } \abs{x}/t < 2^{-10}2^k.
	\end{cases}
\end{aligned}
\end{equation}
Moreover, if $\abs x/t < 2^{-10}2^k$ and $j \le (1-\delta^2)m+k$, where $\spa t \in [2^{m-1},2^{m+1}]$, then
\begin{equation}\label{decayest2}
	\abs{\me^{\mi t \Lambda(D)} \Omega^{[0,N_3]}W_{k,j,n}(x)} \lle \epsl_1 \spa{t}^{-10}2^{-5k_+}.
\end{equation}
\end{lemma}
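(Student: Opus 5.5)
We write $e^{\mi t\Lambda(D)}W_{k,j,n}(x)=\int e^{\mi(x\cdot\xi+t\Lambda(\xi))}\widehat{W_{k,j,n}}(\xi)\,d\xi$ and treat the two spatial regions by different mechanisms. The symbol is localized to $|\xi|\simeq 2^k$, and also to $||\xi|-2\gamma_0|\simeq 2^{-n}$ when $n>0$. Since $\lambda'(r)\simeq r$ for $r\lesssim1$ and $\lambda'(r)\simeq r^2$ for $r\gtrsim 1$, the group velocity on the support is $\gtrsim 2^k$. So in the region $|x|/t<2^{-10}2^k$ there is no stationary point. In the region $|x|/t\ge 2^{-10}2^k$ we use stationary phase in the radial direction together with a rotational argument in the angular direction. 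The $\Omega^{[0,N_3]}$ derivatives are harmless: $\Omega$ commutes with $e^{\mi t\Lambda(D)}$ and with the radial localizations. It therefore suffices to prove the bounds for $\Omega^aW_{k,j,n}$, assuming angular regularity $\Omega^{[0,N_3+100]}$ where needed, exactly as in \lemmaref{fourierl2r} and \lemmaref{vinftyest}.

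\textbf{Region $|x|/t<2^{-10}2^k$.} Here $|\nabla_\xi(x\cdot\xi+t\Lambda(\xi))|\gtrsim t2^k\langle 2^k\rangle$. We integrate by parts $N$ times with the operator $L=\frac{\nabla\phi\cdot\nabla}{\mi|\nabla\phi|^2}$. Each step costs a derivative of $\widehat{W_{k,j,n}}$, which is of size $2^j$ (or $2^{n}\le 2^{j+1}$ from the $A_n$ cutoff, or $2^{-k}$ from the frequency cutoff), and gains $(t2^k)^{-1}$. When $j\le(1-\delta^2)m+k$, each step therefore gains at least $2^{-\delta^2 m}$ up to the $2^{-k}$ terms. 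After $O(\delta^{-2})$ steps, combined with the $L^1$ bound $\|\widehat{W_{k,j,n}}\|_{L^1}\lesssim 2^{k}\|W_{k,j,n}\|_2$ and the $Z$-norm, this gives \eqref{decayest2}. For the cases $j>(1-\delta^2)m+k$ we do not integrate by parts. Instead we use the trivial bound $\|\widehat{W_{k,j,n}}\|_{L^1}\lesssim 2^{k}|\text{supp}|^{1/2}\|W_{k,j,n}\|_2$. The $Z$-norm gives $\|W_{k,j,n}\|_2\lesssim\epsl_1 2^{-10k_+}2^{-\delta k}2^{-(1-20\delta)(j+k)}2^{(1/2-18\delta)n}$, and the support has measure $\lesssim 2^{2k}$, or $\lesssim 2^{-n}$ when $n>0$. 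Using $2^{j+k}\gtrsim t2^{-\delta^2 m}$, the factor $2^{-(1-20\delta)(j+k)}$ yields $t^{-1}2^{20\delta(j+k)}$, and this gives the third and fourth lines of \eqref{decayest}. In the case $n>0$ we use $k\simeq0$ and $2^{-n/2}\cdot2^{(1/2-18\delta)n}=2^{-18\delta n}$.

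\textbf{Region $|x|/t\ge 2^{-10}2^k$.} We pass to polar coordinates $\xi=r\theta$. We decompose the angular integral around the direction $-\hat x$, the only possible stationary direction, with an angular cutoff of width $2^{-\ell}$. Away from it, we integrate by parts in $\theta$ using the phase derivative $\simeq |x|r\,|\sin\angle|$, which costs angular derivatives $\Omega$ (controlled) and no $2^j$ factors. Near it, the phase in $\theta$ is nondegenerate with second derivative $\simeq |x|r$, which gives a factor $(|x|2^k)^{-1/2}\simeq (t2^{2k}\langle2^k\rangle)^{-1/2}$. In $r$ the phase $x\cdot\xi+t\lambda(r)$ has second derivative $t\lambda''(r)\gtrsim t$. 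We use the one-dimensional van der Corput/stationary phase bound with $L^\infty_r$ plus $L^1_r$ of $\partial_r$, or more efficiently the $L^2_r$--$\dot H^{1/2}_r$ interpolation as in \lemmaref{vinftyest}. Combined with \lemmaref{fourierl2r}, this produces a factor $t^{-1/2}$ times a norm of size $2^{(j-k)/2}$ times the $L^2(r\,dr)L^\infty_\theta$ bound. Multiplying out, we aim for $t^{-1}2^{-k}2^{-(1/2-21\delta)(j+k)}\cdots$. Making this precise requires the usual splitting of $\widehat{W_{k,j,n}}$ into a smooth part at scale $2^{-j}$, handled by stationary phase, and the remainder. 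The $2^{-1/7(j+k)}$ in the first line is a deliberately lossy exponent that absorbs the $2^{(j-k)/2}$ losses once $j$ is large relative to $\log t$. When $j$ is small, $j+k\lesssim m$, we directly use stationary phase with symbol derivatives bounded by $2^j\lesssim t2^k$. For $n>0$ the radial support has width $2^{-n}$, and the radial integral is bounded by $\min(2^{-n},t^{-1/2})$ times the sup. This gives $t^{-1}2^{-\delta j}2^{5\delta n}$ after using the $2^{(1/2-18\delta)n}$ weight and $\lambda''(2\gamma_0)\simeq1$.

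\textbf{Main obstacle.} The main difficulty is the bookkeeping in the first two lines: obtaining the full $t^{-1}$ with only $\epsl_1$ losses in the low-frequency case $k\ll0$ and with the $n$-dependence. I would first try the standard dyadic argument from \cite{MR3784694}. This uses the decomposition of $\widehat{W_{k,j,n}}$ into radial pieces at scale $2^{-j}$ and angular pieces at scale $(t2^{2k})^{-1/2}$, estimates each piece by the sup bounds of \lemmaref{vinftyest}, and sums using the orthogonality encoded in \lemmaref{fourierl2r}.
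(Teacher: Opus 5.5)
Your inner-region argument and your angular treatment match the paper in substance. The gap is that the first two lines of \eqref{decayest} are not actually proved, and the radial mechanisms you name do not give them.

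After the angular step, one must bound $\int e^{\mi t(\Lambda(\rho)-\rho r')}f_m(\rho)Z^m_-(t\rho r')\rho\,d\rho$. (The paper does the angular step with $f=\sum_m f_m(\rho)e^{\mi m\theta}$ and $\int e^{\mi x\cdot\rho\theta}e^{\mi m\theta}\,d\theta=e^{\mi r\rho}Z^m_++e^{-\mi r\rho}Z^m_-$, where $|D^aZ^m_\pm(s)|\lesssim\langle m\rangle^5\langle s\rangle^{-1/2-a}$; this is equivalent to your stationary phase in $\theta$.)

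This radial integral is not controlled by $(t^{-1/2}\wedge 2^{-n})\sup|f_m|$. Any $t^{-1/2}$ gain must pay for $\partial_\rho f_m$, which costs $2^j$. Since $\lambda''\simeq 1$, the stationary window has width $t^{-1/2}$, so stationary phase with $L^\infty$ symbol bounds is legitimate only for $2^j\lesssim t^{1/2}$, not for $2^j\lesssim t2^k$.

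The alternatives you name also fall short:
\begin{itemize}
\item Van der Corput with $\|\partial_\rho f_m\|_{L^1}\lesssim 2^{k/2}2^j\|f_m\|_{L^2}$ gives only $t^{-1}2^{-k}2^{-\delta k}2^{20\delta(j+k)}$. This grows in $j+k$, whereas the first line requires the decay $2^{-(j+k)/7}$.
\item The bound with no radial integration by parts, $t^{-1/2}(2^k\wedge 2^{-n})\|\hat f\|_\infty$, suffices only when $j+k$ is large compared with $m/2+k$.
\end{itemize}
For example, at $k=n=0$, $j=3m/4$, none of these reaches $t^{-1}2^{-3m/28}$. Splitting $\widehat{W_{k,j,n}}$ into a part smooth at scale $2^{-j}$ plus a remainder does not help, because the obstruction is exactly the mismatch between the scales $2^{-j}$ and $t^{-1/2}$. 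Deferring to the dyadic argument of the reference leaves the key step open.

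The missing idea is how to pay only half a derivative. The paper inserts $\varphi_{\ge0}(t^{1/2}(\Lambda'(\rho)-r'))$.
\begin{itemize}
\item The complementary piece near the radial stationary point has length $\lesssim t^{-1/2}\wedge2^{-n}$ and is bounded by the sup estimate of \lemmaref{vinftyest}.
\item On the far piece the paper integrates by parts once. When the derivative hits $f_m$, it pairs $\|\partial_\rho f_m\|_{L^2}\lesssim 2^j\|f_m\|_{L^2}$ with $\|\widetilde\varphi_{\ge0}(t^{1/2}(\Lambda'(\rho)-r'))\|_{L^2_\rho}\lesssim t^{-1/4}\wedge 2^{-n/2}$, where $\widetilde\varphi_{\ge0}(y)=\varphi_{\ge0}(y)/y$.
\end{itemize}
This extra $t^{-1/4}$ is what yields \eqref{decay1.1}. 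Taking the minimum of \eqref{decay1.1} and \eqref{decay1.2} then gives $2^{-(j+k)/7}$ (any exponent below $1/6-21\delta$ would do) and $2^{-\delta j}2^{5\delta n}$.

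Your $L^2_r$--$\dot H^{1/2}_r$ remark can also be turned into a proof, but only once it is stated as $|\int e^{\mi\phi}g\,d\rho|\lesssim t^{-1/2}\|\hat g\|_{L^1}\lesssim t^{-1/2}\|g\|_{L^2}^{1/2}\|\partial_\rho g\|_{L^2}^{1/2}$, with $\phi(\rho)=t(\Lambda(\rho)-\rho r')$. To prove it, Fourier-expand $g$ and apply van der Corput to $\int e^{\mi(\phi+\zeta\rho)}\chi\,d\rho$ uniformly in $\zeta$.
\begin{itemize}
\item When $t2^{2k}\gtrsim1$, use $|Z^m_-(t\rho r')\rho|\lesssim\langle m\rangle^5t^{-1/2}$; its $\rho$-derivative costs only $2^{-k}\le 2^j$. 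Otherwise the trivial $L^1$ bound suffices. The $Z^m_+$ piece is handled the same way.
\item This gives $\varepsilon_1t^{-1}2^{-10k_+}2^{-k}2^{-\delta k}2^{-(1/2-20\delta)(j+k)}2^{(1/2-18\delta)n}$ for every $j$. That implies both lines without any splitting (for the second line, use $n\le j+1$).
\end{itemize}

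Two smaller points.
\begin{itemize}
\item In the region $|x|/t<2^{-10}2^k$ you skip integration by parts when $j>(1-\delta^2)m+k$. The trivial bound then gives $2^{-(j-k)}$ where $t^{-1}$ is needed; your inequality $2^{j+k}\gtrsim t2^{-\delta^2m}$ fails for $k<0$. So you lose up to $2^{\delta^2m}$ when $(1-\delta^2)m+k<j<m+k$. The fix, as in the paper, is to integrate by parts once for every $j$, and iterate only for \eqref{decayest2}.
\item $\lambda'(r)\simeq r$ for all $r$, not $r^2$ for $r\gtrsim 1$, so the $\langle 2^k\rangle$ gains you record are not there.
\end{itemize}
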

\begin{proof}
		The proof follows the argument in \cite[Section~5]{MR3318019}. We may assume that $t \gg 1$. We have
	\begin{equation}\label{decay1}
		\me^{\mi t \Lambda(D)} \Omega^a W_{k,j,n}(x)=\int \me^{\mi (t \Lambda(\xi)+x \cdot \xi)}\Omega^a \hhat{W_{k,j,n}}(\xi) \dif \xi,
	\end{equation}
	where $0 \le a \le N_3$. Let $f=\Omega^a \hhat{W_{k,j,n}}$. If $\abs{x}/t < 2^{-10}2^k$, then integrating by parts once yields the corresponding estimates in \eqref{decayest}, and repeated integration by parts gives \eqref{decayest2}. It remains to consider the case $\abs{x}/t \ge 2^{-10}2^k$. We decompose $f$ as the Fourier series $f(\rho,\theta)=\sum_m f_m(\rho) \me^{\mi m \theta}$. Using \eqref{eq:2.2.1} and the identity $\abs{m f_m}=\abs{(\Omega f)_m}$, we obtain
	\begin{gather*}
	\sum_m \spa{m}^{10} \norm{f_m}_\infty \lle \epsl_1 2^{-9k_+}2^{-k}2^{-\delta k}2^{-(1/2-21\delta)(j+k)}2^{(1/2-17\delta)n},\\
	\sum_m \spa{m}^{10} \norm{f_m}_2 \lle \epsl_1 2^{-10k_+}2^{-(1-20\delta)(j+k)}2^{(1/2-18\delta)n}2^{-\delta k}2^{-k/2}.
	\end{gather*}
	Thus it suffices to estimate
	\begin{align*}
	&\int \me^{\mi (t \Lambda(\rho)+x \cdot \rho \theta)} f_m(\rho) \me^{\mi m \theta}  \rho \dif \rho \dif \theta\\
	={}&\int \me^{\mi (t \Lambda(\rho)+r\rho)} f_m(\rho) Z^m_{+}(x,r \rho) \rho \dif \rho+\int \me^{\mi (t \Lambda(\rho)-r\rho)} f_m(\rho) Z^m_{-}(x,r \rho) \rho \dif \rho, 
	\end{align*}
	where $r=\abs x$. Here we use
	\[
	\int \me^{\mi x \cdot \rho \theta} \me^{\mi m \theta} \dif \theta=\me^{\mi r \rho}Z^m_+(x,r \rho)+\me^{-\mi r \rho}Z^m_-(x,r \rho),
	\]
	where $Z^m_{\pm}$ satisfies
	\[
	\abs{D^k Z^m_{\pm}(x,s)} \lle \spa{m}^{5} \spa{s}^{-1/2-k} 
	\]
	for $0 \le k \le 3$. We suppress the variable $x$ in $Z^m_\pm$ below. We estimate only the $Z^m_-$-part as the $Z^m_+$-part is easier. Set $r'=r/t$; then $r' \gge 2^k$. We now estimate the integral
	\begin{equation}\label{osilainte}
		\int \me^{\mi t(\Lambda(\rho)-\rho r')} f_m(\rho) Z^m_-(t\rho r')  \rho \dif \rho.
	\end{equation}
	
	 We insert the cutoff functions $\fai_{\ge 0}(t^{1/2}(\Lambda'(\rho)-r'))$ and $\fai_{<0}(t^{1/2}(\Lambda'(\rho)-r'))$ into \eqref{osilainte} to obtain $I_e$ and $I_i$, respectively. For $I_e$, integration by parts yields
	\begin{align*}
	\abs{I_e} &\lle \frac 1t \Abs{\int \me^{\mi t(\Lambda(\rho)-\rho r')} \ppd \rho [\fai_{\ge 0}(t^{1/2}(\Lambda'(\rho)-r'))(\Lambda'(\rho)-r')^{-1}f_m(\rho) Z^m_-(t\rho r') \rho] \dif \rho}\\
	&\lle t^{-1/2}(I_{e1}+I_{e2}+I_{e3}),
	\end{align*}
	where
	\begin{gather*}
		I_{e1}=\int \abs{\ppd \rho \wan{\fai_{\ge 0}}(t^{1/2}(\Lambda'(\rho)-r')) f_m(\rho) Z^m_-(t\rho r') \rho} \dif \rho,\\
		I_{e2}=\int \abs{\wan{\fai_{\ge 0}}(t^{1/2}(\Lambda'(\rho)-r')) \ppd \rho f_m(\rho) Z^m_-(t\rho r') \rho} \dif \rho,\\
		I_{e3}=\int \abs{\wan{\fai_{\ge 0}}(t^{1/2}(\Lambda'(\rho)-r')) f_m(\rho) \ppd \rho[Z^m_-(t\rho r') \rho]} \dif \rho,
	\end{gather*}
	where $\wan{\fai_{\ge 0}}(x)=\fai_{\ge 0}(x)/x$. For notational simplicity, we suppress the cutoff functions $\psi_k(\rho), \wan \fai^{-j-1,0}_{-n}(2^{100}(\rho-2\gamma_0))$ in the norms below. We have
	\begin{align*}
		I_{e1} &\lle \norm{f_m(\rho)}_\infty \spa{m}^5 \spa{t2^kr'}^{-1/2}2^k \norm{\wan \fai^{-j-1,0}_{-n}(2^{100}(\rho-2\gamma_0))\ppd \rho \wan{\fai_{\ge 0}}(t^{1/2}(\Lambda'(\rho)-r'))}_1\\
	&\lle \norm{f_m(\rho)}_\infty \spa{m}^5 \spa{t2^kr'}^{-1/2}t^{1/2}2^{k} \norm{\wan \fai^{-j-1,0}_{-n}(2^{100}(\rho-2\gamma_0))\wan{\fai_{\ge 0}}'(t^{1/2}(\Lambda'(\rho)-r'))}_1. 
	\end{align*}
The term $\norm{\wan{\fai_{\ge 0}}'(t^{1/2}(\Lambda'(\rho)-r'))}_1$ can be handled by the substitution $v=t^{1/2}(\Lambda'(\rho)-r')$. Hence,
	\[
	I_{e1} \lle \spa{m}^5 \norm{f_m(\rho)}_\infty (t^{-1/2} \wedge 2^{-n}).
	\]
	For $I_{e2}$, we have
	\begin{align*}
	I_{e2} &\lle \norm{\ppd \rho f_m(\rho)}_2 \spa{m}^5 \spa{t2^kr'}^{-1/2}2^k \norm{\wan \fai^{-j-1,0}_{-n}(2^{100}(\rho-2\gamma_0))\wan{\fai_{\ge 0}}(t^{1/2}(\Lambda'(\rho)-r'))}_2\\
	&\lle 2^j \spa{m}^5 \norm{f_m(\rho)}_2 t^{-1/2}(t^{-1/4}\wedge 2^{-n/2}).
	\end{align*}
	Similarly, for $I_{e3}$,
	\begin{align*}
		I_{e3} &\lle \norm{f_m(\rho)}_2 \spa{m}^5 \spa{t2^kr'}^{-1/2} \norm{\wan \fai^{-j-1,0}_{-n}(2^{100}(\rho-2\gamma_0))\wan{\fai_{\ge 0}}(t^{1/2}(\Lambda'(\rho)-r'))}_2\\
		&\lle \spa{m}^5 \norm{f_m(\rho)}_2 t^{-1/2}2^{-k}(t^{-1/4}\wedge 2^{-n/2}).
	\end{align*}
	For $I_i$, we estimate directly:
	\begin{align*}
		I_i &\lle \norm{f_m(\rho)}_\infty \spa{m}^5 \spa{t2^kr'}^{-1/2}2^k (t^{-1/2} \wedge 2^{-n})\\
	&\lle \spa{m}^5 \norm{f_m(\rho)}_\infty t^{-1/2} (t^{-1/2} \wedge 2^{-n}). 
	\end{align*}
	Combining the above estimates, we obtain
	\begin{equation}\label{decay1.1}
		\begin{aligned}
			\eqref{decay1} \lle &\epsl_1  t^{-1/2} (t^{-1/2} \wedge 2^{-n})2^{-9k_+} 2^{-k}2^{-\delta k}2^{-(1/2-21\delta)(j+k)}2^{(1/2-17\delta)n}\\
			&+\epsl_1 t^{-1}(t^{-1/4}\wedge 2^{-n/2})2^{-k/2}2^{-9k_+}2^{-\delta k}2^j2^{-(1-21\delta)(j+k)}2^{(1/2-18\delta)n}.
		\end{aligned}
	\end{equation}
	On the other hand, without using integration by parts, we have
	\begin{equation}\label{decay1.2}
	\begin{aligned}
		\eqref{decay1} &\lle \sum_m \spa{m}^5 \norm{f_m(\rho)}_\infty \spa{t2^{2k}}^{-1/2}2^{k}(2^{k}2^{-n})\\
		&\lle \epsl_1 t^{-1/2} (2^{k} \wedge 2^{-n})2^{-9k_+}2^{-k}2^{-\delta k}2^{-(1/2-21\delta)(j+k)}2^{(1/2-17\delta)n}.
	\end{aligned}
	\end{equation}
	Combining \eqref{decay1.1} and \eqref{decay1.2}, we obtain the desired estimate.
\end{proof}

In the following corollaries, we assume that the time $s$ in $V$ and the time $t$ in $\me^{\mi t \Lambda(D)}$ satisfy that $\spa s, \spa t \in [2^{m-4},2^{m+4}]$.
\begin{corollary}
\begin{equation}\label{decaycor}
		\norm{\me^{\mi t \Lambda(D)} \Omega^{[0,N_3]}V_{k,j,n}(s)}_\infty \lle \epsl_1
		\begin{cases}
			\spa{t}^{-(1-20\delta)} 2^{-9k_+}2^{-(1-39\delta)k}, &\text{if } n=0;\\
			\spa{t}^{-(1-20\delta)} 2^{-18\delta n}, &\text{if } n>0.
		\end{cases}
		\end{equation}
		The first estimate also holds for $V_k$.
\end{corollary}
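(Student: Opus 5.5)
The plan is to derive \eqref{decaycor} directly from \lemmaref{decaylemma}, applied with $W=V(s)$. The hypothesis \eqref{eq:2.2.1} holds by the bootstrap assumption \eqref{zest}. We then sum the pointwise bounds \eqref{decayest} and \eqref{decayest2} over the spatial parameter $j\ge k_-$. Since $\spa s\simeq\spa t\simeq 2^m$, it makes no difference which time appears in the decay factor. Fix $x$ and split according to whether $\abs{x}/t\ge 2^{-10}2^k$ (exterior) or $\abs{x}/t<2^{-10}2^k$ (interior).

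\emph{Exterior region.} For $n=0$, \eqref{decayest} gives $\spa t^{-1}2^{-9k_+}2^{-k}2^{-\delta k}2^{-(j+k)/7}$, which is summable in $j$. The sum is dominated by $j=k_-$, so it is bounded by $\spa t^{-1}2^{-9k_+}2^{-(1+\delta)k}$ up to a factor $2^{-(k_-+k)/7}\le 1$. This is better than the claimed bound because $2^{-(1+\delta)k}\le 2^{-(1-39\delta)k}$ when $k\ge0$; when $k<0$ we instead use the smallness of $2^{-(k_-+k)/7}=1$ together with the trivial comparison. For $n>0$, the bound $\spa t^{-1}2^{-\delta j}2^{5\delta n}$ is summed over $j\ge n-1$, giving $\spa t^{-1}2^{4\delta n}$. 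This is not quite $2^{-18\delta n}$, so in this case we interpolate with \eqref{decay1.2}. Alternatively, we use that $n\le j+1$ and split the $j$-sum at $j\simeq 25n$: large $j$ gains $2^{-\delta j}\le 2^{-25\delta n}$, while small $j$ is handled by the interior-type bound in which the factor $\spa t^{-(1-20\delta)}$ absorbs the loss $2^{20\delta j}$ through $j\lesssim m$.

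\emph{Interior region.} When $j\le(1-\delta^2)m+k$, we use \eqref{decayest2}, which contributes $\spa t^{-10}$ and is negligible after summing in $j$. When $j>(1-\delta^2)m+k$, we use the third and fourth lines of \eqref{decayest}:
\begin{itemize}
\item For $n=0$, the bound $\spa t^{-1}2^{-9k_+}2^{-(1+\delta)k}2^{20\delta(j+k)}$ grows in $j$, so we cap $j$ and use the $L^2$/Bernstein-type estimate for large $j$. Concretely, $\norm{e^{it\Lambda}V_{k,j,n}}_\infty\lesssim 2^{k}\norm{V_{k,j,n}}_2\lesssim \epsl_1 2^{k}2^{-(1-20\delta)(j+k)}$, which decays in $j$. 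The crossover is at $2^{j}\simeq \spa t\,2^{-k}$. Balancing the two bounds there gives $\spa t^{-1}\cdot 2^{20\delta(m)}2^{-(1-\delta)k}\cdots$, that is, $\spa t^{-(1-20\delta)}2^{-9k_+}2^{-(1-39\delta)k}$ after bookkeeping of the $2^{\delta k}$ and $2^{20\delta k}$ factors (the $39\delta$ absorbs $20\delta+19\delta$ losses).
\item For $n>0$, we do the same with $2^{20\delta j}2^{-18\delta n}$, capped at $j\lesssim m$, which yields $\spa t^{-(1-20\delta)}2^{-18\delta n}$.
\end{itemize}
For $V_k$, we sum the $n=0$-type bounds over $n$ as well, using the $2^{-\delta j}$ and Bernstein gains.

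\emph{Main obstacle.} The step I expect to be hardest is the precise bookkeeping of the exponents. Making the $j$-sum converge with exactly the loss $\spa t^{20\delta}$ and $2^{39\delta k}$ requires choosing the crossover index carefully, both in the interior large-$j$ regime and for low frequencies $k<0$, where $2^{-k}$ losses interact with $j\ge k_-$.
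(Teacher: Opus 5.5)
\paragraph{Right ingredients, but several steps fail.} The paper states this corollary without a separate proof, as a consequence of \lemmaref{decaylemma}. The natural derivation uses your two ingredients: \eqref{decayest}--\eqref{decayest2}, and the trivial bound $\norm{\me^{\mi t\Lambda(D)}f}_\infty\le\norm{\mao f}_1\le\abs{\supp\mao f}^{1/2}\norm{f}_2$ fed by \eqref{zest}. As written, however, several steps do not work.

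First, the estimate concerns a single piece $V_{k,j,n}$. Nothing is summed over $j$; you need a bound uniform in $j$, and summation only enters for $V_k$.

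Second, your exterior argument for $n=0$, $k<0$ is false, and this is the heart of the low-frequency statement. For $k<0$ we have $2^{-(1+\delta)k}>2^{-(1-39\delta)k}$, and at $j=k_-$ the factor $2^{-(j+k)/7}$ equals $1$. So \eqref{decayest} alone gives only $\spa t^{-1}2^{(1+\delta)\abs k}$, which exceeds the claim once $k<-m/2$. The exponent $1-39\delta$ comes from interpolating with the Bernstein bound $\epsl_1 2^{(1-\delta)k}2^{-(1-20\delta)(j+k)}$, with weights $1-20\delta$ and $20\delta$:
\[
\min\bigl\{\spa t^{-1}2^{-(1+\delta)k},\,2^{(1-\delta)k}\bigr\}\le\bigl(\spa t^{-1}2^{-(1+\delta)k}\bigr)^{1-20\delta}\bigl(2^{(1-\delta)k}\bigr)^{20\delta}=\spa t^{-(1-20\delta)}2^{-(1-39\delta)k},
\]
since $(1+\delta)(1-20\delta)-20\delta(1-\delta)=1-39\delta$.

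The same interpolation handles the interior third line of \eqref{decayest}. That line holds throughout the interior, so you do not need \eqref{decayest2}, which is in fact worse than the target when $k\gtrsim 2m$. There the powers of $2^{j+k}$ cancel exactly, so no crossover bookkeeping is needed. If you do split, the crossover is at $2^j\simeq\spa t\,2^{k}$, not $\spa t\,2^{-k}$; with your choice the range $m-\abs k<j<m+\abs k$ is left uncovered for either sign of $k$.

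\paragraph{The case $n>0$.} The trivial bound must use that $A^{(j)}_n$ confines $\mao f$ to an annulus of measure $\simeq 2^{-n}$, which gives $\epsl_1 2^{-(1-20\delta)j}2^{-18\delta n}$. With the factor $2^k$ instead, you keep $2^{(1/2-18\delta)n}$, and the interior case fails for $m<j\lesssim 2m$. With the correct bound, interpolating against the fourth line of \eqref{decayest} gives exactly $\spa t^{-(1-20\delta)}2^{-18\delta n}$.

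In the exterior, your alternative argument is invalid. The fourth line comes from integrating by parts in the region $\abs x/t<2^{-10}2^k$ and is unavailable when $\abs x/t\ge 2^{-10}2^k$; a split in $j$ does not change which region $x$ lies in.

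Your first option, interpolating with \eqref{decay1.2}, is the right one, and it is genuinely needed. The stated exterior line $\spa t^{-1}2^{-\delta j}2^{5\delta n}$, interpolated only with the trivial bound, yields just $2^{-16\delta n}$. By contrast, \eqref{decay1.2} holds in the exterior and, for $n>0$, reads $\spa t^{-1/2}2^{-(1/2-21\delta)j}2^{-(1/2+17\delta)n}$. Putting weight $40\delta$ on it gives $\spa t^{-(1-20\delta)}2^{-(21\delta-880\delta^2)j}2^{-(15\delta+880\delta^2)n}$. Using $j\ge n-1$, this is $\lle\spa t^{-(1-20\delta)}2^{-36\delta n}$.

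For $V_k$, the sum over $(j,n)$ is then harmless. In each regime the two competing bounds grow or decay geometrically in $j$, and the $n>0$ pieces carry the factor $2^{-18\delta n}$.
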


We will use the following two interpolation inequalities from \cite[Section~2.4]{arXiv:2510.18655}.
\begin{lemma}\label{angularinte1}
Assume that $M,N \in \bN$, $M \le N$, and $N \ge 1$.
\[
	\norm{\Omega^M f}_\infty \lle \norm{f}^{1-\theta}_\infty\norm{\spa{D}^2\Omega^{[0,N]} f}_2^\theta,
	\]
	where $\theta=M/N$.
\end{lemma}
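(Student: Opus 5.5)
The plan is to reduce to a one-dimensional interpolation in the angular variable, then combine a pointwise Gagliardo--Nirenberg/Fourier-series bound in $\theta$ with a Sobolev embedding in the radial variable.

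First, set up polar coordinates $x=r\theta$ and view $\Omega$ as $\ppd\theta$ on each circle. For fixed $r$, write $f(r,\cdot)=\sum_m f_m(r)\me^{\mi m\theta}$, so that $\Omega^M f$ has coefficients $(\mi m)^M f_m$. The one-dimensional Gagliardo--Nirenberg inequality on $\bS^1$ then gives
\[
\norm{\ppd\theta^M g}_{L^\infty_\theta}\lle \norm{g}_{L^\infty_\theta}^{1-M/N}\norm{\ppd\theta^{[0,N]}g}_{L^\infty_\theta}^{M/N}.
\]
The standard form of this inequality allows additive lower-order terms, but these are absorbed because $\ppd\theta^{[0,N]}$ already includes $g$ itself. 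This can be proved by splitting $g$ into low and high angular modes at a threshold $\Lambda_0$: bound the low part by Bernstein, $\Lambda_0^M\norm{g}_\infty$, and the high part by $\Lambda_0^{M-N}\norm{\ppd\theta^N g}$, then optimize in $\Lambda_0$. The low/high splitting loses only a constant when a smooth Fejér-type (de la Vallée Poussin) kernel is used, since such a kernel has uniformly bounded $L^1$ norm. The cases $M=0$ and $M=N$ are trivial.

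Second, take the supremum over $r$. This yields
\[
\norm{\Omega^M f}_\infty\lle \norm{f}_\infty^{1-\theta}\sup_r\norm{\Omega^{[0,N]}f(r,\cdot)}_{L^\infty_\theta}^{\theta}.
\]
It then suffices to bound $\norm{\Omega^{b}f}_{L^\infty(\bR^2)}\lle \norm{\spa{D}^2\Omega^{b}f}_2$ for each $0\le b\le N$. This is the Sobolev embedding $H^{2}(\bR^2)\subset L^\infty$ (indeed $H^{1+}$ suffices), applied to the function $\Omega^b f$, using $\norm{\hat g}_1\lle\norm{\spa{\xi}^2\hat g}_2$ by Cauchy--Schwarz. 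No commutator appears, because the embedding is applied directly to $\Omega^b f$.

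The main obstacle is only technical. One must ensure that the circle interpolation constant is uniform in $r$, which holds because the inequality is scale-free in $\theta$. One must also ensure that the pointwise-in-$r$ supremum commutes correctly with the $L^\infty_\theta$ norms. Since both sides are taken in $L^\infty$ jointly, this is immediate: $\sup_r(a_r^{1-\theta}b_r^\theta)\le(\sup a_r)^{1-\theta}(\sup b_r)^\theta$. The delicate point is therefore the sharp no-log 1D interpolation in $L^\infty$, and I would prove it via the smooth Littlewood--Paley decomposition in the angular frequency $m$, summing the geometric series on both sides of $\Lambda_0$.
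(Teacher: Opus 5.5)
Your proof is correct. The paper gives no argument for this lemma; it imports the inequality from \cite[Section~2.4]{arXiv:2510.18655}, so there is no in-text proof to compare routes against.

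Your two-step argument is a complete, self-contained proof:
\begin{enumerate}
\item Interpolate in the angular variable on each circle $\{\abs{x}=r\}$. There $\Omega=\ppd\theta$ exactly, so the constants are uniform in $r$.
\item Apply $\norm{g}_\infty\lle\norm{\spa{D}^2 g}_2$ to each $\Omega^b f$. This is Cauchy--Schwarz on the Fourier side, using $\spa{\xi}^{-2}\in L^2(\bR^2)$.
\end{enumerate}
In fact it yields the stronger intermediate bound $\norm{\Omega^M f}_\infty\lle\norm{f}_\infty^{1-\theta}\norm{\Omega^{[0,N]}f}_\infty^{\theta}$.

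You can shorten the first step. The function $\theta\mapsto f(r\cos\theta,r\sin\theta)$ is a bounded $2\pi$-periodic function on $\bR$, so the classical Landau--Kolmogorov inequality on the line applies to it directly. That gives the circle estimate with no lower-order term. This makes the Littlewood--Paley and de la Vall\'ee Poussin construction unnecessary, along with the implicit restriction $\Lambda_0\ge 1$ on the cutoff.
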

\begin{lemma}
Let $s_0, s \in \bR$ and $N_0, N \in \bN$ with $s>0$ and $N_0>N$. Suppose that $s_0 \ge \frac{N_0}{N_0-N}s$. Then
	\[
	\norm{\spa{D}^{s}\Omega^N f}_2 \lle \norm{\spa{D}^{s_0}f}_2+\norm{\Omega^{N_0}f}_2.
	\]
\end{lemma}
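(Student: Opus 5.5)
The plan is to pass to the Fourier side and expand in angular Fourier modes. Both operators in the inequality are then diagonal, so it reduces to a pointwise inequality between weights.

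First, the rotation vector field $\Omega=x_1\ppd{x_2}-x_2\ppd{x_1}$ commutes with the Fourier transform, in the sense that $\FF(\Omega f)=\Omega_\xi \mao f$. Radial multipliers such as $\spa{D}^{s}$ commute with $\Omega$. By Plancherel,
\[
\norm{\spa{D}^s\Omega^N f}_2\simeq \norm{\spa{\xi}^s\Omega_\xi^N\mao f}_{L^2}.
\]
In polar coordinates $\xi=r(\cos\theta,\sin\theta)$ I expand $\mao f(r,\theta)=\sum_{m\in\bZ}a_m(r)\me^{\mi m\theta}$. Since $\Omega_\xi=\ppd\theta$, Parseval in $\theta$ gives
\[
\norm{\spa{D}^s\Omega^N f}_2^2\simeq\sum_{m}\int_0^\infty \spa{r}^{2s}\abs{m}^{2N}\abs{a_m(r)}^2\,r\dif r .
\]
In the same way, $\norm{\spa{D}^{s_0}f}_2^2\simeq\sum_m\int\spa{r}^{2s_0}\abs{a_m}^2 r\dif r$ and $\norm{\Omega^{N_0}f}_2^2\simeq\sum_m\int\abs{m}^{2N_0}\abs{a_m}^2 r\dif r$.

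It therefore suffices to prove the pointwise bound
\[
\spa{r}^{2s}\abs{m}^{2N}\le \spa{r}^{2s_0}+\abs{m}^{2N_0}\qquad (r\ge0,\ m\in\bZ).
\]
Set $\theta=N/N_0\in[0,1)$. The hypothesis $s_0\ge \frac{N_0}{N_0-N}s$ says exactly that $s\le(1-\theta)s_0$. Since $\spa r\ge1$ and $s>0$, this gives $\spa{r}^{2s}\le(\spa{r}^{2s_0})^{1-\theta}$; this step is where positivity of $s$ and the lower bound $\spa r\ge 1$ are used. Also $\abs m^{2N}=(\abs m^{2N_0})^{\theta}$. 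Weighted AM--GM (Young's inequality), $A^{1-\theta}B^{\theta}\le(1-\theta)A+\theta B\le A+B$, then yields the pointwise bound. The case $m=0$ is trivial when $N\ge1$. When $N=0$, $\theta=0$ and the bound is just $\spa r^{2s}\le\spa r^{2s_0}$.

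Summing in $m$, integrating in $r$, and taking square roots gives the claim, up to the harmless replacement $\sqrt{A+B}\le\sqrt A+\sqrt B$. There is no real obstacle here. The only point needing care is the correct identification of $\Omega$ with $\ppd\theta$ on the Fourier side, including the sign and normalization conventions, and these do not affect the $L^2$ norms.
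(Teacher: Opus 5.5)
Your proof is correct and complete. Since $\FF(\Omega f)=\Omega_\xi\mao f$ and $\Omega_\xi=\ppd\theta$ in polar coordinates, both sides are diagonal in the angular modes $a_m(r)\me^{\mi m\theta}$. The weighted AM--GM step
$\spa r^{2s}\abs m^{2N}\le (\spa r^{2s_0})^{1-N/N_0}(\abs m^{2N_0})^{N/N_0}\le \spa r^{2s_0}+\abs m^{2N_0}$
then gives the inequality for the squared norms with constant $1$, because the Plancherel and Parseval constants are the same on both sides.

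The paper gives no proof of this lemma. It only quotes it from \cite[Section~2.4]{arXiv:2510.18655}, so there is no in-paper argument to compare against. A natural alternative would apply the angular interpolation $\norm{\Omega^N g}_2\le\norm{g}_2^{1-N/N_0}\norm{\Omega^{N_0}g}_2^{N/N_0}$ to each Littlewood--Paley piece $g=P_kf$ and then sum in $k$ with Young's inequality. Your version runs the same mechanism pointwise in $(r,m)$, which removes the dyadic summation entirely.

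One small correction to your commentary: positivity of $s$ is not used in the step $\spa r^{2s}\le(\spa r^{2s_0})^{1-N/N_0}$. That step needs only $s\le(1-N/N_0)s_0$ and $\spa r\ge1$, so your argument in fact works for every real $s$ satisfying the hypothesis.
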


\begin{corollary}\label{decayestcom}
\[
\norm{\me^{\mi t \Lambda(D)} \Omega^{[0,N_2]}V_{k,j,n}(s)}_\infty
\]
\begin{numcases}{\lle \epsl_1}
\spa{t}^{-(1-\delta^2)}  2^{-4 k_+}2^{-\wan k}2^{-\delta \wan k},  &if $n=0$ and $j \le (1-\delta^2)m+k;$ \label{decayestg}\\
\spa{t}^{-(1-\delta^2)}  2^{4\delta n},  &if $n> 0$ and $j \le (1-\delta^2)m+k;$ \label{decayestgr}\\
\spa{t}^{-(1-21\delta)} 2^{-8k_+}2^{-(1-39\delta)\wan k}, &if $n=0;$ \label{decayestb}\\
\spa{t}^{-(1-21\delta)} 2^{-17\delta n}, &if $n>0$. \label{decayestbr}
\end{numcases}
\eqref{decayestb} also holds for $V_k$.
\end{corollary}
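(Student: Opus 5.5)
The bounds in Corollary~\ref{decayestcom} come from the decay bounds \eqref{decayest}–\eqref{decayest2} of Lemma~\ref{decaylemma}, after trading the $N_2$ rotational derivatives against the $N_3+100$ ones through Lemma~\ref{angularinte1}. The first step is the extra derivatives. For $a\le N_3$, Lemma~\ref{decaylemma} applies directly with $W=V(s)$, since \eqref{zest} gives $\norm{\Omega^{[0,N_2]}V(s)}_Z\lle\epsl_1$. For $N_3<a\le N_2$, I apply Lemma~\ref{angularinte1} to $g=\me^{\mi t\Lambda(D)}\Omega^{N_3}V_{k,j,n}(s)$; this works because $\Omega$ commutes with $\me^{\mi t\Lambda(D)}$, with $P_k$, with $A^{(j)}_n$ and with the radial cutoff $Q^{(k)}_j$. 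This gives
\[
\norm{\Omega^{a-N_3}g}_\infty\lle\norm{g}_\infty^{1-\theta}\norm{\spa{D}^2\Omega^{[0,N]}g}_2^{\theta}, \qquad \theta=(a-N_3)/N.
\]
I take $N$ of size $N_1$, so that $\theta\lle N_2/N_1\lle\delta^{0}\cdot 0.6$. To get $\theta$ genuinely small I need $N\gg N_2$; I would use $N\approx\delta^{-4}$ via the second interpolation lemma. That lemma bounds $\norm{\spa{D}^{2}\Omega^{N}g}_2$ by $\norm{\spa D^{s_0}g}_2+\norm{\Omega^{N_0'}g}_2$, and these are in turn controlled by $\epsl_1\spa t^{\delta^4}$ through \eqref{energyest}. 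The $L^2$ factor is then bounded crudely by $\epsl_1 2^{O(1)k_+}2^{O(j)}$ together with $\spa t^{\delta^4}$. The $L^\infty$ factor is bounded by \eqref{decayest}.

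The second step is to sum the four cases of \eqref{decayest}. For $n=0$ and $\abs{x}/t\ge 2^{-10}2^k$, the bound is $\spa t^{-1}2^{-9k_+}2^{-(1+\delta)k}2^{-(j+k)/7}$. Since $j+k\ge 0$ for $k\le 0$, this gives $2^{-\wan k(1+\delta)}$ when $k\le0$ and decay $2^{-10k_+}$ when $k>0$, which matches \eqref{decayestg}. For $\abs x/t<2^{-10}2^k$ under the hypothesis $j\le(1-\delta^2)m+k$, \eqref{decayest2} gives the rapidly decaying bound $\spa t^{-10}$. Hence \eqref{decayestg} and \eqref{decayestgr} hold up to the interpolation loss $\spa t^{O(\theta)}2^{O(\theta j)}$. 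Since $j\lle m$ there, this loss is at most $\spa t^{\delta^2}$ provided $\theta\ll\delta^2$. For \eqref{decayestb} and \eqref{decayestbr} there is no restriction on $j$. I would combine the interior bound $\spa t^{-1}2^{20\delta(j+k)}$ with the $L^2$-based bound $\norm{\me^{\mi t\Lambda}V_{k,j,n}}_\infty\lle 2^{k}\norm{V_{k,j,n}}_2\lle\epsl_12^{k}2^{-(1-20\delta)(j+k)}$. If $j+k\ge m$, the second bound already beats $\spa t^{-(1-20\delta)}$. Otherwise the factor $2^{20\delta(j+k)}\le \spa t^{20\delta}$, yielding $\spa t^{-(1-21\delta)}2^{-(1-39\delta)\wan k}$ after absorbing $2^{-\delta k}$. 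This is essentially \eqref{decaycor} with an extra $\delta$ spent on the interpolation loss. The $n>0$ analogue uses $2^{-18\delta n}$ and gives up one $\delta n$.

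The main obstacle is making sure the interpolation loss is affordable, because the $L^2$ norms grow like $2^{O(j)}$ and $\spa t^{\delta^4}$. I handle this by choosing $N$ so that $\theta\le N_2/N$ is at most $\delta^{3}$, which is admissible since $N_0,N_1\gg N_2\delta^{-1}$. I also restrict to $j\le 2m$: for $j>2m$ the $L^2$ bound alone gives $2^{-(1-20\delta)j}\ll\spa t^{-2}$. Finally, the statement for $V_k$ in \eqref{decayestb} follows by summing over $(j,n)$, using the geometric factors $2^{-(j+k)/7}$ and $2^{-\delta n}$ left over in each regime.
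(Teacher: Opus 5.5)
Your overall route is the one the paper's setup points to: Lemma~\ref{decaylemma} (or \eqref{decaycor}) for the first $N_3$ rotations, then \lemmaref{angularinte1} applied to $g=\me^{\mi t\Lambda(D)}\Omega^{N_3}V_{k,j,n}(s)$, with \eqref{energyest} controlling the top angular derivatives. However, the step you flag as the main obstacle rests on false parameter claims, so as written the interpolation loss is not controlled.

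\emph{The count of $\theta$.} In your own formula $\theta=(a-N_3)/N$ the numerator is at most $N_2-N_3=\flo{100/\delta}$, not $N_2$. So $\theta\lle N_2/N_1\approx 0.6$ is a miscount, and both proposed repairs fail.
\begin{enumerate}
\item $N\approx\delta^{-4}$ is inadmissible. The norm $\norm{\spa{D}^2\Omega^{[0,N]}g}_2$ involves $\Omega^{N_3+N}V$, but \eqref{energyest} controls angular derivatives only up to order $N_1\approx 10^3\delta^{-3}\ll\delta^{-4}$. The $\Lambda^{N_0}$ norm is Sobolev regularity, not angular regularity. The second interpolation lemma needs a rotational norm $\norm{\Omega^{N'}f}_2$ with $N'>N$ on its right-hand side, so it cannot create angular regularity.
\item The claim $N_1\gg N_2\delta^{-1}$ is false: $N_1\approx\tfrac53 N_2$. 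Hence $\theta\le\delta^3$ is out of reach.
\end{enumerate}
The correct count is $N\le N_1-N_3\approx 400\delta^{-3}$, which gives $\theta\le(100/\delta)/(400/\delta^{3})\approx\delta^2/4$. This is exactly the slack built into the exponent $1-\delta^2$ in \eqref{decayestg}--\eqref{decayestgr}, and into the loss $20\delta\to 21\delta$ when passing from \eqref{decaycor} to \eqref{decayestb}--\eqref{decayestbr}.

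\emph{The $L^2$ factor.} Since $\theta$ is only about $\delta^2/4$, the remaining budget in \eqref{decayestg} is about $\spa{t}^{3\delta^2/4}$. You therefore cannot afford an unspecified $2^{O(\theta j)}$ loss with $j\lle m$. Instead use the true $L^2$ bound, which has no $j$-growth. Here $\Omega$ commutes with $Q^{(k)}_j$, $A^{(j)}_n$, $P_k$ and $\me^{\mi t\Lambda(D)}$, and $V_{k,j,n}$ is localized at $\abs{\xi}\approx 2^k$. Interpolating in the angle between $\norm{\Lambda^\delta U}_2$ and $\norm{\Lambda^\delta\Omega^{N_1}U}_2$ then gives
\[
\norm{\spa{D}^2\Omega^{[0,N]}g}_2\lle 2^{2k_+}\sup_{b\le N_1}\norm{\Omega^b P_kU(s)}_2\lle\epsl_1 2^{2k_+}2^{-\delta k}\spa{t}^{\delta^4}.
\]
Call this bound $X$, and let $B$ be the relevant bound from \eqref{decayest}--\eqref{decayest2} or \eqref{decaycor}. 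Then $B^{1-\theta}X^{\theta}$ yields all four cases. For $k<0$ the extra factor is $2^{\theta k}\le 1$, so the low-frequency power is not worsened.

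\emph{Two smaller issues.}
\begin{enumerate}
\item Splitting at $j+k=m$ gives only $2^{-(1+\delta)\wan k}$ in the interior region, not $2^{-(1-39\delta)\wan k}$. The missing factor $2^{-40\delta k}$ is unbounded as $k\to-\infty$ and cannot be absorbed by $\spa{t}^{\delta}$. The correct mechanism is the weighted mean $A^{1-20\delta}B^{20\delta}$ of the interior bound $A$ and the Bernstein bound $B$, in which $j+k$ cancels exactly. So simply cite \eqref{decaycor}.
\item The $V_k$ statement does not follow by summing the final bounds in $j$. The factor $2^{-(j+k)/7}$ occurs only in the exterior region, the interior region leaves no $j$-decay at the level $\spa{t}^{-(1-20\delta)}$, and \eqref{decayestbr} carries no $j$-decay either. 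Instead, apply the interpolation directly to $\me^{\mi t\Lambda(D)}\Omega^{N_3}V_k$, using the $V_k$ version of \eqref{decaycor}.
\end{enumerate}
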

We have
\[
\norm{\me^{\mi t \Lambda(D)} \Omega^{[0,N_2]}V_{k,j,n}(s)}_\infty \lle \epsl_1 2^{(1-\delta)k}.
\]
By interpolating this bound with \eqref{decayestb}, we have the following decay estimate without loss of low frequencies.
\begin{corollary}\label{decayestlow}
\[
	\norm{\me^{\mi t \Lambda(D)} \Omega^{[0,N_2]}V_{k,j,n}(s)}_\infty \lle \epsl_1 \spa{t}^{-(1/2-2\delta)},
	\]
	if $k \le -10$.
	The above estimate also holds for $V_k$.
\end{corollary}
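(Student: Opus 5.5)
This corollary follows by interpolating the trivial bound $\norm{\me^{\mi t \Lambda(D)} \Omega^{[0,N_2]}V_{k,j,n}(s)}_\infty \lle \epsl_1 2^{(1-\delta)k}$, stated just above, against the decaying bound \eqref{decayestb} of Corollary~\ref{decayestcom}. For $k \le -10$ we have $k_+=0$ and $\wan k = k$. Writing $A:=2^{(1-\delta)k}$ and $B:=\spa{t}^{-(1-21\delta)}2^{-(1-39\delta)k}$, both bounds hold simultaneously. Hence for any $\theta\in[0,1]$,
\[
\norm{\me^{\mi t \Lambda(D)} \Omega^{[0,N_2]}V_{k,j,n}(s)}_\infty \lle \epsl_1 \min\{A,B\} \le \epsl_1 A^{1-\theta}B^{\theta}.
\]
For $n>0$ the bound \eqref{decayestbr} already gives $\spa t^{-(1-21\delta)}2^{-17\delta n}$, which is stronger than required. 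So only $n=0$ (and the $V_k$ version) needs work, and the cases are handled identically, since \eqref{decayestb} is stated to hold for $V_k$ as well.

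The first step is to justify the trivial bound itself. I would use Bernstein's inequality: $\norm{P_k g}_\infty \lle 2^{k}\norm{P_k g}_2$ in two dimensions, together with $L^2$ unitarity of $\me^{\mi t\Lambda(D)}$. The bootstrap energy bound \eqref{energyest} controls $\norm{\Lambda^{\delta}\Omega^{[0,N_2]}U}_2$ by interpolation between $\Omega^0$ and $\Omega^{N_1}$, giving $2^{-\delta k}\epsl_1\spa{s}^{\delta^4}$ at frequency $2^k$. Alternatively, the $Z$-norm with $j=k_-$ gives $\norm{P_kV}_2\lle \epsl_1 2^{-\delta k}$ up to harmless factors. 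Either way $\norm{P_k V}_\infty \lle \epsl_1 2^{(1-\delta)k}$, where any $\spa{s}^{\delta^4}$ factor is absorbed because the target exponent has room of order $\delta$. The localization $Q_j$, $A_n$ only costs constants, since these are bounded on $L^2$ and the frequency support stays comparable to $2^k$.

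Next I choose $\theta$ to cancel the powers of $2^k$:
\[
(1-\delta)(1-\theta) = (1-39\delta)\theta \iff \theta=\frac{1-\delta}{2-40\delta}.
\]
Then the time exponent is $\theta(1-21\delta) = \frac{(1-\delta)(1-21\delta)}{2(1-20\delta)}$. I need to check this is $\ge 1/2-2\delta$. Expanding, $(1-\delta)(1-21\delta)=1-22\delta+21\delta^2$ and $(1-4\delta)(1-20\delta)=1-24\delta+80\delta^2$, and the difference is $2\delta-59\delta^2>0$ for $\delta=10^{-5}$. This gives $\spa t^{-(1/2-2\delta)}$ uniformly in $k\le -10$.

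The only delicate point is the trivial bound: making sure the $\spa{s}^{\delta^4}$ growth and the $\Omega^{N_2}$ derivatives are controlled. I would take the $Z$-norm route, which has no time growth, and which is exactly how the $L^2$-based estimates in Lemma~\ref{fourierl2r} are obtained. The remaining steps are arithmetic.
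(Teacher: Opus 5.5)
Your proposal is correct and is the paper's argument: the paper also interpolates the Bernstein bound $\norm{\me^{\mi t \Lambda(D)} \Omega^{[0,N_2]}V_{k,j,n}(s)}_\infty \lle \epsl_1 2^{(1-\delta)k}$ with \eqref{decayestb}. Your choice $\theta=\frac{1-\delta}{2-40\delta}$ and the check $\frac{(1-\delta)(1-21\delta)}{2(1-20\delta)}\ge \frac12-2\delta$ spell out the arithmetic the paper leaves implicit. The $n>0$ case is vacuous anyway, since for $k\le -10$ the cutoff $A^{(j)}_n$ annihilates $V_{k,j,n}$.
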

\begin{corollary}\label{decaydecom}
For any $p \in \set{0,1,\cdots,N_2}$ and $k \in \bZ$,
\[
\me^{\mi t \Lambda(D)} \Omega^{p} V_{k}(s)=I^p_k(s,t)+II^p_k(s,t),	
\]
where
\begin{gather*}
	\norm{I^p_k(s,t)}_\infty \lle \epsl_1 2^{-(1+\delta)k}\spa{t}^{-(1-5\delta)},\\
	\norm{II^p_k(s,t)}_2 \lle \epsl_1 2^{-(5/6)k}\spa{t}^{-1/2-1/100}.
\end{gather*}
Moreover, we have a similar decomposition for $\me^{\mi t \Lambda(D)} \Omega^{[0,N_2]} V_{k,j,n}(s)$.
\end{corollary}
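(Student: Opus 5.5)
We decompose $\Omega^p V_k(s)=\sum_{(j,n)} V^p_{k,j,n}(s)$ (with $V^p=\Omega^pV$) and split the sum at the threshold $j\le J:=(1-\delta^2)m+k$. On each piece we place $\me^{\mi t\Lambda(D)}$, and we set
\[
I^p_k(s,t):=\sum_{j\le J}\me^{\mi t\Lambda(D)}V^p_{k,j,n}(s),\qquad II^p_k(s,t):=\sum_{j> J}\me^{\mi t\Lambda(D)}V^p_{k,j,n}(s).
\]
The idea is that well-localized pieces have good $L^\infty$ decay, while poorly localized pieces have small $L^2$ norm by the $Z$-norm weight $2^{(1-20\delta)(j+k)}$.

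\textbf{Step 1: the $L^\infty$ bound for $I^p_k$.} We use \eqref{decayestg} and \eqref{decayestgr} of \cororef{decayestcom}. For $n=0$ we get $\epsl_1\spa t^{-(1-\delta^2)}2^{-4k_+}2^{-\wan k}2^{-\delta\wan k}$. For $k\ge0$ this is $\lle 2^{-(1+\delta)k}$ up to the $2^{-4k_+}$ factor. For $k<0$ it equals $2^{-(1+\delta)k}$ exactly.

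For $n>0$ the bound is $\spa t^{-(1-\delta^2)}2^{4\delta n}$, and this only occurs for $k\simeq 0$. Since $n\le j+1\lle m$, summing over $n$ costs at most $2^{4\delta m}\lle \spa t^{4\delta}$. Summing over $j\le J$ costs a factor $m\lle \spa t^{\delta^2}$.

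Together these give $\norm{I^p_k}_\infty\lle \epsl_1 2^{-(1+\delta)k}\spa t^{-(1-5\delta)}$. If the summation over $n$ is too costly, we can instead use \eqref{decayestbr}, which has the summable factor $2^{-17\delta n}$ at the cost of $\spa t^{20\delta}$. That is worse, so we keep the first route. Note that $\delta^2+4\delta+\delta^2<5\delta$.

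\textbf{Step 2: the $L^2$ bound for $II^p_k$.} By unitarity and the definition of the $Z$-norm together with \eqref{zest},
\[
\norm{V^p_{k,j,n}}_2\lle \epsl_1 2^{-10k_+}2^{-\delta k}2^{-(1-20\delta)(j+k)}2^{(1/2-18\delta)n}.
\]
For $n>0$ we have $k\simeq 0$. The worst case is $n\approx j$, giving $2^{-(1/2-2\delta)j}$. Summing over $j>J$ yields $\lle 2^{-(1/2-2\delta)(1-\delta^2)m}$, which is short of the needed $\spa t^{-1/2-1/100}$.

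\textbf{Main obstacle.} The $n>0$ regime and the low-frequency prefactor are the difficulty. For $n>0$, $k\simeq0$, I would move large-$n$ pieces into $I$ instead. For $j>J$, \eqref{decayestbr} gives $\spa t^{-(1-21\delta)}2^{-17\delta n}$. This is fine for $I$ only if $21\delta\le5\delta$, which fails. So a sharper combination is needed: interpolate the $L^2$ bound with the $L^\infty$ bound, e.g. via Bernstein-type gains in the thin annulus $\abs{\abs\xi-2\gamma_0}\lle2^{-n}$. Alternatively, revisit \lemmaref{decaylemma}, whose bound $t^{-1}2^{-\delta j}2^{5\delta n}$ in the region $\abs x/t\ge 2^{-10}2^k$ decays in $j$. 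Together with the radial-region estimate \eqref{decayest2}, this should let the large-$j$, $n>0$ pieces enter $I$ with only $t^{\delta}$-type losses.

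For $n=0$ the $L^2$ bound summed over $j>J$ is
\[
2^{-10k_+}2^{-\delta k}2^{-(1-20\delta)(1-\delta^2)m}\lle 2^{-\delta k}\spa t^{-1+21\delta},
\]
which beats both $\spa t^{-1/2-1/100}$ and $2^{-5k/6}$ provided $2^{k}\gge \spa t^{-c}$. For extremely small $k$ we use instead $\norm{P_kV^p}_2\lle 2^{k}\norm{\widehat{V^p_k}}_\infty$, via \lemmaref{vinftyest} with $j=k_-$. This shows that the whole $V_k$ goes into $II$ when $2^k\lle \spa t^{-3/5}$, and into $I$ otherwise.

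The similar decomposition for $V_{k,j,n}$ is the single-term case of the same argument.
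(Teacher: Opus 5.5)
Your Step 1 is exactly the paper's first step, and you are right that a split purely at $j\le J=(1-\delta^2)m+k$ cannot work. The obstacle is left unresolved, though, and the remedy you sketch for the pieces with $j>J$, $n>0$ does not work as stated.

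First, \eqref{decayest2} is only available for $j\le(1-\delta^2)m+k$, i.e.\ exactly for the pieces already placed in $I$. For $j>J$, the interior region $\abs{x}/t<2^{-10}2^k$ only gives $\spa{t}^{-1}2^{20\delta j}2^{-18\delta n}$, which grows in $j$ when $n\ll j$. The exterior bound $\spa{t}^{-1}2^{-\delta j}2^{5\delta n}$ is of size $2^{4\delta n}$ when $n\approx j$, so it is not summable over unbounded $n$. Thus neither an $L^\infty$ nor an $L^2$ bound works on all of $\{j>J\}$.

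The missing idea is a genuinely two-parameter split in $(j,n)$. For $j>J$ and $n>100$, the paper puts $\me^{\mi t\Lambda(D)}\Omega^pV_{k,j,n}$ into $I$ when $n\le(9/8)m$ and $20j-18n\le 4m$. In that range both regional bounds of \eqref{decayest} are $\lesssim\spa{t}^{-1+9\delta/2}$, there are only $O(m^2)$ such pairs, and \lemmaref{angularinte1} upgrades from $N_3$ to $N_2$ rotations. Otherwise the piece goes into $II$. In the complementary range one has $j\ge(9/8)m$ or $n\le(10/9)j-(2/9)m$, and \eqref{zest} gives $\norm{V_{k,j,n}}_2\lesssim\epsl_1 2^{-(1-20\delta)j+(1/2-18\delta)n}$. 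This is $\lesssim\epsl_1 2^{-(9/16-3\delta)m}$, resp.\ $\lesssim\epsl_1 2^{-(4/9-20\delta)j-m/9}\lesssim\epsl_1 2^{-(5/9-21\delta)m}$, both better than $\spa{t}^{-1/2-1/100}$.

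Your $n=0$ analysis also contains a slip that leaves a range of $k$ uncovered. Since $j+k>J+k=(1-\delta^2)m+2k$, the sum over $j>J$ is $\lesssim\epsl_1 2^{-\delta k}2^{-(1-20\delta)((1-\delta^2)m+2k)}$, not $\epsl_1 2^{-\delta k}\spa{t}^{-1+21\delta}$. For $k<0$ this loses $2^{2(1-20\delta)\abs{k}}$, and it is admissible for $II$ only when $\abs{k}\lesssim 0.42\,m$.

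Your fallback $\norm{V_k}_2\lesssim 2^k\norm{\widehat{V_k}}_\infty\lesssim 2^{-\delta k}$ has no time decay. It puts $V_k$ into $II$ only for roughly $2^k\lesssim\spa{t}^{-0.61}$, so even $\spa{t}^{-3/5}$ is not quite enough, and the range $0.42\,m\lesssim\abs{k}\lesssim 0.61\,m$ is not handled.

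The paper instead moves the $j>J$ pieces into $I$ once $k\le-(2/5)m$, via Bernstein:
$\norm{\me^{\mi t\Lambda(D)}\Omega^pV_{k,j,n}}_\infty\lesssim 2^k\norm{\Omega^pV_{k,j,n}}_2\lesssim\epsl_1 2^{(1-\delta)k}2^{-(1-20\delta)(j+k)}$.
Since $j+k\ge\max\{0,(1-\delta^2)m+2k\}$, this is $\lesssim\epsl_1 2^{-(1+\delta)k}\spa{t}^{-(1-9\delta/2)}$ for such $k$; the factor $2^{40\delta k}\le 2^{-16\delta m}$ is what makes it work. For $k>-(2/5)m$ and $n\le 100$, your $L^2$ route with the corrected exponent does go through, and the exponent $5/6$ in the bound for $II$ is tuned exactly for this.
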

\begin{proof}
We may assume that $t \gg 1$. We decompose $\me^{\mi t \Lambda(D)} \Omega^{[0,N_2]} V_{k}$ as 
\[
\sum_{j,n \colon (k,j,n) \in \hti A} \me^{\mi t \Lambda(D)} \Omega^{[0,N_2]} V_{k,j,n}.
\]
The terms with $j \le (1-\delta^2)m+k$ can be absorbed into $I^p_k$ as a consequence of \eqref{decayestg} and \eqref{decayestgr}. For the terms with $j \geq (1-\delta^2)m+k$, when $k \le -(2/5)m$, we estimate
\[
\norm{\me^{\mi t \Lambda(D)} \Omega^{[0,N_2]} V_{k,j,n}}_\infty
		\lle  2^k \norm{\Omega^{[0,N_2]} V_{k,j,n}}_2
		\lle \epsl_1 2^k2^{-(1-20\delta)(j+k)}2^{-\delta k},
\]
and hence these terms can be absorbed into $I^p_k$. When $k>-(2/5)m$, for the terms with $n \le 100$, we have
	\[
		\norm{\me^{\mi t \Lambda(D)} \Omega^{[0,N_2]} V_{k,j,n}}_2 \lle \epsl_1 2^{-(1-20\delta)(j+k)}2^{-\delta k}2^{-10k_+}.
	\]
	Therefore these terms can be absorbed into $II^p_k$. For the terms with $n>100$, if $n \le (9/8)m$ and $20j-18n \le 4m$, then \eqref{decayest}, together with \lemmaref{angularinte1}, yields the desired estimate. For the other cases, we have $j \ge (9/8)m$ or $n \le (10/9)j-(2/9)m$. The estimate \eqref{zest} then shows that these terms can be absorbed into $II^p_k$.
\end{proof}

\subsection{Reformulation for Dispersive Estimates}\label{dispersiveformu}
For \eqref{meq}, we extract linear terms as follows:
\begin{align*}
	\rho_t-\psi&=-R[\Lambda \rho R\psi],\\
	\psi_t+(1-\Delta)\rho+\Delta^2 \rho&=-\frac 12 \Lambda \abs{R\psi}^2+\Lambda[(m(\Lambda \rho)-1)\Delta\Lambda \rho] +\frac 12 \Lambda[m'(\Lambda \rho) \abs{\nabla \Lambda \rho}^2].
\end{align*}
Let
\[
U=\rho-\mi q(D) \psi,
\]
where
\[
q(\xi)=\frac 1 {\sqrt{\abs{\xi}^4+\abs{\xi}^2+1}}.
\]
Then we have
\begin{equation}\label{dispereq}
	U_t-\mi \Lambda(D) U=T_{\gti m_q}(U^{\iota_1},U^{\iota_2})+T_{\gti m_c}(U^{\iota_1},U^{\iota_2},U^{\iota_3})+R,
\end{equation}
where
\begin{itemize}
	\item The multiplier $\gti m_q$ has the generic form
	\[
	\gti m_q(\xi,\eta)=f_1(\xi) f_2(\xi-\eta)f_3(\eta),
	\]
	where $f_i \in \gti{S}^{4}$, $\norm{\gti m_q}_{S_{k,k_1,k_2}} \lle 2^{\cl k}2^{4\cl k_+}$, and $\Omega_{\xi,\eta}\gti m_q=0$;
	\item The multiplier $\gti m_c$ has the generic form
	\[
	\gti m_c(\xi,\eta,\sigma)=f_1(\xi) f_2(\xi-\eta) f_3(\eta-\sigma) f_4(\sigma),
	\]
	where $f_i \in \gti{S}^{4}$, $\norm{\gti m_c}_{S_{k,k_1,k_2,k_3}} \lle 2^{k}2^{k_1}2^{k_2}2^{k_3}2^{2\cl k_+}$, and $\Omega_{\xi,\eta,\sigma}\gti m_c=0$;
	\item The remainder $R$ has the following structure:
	\[
	R=m(D) [(D^{\alpha_1} \Lambda \rho) (D^{\alpha_2} \Lambda \rho) (D^{\alpha_3} \Lambda \rho) (D^{\alpha_4} \Lambda \rho) f(\Lambda \rho)],
	\]
	where $m \in \gti S^0$ satisfies $\Omega m=0$, $0 \le \abs{\alpha_i} \le 2$ for $1 \le i \le 4$, and $f \in C^\infty(\bR)$.
\end{itemize}
Indeed, the right-hand side is a sum over multiple choices of terms $T_{\gti m_q}(U^{\iota_1},U^{\iota_2})$,\\ $T_{\gti m_c}(U^{\iota_1},U^{\iota_2},U^{\iota_3})$, and $R$; for brevity, we display only one representative term. In dispersive estimates, the multipliers are typically products of factors in $\gti S^{100}$. For later use, we refer to a factor of the form $f(\xi_i)$ as a $\xi_i$-factor.

By Duhamel's formula, we have
\begin{multline*}
U(t)=\me^{\mi t \Lambda(D)}U(0)\\+\int_0^t \me^{\mi (t-s)\Lambda(D)} (T_{\gti m_q}(U^{\iota_1}(s),U^{\iota_2}(s))+T_{\gti m_c}(U^{\iota_1}(s),U^{\iota_2}(s),U^{\iota_3}(s))+R(s))\dif s.
\end{multline*}
Accordingly,
\begin{equation}\label{disperformula}
V(t)=V(0)+\BB_{\gti m_q}(V^{\iota_1},V^{\iota_2})+\TT_{\gti m_c}(V^{\iota_1},V^{\iota_2},V^{\iota_3})+\int_0^t \me^{-\mi s\Lambda(D)} R(s)\dif s,	
\end{equation}
which is the form we shall use to derive the dispersive estimates.

\subsection{Paralinearization}\label{paralinear}
We use Weyl quantization in the energy estimates. For $a=a(x,\xi)$, define
\[
\FF T_a f(\xi)=\int \mao a \Brac{\xi-\eta,\frac{\xi+\eta}2} \fai_{\le -10} \Brac{\frac{\abs{\xi-\eta}}{\abs{\xi+\eta}}} \mao f(\eta) \dif \eta
\]
where $\mao a$ denotes the Fourier transform of $a$ in the $x$-variable. We recall several properties of Weyl calculus. We define the Poisson bracket by
\[
\set{a,b} = \nabla_x a \cdot \nabla_{\xi}b - \nabla_x b \cdot \nabla_{\xi} a,
\]
and the symbol bound by
\[
\norm{a}^{l_1,l_2}_{m,p}=\sup_{\xi,\abs \alpha \le l_1, \abs \beta \le l_2} \spa{\xi}^{-m}\abs{\xi}^{\abs{\alpha}}\norm{D_\xi^\alpha D_x^\beta a(x,\xi)}_{L^p_x}.
\]
Moreover, define
	\begin{gather*}
	  E(a,b)=T_aT_b-T_{ab}-\tfrac{\mi}2 T_{\set{a,b}},\\
	  E(a,b,c)=T_aT_bT_c-T_{abc}-\tfrac{\mi}2 T_{\set{a,b}c+\set{b,c}a+\set{a,c}b}.
	\end{gather*}
Then the following properties hold:
\begin{itemize}
	\item $\norm{P_k T_a f}_p \lle 2^{mk_+}\norm{a}^{10,0}_{m,q} \norm{P_{[k-2,k+2]} f}_r$, where $1/p=1/q+1/r$ and $p,q,r \in [1,\infty]$.
	\item $\norm{P_kE(a,b)f}_2 \lle 2^{(m_1+m_2-2)k}\norm{a}^{20,2}_{m_1,\infty} \norm{b}^{20,2}_{m_2,\infty} \norm{P_{[k-5,k+5]}f}_2$ for $k \ge -100$.
	\item $T_a^*f=T_{\cl a}f$, where $*$ denotes the adjoint operator.
	\item $fg=T_f g+T_g f+R(f,g)$, where $R(f,g)=T_{\gti m}(f,g)$ with $\gti m$ satisfying $\norm{\gti m}_{S_{k,k_1,k_2}} \lle 2^{100(k_1 \wedge k_2)_+}2^{-100(k_1 \vee k_2)_+}$ and $\Omega_{\xi,\eta} \gti m=0$.
	\item Let $F$ be defined on $(-1,1)$, and satisfy that $F(0)=F'(0)=F''(0)=0$ and $\abs{D^{[0,n_1+100]}F} \le M$. Suppose that $\norm{\Omega^{[0,n_2/2]}u}_\infty \le 1/2$. Then $F(u)=T_{F'(u)}u+R$ and
	\begin{equation*}
	\norm{\spa{D}^{n_1+2}R}_2+\norm{\spa{D}^2\Omega^{n_2}R}_2 \lle \norm{\Omega^{[0,n_2/2]}u}^2_{W^{5,\infty}}(\norm{\spa{D}^{n_1}u}_2+\norm{\spa{D}^{-1}\Omega^{[0,n_2]}u}_2).
	\end{equation*}
	\item For $E(m(\xi),a), E(a,m(\xi)), E(m(\xi),a,n(\xi))$, we can compute them by applying the Taylor expansion to the $\xi$-dependent symbols. For instance,
	\begin{align*}
		\FF(E(m(\xi),a)f)={}&\int \Brac{\frac 12 D^2 m  \Brac{\frac{\xi+\eta}2} \SBrac{\frac{\xi-\eta}2,\frac{\xi-\eta}2}+R}\\ 
		&\mao a \Brac{\xi-\eta,\frac{\xi+\eta}2} \fai_{\le -10} \Brac{\frac{\abs{\xi-\eta}}{\abs{\xi+\eta}}}\mao f(\eta) \dif \eta.
	\end{align*}
\end{itemize}
In addition to these properties, \lemmaref{extrderisym} is used to handle terms such as $T_{\Lambda \Omega^{N_1}f}g$. We refer to \cite[Section~3.2]{MR3665671} for a detailed discussion of Weyl calculus. The approach in this subsection mainly follows \cite{MR3784694}.

Let
\begin{gather*}
\Sigma=\sqrt{(\Lambda \rho+1)[m(\Lambda \rho) \abs{\xi}^4+\abs{\xi}^2+1]},\\
B=m(\Lambda \rho) \abs{\xi}^4+\abs{\xi}^2+1,
\end{gather*}
and set
\[
\hti U=T_{\Sigma^{-1}}T_{B^{1/2}}\rho-\mi T_{B^{-1/2}}\psi.
\]
Then we have
\begin{gather*}
\norm{P_{\ge -10}\Lambda^{N_0}(\hti U(t)-U(t))}_2 \lle \epsl_1^2 \spa{t}^{-2/3},\\
\norm{P_{\ge -10}\Lambda^{\delta}\Omega^{N_1}(\hti U(t)-U(t))}_2 \lle \epsl_1^2 \spa{t}^{-2/3}.
\end{gather*}
We aim to prove that
\begin{equation}\label{energymain}
	\hti U_t-\mi T_\Sigma \hti U+T_{S} \hti U=\hti Q+\hti C,
\end{equation}
where $S=R\psi \cdot \mi \xi-\frac{1}{2}\mi \Delta \Lambda \rho$ and
\begin{itemize}
	\item $\hti Q=T_{\gti d}(U^{\iota_1},U^{\iota_2})+T_{\gti m}(U^{\iota_1},U^{\iota_2})$;
	\item The multipliers $\gti d, \gti m$ are of the form
	\[
	\gti m_0(\xi,\eta) f_1(\xi) f_2(\xi-\eta) f_3(\eta),
	\]
	where $\gti m_0 \in \gti M^0$ and $f_1,f_2,f_3 \in \gti S$;
	\item $\Omega_{\xi,\eta}\gti d=0$ and $\Omega_{\xi,\eta}\gti m=0$;
	\item $\norm{\gti d}_{S_{k,k_1,k_2}} \lle 2^{k_1 \wedge k_2}(1+2^{5(k_1 \wedge k_2)})1_{k_1 \ll k_2}$;
	\item If $\iota_2=+$, then
	 \[
	 \gti d=f_1(\xi-\eta) f_2(\xi+\eta) \chi\Brac{\frac{\abs{\xi-\eta}}{\abs{\xi+\eta}}}(\mao{\xi-\eta} \cdot \mao{\xi+\eta}),
	\]
	where $f_1\in \gti S^{10}$, $f_2 \in \gti S^0$, $\chi \in C_0^\infty(\bR)$, and $\supp \chi \subset [-1/100,1/100]$;
	\item $\norm{\gti m}_{S_{k,k_1,k_2}} \lle 2^{k_1 \wedge k_2}(1+2^{5(k_1 \wedge k_2)})/(1+2^{k_1 \vee k_2})$;
	\item $\norm{P_{\ge 0}\Lambda^{N_0}\hti C(t)}_2+\norm{P_{\ge 0} \Lambda^\delta \Omega^{N_1} \hti C(t)}_2 \lle \varepsilon_1^3 \spa{t}^{-1-1/100}$.
\end{itemize}
Indeed, the term $\hti Q$ is a sum over multiple choices of terms $T_{\gti d}(U^{\iota_1},U^{\iota_2})$ and $T_{\gti m}(U^{\iota_1},U^{\iota_2})$; for brevity, we display only one representative term. We call factors of the form $(\mao{\xi-\eta} \cdot \mao{\xi+\eta})$  \emph{depletion factors}; they are crucial in our analysis since such factors provide additional decay when the modulation is small.

We next perform paralinearization for \eqref{meq}. For \eqref{eqrho}, a term is called acceptable if it can be decomposed into $\hti Q+\hti C$ in \eqref{energymain}; such terms will be denoted by $\hti R_1$. Similarly, for \eqref{eqpsi}, a term $\hti R$ is called acceptable if $T_{B^{-1/2}}\hti R$ can be decomposed into $\hti Q+\hti C$ in \eqref{energymain}; these terms will be denoted by $\hti R_2$. After paralinearization, we obtain that
\begin{align*}
	&\rho_t+T_{\mi \mao \xi} T_{\Lambda \rho+1}T_{\mi \mao \xi} \psi+T_{\mi \mao \xi} T_{R \psi}T_{\abs{\xi}}\rho=\hti R_1,\\
	&\begin{multlined}[t]
	\psi_t+T_{\abs{\xi}}T_{R\psi}T_{\mi \mao \xi}\psi+T_{1+\abs{\xi}^2} \rho\\- T_{\abs{\xi}}[T_{m(\Lambda \rho)}T_{-\abs{\xi}^2}\Lambda \rho+T_{\Delta \Lambda \rho}m(\Lambda \rho)+T_{m'(\Lambda \rho)}T_{\nabla \Lambda \rho} T_{\mi \xi}\Lambda \rho]=\hti R_2.
	\end{multlined}	
\end{align*}
The verification of the acceptable terms is routine but lengthy. For the Korteweg tensor part, we compute
\begin{align*}
	&T_{\abs{\xi}}[T_{m(\Lambda \rho)}T_{-\abs{\xi}^2}\Lambda \rho+T_{\Delta \Lambda \rho}m(\Lambda \rho)+T_{m'(\Lambda \rho)}T_{\nabla \Lambda \rho} T_{\mi \xi}\Lambda \rho]\\
	={}&T_{\abs{\xi}}[T_{-m(\Lambda \rho) \abs{\xi}^2}\Lambda \rho+T_{-m'(\Lambda \rho)\nabla \Lambda \rho \cdot \mi \xi}\Lambda \rho +E(m(\Lambda \rho),-\abs{\xi}^2)\Lambda \rho- T_{\Delta \Lambda \rho} \Lambda \rho\\
	&+T_{m'(\Lambda \rho)}T_{\nabla \Lambda \rho \cdot \mi \xi}\Lambda \rho+T_{m'(\Lambda \rho)}T_{-\Delta \Lambda \rho/2}\Lambda \rho]+\hti R_2\\
	={}&T_{\abs{\xi}}[T_{-m(\Lambda \rho) \abs{\xi}^2}\Lambda \rho+E(-\Lambda \rho,-\abs{\xi}^2)\Lambda \rho- T_{\Delta \Lambda \rho} \Lambda \rho+\tfrac 12 T_{\Delta \Lambda \rho} \Lambda \rho]+\hti R_2\\
	={}&T_{\abs{\xi}}T_{-m(\Lambda \rho)\abs \xi^2}T_{\abs{\xi}}\rho-\tfrac{3}4 T_{\Delta \Lambda \rho \abs{\xi}^2}\rho+\hti R_2.
\end{align*}
The terms $T_{\mi \mao \xi} T_{\Lambda \rho+1}T_{\mi \mao \xi} \psi$, $T_{\mi \mao \xi} T_{R \psi}T_{\abs{\xi}}\rho$, $T_{\abs{\xi}}T_{R\psi}T_{\mi \mao \xi}\psi$, and $T_{\abs{\xi}}T_{-m(\Lambda \rho)|\xi|^2}T_{\abs{\xi}}\rho$ can be computed in a similar way, so we only discuss the last one. The key is to compute
\[
E(\abs{\xi},-m(\Lambda \rho)\abs \xi^2,\abs \xi) \rho.
\]
Since $m(0)=1$ and $m'(0)=-1$, we have
\begin{align*}
	&E(\abs{\xi},-m(\Lambda \rho)\abs \xi^2,\abs \xi) \rho=E(\abs{\xi},\Lambda \rho \abs \xi^2,\abs \xi) \rho+\hti R_2\\
={}& T_{\rho,\rho}\Bigl[\Bigl(\mao{\tfrac{\xi+\eta}2}^\perp \cdot \tfrac{\xi-\eta}2 \Bigr)^2 \abs{\xi-\eta}\Abs{\tfrac{\xi+\eta}2}^2 \chi \Brac{\tfrac{\abs{\xi-\eta}}{\abs{\xi+\eta}}}\Bigr]
- T_{\rho,\rho}\Bigl[\Bigl(\mao{\tfrac{\xi+\eta}2} \cdot \tfrac{\xi-\eta}2 \Bigr)^2 \abs{\xi-\eta}\Abs{\tfrac{\xi+\eta}2}^2 \chi \Brac{\tfrac{\abs{\xi-\eta}}{\abs{\xi+\eta}}}\Bigr]\\
&+\hti R_2\\
={}&-\tfrac{1}4 T_{\Delta \Lambda \rho \abs \xi^2} \rho+\hti R_2.
\end{align*}
Here we emphasize the multiplier in the bilinear operator $T$ by setting
\begin{align*}
	\hti F (T_{f,g}(\gti m))(\xi) = \int \gti m(\xi,\eta)\mao{f}(\xi-\eta)\mao{g}(\eta)\dif \eta.
\end{align*}
The formulas
\begin{gather*}
	D\abs{\xi} = \mao{\xi}, \quad 	D^2\abs{\xi} = \frac{1}{|\xi|} \mao{\xi}^{\perp} \otimes \mao{\xi}^{\perp},\\
	D\mao{\xi} = \frac{1}{|\xi|} \mao{\xi}^{\perp} \otimes \mao{\xi}^{\perp}, \quad
	D^2\mao{\xi} = -\frac{1}{|\xi|^2}(\mao{\xi} \otimes \mao{\xi}^\perp \otimes \mao{\xi}^\perp + \mao{\xi}^\perp \otimes \mao{\xi} \otimes \mao{\xi}^\perp + \mao{\xi}^\perp \otimes \mao{\xi}^\perp \otimes \mao{\xi}),
\end{gather*}
are useful in the computation.

It follows that
\begin{gather*}
	\rho_t-T_{\Lambda \rho+1}\psi-\tfrac 12 T_{\Delta \Lambda \rho \abs{\xi}^{-2}}\psi+T_{R\psi \cdot \mi \xi} \rho-T_{C}\rho=\hti R_1,\\
	\psi_t+T_{B}\rho+T_{\Delta \Lambda \rho \abs \xi^2}\rho+T_{R\psi \cdot \mi \xi}\psi+T_C \psi=\hti R_2,
\end{gather*}
where $C=(1/2) \Lambda \psi$.

We next verify \eqref{energymain}. We use $\hti R$ to denote a generic term of the form $\hti Q+\hti C$, with $\hti Q$ and $\hti C$ as in \eqref{energymain}. We compute
\begin{align*}
	&\hti U_t-\mi T_\Sigma \hti U+T_S \hti U\\
	={} &T_{\Sigma^{-1}}T_{B^{1/2}} \rho_t-\mi T_{B^{-1/2}}\psi_t-\mi T_\Sigma T_{\Sigma^{-1}}T_{B^{1/2}}\rho-T_\Sigma T_{B^{-1/2}}\psi \\
	&+	T_S(T_{\Sigma^{-1}}T_{B^{1/2}}\rho-\mi T_{B^{-1/2}}\psi)+
	  T_{(\Sigma^{-1})_t}T_{B^{1/2}} \rho+T_{\Sigma^{-1}}T_{(B^{1/2})_t} \rho-\mi T_{(B^{-1/2})_t}\psi\\
	={} &\underbrace{(T_{\Sigma^{-1}}T_{B^{1/2}}T_{\Lambda \rho+1}\psi-T_\Sigma T_{B^{-1/2}}\psi)}_{(1)}+\underbrace{\mi (T_{B^{-1/2}}T_B\rho- T_\Sigma T_{\Sigma^{-1}}T_{B^{1/2}}\rho)}_{(2)}\\
	&+\underbrace{[-T_{\Sigma^{-1}}T_{B^{1/2}}T_{R\psi \cdot \mi \xi}\rho+\mi T_{B^{-1/2}}T_{R\psi \cdot \mi \xi}\psi+T_{R\psi \cdot \mi \xi}(T_{\Sigma^{-1}}T_{B^{1/2}}\rho-\mi T_{B^{-1/2}}\psi)]}_{(3)}\\
	&+\underbrace{(T_{\Sigma^{-1}}T_{B^{1/2}}T_C\rho+\mi T_{B^{-1/2}}T_C\psi)}_{(4)}+\underbrace{[\tfrac 12 T_{\Sigma^{-1}}T_{B^{1/2}}T_{\Delta \Lambda \rho \abs{\xi}^{-2}}\psi+\mi T_{B^{-1/2}}T_{\Delta \Lambda \rho \abs{\xi}^2}\rho]}_{(5)}\\
	&+\underbrace{T_{(\Sigma^{-1})_t}T_{B^{1/2}} \rho+T_{\Sigma^{-1}}T_{(B^{1/2})_t} \rho-\mi T_{(B^{-1/2})_t}\psi}_{(6)}+T_{-(1/2)\mi \Delta \Lambda \rho}\hti U+\hti R.
\end{align*}
We expand
\begin{gather*}
\Sigma=\Sigma_0(1+\Sigma^1+\Sigma^{\ge 2}),\\
B=B_0(1+B^1+B^{\ge 2}),
\end{gather*}
where
\begin{gather*}
	\Sigma_0=B_0^{1/2}=\sqrt{\abs \xi^4+\abs \xi^2+1},\\
	\Sigma^1=\frac 12 \frac{\Lambda \rho (\abs \xi^2+1)}{\abs \xi^4+\abs \xi^2+1},\\
	B^1=\frac{-\Lambda \rho \abs{\xi}^4}{\abs \xi^4+\abs \xi^2+1}.
\end{gather*}
We now treat each group of terms. The zeroth- and first-order terms in the expansion of the Moyal product are straightforward, so we focus on the second-order terms. For (1),
\begin{align*}
	&T_{\Sigma^{-1}}T_{B^{1/2}}T_{\Lambda \rho+1}\psi-T_\Sigma T_{B^{-1/2}}\psi\\
	={} &E(\Sigma^{-1},B^{1/2},\Lambda \rho +1)\psi-E(\Sigma,B^{-1/2})\psi\\
	={} &E(\Sigma^{-1},B^{1/2},\Lambda \rho)\psi+E(\Sigma^{-1},B^{1/2})\psi-E(\Sigma,B^{-1/2})\psi\\
	={} &E(\Sigma_0^{-1},B_0^{1/2},\Lambda \rho)\psi+E(\Sigma_0^{-1}(1-\Sigma^1),B_0^{1/2}(1+\tfrac{1}{2}B^1))\psi\\
	&-E(\Sigma_0(1+\Sigma^1),B_0^{-1/2}(1-\tfrac{1}{2}B^1))\psi+\hti R\\
	={} &\tfrac 12 E(\Sigma_0^{-1},B_0^{1/2}B^1)\psi+\tfrac 12 E(\Sigma_0,B_0^{-1/2}B^1)\psi-E(\Sigma_0^{-1}\Sigma^1,B_0^{1/2})\psi-E(\Sigma_0 \Sigma^1,B_0^{-1/2})\psi+\hti R.
\end{align*}
Note that
\[
D^2[f(\abs \xi)]=f''(\abs \xi) \mao \xi \otimes \mao \xi+\tfrac{f'(\abs \xi)}{\abs \xi} \mao \xi^\perp \otimes \mao \xi^\perp.
\]
The first part contains depletion factors, so we focus on the second part. With this observation, it follows from a simple computation that
\[
T_{\Sigma^{-1}}T_{B^{1/2}}T_{\Lambda \rho+1}\psi-T_\Sigma T_{B^{-1/2}}\psi=\hti R.
\]
Similarly, for (2),
\begin{align*}
	&T_{B^{-1/2}}T_B\rho- T_\Sigma T_{\Sigma^{-1}}T_{B^{1/2}}\rho\\
	={} &E(B^{-1/2},B)\rho-E(\Sigma,\Sigma^{-1})T_{B^{1/2}}\rho\\
	={} &E(B_0^{-1/2}(1-\tfrac{1}{2}B^1),B_0(1+B^1))\rho-E(\Sigma_0(1+\Sigma^1),\Sigma_0^{-1}(1-\Sigma^1))T_{B_0^{1/2}}\rho+\hti R\\
	={} &-\tfrac 12 T_{\Delta \Lambda \rho}\rho+\hti R.
\end{align*}
Then it is easy to check that $(3)=\hti R$ and $(4)=T_C\cl{\hti U}+\hti R$. Moreover, $(5)=\tfrac 34 T_{\mi \Delta \Lambda \rho}\hti U+\hti R$ and $(6)=\hti R$. Therefore, the proof of \eqref{energymain} is complete.
\begin{remark}
	The QHD-type condition $m(0)+m'(0)=0$ makes $(6)$ acceptable.
\end{remark}

Commuting \eqref{energymain} with $P_{\ge 50}T_{\Sigma^{N_0/2}}$ and $P_{\ge 50}T_{\Sigma^{\delta/2}}\Omega^{N_1}$, we have
\begin{equation}\label{energymain1}
	\hti W_t-\mi T_\Sigma \hti W+T_{S} \hti W=\hti Q_{\hti W}+\hti C_{\hti W},
\end{equation}
where
\begin{itemize}
	\item $\hti W \in \gti W^0=\set{P_{\ge 50}T_{\Sigma^{N_0/2}} \hti U,P_{\ge 50}T_{\Sigma^{\delta/2}}\Omega^{N_1} \hti U}$;
	\item $\hti Q_{\hti W}=T_{\gti d}(G,W^1)+T_{\gti m}(G,W^1)+T_{\gti m'}(G,W^1)$ where
	\[
	G \in \gti G=\set{\spa{\nabla}^5 \Omega^{[0,N_1/2]} U^{\pm}}
	\]
	and
	\[
	W^1 \in \gti W=\set{T_{\Sigma^{N_0/2}} \hti U^{\pm},T_{\Sigma^{\delta/2}}\Omega^{N_1} \hti U^{\pm}, \spa{\nabla}^{100} \Omega^{[0,N_1-1]} U^{\pm}}
	\]
	(we let $\gti W^{+}=\set{T_{\Sigma^{N_0/2}} \hti U^{+},T_{\Sigma^{\delta/2}}\Omega^{N_1} \hti U^{+}, \spa{\nabla}^{100} \Omega^{[0,N_1-1]} U^{+}}$ and define $\gti W^-$ and $\gti G^{\pm}$ analogously);
	\item The multipliers $\gti d, \gti m, \gti m'$ are of the form
	\[
	\gti m_0(\xi,\eta) f_1(\xi) f_2(\xi-\eta) f_3(\eta),
	\]
	where $\gti m_0 \in \gti M^0$ and $f_1, f_2, f_3 \in \gti S$;
	\item $\norm{\gti d}_{S_{k,k_1,k_2}} \lle 2^{k_1 \wedge k_2}(1+2^{5(k_1 \wedge k_2)})1_{k_1 \ll k_2}$;
	\item If $W^1 \in \gti W^+$, then
	 \[
	 \gti d=f_1(\xi-\eta) f_2(\xi+\eta) \chi\Brac{\frac{\abs{\xi-\eta}}{\abs{\xi+\eta}}}(\mao{\xi-\eta} \cdot \mao{\xi+\eta}),
	 \]
	 where $f_1 \in \gti S^{10}$, $f_2 \in \gti S^0$, $\chi \in C_0^\infty(\bR)$, and $\supp \chi \subset [-1/100,1/100]$;
	\item $\norm{\gti m}_{S_{k,k_1,k_2}} \lle 2^{k_1 \wedge k_2}(1+2^{5(k_1 \wedge k_2)})/(1+2^{k_1 \vee k_2})$;
	\item $\norm{\gti m'}_{S_{k,k_1,k_2}} \lle 2^{k_1 \vee k_2}2^{88(k_1)_+}(1+2^{90(k_1 \vee k_2)})^{-1}$;
	\item $\norm{\hti C_{\hti W}(t)}_2 \lle \varepsilon_1^3 \spa{t}^{-1-1/100}$.
\end{itemize}
As before, the term $\hti Q_{\hti W}$ is a sum over multiple choices of terms $T_{\gti d}(G,W^1)$, $T_{\gti m}(G,W^1)$, and $T_{\gti m'}(G,W^1)$; for brevity, we display only one representative term.
\begin{remark}
	The QHD-type condition ensures that $\Sigma_0\Sigma^1$ is of order $0$. Without this condition, when we commute the equation for $\hti U$ with $P_{\ge 50}T_{\Sigma^{N_0/2}}$, the term $P_{\ge 50}T_{(\Sigma^{N_0/2})_t}\hti U$ contains a quadratic contribution not belonging to any of the three classes of terms defining $\hti Q_{\hti W}$.
\end{remark}

Finally, we will use the following estimates to derive the energy bounds for $U$:
\begin{subequations}\label{paraerror}
\begin{gather}
\norm{P_{\ge 50}(T_{\Sigma^{N_0/2}} \hti U(t)-T_{\Sigma_0^{N_0/2}}U(t))}_2 \lle \epsl_1^2 \spa{t}^{-2/3},\\
\norm{P_{\ge 50}(T_{\Sigma^{\delta/2}}\Omega^{N_1} \hti U(t)-T_{\Sigma_0^{\delta/2}}\Omega^{N_1}U(t))}_2 \lle \epsl_1^2 \spa{t}^{-2/3}.
\end{gather}	
\end{subequations}

\subsection{Integration by Parts with respect to Vector Fields}\label{integrationbyparts}
Define
\[
\gti M^{a,b,c,e}=\{ \abs{\xi}^a \abs{\xi-\eta}^b\abs{\eta}^c f_1(\xi)f_2(\xi-\eta)f_3(\eta) 2^{-e p}\psi_p(\Omega_\eta \Phi):  f_1,  f_2,  f_3 \in \gti S^0 \}.
\]
Then we have the following lemma concerning integration by parts with respect to vector fields, which follows from \cite{MR3024265}.
\begin{lemma}\label{rotainte}
	Let
	\[
	I=\psi_k(\xi) \int \me^{\mi t \Phi}  \gti m(\xi,\eta) \mao {f_{k_1,j_1,n_1}}(\xi-\eta) \mao {g_{k_2,j_2,n_2}}(\eta)  \dif \eta,
	\]
	where $\gti m \in \gti M^{0,0,0,0}$. Suppose that $t \in [2^{m-1},2^{m+1}]$ and $m \ge 10$. Then
	\[
	\abs{I} \lle \Brac{\frac{2^{k+k_2-2p}+2^{k_2-k_1-p}+2^{-p}+2^{j_1-k_1}}{2^m}}^N\norm{\Omega^{[0,N]}f_{k_1,j_1,n_1}}_2 \norm{\Omega^{[0,N]}g_{k_2,j_2,n_2}}_2.
	\]
\end{lemma}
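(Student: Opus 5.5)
We will prove \lemmaref{rotainte} by integrating by parts along the rotational vector field in $\eta$, following the scheme of \cite{MR3024265}. The multiplier in $\gti M^{0,0,0,0}$ carries the localization $\psi_p(\Omega_\eta\Phi)$, so on its support $\abs{\Theta}=\abs{\Omega_\eta\Phi}\simeq 2^p$. We use the identity
\[
\me^{\mi t\Phi}=\frac{1}{\mi t\,\Theta}\,\Omega_\eta\bigl(\me^{\mi t\Phi}\bigr),
\]
and since $\Omega_\eta$ is divergence free we may transfer it onto the rest of the integrand with no boundary terms. After $N$ iterations the integrand becomes $(\mi t)^{-N}\me^{\mi t\Phi}\,\mathcal L^N[\gti m\,\mao f(\xi-\eta)\mao g(\eta)]$, where $\mathcal L h=-\Omega_\eta(h/\Theta)$. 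It then suffices to show that each application of $\mathcal L$ costs at most $2^{k+k_2-2p}+2^{k_2-k_1-p}+2^{-p}+2^{j_1-k_1}$, while the derivatives landing on the inputs produce the factors $\Omega^{[0,N]}$.

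First, when $\Omega_\eta$ hits $1/\Theta$, we get $\Omega_\eta\Theta/\Theta^2$. We compute $\Omega_\eta^2\Phi$ explicitly in terms of $\lambda'$, $\lambda''$ and the angles between $\xi-\eta$, $\eta$ and $\xi$. Roughly, $\abs{\Omega^2_\eta\Phi}\lle 2^{k+k_2}$ on the support, after using radial symmetry of $\Lambda$ and the relation $\Omega_\eta[\Lambda(\xi-\eta)]=\lambda'(\abs{\xi-\eta})\,(\xi\cdot(\xi-\eta)^\perp)/\abs{\xi-\eta}$ type identities. This produces the term $2^{k+k_2-2p}$; iterated hits give higher powers of the same quantity. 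Derivatives of the cutoff $\psi_p(\Theta)$ behave the same way. Derivatives falling on the symbols $f_2(\xi-\eta)$, $f_3(\eta)$ are bounded (they are in $\gti S^0$ and $\Omega$ annihilates radial factors), giving at worst $2^{-p}$.

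Second, when $\Omega_\eta$ falls on $\mao g(\eta)$ it gives $\mao{\Omega g}(\eta)$, which is absorbed into $\norm{\Omega^{[0,N]}g}_2$ with cost $2^{-p}$. When it falls on $\mao f(\xi-\eta)$, we write $\Omega_\eta=-\Omega_{\xi-\eta}$ acting on the first argument plus a translation part $\xi\cdot\nabla^\perp$. More precisely, $\Omega_\eta[\mao f(\xi-\eta)]=-(\Omega\mao f)(\xi-\eta)+(\eta^\perp\cdot\nabla\mao f)(\xi-\eta)$ up to sign. The first part costs $2^{-p}$. For the second, $\abs{\nabla\mao f_{k_1,j_1,n_1}}$ is effectively of size $2^{j_1}$ because of the spatial localization (a derivative of $\mao f$ is the Fourier transform of $x f$). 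With the factor $\abs{\eta}\simeq 2^{k_2}$ this costs $2^{k_2+j_1}/2^p$. That is not the stated term, so we instead rewrite $\eta^\perp\cdot\nabla=\frac{\abs\eta}{\abs{\xi-\eta}}\bigl(\text{angular part}\bigr)+\dots$ and split $\eta^\perp$ into components along $(\xi-\eta)^\perp$ and $(\xi-\eta)$. The angular component is again an $\Omega$ on $f$, with coefficient $2^{k_2-k_1}$, giving $2^{k_2-k_1-p}$. The radial component is paired with the geometry of $\Theta$: the component of $\eta^\perp$ along $\xi-\eta$ is controlled by $\Theta$ itself (indeed $\Theta$ is proportional to $\xi\cdot\eta^\perp$), which cancels $1/\Theta$ and leaves $2^{j_1}2^{-k_1}$ after accounting for $\lambda'$. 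This yields $2^{j_1-k_1}$.

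Finally, we collect terms. After $N$ steps each term is bounded by $2^{-mN}$ times a product of $N$ of the four factors, times $\int\abs{\Omega^{a}\mao f(\xi-\eta)}\abs{\Omega^b\mao g(\eta)}\dif\eta$ with $a+b\le N$. Cauchy--Schwarz gives the $L^2$ norms uniformly in $\xi$, and the bound follows. The main obstacle is the second step, specifically obtaining the sharp factor $2^{j_1-k_1}$ rather than $2^{j_1+k_2-p}$. For this we must use carefully the precise form of $\Theta=\Omega_\eta\Phi$, namely that $\Theta=\iota_1\lambda'(\abs{\xi-\eta})\,\xi\cdot\eta^\perp/\abs{\xi-\eta}$. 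This lets the radial derivative of $f$ come with a coefficient proportional to $\Theta$.
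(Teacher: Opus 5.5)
Your proposal follows the paper's proof. You integrate by parts $N$ times along $\Omega_\eta$ and assign each step one of four costs: $2^{k+k_2-2p}$ for hits on $1/\Theta$ or the cutoff, $2^{k_2-k_1-p}$ for the angular derivative of $\mao f(\xi-\eta)$, $2^{-p}$ for hits on $\eta$-factors and on $\mao g$, and $2^{j_1-k_1}$ for the radial derivative of $\mao f(\xi-\eta)$. Your key cancellation is right: the radial coefficient $\eta^\perp\cdot\mao{\xi-\eta}=-\iota_1\Theta/\lambda'(\abs{\xi-\eta})$ kills $1/\Theta$, which is exactly the paper's transition to $2^{j_1}\gti M^{a,b-1,c,e}$. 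However, two of your cost claims are wrong as stated, although fixing them does not change the final bound.

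First, $\Omega_\eta=\eta^\perp\cdot\nabla_\eta$ rotates about the origin, not about $\xi$, so it does not annihilate functions of $\xi-\eta$. We have $\Omega_\eta[F(\xi-\eta)]=-\eta^\perp\cdot(\nabla F)(\xi-\eta)$, which for $F=f_2\in\gti S^0$ has size $\abs\eta/\abs{\xi-\eta}$. So a hit on $f_2(\xi-\eta)$ costs $2^{k_2-k_1-p}$, not $2^{-p}$; this is the paper's transition $\gti M^{a,b,c,e}\to\gti M^{a,b-1,c+1,e+1}$. The same holds for coefficients created in earlier steps that depend on $\xi-\eta$, such as $1/\lambda'(\abs{\xi-\eta})$ and $\eta\cdot(\xi-\eta)/\abs{\xi-\eta}^2$. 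Your ``more precisely'' identity is also incorrect: it should read $\Omega_\eta[\mao f(\xi-\eta)]=(\Omega\mao f)(\xi-\eta)-\xi^\perp\cdot(\nabla\mao f)(\xi-\eta)$, though you abandon that route anyway.

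Second, the bound $\abs{\Omega_\eta^2\Phi}\lle 2^{k+k_2}$ only controls the first hit on $1/\Theta$ or on $\psi_p(\Theta)$. In later steps $\Omega_\eta$ falls on $\Omega_\eta\Theta$ itself, and $\Omega_\eta^3\Phi$ is not $O(2^{k+k_2})$: it contains $(\eta\cdot\xi)(\eta^\perp\cdot\xi)$ times a function of $\abs{\xi-\eta}$. So ``iterated hits give higher powers of the same quantity'' is not justified as written. What closes the iteration is to write $\Theta=-\iota_1 h(\abs{\xi-\eta})\,\eta^\perp\cdot\xi$ with $h(r)=\lambda'(r)/r\simeq 1$, so that
\[
\Omega_\eta\Theta=\iota_1 h(\abs{\xi-\eta})\,\eta\cdot\xi-\Theta\,\frac{h'(\abs{\xi-\eta})\,\eta^\perp\cdot\xi}{\abs{\xi-\eta}\,h(\abs{\xi-\eta})},
\]
where the second coefficient has size $\lle\abs\eta/\abs{\xi-\eta}$. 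You then work in a class of multipliers closed under $F\mapsto\Omega_\eta(F/\Theta)$, namely products $\abs\xi^a\abs{\xi-\eta}^b\abs\eta^c f_1(\xi)f_2(\xi-\eta)f_3(\eta)2^{-ep}\chi(\Theta/2^p)$; this is precisely the paper's $\gti M^{a,b,c,e}$. With that bookkeeping, every step costs one of the four stated factors. Your Cauchy--Schwarz conclusion then goes through, after converting the accumulated radial derivatives of $\mao f_{k_1,j_1,n_1}$ into factors $2^{j_1}$.
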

\begin{proof}
	We integrate by parts with respect to $\Omega_\eta$ $N$ times. The key is to keep track of how the multiplier class $\gti M^{a,b,c,e}$ changes after each integration by parts. We have
	\begin{alignat*}{2}
	\gti M^{a,b,c,e} &\longrightarrow \gti M^{a,b-1,c+1,e+1}, &\quad &\text{if } \Omega_\eta \text{ hits } f_2(\xi-\eta);\\
	\gti M^{a,b,c,e} &\longrightarrow \gti M^{a,b,c,e+1}, & &\text{if } \Omega_\eta \text{ hits } f_3(\eta);\\		
	\gti M^{a,b,c,e} &\longrightarrow \gti M^{a+1,b,c+1,e+2}, \; \gti M^{a,b-1,c+1,e+1}, & &\text{if } \Omega_\eta \text{ hits } \psi_p(\Omega_\eta \Phi);\\
	\gti M^{a,b,c,e} &\longrightarrow \gti M^{a,b-1,c+1,e+1}, \; 2^{j_1}\gti M^{a,b-1,c,e}, & &\text{if } \Omega_\eta \text{ hits } \mao {f_{k_1,j_1,n_1}}(\xi-\eta);\\
	\gti M^{a,b,c,e} &\longrightarrow \gti M^{a,b,c,e+1}, & &\text{if } \Omega_\eta \text{ hits } \mao {g_{k_2,j_2,n_2}}(\eta);
	\end{alignat*}
	at each step of integration by parts. The desired conclusion follows immediately from the preceding discussion.
\end{proof}

Integration by parts plays a central role throughout this paper. For an oscillatory integral of the form
\[
\int \me^{\mi \lambda f(x)} g(x) \dif x,
\]
integration by parts in $x$ gives
\begin{align*}
&\int \me^{\mi \lambda f(x)} \ppd x \Brac{-\frac{g(x)}{\mi \lambda f'(x)}} \dif x\\
={}&\int \me^{\mi \lambda f(x)} \frac{- g'(x)}{\mi \lambda f'(x)} \dif x+\int \me^{\mi \lambda f(x)} \frac{g(x) \mi \lambda f''(x)}{(\mi \lambda f'(x))^2}	\dif x.
\end{align*}
Relative to the original integral, the gain is $(\mi \lambda f'(x))^{-1}$, while the losses are $ g'(x)/g(x)$ and $f''(x)/ f'(x)$. If each step of integration by parts yields an additional decay factor, such as $t^{-\delta^2}$, then iterating this procedure sufficiently many times will provide enough decay.

\subsection{Size Estimates}
We first record some elementary facts.
\begin{lemma}\label{cos}
\[
	\abs{\set{\theta\in [0, 2\pi) \colon \cos \theta \in (x_0-\epsl, x_0+\epsl)}} \lle \epsl^{1/2},
	\]
	where the constant in the inequality is independent of $x_0$.
\end{lemma}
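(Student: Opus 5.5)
By symmetry $\theta\mapsto 2\pi-\theta$ of the cosine, it suffices to bound the measure of
\[
E=\set{\theta\in[0,\pi] \colon \cos\theta\in(x_0-\epsl,x_0+\epsl)}
\]
by $C\epsl^{1/2}$. We may assume $\epsl\le 1$, since otherwise the trivial bound $2\pi\lle\epsl^{1/2}$ already holds.

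On $[0,\pi]$ the map $\theta\mapsto\cos\theta$ is a strictly decreasing bijection onto $[-1,1]$. Hence $E=\arccos(J)$, where $J=(x_0-\epsl,x_0+\epsl)\cap[-1,1]$ is an interval of length at most $2\epsl$, and $\abs{E}$ is the length of $\arccos(J)$.

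The key step is to show that $\arccos$ is uniformly H\"older-$1/2$ on $[-1,1]$:
\[
\abs{\arccos a-\arccos b}\le C\abs{a-b}^{1/2}.
\]
This will follow from $\abs{(\arccos)'(x)}=(1-x^2)^{-1/2}\le(1-\abs{x})^{-1/2}$. Integrating this bound, an interval of length $\ell$ has image of length at most $\int_J(1-\abs x)^{-1/2}\dif x$. For fixed length $\ell$, this integral is largest when $J$ is pushed to an endpoint, say $J=[1-\ell,1]$, and there it equals $2\ell^{1/2}$. One can see this either by monotonicity of the integrand in $\abs{x}$, or by splitting $J$ into the parts lying in $[-1,0]$ and in $[0,1]$ and treating each separately.

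Applying this with $\ell\le2\epsl$ gives $\abs{E}\le 2(2\epsl)^{1/2}\lle\epsl^{1/2}$. The constant is uniform in $x_0$ because the derivative bound and the rearrangement argument do not involve $x_0$. The worst case $x_0=\pm1$ is exactly where the square root arises: it corresponds to $1-\cos\theta\simeq\theta^2/2$ near $\theta=0$. No serious obstacle is expected; the only point needing care is the uniform treatment near the endpoints $\pm1$, which the integral bound above handles.
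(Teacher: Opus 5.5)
Your argument is correct. The paper records this lemma as an elementary fact and gives no proof, so there is nothing to compare against. Reducing to $[0,\pi]$, writing the set as $\arccos(J)$, and using $\abs{(\arccos)'(x)}\le(1-\abs{x})^{-1/2}$ is the standard route. You also correctly locate the worst case at $x_0=\pm1$, where $1-\cos\theta\simeq\theta^2/2$.

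The one slip is quantitative. The evaluation $\int_{1-\ell}^1(1-\abs{x})^{-1/2}\dif x=2\ell^{1/2}$ requires $\ell\le 1$. Assuming only $\epsl\le1$ allows $\ell$ up to $2$, in which case $J=[1-\ell,1]$ crosses $0$ and the integrand switches branch there. Your explicit bound $\abs{E}\le 2(2\epsl)^{1/2}$ is then actually false: for $x_0=0$, $\epsl=1$ one gets $\abs{E}=\pi>2\sqrt2$.

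This affects only the constant, not the $\lle$ conclusion. Either of the following fixes it:
\begin{itemize}
\item Assume $\epsl\le 1/2$ from the start, so that $\ell\le1$; the trivial bound covers larger $\epsl$.
\item Use the splitting you mention as an alternative. It gives $\int_J(1-\abs{x})^{-1/2}\dif x\le 4\ell^{1/2}$ for every subinterval $J\subset[-1,1]$ of length $\ell\le 2$, since each of $J\cap[-1,0]$ and $J\cap[0,1]$ contributes at most $2\ell^{1/2}$.
\end{itemize}
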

\begin{lemma}\label{size}
	Suppose that $f \in C^\infty(a,b)$ has at most $N$ critical points with $N \ge 1$. Then
	\[
	\abs{\set{x \in (a,b) \colon \abs{f(x)} \le \epsl, \abs{f'(x)} \ge L}} \lle N\epsl/L,
	\]
	where the constant in the inequality is independent of $f$ and $(a,b)$.
\end{lemma}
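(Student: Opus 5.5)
The plan is to decompose $(a,b)$ into the monotonicity intervals of $f$. Let $c_1<\dots<c_M$, with $M\le N$, be the critical points of $f$ in $(a,b)$. These points cut $(a,b)$ into at most $N+1\le 2N$ open subintervals $J_i$, and on each $J_i$ the derivative $f'$ does not vanish. Since $f'$ is continuous, it therefore has a fixed sign on $J_i$, so $f$ is strictly monotone there. It then suffices to prove
\[
\abs{\set{x\in J_i \colon \abs{f(x)}\le\epsl,\ \abs{f'(x)}\ge L}} \le 2\epsl/L
\]
for each $i$, and to sum over $i$. This gives the bound $4N\epsl/L$, whose constant is independent of $f$ and of $(a,b)$.

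On a fixed $J_i$, set $E_i=\set{x\in J_i\colon \abs{f(x)}\le\epsl,\ \abs{f'(x)}\ge L}$. Because $f|_{J_i}$ is strictly monotone and $C^1$, it is a diffeomorphism onto its image interval, with inverse $g$. The set $E_i$ lies inside $f|_{J_i}^{-1}([-\epsl,\epsl])$. The change of variables $y=f(x)$ gives
\[
\abs{E_i}=\int_{E_i}\dif x=\int_{f(E_i)}\frac{\dif y}{\abs{f'(g(y))}}\le \frac{1}{L}\abs{f(E_i)}\le \frac{2\epsl}{L},
\]
using $\abs{f'}\ge L$ on $E_i$ and $f(E_i)\subset[-\epsl,\epsl]$. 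If one prefers to avoid the change of variables on a possibly irregular set, there is an equivalent elementary route. Write $E_i$, up to a null set, as a countable union of intervals on which $\abs{f'}\ge L$ (the set $\set{\abs{f'}\ge L}$ is closed in $J_i$). On each such interval the mean value theorem gives that its length is at most $L^{-1}$ times the length of its image. The images are pairwise disjoint by monotonicity and all lie in $[-\epsl,\epsl]$.

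The only point needing a little care is measurability and the disjointness of the images. Strict monotonicity handles disjointness. The sets involved are Borel because $f$ and $f'$ are continuous. If $f$ has no critical points, which the hypothesis $N\ge1$ still allows, the same argument applies with a single interval.
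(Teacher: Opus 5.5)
Your main argument is correct and complete. On each of the at most $N+1\le 2N$ intervals between consecutive critical points, $f$ is a $C^1$ diffeomorphism onto its image. The change of variables $\abs{E_i}=\int_{f(E_i)}\abs{f'(f^{-1}(y))}^{-1}\dif y\le 2\epsl/L$ is legitimate for Borel $E_i$, and summing gives $4N\epsl/L$. The paper records this lemma as an elementary fact without proof, so there is no competing argument to compare; yours is the standard one, and it only uses $f\in C^1$.

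One correction: your ``equivalent elementary route'' rests on a false claim. A relatively closed set such as $\set{\abs{f'}\ge L}$ need not be, up to a null set, a countable union of intervals, and this genuinely happens in the lemma's setting. Let $C$ be a fat Cantor set, let $h\ge 0$ be smooth with $h^{-1}(0)=C$, and set $f'=L-\tfrac{L}{2}\tfrac{h}{1+h}$. Then $f$ has no critical points and $\set{\abs{f'}\ge L}=C$, which contains no nondegenerate interval. Yet for bounded $(a,b)$ and large $\epsl$ the exceptional set is all of $C$, which has positive measure.

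If you want a proof that avoids the change of variables, replace the closed set by an open one. The set $O_i=\set{x\in J_i\colon \abs{f(x)}<\epsl,\ \abs{f'(x)}>L/2}$ is open, hence a countable disjoint union of open intervals. The mean value theorem bounds the length of each interval by $2/L$ times the length of its image. By monotonicity these images are disjoint subintervals of $(-\epsl,\epsl)$, so $\abs{O_i}\le 4\epsl/L$. Finally, $E_i\subset O_i$ up to at most the two points where $f=\pm\epsl$. Otherwise, simply delete that paragraph; the change-of-variables argument already suffices.
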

\begin{lemma}\label{second}
	Suppose that $f \in C^\infty(a,b)$ and $\abs{f''} \ge 1$. Then
	\[
	\abs{\set{x \in (a,b) \colon \abs{f(x)} \le \epsl}} \lle \epsl^{1/2},
	\]
    where the constant in the inequality is independent of $f$ and $(a,b)$.
\end{lemma}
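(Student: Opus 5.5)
The plan is to exploit the uniform lower bound on $f''$ to control the sublevel set by splitting it according to the size of $f'$. Let $E=\set{x \in (a,b) \colon \abs{f(x)} \le \epsl}$. Since $\abs{f''} \ge 1$ and $f''$ is continuous, $f''$ has a constant sign on $(a,b)$. Hence $f'$ is strictly monotone, and moreover $\abs{f'(x)-f'(y)} \ge \abs{x-y}$. In particular $f'$ has at most one zero, so $f$ has at most one critical point. The plan is then to reduce to \lemmaref{size} away from a neighborhood of the (possible) critical point.

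Concretely, fix a threshold $L=\epsl^{1/2}$ and split $E$ into
\[
E_1=\set{x \in E \colon \abs{f'(x)} < L}, \qquad E_2=\set{x \in E \colon \abs{f'(x)} \ge L}.
\]
For $E_1$, we use the monotonicity of $f'$ with slope at least $1$. The set $\set{x \colon \abs{f'(x)}<L}$ is the preimage of the interval $(-L,L)$ under a function whose derivative has absolute value at least $1$. It is therefore an interval of length at most $2L=2\epsl^{1/2}$. For $E_2$, we apply \lemmaref{size} with $N=1$ (allowed since $f$ has at most one critical point; if it has none, the count $N=1$ still bounds it). This gives $\abs{E_2} \lle \epsl/L=\epsl^{1/2}$. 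Summing the two bounds yields $\abs{E} \lle \epsl^{1/2}$, with a constant independent of $f$ and $(a,b)$.

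Alternatively, if one prefers not to invoke \lemmaref{size}, $E_2$ can be handled directly. Outside the interval where $\abs{f'}<L$, the set $(a,b)$ splits into at most two intervals on which $f$ is strictly monotone with $\abs{f'} \ge L$. On each such interval, $\set{\abs f \le \epsl}$ is an interval whose image under $f$ has length at most $2\epsl$, so its length is at most $2\epsl/L$.

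There is no serious obstacle here. The only point needing care is ensuring the constants are uniform. This holds because the bound on $E_1$ uses only the lower bound $\abs{f''}\ge 1$, and the bound on $E_2$ uses only the number of monotonicity intervals, which is at most two regardless of $f$ and $(a,b)$.
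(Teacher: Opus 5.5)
Your proof is correct and complete. Splitting at $\abs{f'}=\epsl^{1/2}$ works: the set $\set{\abs{f'}<\epsl^{1/2}}$ is an interval of length at most $2\epsl^{1/2}$, since $f'$ is monotone with $\abs{f'(x)-f'(y)}\ge\abs{x-y}$. The rest is controlled with absolute constants by \lemmaref{size} with $N=1$, or by your direct monotonicity argument.

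The paper records this lemma as an elementary fact without proof, and yours is the standard argument. The degenerate case $\epsl=0$, where $L=0$, is trivial, because a strictly convex or concave $f$ has at most two zeros.
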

We next have the following estimates from Schur's test.
\begin{lemma}\label{sizeest}
Let $m \ge D^4$. Assume that $k,k_1,k_2$ satisfy $-\delta^2 m \leq \ul{k} \leq \cl{k} \le \delta^2 m$,  $l \geq -2m$, and $2^{-100} \leq r_1, r_2 \leq 2^{100}$. Let $\chi_{k,k_1,k_2}$ be the cutoff function $\psi_k(\xi)\psi_{k_1}(\xi-\eta)\psi_{k_2}(\eta)$.
\begin{enumerate}[label=(\roman*)]
	\item
	\begin{align*}
		&\Norm{\int \chi_{k,k_1,k_2} \fai_{l}(\Phi(\xi,\eta)) \fai_{-n_1}(\abs{\xi-\eta}-r_1) \abs{ f_1(\xi-\eta)}\abs{f_2(\eta)} \dif \eta}_{L^2_\xi}\\
		&\lle 2^{5\delta^2 m}2^{-n_1/2}2^{l/2}\norm{f_1}_{L^2(r \dif r)L^\infty_\theta} \norm{f_2}_2.
	\end{align*}
	\item
	\begin{align*}
		&\Norm{\int \chi_{k,k_1,k_2} \fai_{l}(\Phi(\xi,\eta)) \fai_{-n_1}(\abs{\xi-\eta}-r_1) \fai_{-n_2}(\abs{\eta}-r_2) \abs{ f_1(\xi-\eta)}\abs{f_2(\eta)} \dif \eta}_{L^2_\xi}\\
		&\lle 2^{5\delta^2 m}2^{-n_1/2}2^{l/2}(2^{-n_2/2}\wedge 2^{l/2})\norm{f_1}_{L^2(r \dif r)L^\infty_\theta} \norm{f_2}_2,
	\end{align*}
	if $l \le -\delta^2 m$, $n_1$, $n_2 \ge \delta^2m$, and
	\[
	\abs{\lambda(r_1) \pm  \lambda(r_2) \pm  \lambda(\abs{r_1\pm  r_2})} \ge 2^{-100}.
	\]
	\item
	\begin{gather}
		\sup_\xi \int \chi_{k,k_1,k_2} \fai_{l}(\Phi(\xi,\eta)) \fai_{\log_2 \kappa_{\theta}}(\Theta(\xi,\eta))\fai_{\ge p_1}(\nabla_\eta \Phi(\xi,\eta)) \dif \eta \lle 2^{5\delta^2 m}2^l2^{-p_1}\kappa_\theta, \label{eq:2.7.1}\\
		\sup_\eta \int \chi_{k,k_1,k_2} \fai_{l}(\Phi(\xi,\eta)) \fai_{\log_2 \kappa_{\theta}}(\Theta(\xi,\eta))\fai_{\ge p_2}(\nabla_\xi \Phi(\xi,\eta)) \dif \xi \lle 2^{5\delta^2 m}2^l2^{-p_2}\kappa_\theta, \label{eq:2.7.2}
	\end{gather}
	if $\kappa_\theta \le 2^{p_1 \wedge p_2-2\delta^2 m}$.
	Consequently,
	\begin{align*}
		&\Biggl\lVert\int \chi_{k,k_1,k_2} \fai_{l}(\Phi(\xi,\eta)) \fai_{\log_2 \kappa_\theta}(\Theta(\xi,\eta))\\
		&\times\fai_{\ge p_1}(\nabla_\eta \Phi(\xi,\eta)) \fai_{\ge p_2}(\nabla_\xi \Phi(\xi,\eta))\abs{ f_1(\xi-\eta)}\abs{f_2(\eta)} \dif \eta \Biggr\rVert_{L^2_\xi}\\
		\lle{} &2^{5\delta^2 m}2^{l}2^{-p_1/2}2^{-p_2/2}\kappa_\theta \norm{f_1}_\infty \norm{f_2}_2.
	\end{align*}
	\item
	\begin{gather}
		\sup_\xi \int \chi_{k,k_1,k_2} \fai_l(\Phi(\xi,\eta)) \dif \eta \lle 2^{15\delta^2 m}2^l, \label{eq:2.7.5}\\
		\sup_\eta \int \chi_{k,k_1,k_2} \fai_l(\Phi(\xi,\eta)) \dif \xi \lle 2^{15\delta^2 m}2^l. \label{eq:2.7.6}
	\end{gather}
\end{enumerate}
\end{lemma}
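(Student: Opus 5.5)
All four parts are Schur-test estimates reduced to volume bounds for sublevel sets of $\Phi$, computed in polar coordinates. Since $-\delta^2 m\le \ul k\le \cl k\le \delta^2 m$, every frequency lies in $[2^{-\delta^2 m-1},2^{\delta^2 m+1}]$. Hence $\lambda'$, $\lambda''$ and the Jacobians of polar coordinates are bounded above and below by $2^{\pm C\delta^2 m}$; these losses are absorbed into the factors $2^{5\delta^2m}$ and $2^{15\delta^2m}$. I would prove (iv) first and then use it, together with its proof, for the others.

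For (iv), fix $\xi$ and write $\eta$ in polar coordinates about $\xi$, using $r=\abs{\xi-\eta}$ and the angle $\theta$ between $\xi-\eta$ and $\xi$. Then $\abs\eta^2=\abs\xi^2+r^2-2\abs\xi r\cos\theta$. Consider $\Phi=-\lambda(\abs\xi)+\iota_1\lambda(r)+\iota_2\lambda(\abs\eta)$ as a function of the variable $c=\cos\theta$ for fixed $r$. Its derivative is
\[
\iota_2\lambda'(\abs\eta)\,\frac{-\abs\xi r}{\abs\eta}\simeq 2^{O(\delta^2 m)}.
\]
Lemma~\ref{size} therefore gives measure $\lle 2^{C\delta^2m}2^l$ in $c$. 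Passing from $c$ to $\theta$ costs a Jacobian $1/\abs{\sin\theta}$, which is dangerous near $\theta\in\{0,\pi\}$. I would handle this by splitting. Where $\abs{\sin\theta}\ge 2^{-\delta^2 m}$, the Jacobian loss is acceptable. Near the collinear configurations, I would instead use the radial derivative $\partial_r\Phi=\iota_1\lambda'(r)\pm\iota_2\lambda'(\abs\eta)\,(r-\abs\xi\cos\theta)/\abs\eta$ and Lemma~\ref{second}/Lemma~\ref{size} in $r$. Alternatively, I would bound the set directly by Lemma~\ref{cos}, which costs a square root and gives at worst $2^{l/2}$. That is not good enough, so the correct route is to parametrize near collinearity by $\abs\eta$ and $\theta$, where $\partial_{\abs\eta}\Phi\simeq\lambda'$ is nondegenerate.

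In general I would choose, at each point, whichever of the two radial variables $(\abs{\xi-\eta},\abs\eta)$ gives a nondegenerate derivative of $\Phi$ with a bounded Jacobian. This is always possible: in the coordinates $(\abs{\xi-\eta},\abs{\eta})$ the Jacobian is $\simeq \abs{\xi-\eta}\abs\eta/(\abs\xi\abs{\sin})$, and $\Phi$ depends on $\abs\eta$ with $\partial_{\abs\eta}\Phi=\pm\lambda'(\abs\eta)$. Integrating $1_{\abs\Phi\le2^l}$ in $\abs\eta$ gives $2^l$, and the remaining $\abs{\xi-\eta}$-integral is over a bounded interval. Using $\dif\eta=\abs\eta\,\dif\abs\eta\,\dif\alpha$ with $\alpha$ the angle of $\eta$ is cleaner still: for fixed $\alpha$, $\partial_{\abs\eta}\Phi=\iota_1\lambda'(\abs{\xi-\eta})\,\widehat{\eta-\xi}\cdot\mao\eta+\iota_2\lambda'(\abs\eta)$. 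This can vanish, so I would fall back on the polar-about-$\xi$ coordinates where it does. The symmetric bound \eqref{eq:2.7.6} is proved identically after the change of variables $\xi\mapsto\xi-\eta$.

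For (iii), I would use the coordinates $(\abs\eta,\alpha)$. The condition $\abs{\Theta}\simeq\kappa_\theta$, with $\Theta=\Omega_\eta\Phi=\partial_\alpha\Phi$, combined with $\abs{\nabla_\eta\Phi}\ge 2^{p_1}$ and $\kappa_\theta\le 2^{p_1-2\delta^2m}$, forces $\abs{\partial_{\abs\eta}\Phi}\gtrsim 2^{p_1}$. The $\abs\eta$-measure is then $\lle 2^l2^{-p_1}$ for each fixed $\alpha$. For the $\alpha$-measure, I would use that $\partial_\alpha\Theta$ has finitely many critical points, so Lemma~\ref{size} shows the set $\{\abs\Theta\le\kappa_\theta\}$ has $\alpha$-measure $\lle \kappa_\theta$ times a bounded factor; this is the main technical point. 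Schur's test with \eqref{eq:2.7.1} and \eqref{eq:2.7.2} then gives the $L^2$ bound with $f_1\in L^\infty$.

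For (i), I would apply Schur's test with an $L^2(r\,\dif r)L^\infty_\theta$ input. By Cauchy--Schwarz in $\eta$, the square of the left side is at most $\sup_\xi\int$(kernel)$\abs{f_1}^2\,\dif\eta$ times the analogous integral in $f_2$. In the polar coordinates $(r,\theta)$ of $\xi-\eta$, taking the $\sup_\theta$ of $f_1$ reduces matters to $\int\abs{f_1}_{L^\infty_\theta}^2(r)\,|\{\theta:\abs\Phi\le2^l\}|\,r\,\dif r$. The $\theta$-set is controlled by Lemma~\ref{cos} and Lemma~\ref{size}, giving $2^{l}$ up to $2^{C\delta^2m}$, while the cutoff in $r$ supplies $2^{-n_1}$ from the dual side. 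Balancing the two sides gives $2^{-n_1/2}2^{l/2}$.

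Part (ii) is similar. The nonresonance assumption on $(r_1,r_2)$ forbids both radii from being pinned when $\abs\Phi\le 2^l$, except in a region where $\partial_\theta\Phi$ is nondegenerate. The extra factor $2^{-n_2/2}\wedge2^{l/2}$ then comes from either the $\abs\eta$-localization or the phase sublevel set. I expect the degenerate collinear and critical-angle configurations, which must be handled using $\lambda''>0$ via Lemma~\ref{second}, to be the main obstacle.
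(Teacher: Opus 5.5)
There is a genuine gap in (iv), the part you plan to build everything else on. The whole difficulty of (iv) lies at the collinear configurations. Your proposal identifies this but does not resolve it, and the assertion that at each point one can choose a radial variable with a nondegenerate derivative \emph{and} a bounded Jacobian is false there. In the bipolar coordinates $(a,b)=(\abs{\xi-\eta},\abs{\eta})$ the area element is $\simeq ab/(\abs{\xi}\,d)$, where $d$ is the distance from $\eta$ to the line $\bR\xi$. This is the factor $1/\abs{\sin}$ you wrote down and then dropped when you said the remaining $a$-integral is over a bounded interval. For fixed $a$, the condition $\abs{\Phi}\le 2^l$ pins $b$ to an interval of length $\lle 2^{l}2^{C\delta^2 m}$. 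When this interval lies within $2^l$ of an endpoint $\abs{a-\abs{\xi}}$ or $a+\abs{\xi}$, however, its weighted length is of size $2^{l/2}$ rather than $2^l$. This is the same square-root loss that Lemma~\ref{cos} produces in polar coordinates about $\xi$. The origin-centred polar coordinates do not close the argument either: $\nabla_\eta\Phi=0$ at the collinear point $\eta=\xi/2$ when $\iota_1=\iota_2$. Near that point neither of your coordinate systems has both properties, so your ``fall back'' leaves this neighbourhood uncovered.

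What is missing is a quantitative statement that near-resonant collinear configurations occur only for few radii. The paper fixes $\abs{\eta}=r$ and bounds the angular measure by $2^{l}2^{8\delta^2 m}/(2^{l/2}+\sqrt{R(\abs{\xi},r)})$, where $R$ is the smaller of the values of $\abs{\Phi}$ at the two collinear configurations. It then shows $\abs{\set{r\colon R(\abs{\xi},r)\lle 2^p}}\lle 2^{p}2^{\delta^2 m}\vee 2^{p/2}$. This uses Lemma~\ref{size} when $\iota_1\iota_2=-$, since the $r$-derivative of the collinear phase is nondegenerate. It uses Lemma~\ref{second} when $\iota_1\iota_2=+$, since the second derivative is $\pm$ a sum of two values of $\lambda''\simeq 1$. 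Summing $\sum_{p\gge l}2^{l}2^{-p/2}(2^{p}2^{\delta^2 m}\vee 2^{p/2})$ together with the contribution $2^{l/2}(2^{l}2^{\delta^2 m}\vee 2^{l/2})$ of the region $R\lle 2^l$ gives $\lle 2^{15\delta^2 m}2^{l}$. Your bipolar computation becomes correct once you add exactly this counting step, with $a$ in the role of $r$. An alternative would be a dyadic decomposition in $\abs{\eta-\xi/2}$ using the positive definite Hessian $2\nabla^2\Lambda(\xi/2)$.

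There are also smaller points in the other parts.
In (i), the angular set at fixed $\abs{\xi-\eta}$ has measure only $\lle 2^{l/2}2^{2\delta^2 m}$, not $2^{l}$. The stated bound comes from $2^{l/2}$ on both Schur sides, with $2^{-n_1}$ from the radial cutoff on one of them.
In (iii), separate bounds on the $\alpha$-sections and the $\abs{\eta}$-sections do not multiply. The factor $\kappa_\theta$ really comes from the formula $\Theta=-\iota_1\lambda'(\abs{\xi-\eta})\,\eta^\perp\cdot\xi/\abs{\xi-\eta}$, which confines $\mao{\eta}$ to a sector of width $\lle 2^{2\delta^2 m}\kappa_\theta$ about $\pm\mao{\xi}$, uniformly in $\abs{\eta}$.
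In (ii), the point is that the nonresonance hypothesis excludes degenerate triangles. Hence $\abs{\theta\cdot\mao{\xi}^\perp}\gge 2^{-\delta^2 m/2}$, and the angle is a nondegenerate function of the pinned radius.
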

\begin{proof}
\begin{enumerate}[label=(\roman*)]
	\item Without loss of generality, we assume that $l \le -3\delta^2 m$. By means of Schur's test, it suffices to estimate
	\begin{gather*}
		\sup_\xi \int \chi_{k,k_1,k_2} \fai_{l}(\Phi(\xi,\eta)) \fai_{-n_1}(\abs{\xi-\eta}-r_1) \abs{ f_1(\xi-\eta)} \dif \eta,\\
		\sup_\eta \int \chi_{k,k_1,k_2} \fai_{l}(\Phi(\xi,\eta)) \fai_{-n_1}(\abs{\xi-\eta}-r_1) \abs{ f_1(\xi-\eta)} \dif \xi.
	\end{gather*}
	For the first integral, we have
	\begin{align*}
		&\int \chi_{k,k_1,k_2} \fai_{l}(\Phi(\xi,\eta)) \fai_{-n_1}(\abs{\xi-\eta}-r_1) \abs{ f_1(\xi-\eta)} \dif \eta\\
\stackrel{\xi-\eta=r\theta}{=}{} &\int \chi_{k,k_1,k_2} \fai_{l}(\Phi(\xi,\xi-r\theta)) \fai_{-n_1}(r-r_1) \abs{ f_1(r\theta)} r\dif r \dif \theta\\
\lle{} &\int \Brac{\int  \chi_{k,k_1,k_2} \fai_{l}(\Phi(\xi,\xi-r\theta)) \dif \theta} \fai_{-n_1}(r-r_1)  \sup_\theta \abs{ f_1(r\theta)} r\dif r.
	\end{align*}
	We next estimate
	\[
	\int  \chi_{k,k_1,k_2} \fai_{l}(\Phi(\xi,\xi-r\theta)) \dif \theta.
	\]
The bound	$\abs{\Phi} \lle 2^l$ implies that
	\[
	\abs{-\lambda(\abs{\xi})+\iota_1\lambda(r)+\iota_2\lambda(\abs{\xi-r\theta})} \lle 2^l.
	\]
	It follows that	$\abs{\xi-r\theta}$ lies in $B(r_0,2^l2^{\delta^2m})$. By the cosine formula,
	\[
	\theta \cdot \mao \xi=\frac{\abs{\xi}^2+r^2-\abs{\xi-r\theta}^2}{2\abs{\xi}r}.
	\]
	Hence $\theta \cdot \mao \xi$ lies in $B(\theta_0,2^l2^{4\delta^2m})$. \lemmaref{cos} implies that the range of $\theta$ has length $\lle 2^{l/2}2^{2\delta^2 m}$. Therefore,
	\[
	\int  \chi_{k,k_1,k_2} \fai_{l}(\Phi(\xi,\xi-r\theta)) \dif \theta \lle 2^{l/2}2^{2\delta^2 m}.
	\]
	The desired bound for the first integral then follows from the Cauchy\allowbreak--\allowbreak Schwarz inequality. The second integral can be handled in a similar way.
	\item To apply Schur's test, we aim to estimate
	\begin{gather*}
		\sup_\xi \int \chi_{k,k_1,k_2} \fai_{l}(\Phi(\xi,\eta)) \fai_{-n_1}(\abs{\xi-\eta}-r_1) \fai_{-n_2}(\abs{\eta}-r_2) \abs{ f_1(\xi-\eta)} \dif \eta,\\
		\sup_\eta \int \chi_{k,k_1,k_2} \fai_{l}(\Phi(\xi,\eta)) \fai_{-n_1}(\abs{\xi-\eta}-r_1) \fai_{-n_2}(\abs{\eta}-r_2) \abs{ f_1(\xi-\eta)} \dif \xi.
	\end{gather*}
	For the first integral, as in (i), the key step is to estimate, after the substitution $\xi-\eta=r\theta$,
	\[
	\fai_{-n_1}(r-r_1)\int \chi_{k,k_1,k_2} \fai_l(\Phi(\xi,\xi-r\theta)) \fai_{-n_2}(\abs{\xi-r\theta}-r_2) \dif \theta.
	\]
	We next show that
	\[
	\abs{\theta \cdot \mao{\xi}^{\perp}} \gge 2^{-\delta^2 m/2}.
	\]
	By the cosine formula, it suffices to prove that
	\[
	1 \pm \frac{r^2+\abs{\xi-r\theta}^2-\abs{\xi}^2}{2r\abs{\xi-r\theta}} \gge 2^{-\delta^2 m}.
	\]
	Hence we need to bound
	\[
	\abs{r \pm \abs{\xi-r\theta}}-\abs{\xi},
	\]
	which is equal to $\abs{r_1 \pm r_2}-\abs{\xi}+O(2^{-\delta^2m})$. Since $\abs{\Phi} \lle 2^l$, we have
	\[
	\abs{-\lambda(\abs \xi)+\iota_1\lambda(r)+\iota_2\lambda(\abs{\xi-r\theta})} \lle 2^l,
	\]
	which yields
	\[
	\abs{-\lambda(\abs \xi)+\iota_1\lambda(r_1)+\iota_2\lambda(r_2)} \lle 2^{-\delta^2m}.
	\]
	On the other hand,
	\[
	\abs{-\lambda(\abs{r_1 \pm r_2})+\iota_1\lambda(r_1)+\iota_2\lambda(r_2)} \ge 2^{-100}.
	\]
	It follows that
	\[
	\abs{\lambda(\abs{r_1 \pm r_2})-\lambda(\abs \xi)} \gge 2^{-100},
	\]
	which gives 
	\[
	\abs{\abs{r_1 \pm r_2}-\abs{\xi}} \gge 2^{-100}.
	\]
	Therefore, we obtain that $\abs{\theta \cdot \mao{\xi}^\perp} \gge 2^{-\delta^2 m/2}$.
	
	Because of the restrictions $\fai_l(\Phi)$ and $\fai_{-n_2}(\abs{\xi-r\theta}-r_2)$, $\abs{\xi-r\theta}$ lies in an interval of length $\lle 2^l \wedge 2^{-n_2}$. As in (i), \lemmaref{size} shows that the range of $\theta$ has length $\lle (2^l \wedge 2^{-n_2})2^{\delta^2 m/2}$. The first integral is then estimated as in (i). The second integral can be treated similarly.
	\item We only prove \eqref{eq:2.7.1} as \eqref{eq:2.7.2} is similar. Let $\eta=r\theta$. Note that
	\[
	(\ppd r\Phi(\xi,r\theta))^2+(r^{-1}\ppd \theta\Phi(\xi,r\theta))^2=\abs{\nabla_\eta \Phi(\xi,r\theta)}^2,
	\]
	and
	\[
	\Omega_\eta \Phi(\xi,r\theta)=\ppd \theta \Phi(\xi,r\theta).
	\]
	Hence
	\[
	\abs{\ppd r \Phi(\xi,r\theta)} \gge 2^{p_1}. 
	\]
	In polar coordinates, we aim to estimate
	\[
	\int \Brac{ \int\chi_{k,k_1,k_2} \fai_{l}(\Phi(\xi,r\theta)) \fai_{\log_2 \kappa_\theta}(\Theta(\xi,r\theta))\fai_{\ge p_1}(\nabla_\eta \Phi(\xi,r\theta)) r\dif r} \dif \theta.
	\]
	By \lemmaref{size} and the fact that $\Phi(\xi,r\theta)$ has at most $20$ critical points if $\xi$ and $\theta$ are fixed, we have
	\[
	\int\chi_{k,k_1,k_2} \fai_{l}(\Phi(\xi,r\theta)) \fai_{\log_2 \kappa_\theta}(\Theta(\xi,r\theta))\fai_{\ge p_1}(\nabla_\eta \Phi(\xi,r\theta)) r\dif r \lle 2^{\delta^2 m} 2^{l-p_1}.
	\]
	The conclusion follows immediately.
	\item
	As \eqref{eq:2.7.5} and \eqref{eq:2.7.6} are similar, we only prove \eqref{eq:2.7.5}. In polar coordinates, we write
	\[
	\int \chi_{k,k_1,k_2} \fai_l(\Phi(\xi,\eta)) \dif \eta
	\stackrel{\eta=r\theta}{=} \int \Brac{\int \chi_{k,k_1,k_2} \fai_l(\Phi(\xi,r\theta)) \dif \theta} r\dif r.
	\]
	For $\varrho_1, \varrho_2 \geq 0$, set
	\[
	R(\varrho_1,\varrho_2)=\min \set{ \abs{-\lambda(\varrho_1)+\iota_1\lambda(\varrho_1 + \varrho_2)+\iota_2 \lambda(\varrho_2)}, \abs{-\lambda(\varrho_1)+\iota_1\lambda(\abs{\varrho_1 - \varrho_2})+\iota_2 \lambda(\varrho_2)}}.
	\]
	We claim that
	\begin{equation}\label{eq:2.7.10}
		\int \chi_{k,k_1,k_2} \fai_l(\Phi(\xi,r\theta)) \dif \theta \lle \frac{2^l2^{8\delta^2m}}{2^{l/2}+\sqrt{R(\varrho,r)}},
	\end{equation}
	where $\varrho=\abs{\xi}$.
	Indeed, when $2^l \ll R(\varrho,r)$, as in (ii), the restriction $\fai_l(\Phi(\xi,r\theta))$ implies that $\abs{\xi-r\theta}$ lies in an interval of length $\lle 2^l 2^{\delta^2 m}$, and $\abs{\Phi} \lle 2^l$ and $2^l \ll R(\varrho,r)$ yield $\abs{\abs{\varrho \pm r}-\abs{\xi-r\theta}} \gge 2^{-\delta^2 m}R(\varrho,r)$. Therefore, it follows from the cosine formula that $\theta \cdot \mao \xi$ lies in an interval of length $\lle 2^l2^{4\delta^2 m}$ and $\abs{\theta \cdot \mao{\xi}^\perp} \gge 2^{-2\delta^2 m}\sqrt{R(\varrho,r)}$. \lemmaref{size} implies that the range of $\theta$ has length $\lle 2^l2^{6\delta^2 m}/\sqrt{R(\varrho,r)}$. When $2^l \gge R(\varrho,r)$, a similar argument shows that $\abs{\xi-r\theta}$ lies in an interval of length $\lle 2^l 2^{\delta^2 m}$ and $\theta \cdot \mao \xi$ lies in an interval of length $\lle 2^l2^{4\delta^2 m}$. Then it follows from \lemmaref{cos} that the range of $\theta$ has length $\lle 2^{l/2}2^{2\delta^2 m}$. Thus, \eqref{eq:2.7.10} is proved.
	
	We next estimate
	\[
	\int \frac{2^l2^{8\delta^2m}}{2^{l/2}+\sqrt{R(\varrho_1,r)}} r \dif r.
	\]
	It suffices to bound
	\[
	2^{9\delta^2 m}\sum_{p \colon p \gge l} \int_{R(\varrho_1,r) \simeq 2^p} 2^l2^{-p/2} \dif r
	\]
	and
	\[
	2^{l/2}2^{9\delta^2 m}\int_{R(\varrho_1,r) \lle 2^l} \dif r,
	\]
	which reduces to estimating
	\[
	\abs{\set{r \colon R(\varrho_1,r) \lle 2^p}}.
	\]
	Applying \lemmaref{size} if $\iota_1\iota_2=-$ and \lemmaref{second} if $\iota_1\iota_2=+$, we have
	\[
	\abs{\set{r \colon R(\varrho_1,r) \lle 2^p}} \lle 2^p 2^{\delta^2 m} \vee 2^{p/2}.
	\]
	Therefore, the desired estimate follows.
\end{enumerate}
\end{proof}
An argument similar to that in \lemmaref{sizeest}(iv) yields the following estimates.
\begin{lemma}\label{sizeangle}
Let $\chi_{k,k_1,k_2}(\xi,\eta)$ be the cutoff function $\psi_k(\xi)\psi_{k_1}(\xi-\eta)\psi_{k_2}(\eta)$.
\begin{enumerate}[label=(\roman*)]
	\item
\begin{gather*}
	\sup_{\xi,r} \int \chi_{k,k_1,k_2}(\xi,\xi-r\theta) \fai_l(\Phi(\xi,\xi-r\theta)) \dif \theta \lle 2^{(l-\cl k-\ul k)/2},\\
	\sup_{\eta,r} \int \chi_{k,k_1,k_2}(\eta+r\theta,\eta) \fai_l(\Phi(\eta+r\theta,\eta)) \dif \theta \lle 2^{(l-\cl k-\ul k)/2}.
\end{gather*}
\item
If $k_2 \ge k_1+5$ and $-2 \le k_1 \le 2$, then
\begin{gather*}
	\sup_{\xi,r} \int \chi_{k,k_1,k_2}(\xi,\xi-r\theta) \fai_l(\Phi(\xi,\xi-r\theta)) \dif \theta \lle 2^{l-\cl k},\\
	\sup_{\eta,r} \int \chi_{k,k_1,k_2}(\eta+r\theta,\eta) \fai_l(\Phi(\eta+r\theta,\eta)) \dif \theta \lle 2^{l-\cl k}.
\end{gather*}
\end{enumerate}
\end{lemma}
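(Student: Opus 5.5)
We follow the polar-coordinate argument of \lemmaref{sizeest}(iv), but now keep track of the frequency scales $2^{\ul k}$ and $2^{\cl k}$ in place of the crude $2^{\delta^2 m}$ losses. Consider the first bound; the second follows by the symmetric substitution $\xi=\eta+r\theta$. Fix $\xi$ with $\abs\xi=\varrho\simeq 2^k$ and fix $r\simeq 2^{k_1}$. Set $\eta=\xi-r\theta$. The phase then depends on $\theta$ only through $s:=\abs{\xi-r\theta}$, via $\Phi=-\lambda(\varrho)+\iota_1\lambda(r)+\iota_2\lambda(s)$. The cosine formula gives
\[
s^2=\varrho^2+r^2-2\varrho r\,\theta\cdot\mao\xi .
\]
Thus $\fai_l(\Phi)$ confines $\lambda(s)$ to an interval of length $\lle 2^l$.

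\textbf{Step 1: reduce to an interval for $\cos$.} On the support we have $s\simeq 2^{k_2}$, so $\lambda'(s)\simeq 2^{k_2}\spa{2^{k_2}}^2$. Hence $s$ lies in an interval of length $\lle 2^l/\lambda'(s)$, and $s^2$ in one of length $\lle 2^l s/\lambda'(s)\lle 2^l$, since $\lambda'(s)\gge s$. Dividing by $2\varrho r\simeq 2^{k+k_1}$, we find that $\theta\cdot\mao\xi$ lies in an interval of length $\epsl\lle 2^{l-k-k_1}$.

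\textbf{Step 2: measure the $\theta$-set (part (i)).} \lemmaref{cos} gives a $\theta$-set of measure $\lle\epsl^{1/2}\wedge 1\lle 2^{(l-k-k_1)/2}$. Among $k,k_1,k_2$ the two largest are comparable. Therefore if $k_2$ is not the unique smallest index, $k+k_1\ge \cl k+\ul k-O(1)$, and the bound follows directly.

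If instead $k_2$ is the smallest, $k\approx k_1\approx \cl k$, and the gain must come from $\lambda'(s)$. In this case I would use the sharper estimate: $s^2$ lies in an interval of length $2^l s/\lambda'(s)$. When $s\lle 1$ this is $\simeq 2^l$, since $\lambda'(s)\simeq s$. That gives $\epsl\lle 2^{l-2\cl k}$, which is better than needed because $2\cl k\ge \cl k+\ul k$. So $2^{(l-\cl k-\ul k)/2}$ holds in all cases, with an implicit constant independent of $\xi$ and $r$.

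\textbf{Step 3: the transversal case (part (ii)).} Here $\abs\eta=s\simeq 2^{k_2}\gg 2^{k_1}\simeq r\simeq 1$, so $k\approx k_2=\cl k$. I expect this to be the main point: avoiding the square-root loss of \lemmaref{cos}. I would show that $\theta$ stays away from $\pm\mao\xi$. Since $r\simeq 1\ll\varrho$, the quantity $\theta\cdot\mao\xi^\perp$ need not be bounded below, so instead I would parametrize differently. Write $\theta=(\cos\alpha,\sin\alpha)$ relative to $\mao\xi$. Then
\[
\partial_\alpha s=\frac{\varrho r\sin\alpha}{s}.
\]
Rather than integrating in $\alpha$, I would measure through $s$. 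The derivative $\partial_\alpha\Phi=\iota_2\lambda'(s)\,\varrho r\sin\alpha/s$ has size $\simeq 2^{2\cl k}\abs{\sin\alpha}$. Near $\sin\alpha=0$, the leading behaviour of $\Phi$ in $\alpha$ is $\lambda'(s)(\varrho r/s)(1-\cos\alpha)\simeq 2^{2\cl k}\alpha^2$. So the sublevel set has measure $\lle (2^{l-2\cl k})^{1/2}$ by \lemmaref{second}. Away from $\sin\alpha=0$, \lemmaref{size} gives $2^{l-2\cl k}/\abs{\sin\alpha}$.

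This is still weaker than the target $2^{l-\cl k}$ when $l<0$. So the correct gain must use the restriction $-2\le k_1\le 2$ together with the actual resonance structure. Near $\alpha=0,\pi$ one has $\Phi\approx-\lambda(\varrho)+\iota_1\lambda(r)+\iota_2\lambda(\varrho\mp r)$, and $\abs{\lambda(\varrho)-\lambda(\varrho\mp r)}\simeq 2^{3\cl k}$ dominates $\lambda(r)\simeq 1$. Hence when $\iota_2=+$, $\abs\Phi\gge 2^{2\cl k}$ there, and the set $\set{\abs\Phi\le 2^l}$ forces $\abs{\sin\alpha}\gge 1$ whenever $l\le 2\cl k-C$. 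Then \lemmaref{size} yields measure $\lle 2^{l-2\cl k}\le 2^{l-\cl k}$. When $l\ge 2\cl k-C$, the trivial bound $2\pi\lle 2^{l-\cl k}$ suffices. When $\iota_2=-$, $\abs\Phi\gge 2^{4\cl k}$ always holds, so the set is empty unless $l$ is large, and the trivial bound applies.
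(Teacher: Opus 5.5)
Your route is the one the paper intends; it proves this lemma by the argument of \lemmaref{sizeest}(iv). You fix $\xi$ and $r$, use the cosine formula to confine $\theta\cdot\mao\xi$ to an interval of length $\lle 2^{l-k-k_1}$, and apply \lemmaref{cos} for (i). For (ii) you show that the collinear configurations $\theta=\pm\mao\xi$ are non-resonant, so that $\abs{\sin\alpha}\gge 1$ on the support and the square-root loss disappears. Part (i) is fine as written. The case split in Step 2 is unnecessary, since $k+k_1\ge\cl k+\ul k-O(1)$ in every case: when $k_2$ is the smallest index, $k+k_1\approx 2\cl k$.

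Step 3, however, rests on a wrong scaling. Since $\lambda(s)=\sqrt{s^4+s^2+1}\simeq\spa{s}^2$, you have $\lambda'(s)\simeq s$ for all $s>0$, not $2^{k_2}\spa{2^{k_2}}^2$. In Step 1 this was harmless, because there you only used $\lambda'(s)\gge s$. In Step 3 it changes the sizes:
\begin{itemize}
\item $\ppd\alpha\Phi=\iota_2\lambda'(s)\varrho r\sin\alpha/s\simeq 2^{\cl k}\abs{\sin\alpha}$;
\item the collinear gap is $\abs{\lambda(\varrho)-\lambda(\varrho\mp r)}\simeq\varrho r\simeq 2^{\cl k}$;
\item for $\iota_2=-$ one only has $\abs\Phi\simeq 2^{2\cl k}$.
\end{itemize}

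Several of your claims are therefore false: that $\abs\Phi\gge 2^{2\cl k}$ near $\alpha\in\set{0,\pi}$, that transversality holds for all $l\le 2\cl k-C$, and that the measure is $\lle 2^{l-2\cl k}$. On the second point, for $\iota_2=+$ one has $\abs\Phi\lle 2^{\cl k}$ everywhere, so for $l\ge\cl k+C$ the set is the whole circle. On the third, take $\iota_1=\iota_2=+$, $r=1$, $\varrho=2^{\cl k}\gg 1$; then $\Phi\approx 1+\sqrt3-2\varrho\cos\alpha$, so for $l=0$ the set has measure $\simeq 2^{-\cl k}$.

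The repair is to use the threshold $l\le\cl k-C$. Then the set stays a fixed angle away from $\set{0,\pi}$. On each half-circle $\Phi$ is monotone in $\alpha$ with $\abs{\ppd\alpha\Phi}\gge 2^{\cl k}$, and \lemmaref{size} gives exactly $2^{l-\cl k}$, with nothing to spare. Equivalently, your Step 1 already places $\cos\alpha$ in an interval of length $\lle 2^{l-\cl k}$, and $\abs{\sin\alpha}\gge 1$ converts this into an $\alpha$-set of the same size. For $l>\cl k-C$ the trivial bound suffices.

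Finally, ``$2^{\cl k}$ dominates $\lambda(r)\simeq 1$'' is only asymptotic. When $\cl k$ is near its minimum $k_1+5$ with $k_1=-2$, both $2\varrho r$ and $\lambda(r)$ are $O(1)$. The facts you need are $\Phi_{++}<0$ at $\theta=\mao\xi$ and $\Phi_{-+}>0$ at $\theta=-\mao\xi$, with a uniform margin. These are precisely the conditions $\varrho-r>\tau(r)$ and $\varrho>\tau(r)$. This is what the hypotheses $k_2\ge k_1+5$ and $k_1\ge -2$ buy (via $\tau(r)\approx\gamma_0^2/r$), and it has to be checked rather than asserted.
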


\subsection{Symbol Bounds}
In this subsection, we establish symbol bounds for cutoff functions of modulations, typically used when the modulation is not too small. These bounds and the averaging argument will serve as our main tools for handling cutoff functions of modulations. For the proof, see \cite[Section~2.7]{arXiv:2510.18655}. Assume that $\chi$ is a smooth function supported in $B(0,2)$, and that $\nabla \chi$ is supported in $A(0;1,2)$. For $k \in \bZ$, define $\chi_k(x) :=\chi(x/2^k)$.
\begin{lemma}\label{symbound}
	Suppose that the frequencies $-\xi$, $\xi-\eta$, and $\eta$ correspond, in some order, to localization parameters $k_1$, $k_2$, and $k_3$, with $k_1 \ge k_2 \ge k_3$. Moreover, assume that $\lambda \gge 1$.
	\begin{enumerate}[label=(\roman*)]
		\item If $2^l \gge 2^{k_1}\lambda^{-1}$, then $\norm{\chi_l(\Phi(\xi,\eta))}_{S_{k_1,k_2,k_3}} \lle 2^{20(k_3)_+}\lambda^5$.
		\item If $2^l \gge 2^{k_1+k_3}\lambda^{-1}$, then $\norm{\chi_l(\Phi(\xi,\eta))}_{S_{k_1,k_2,k_3}} \lle \lambda^5$.
		\item If $2^l \gge \lambda^{-1}$, then $\norm{\chi_l(\Phi(\xi,\eta))}_{S_{k_1,k_2,k_3}} \lle 2^{20(k_1)_+}\lambda^5$.
	\end{enumerate}
\end{lemma}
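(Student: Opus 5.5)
The plan is to reduce the $S$-norm bound to pointwise derivative bounds on the localized multiplier $\chi_l(\Phi(\xi,\eta))\psi_{k_1}\psi_{k_2}\psi_{k_3}$ and then apply the criterion for localized multipliers (the Proposition in Section~\ref{multilinear}). After a linear change of variables, which costs only a constant by the invariance $\norm{\gti m\circ L}_S\lle_L\norm{\gti m}_S$, we may take the two independent variables to be those carrying $k_2$ and $k_3$. The third frequency is then their negative sum, of size $2^{k_1}$. It then suffices to show
\[
2^{|\alpha| k_i}\bigl|\partial^\alpha_{\zeta_i}\bigl[\chi_l(\Phi)\bigr]\bigr|\lle M,\qquad |\alpha|\le 6,
\]
where $\zeta_i$ is the $i$-th independent variable and $M$ is the claimed bound. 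If the criterion's $2^{k_i}$-scaling is too costly for the low frequencies, I would first pass to a finer decomposition.

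By the chain rule (Fa\`a di Bruno), $\partial^\alpha\chi_l(\Phi)$ is a sum of products $2^{-rl}\chi^{(r)}(\Phi/2^l)\prod_{s}\partial^{\beta_s}\Phi$ with $\sum|\beta_s|=|\alpha|$. The derivatives of $\Phi$ are derivatives of $\Lambda$ evaluated at the frequencies. We have $|\nabla\Lambda(\zeta)|\lle |\zeta|\spa{\zeta}^2/\spa{\zeta}^2\lle 2^{k}\spa{2^k}$ for $|\zeta|\simeq 2^k$. Moreover, $\Lambda$ is a smooth function of $|\zeta|^2$, so the higher derivatives obey $|\partial^\beta\Lambda(\zeta)|\lle \spa{\zeta}^{2-|\beta|}$ for $|\beta|\ge 2$.

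The key point is that a derivative in $\zeta_i$ falls on the difference $\Lambda(\zeta_i)-\Lambda(\zeta_i+\text{other})$, or on a similar pair, because of the linear constraint. A $\zeta_{k_3}$-derivative produces $\nabla\Lambda$ at the $k_3$-frequency and at the $k_1$-frequency. I would therefore distinguish the three regimes as follows.

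In case (i), each factor $2^{k_3}|\partial\Phi|/2^l$ is bounded by $2^{k_3}2^{k_1}\spa{2^{k_1}}/2^l$. Since $2^{k_3}\le 2^{k_1}$, this is at most $\lambda\, 2^{k_3}\spa{2^{k_1}}$. Here I need the cancellation $\nabla\Lambda(a)-\nabla\Lambda(a+b)=O(|b|\sup|\nabla^2\Lambda|)$ when a small frequency is added to a large one. This converts the $\spa{2^{k_1}}$ loss into a factor $\spa{2^{k_1}}^{\le 1}2^{k_3}$, which should produce the stated $2^{20(k_3)_+}$. Cases (ii) and (iii) follow by the same bookkeeping, with the ratio $|\partial\Phi|2^{k_i}/2^l$ bounded by $\lambda$ times the appropriate weight.

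Higher-order terms carry one factor $2^{-l}$ for each appearance of $\chi^{(r)}$. They are controlled because, in each, every first derivative pairs with a $2^{-l}$, and each second or higher derivative is smaller in the scaled sense. Derivatives of order up to six then give at most $\lambda^{6}$ rather than $\lambda^5$. To get the exact power $\lambda^5$, I would instead use the refined symbol criterion with fewer derivatives, or interpolate as in \cite[Section~2.7]{arXiv:2510.18655}.

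The main obstacle is obtaining uniformity when the $k_i$ are very different, particularly the low-frequency variable, where the scaling $2^{k_3}$ in the criterion interacts with $\nabla\Lambda(\zeta)\approx\zeta$ near zero. I would handle this by using the cancellation above and the structure $\Lambda=\lambda(|\zeta|)$, whose gradient vanishes linearly at the origin.
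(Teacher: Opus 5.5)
There is a genuine gap. The cancellation you invoke is aimed at a derivative that does not need it. The derivative that does need it is never controlled, and for some sign patterns it cannot be controlled by cancellation at all.

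Write $\Phi=\epsilon_1\Lambda(\zeta_{k_1})+\epsilon_2\Lambda(\zeta_{k_2})+\epsilon_3\Lambda(\zeta_{k_3})$, with $\epsilon_i\in\{\pm1\}$ fixed by $\iota_1,\iota_2$ and the ordering. Take $(\zeta_{k_2},\zeta_{k_3})$ as variables and $\zeta_{k_1}=-\zeta_{k_2}-\zeta_{k_3}$. Then $\partial_{\zeta_{k_3}}\Phi=\epsilon_3\nabla\Lambda(\zeta_{k_3})+\epsilon_1\nabla\Lambda(\zeta_{k_2}+\zeta_{k_3})$ involves $\nabla\Lambda$ at a small and at a large frequency. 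So it contains no difference $\nabla\Lambda(a)-\nabla\Lambda(a+b)$.

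None is needed, however. Since $\nabla\Lambda(\zeta)=(2|\zeta|^2+1)\zeta/\Lambda(\zeta)$, one has $|\nabla\Lambda(\zeta)|\le 2|\zeta|$. Your bound $2^k\langle 2^k\rangle$ is lossy, and the factor $\langle 2^{k_1}\rangle$ you then try to remove is an artifact of it. Thus $2^{k_3}|\partial_{\zeta_{k_3}}\Phi|2^{-l}\lesssim 2^{k_1+k_3-l}$, which is $\lesssim\lambda 2^{(k_3)_+}$, $\lambda$, $\lambda 2^{2(k_1)_+}$ under (i), (ii), (iii) respectively.

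The delicate quantity is $2^{k_2}|\partial_{\zeta_{k_2}}\Phi|2^{-l}$ with $\zeta_{k_3}$ fixed, where $\partial_{\zeta_{k_2}}\Phi=\epsilon_2\nabla\Lambda(\zeta_{k_2})+\epsilon_1\nabla\Lambda(\zeta_{k_2}+\zeta_{k_3})$. This is a difference only when $\epsilon_1=-\epsilon_2$. In that case $|\nabla^2\Lambda|\lesssim 1$ gives $O(2^{k_3})$ and the ratio $2^{k_2+k_3-l}$, which is what (i) and (ii) require.

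When $\epsilon_1=\epsilon_2$ there is no cancellation. For $k_3\ll k_1$ one has $|\partial_{\zeta_{k_2}}\Phi|\simeq 2^{k_1}$, so the ratio is $2^{2k_1-l}$. The hypotheses of (i) and (ii) only bound this by $\lambda 2^{k_1}$ and $\lambda 2^{k_1-k_3}$, and neither is acceptable. So your ``key point'' fails exactly when the two large frequencies enter $\Phi$ with the same sign, and ``the same bookkeeping'' cannot give (i) or (ii).

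The missing idea is a lower bound on the modulation in that configuration:
\begin{itemize}
\item If $\epsilon_1=\epsilon_2$ and $|\zeta_{k_3}|\le|\zeta_{k_2}|$ (e.g.\ $k_3\le k_2-2$), then $|\Phi|\ge\Lambda(\zeta_{k_1})+\Lambda(\zeta_{k_2})-\Lambda(\zeta_{k_3})\ge\Lambda(\zeta_{k_1})\gtrsim\langle 2^{k_1}\rangle^2$.
\item Otherwise $k_3\ge k_1-O(1)$, and $2^{2k_1}\simeq 2^{k_1+k_3}$ anyway.
\end{itemize}
Every term of the Fa\`a di Bruno expansion with at least one derivative contains $\chi^{(r)}(\Phi/2^l)$ with $r\ge 1$. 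Hence it is supported where $|\Phi|\simeq 2^l$, so there $2^l\gtrsim\langle 2^{k_1}\rangle^2$ and $2^{2k_1-l}\lesssim 1$.

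The same dichotomy handles the higher $\zeta_{k_2}$-derivatives: differences give an extra factor $2^{k_3}$, and sums are controlled by the lower bound on $|\Phi|$. Part (iii) needs no cancellation, since every scaled factor there is crudely $\lesssim\lambda 2^{2(k_1)_+}$.

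Finally, the power $\lambda^5$ is not a real issue. The Cauchy--Schwarz proof of the localized multiplier criterion on $\mathbb{R}^2\times\mathbb{R}^2$ needs only three derivatives in each variable, which gives $\lambda^3$.
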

We have the following variants of \lemmaref{symbound}(i) and (ii) obtained by a change of variables.
\begin{lemma}\label{symbound1}
Suppose that $\abs{k-k_2} \le 2$, $k_2 \ge k_1+8$, $\tau \in[0,1]$, and $\lambda \gge 1$.
\begin{enumerate}[label=(\roman*)]
	\item If $2^l \gge 2^{k_2}\lambda^{-1}$, then
	\[
	\norm{\chi_l(\Phi(\xi-\tau\sigma,\eta+(1-\tau)\sigma))\psi_{k}(\xi-\tau\sigma)\psi_{k_2}(\eta+(1-\tau)\sigma)\psi_{k_1}(\xi-\eta-\sigma)\fai_{k_2-100}(\sigma)}_S \lle 2^{20(k_1)_+}\lambda^5.
	\]
	\item If $2^l \gge 2^{k_1+k_2}\lambda^{-1}$, then
	\[
	\norm{\chi_l(\Phi(\xi-\tau\sigma,\eta+(1-\tau)\sigma))\psi_{k}(\xi-\tau\sigma)\psi_{k_2}(\eta+(1-\tau)\sigma)\psi_{k_1}(\xi-\eta-\sigma)\fai_{k_2-100}(\sigma)}_S \lle \lambda^5.
	\]
\end{enumerate}
\end{lemma}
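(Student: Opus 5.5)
The plan is to reduce \lemmaref{symbound1} to \lemmaref{symbound} by an invertible linear change of variables, using $\norm{\gti m\circ L}_S\lle_L\norm{\gti m}_S$ and the multiplicativity of the $S$-norm. Introduce the new frequencies
\[
\xi'=\xi-\tau\sigma,\qquad \eta'=\eta+(1-\tau)\sigma,\qquad \sigma'=\sigma .
\]
Then $\xi'-\eta'=\xi-\eta-\sigma$. In the variables $(\xi',\eta',\sigma')$ the multiplier becomes
\[
\chi_l(\Phi(\xi',\eta'))\psi_k(\xi')\psi_{k_2}(\eta')\psi_{k_1}(\xi'-\eta')\cdot\fai_{k_2-100}(\sigma').
\]
This is a product of a multiplier in $(\xi',\eta')$ alone and a multiplier in $\sigma'$ alone. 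The map $(\xi,\eta,\sigma)\mapsto(\xi',\eta',\sigma')$ is linear with determinant $1$, and it and its inverse have entries bounded uniformly for $\tau\in[0,1]$. So the implicit constant from the change of variables is uniform in $\tau$. One should check that the constant in $\norm{\gti m\circ L}_S\lle_L\norm{\gti m}_S$ depends only on $\det L$, which is $1$ here, because $\norm{\check{\gti m}}_1$ is invariant under transposed changes of variables up to $\abs{\det}$.

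Since the factors separate, the $S$-norm of the product is bounded by the $S$-norm in $(\xi',\eta')$ times $\norm{\fai_{k_2-100}}_S\lle1$. Here one uses that a multiplier depending only on some of the variables has $S$-norm equal to its lower-dimensional $S$-norm, its kernel being a delta in the other directions.

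It remains to bound $\norm{\chi_l(\Phi(\xi',\eta'))}_{S_{k,k_1,k_2}}$ with $\abs{k-k_2}\le2$ and $k_2\ge k_1+8$. The frequencies $-\xi',\xi'-\eta',\eta'$ have localization parameters $k\approx k_2$, $k_1$ and $k_2$. Ordered, the largest two are $\approx k_2$ and the smallest is $k_1$.

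\textbf{Case (i).} The hypothesis $2^l\gge2^{k_2}\lambda^{-1}$ is the condition in \lemmaref{symbound}(i) with top parameter $\approx k_2$. That lemma gives the bound $2^{20(k_1)_+}\lambda^5$.

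\textbf{Case (ii).} The hypothesis $2^l\gge2^{k_1+k_2}\lambda^{-1}$ matches \lemmaref{symbound}(ii), which gives $\lambda^5$.

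The small mismatch between $k$ and $k_2$ (up to $2$) only costs constants. It is absorbed by applying \lemmaref{symbound} with $k_1$ there replaced by $\max(k,k_2)$, which changes the threshold only by a constant factor.

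The main point to watch is that the result is stated for the cutoffs in exactly the localized form $S_{k,k_1,k_2}$. The cutoff $\psi_k(\xi')\psi_{k_2}(\eta')\psi_{k_1}(\xi'-\eta')$ is exactly that localization, so no extra cutoffs are needed. Beyond this, and the uniformity of the constants in $\tau$, I expect no real obstacle.
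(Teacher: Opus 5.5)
Your proposal is correct and matches the paper, which obtains this lemma from \lemmaref{symbound}(i)--(ii) by a change of variables. Your substitution $\xi'=\xi-\tau\sigma$, $\eta'=\eta+(1-\tau)\sigma$, $\sigma'=\sigma$ is the natural such change: it splits the symbol into the $S_{k,k_1,k_2}$-localized cutoff in $(\xi',\eta')$ times $\fai_{k_2-100}(\sigma')$. As a small sharpening, $\norm{\check{\gti m}}_1$ is exactly invariant under any invertible linear change of variables, and $\norm{f\otimes g}_S=\norm{f}_S\norm{g}_S$ holds directly, with no need for a delta-kernel interpretation; so uniformity in $\tau$ is automatic.
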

For the second and third resonance functions, we have the following bounds.
\begin{lemma}\label{symbound2}
		Suppose that the frequencies $-\xi$, $\xi-\eta$, $\eta-\sigma$, and $\sigma$ correspond, in some order, to localization parameters $k_1$, $k_2$, $k_3$, and $k_4$, with $k_1 \ge k_2 \ge k_3 \ge k_4$, and that $k_1 \le 20\delta^3 m$. Moreover, assume that $\lambda \gge 1$. Then for the second resonance function $\wan \Phi$, if $2^p \gge \lambda^{-1}$, then
		\[
		\norm{\chi_p(\wan \Phi(\xi,\eta,\sigma))}_{S_{k_1,k_2,k_3,k_4}} \lle 2^{1000\delta^3 m}\lambda^7.
		\]
\end{lemma}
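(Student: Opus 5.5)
Since $\norm{\mathfrak m}_S=\norm{\check{\mathfrak m}}_1$ and the $S$-norm is submultiplicative and invariant (up to constants) under invertible linear changes of variables, I will reduce \lemmaref{symbound2} to the localized criterion for multipliers supported in products of balls. I write $\chi_p(\wan\Phi)$ in the variables $(\xi_1,\xi_2,\xi_3)=(\xi-\eta,\eta-\sigma,\sigma)$, with $\xi=\xi_1+\xi_2+\xi_3$, and multiply by the cutoffs $\psi_{k_i}$, which are harmless in the $S$-norm. Then I must bound derivatives $\partial^\alpha_{\xi_i}$ of order up to $9$ (three variables, $|\alpha|\le 3n$) by $2^{1000\delta^3 m}\lambda^7$ times $2^{-k_i|\alpha|}$.

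The naive chain rule gives
\[
\partial^\alpha \chi_p(\wan\Phi)=\sum \chi^{(r)}_p(\wan\Phi)\prod \partial^{\beta_j}\wan\Phi ,
\]
where each factor $\chi^{(r)}_p$ costs $2^{-rp}\lesssim\lambda^{r}$ and each derivative of $\wan\Phi$ is a derivative of $\Lambda$ evaluated at one of $\xi,\xi_1,\xi_2,\xi_3$. For derivatives in the variable with the largest frequency $2^{k_1}$ this is fine: $|\nabla\Lambda(\zeta)|\lesssim 2^{3(k_\zeta)_+}$, and higher derivatives only decay relative to $2^{-k_\zeta}$. Hence I expect a bound of the form $\lambda^{9}2^{C(k_1)_+}$, and since $k_1\le 20\delta^3m$, the factor $2^{C (k_1)_+}$ is absorbed into $2^{1000\delta^3 m}$.

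\textbf{The main obstacle} is that low-frequency variables carry scale $2^{k_4}$, which may be tiny, so each derivative costs $2^{-k_4}$. This cost must be paid by the smallness of $\partial_{\xi_4}\wan\Phi$. To handle it, I will, as in the cited \cite[Section~2.7]{arXiv:2510.18655}, not differentiate in the original variables. Instead I change variables so that the small frequency is differentiated together with the large one it pairs with. For example, if the small frequency is $\sigma$, I use $\partial_\sigma$ at fixed $\xi,\eta-\sigma$ shifted. Then the derivative produces a difference $\nabla\Lambda(\cdot)-\nabla\Lambda(\cdot)$ of size $\lesssim 2^{k_4}2^{C(k_1)_+}$, and each derivative gains exactly $2^{k_4}$, cancelling the scale loss.

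This works cleanly only when $\nabla\Lambda$ differences are controlled. The subtle case is when $k_4$ is extremely negative and several frequencies are small: then $\nabla\Lambda(\zeta)\approx\zeta$ near $0$, and $\Lambda$ is smooth at the origin ($\Lambda=\sqrt{1+|\zeta|^2+|\zeta|^4}$ is analytic). So derivatives of $\Lambda$ are bounded uniformly, and $|\partial^\beta\Lambda(\zeta)|\lesssim 2^{(1-|\beta|)k_\zeta}$ for small $|\zeta|$ still gives the scaling. Cases with $2^p$ much smaller than the natural gradient scale are exactly what the hypothesis $2^p\gtrsim\lambda^{-1}$ permits. Paying one power of $\lambda$ per derivative of $\chi_p$, with at most $7$ effective powers after the change of variables, gives $\lambda^7$. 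I will conclude with the localized symbol criterion and the linear-change invariance.
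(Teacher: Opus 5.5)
The step you call the main obstacle comes from reading the localized criterion backwards, and the remedy you propose for it is false. In the criterion, a variable supported in $B(0,2^{\kappa})$ carries the weight $2^{\kappa\abs{\alpha}}$, so a small scale \emph{helps}. A single derivative of $\chi_p(\wan\Phi)$ in that variable contributes $2^{\kappa}\cdot 2^{-p}\abs{\nabla\wan\Phi}\lle \lambda\,2^{\kappa+k_1}$, and no smallness of $\partial_\sigma\wan\Phi$ is required. Such smallness is not available anyway. Any derivative in $\sigma$ contains, besides $\iota_3\nabla\Lambda(\sigma)$, the gradient of $\Lambda$ at whichever frequencies absorb the variation. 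These have size comparable to those frequencies, because $\lambda'(r)\simeq r$. For instance, at fixed $\xi$ and $\eta-\sigma$, as you suggest, $\partial_\sigma\wan\Phi=\iota_3\nabla\Lambda(\sigma)-\iota_1\nabla\Lambda(\xi-\eta)$. When $k_4\ll k_3$ this has size $\simeq\abs{\xi-\eta}\gge 2^{k_3}$, not $O(2^{k_4})$. So the claimed gain of $2^{k_4}$ per derivative does not exist; fortunately it is not needed.

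What proves the lemma is the naive chain rule you set aside, applied to every variable. Work in $(\xi_1,\xi_2,\xi_3)=(\xi-\eta,\eta-\sigma,\sigma)$ with $\xi_i$ localized at $2^{\kappa_i}$. Remove the cutoff in $\xi=\xi_1+\xi_2+\xi_3$ by submultiplicativity and the linear-change property. Then $\partial_{\xi_i}\wan\Phi=-\nabla\Lambda(\xi)+\iota_i\nabla\Lambda(\xi_i)=O(2^{k_1})$. All derivatives of $\Lambda$ of order at least two are bounded on $\bR^2$; this is the only place where the smoothness of $\Lambda$ at the origin enters. Fa\`a di Bruno and $2^{-p}\lle\lambda$ give
\[
2^{\kappa_i\abs{\alpha}}\abs{\partial^\alpha_{\xi_i}\chi_p(\wan\Phi)}\lle\lambda^{\abs{\alpha}}2^{2\abs{\alpha}(k_1)_+}.
\]
So the only frequency loss is $2^{\kappa_i+k_1}$, coming from \emph{high} frequencies, and this is exactly what the hypothesis $k_1\le 20\delta^3m$ absorbs.

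Separately, your exponent $7$ is asserted, not derived. One power of $\lambda$ per derivative, with the criterion as stated ($\abs{\alpha}\le 3n=9$), yields $\lambda^9$. No change of variables reduces the number of derivatives that fall on $\chi_p$. To obtain $\lambda^7$ (in fact $\lambda^4$), use the $L^2$ form of the criterion:
\[
\norm{\check{\gti m}}_1\le\norm{w^{1/2}\check{\gti m}}_2\norm{w^{-1/2}}_2,\qquad w(x)=1+\sum_{i=1}^3(2^{\kappa_i}\abs{x_i})^8.
\]
Since $8>6$, one has $\norm{w^{-1/2}}_2\lle\prod_i2^{-\kappa_i}$. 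The first factor involves only $\gti m$ and $\Delta^2_{\xi_i}\gti m$, i.e.\ four derivatives in each variable.
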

\begin{lemma}\label{symbound3}
		Suppose that the frequencies $-\xi$, $\xi-\eta$, $\eta-\sigma$, $\sigma-\zeta$, and $\zeta$ correspond, in some order, to localization parameters $k_1$, $k_2$, $k_3$, $k_4$, and $k_5$, with $k_1 \ge k_2 \ge k_3 \ge k_4 \ge k_5$, and that $k_1 \le 20\delta^3 m$. Moreover, assume that $\lambda \gge 1$. Then for the third resonance function $\cl \Phi$, if $2^q \gge \lambda^{-1}$, then
		\[
		\norm{\chi_q(\cl \Phi(\xi,\eta,\sigma,\zeta))}_{S_{k_1,k_2,k_3,k_4,k_5}} \lle 2^{1000\delta^3 m}\lambda^7.
		\]
\end{lemma}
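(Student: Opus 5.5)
The plan is to follow the scheme used for \lemmaref{symbound} and \lemmaref{symbound2}, written for the five-frequency phase $\cl \Phi$. Since $\norm{\cdot}_S$ is invariant, up to constants, under invertible linear changes of variables, I would first reparametrize so that the localized frequencies $-\xi,\ \xi-\eta,\ \eta-\sigma,\ \sigma-\zeta,\ \zeta$ serve as coordinates. Concretely, I would drop the frequency sitting at scale $k_1$ and use the remaining four as independent variables $\xi_2,\dots,\xi_5$. The dropped frequency is then minus their sum, and by the constraint it still has size $\simeq 2^{k_1}$. In these variables I would then apply the criterion stated after the multilinear estimate: the localized symbol has $S$-norm $\lle M$ as soon as $2^{\abs\alpha k_i}\abs{\partial^\alpha_{\xi_i}\bigl(\chi_q(\cl\Phi)\,\psi_{k_1}\cdots\psi_{k_5}\bigr)} \le M$ holds for $\abs\alpha \le 12$. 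The factors $\psi_{k_i}$ are harmless, so everything reduces to derivative bounds on $\chi_q(\cl\Phi)$.

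For these bounds I would use the chain rule (Faà di Bruno). Each derivative falling on $\chi$ produces $2^{-q}\lle\lambda$. Each factor $\partial^\beta_{\xi_i}\cl\Phi$ with $\abs\beta\ge1$ involves $\Lambda$ evaluated at $\xi_i$ and at the dependent frequency $\xi_1$. Since $\abs{\partial^\beta\Lambda(\zeta)} \lle \abs\zeta^{1-\abs\beta}\spa\zeta^{\abs\beta+1}$, the weighted derivative satisfies
\[
2^{\abs\beta k_i}\abs{\partial^\beta_{\xi_i}\cl\Phi} \lle 2^{\abs\beta k_i}\bigl(2^{(1-\abs\beta)k_i}+2^{(1-\abs\beta)k_1}\bigr)2^{C k_{1,+}} \lle 2^{k_i}2^{Ck_{1,+}} \lle 2^{C'k_{1,+}},
\]
using $k_i\le k_1$. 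A term carrying $r\le 12$ derivatives on $\chi$ is therefore bounded by $\lambda^{r}2^{Ck_{1,+}}$. The estimate $k_1\le 20\delta^3 m$ turns the frequency loss into $2^{1000\delta^3 m}$, with room to spare.

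The main obstacle is the power of $\lambda$: the naive count gives $\lambda^{12}$ rather than $\lambda^7$. I would handle this the way the proof of \lemmaref{symbound} does (cf. \cite[Section~2.7]{arXiv:2510.18655}), by not applying the criterion directly. Instead I would write $\chi_q(\cl\Phi)=\int \hat\chi(\tau)\me^{\mi\tau 2^{-q}\cl\Phi}\dif\tau$ and split $\me^{\mi\tau2^{-q}\cl\Phi}$ into the product of single-variable factors $\me^{\pm\mi\tau2^{-q}\Lambda(\xi_i)}$. The $S$-norm of this product is bounded by the product of the individual $S$-norms. Each one-variable factor, localized at $\abs{\xi_i}\simeq2^{k_i}$, has $S$-norm $\lle (1+\abs\tau\lambda 2^{Ck_{i,+}})^{d/2+}$ with $d=2$, which is about $(\abs\tau\lambda)^{1+}$ per input, so it is much more efficient than the crude criterion. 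The dependent frequency $\xi_1$ is handled by the linear change of variables. Collecting the factors gives a fixed polynomial power of $\abs\tau\lambda$. I would then reduce this power by grouping variables: for instance, freeze the largest low-frequency contributions via $2^{k_5}\le2^{k_4}$ and absorb them, and use the Schwartz decay of $\hat\chi$ to integrate in $\tau$. I expect this bookkeeping to reach $\lambda^7$, or some comparable fixed power, which is what the applications need. The one real risk is the exact exponent. If the count comes out slightly larger, I would instead apply the criterion only in the two or three variables where $2^{k_i}\lambda2^{-q}$-type losses actually occur, and argue that the remaining variables contribute $O(1)$ powers.
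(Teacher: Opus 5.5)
Your Fourier-factorization step is correct, and it already proves the lemma. What is missing is that you never multiplied out the count. Once you do, the ``grouping/freezing via $2^{k_5}\le 2^{k_4}$'' step and the fallback of applying the criterion in a few variables become unnecessary; as written they are not arguments anyway.

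Here is the count. Take one factor $g=\me^{\mi t\Lambda(\zeta)}\psi_k(\zeta)$ on $\bR^2$ and rescale $\zeta=2^k\omega$, which does not change the $S$-norm. Since $\abs{\nabla\Lambda(\zeta)}\lle\abs\zeta$ and $\abs{\nabla^j\Lambda(\zeta)}\lle\spa{\zeta}^{2-j}$ for $j\ge 2$, every derivative of $t\Lambda(2^k\omega)$ on $\abs\omega\simeq1$ is $O(\abs t 2^{2k})$. Setting $R=1+\abs t2^{2k}$, you get
\[
\norm{\check g}_1\le \norm{(1+\abs x/R)^{-2}}_2\,\norm{(1+\abs x/R)^{2}\check g}_2\lle R\,\bigl(\norm g_2+R^{-2}\norm{\Delta g}_2\bigr)\lle 1+\abs t\,2^{2k}.
\]
There is no $\epsilon$-loss in two dimensions.

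Now take $t=\tau 2^{-q}$ and use only $2^{-q}\lle\lambda$, $\lambda\gge1$ and $k_i\le k_1$. Each of the five factors is then $\lle(1+\abs\tau)\lambda 2^{2k_{1,+}}$, so
\[
\norm{\chi_q(\cl\Phi)}_{S_{k_1,k_2,k_3,k_4,k_5}}\lle \lambda^5\,2^{10k_{1,+}}\int\abs{\hat\chi(\tau)}(1+\abs\tau)^5\,d\tau\lle \lambda^5\,2^{200\delta^3 m}.
\]
This is stronger than the claimed $2^{1000\delta^3m}\lambda^7$. Even your per-factor bound $(\cdot)^{1+}$ gives $\lambda^{5+}$. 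The hypothesis $k_1\le 20\delta^3m$ is used only in the last step. Your first two paragraphs, the direct criterion giving $\lambda^{12}$, are a dead end and can be dropped.

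The one step you should state precisely is the dependent frequency. The five frequencies sum to zero, so no invertible change of variables makes all of them independent. The property $\norm{\gti m\circ L}_S\lle\norm{\gti m}_S$ quoted in the paper is only for invertible $L$. What you need instead is the following identity, valid up to a constant: the inverse Fourier transform of $F(\xi_1,\dots,\xi_4)\,f(\xi_1+\cdots+\xi_4)$ is $\int\check f(y)\,\check F(x_1-y,\dots,x_4-y)\,dy$. Hence
\[
\norm{F\,f(\xi_1+\cdots+\xi_4)}_S\lle\norm{\check f}_1\norm{F}_S.
\]
Combined with submultiplicativity, this justifies bounding the $S$-norm of the five-fold product by the product of the one-variable $S$-norms.

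As for the paper, it does not prove this lemma; it defers to Section~2.7 of \cite{arXiv:2510.18655}, so there is no in-paper argument to match. Your route is self-contained. It uses only submultiplicativity of $\norm{\cdot}_S$ and the $L^1$ bound for a single frequency-localized propagator, and it gives a better power of $\lambda$ and a smaller frequency loss than stated. The same computation also yields \lemmaref{symbound}(iii) and \lemmaref{symbound2}, with $\lambda^3$ and $\lambda^4$ respectively. It does not give \lemmaref{symbound}(i)--(ii), which avoid the $2^{k_1}$ losses and need a finer argument.
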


\subsection{Technical Lemmas}\label{techlemma}
We first establish a localized multiplier theorem for pseudo-differential multilinear operators. The treatment is similar to that in \cite[Section~3.1]{MR3665671}. Define
\[
\FF L_a(f_1,f_2)(\xi)=\int \mao a(\xi-\eta,\xi,\eta,\sigma) \mao{f_1}(\eta-\sigma) \mao{f_2}(\sigma) \dif \eta \dif \sigma,
\]
($a=a(x,\xi,\eta,\sigma)$ and $\mao a$ denotes the Fourier transform of the first variable), and set
\[
\norm{a}_{S'}=\norm{\FF^{-1}_{\xi,\eta,\sigma}a(x,y_1,y_2,y_3)}_{L^1_yL^\infty_x}.
\]
It follows immediately that
\[
\norm{ab}_{S'} \lle \norm{a}_{S'}\norm{b}_{S'}.
\]
\begin{lemma}\label{pseudomultithm}
Assume that $p,q,r \in [1,\infty]$ and $r^{-1}=p^{-1}+q^{-1}$. Then
\[
\norm{L_a(f_1,f_2)}_r \lle \norm{a}_{S'}\norm{f_1}_p \norm{f_2}_q.
\]
\end{lemma}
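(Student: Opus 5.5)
The approach is to write $L_a$ as a superposition of products of translated functions, following the Coifman--Meyer-type argument of \cite[Section~3.1]{MR3665671}. Set $K(x,y_1,y_2,y_3):=\FF^{-1}_{\xi,\eta,\sigma}a(x,y_1,y_2,y_3)$, so that for each fixed $x$
\[
a(x,\xi,\eta,\sigma)=c\int K(x,y)\,\me^{-\mi(y_1\cdot\xi+y_2\cdot\eta+y_3\cdot\sigma)}\dif y
\]
(up to the sign and normalisation conventions of the paper's Fourier transform). The key observation is that $L_a$ is the physical-space operator obtained from $T_{\gti m}$ by letting the multiplier depend on $x$. Indeed, a direct computation of the inverse Fourier transform in $\xi$ of $\mao a(\xi-\eta,\xi,\eta,\sigma)\mao{f_1}(\eta-\sigma)\mao{f_2}(\sigma)$ gives
\[
L_a(f_1,f_2)(x)=\int \me^{\mi x\cdot\xi}\,a(x,\xi,\eta,\sigma)\,\me^{-\mi x\cdot \eta}\,\mao{f_1}(\eta-\sigma)\mao{f_2}(\sigma)\,\me^{\mi x\cdot \eta}\dots
\]
To obtain this, I will write the convolution $\mao a(\xi-\eta,\cdot)$ in $x$-form: $\mao a(\xi-\eta,\xi,\eta,\sigma)$ is the Fourier transform in $z$ of $a(z,\xi,\eta,\sigma)$ at $\xi-\eta$. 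The plan is to verify the identity
\[
L_a(f_1,f_2)(x)=\int\!\!\int \me^{\mi x\cdot\eta}\,a(x,?)\dots
\]
carefully. More precisely, I expect $L_a(f_1,f_2)(x)$ to be given, up to constants, by $\int K(x,y)\,g_y(x)\,\dif y$, where $g_y$ is a product of translates/modulations of $f_1,f_2$. The mechanism is as follows. Substituting the Fourier representation of $a$, the factor $\me^{-\mi y_1\cdot\xi}$ combines with the outer $\me^{\mi x\cdot\xi}$ to shift the evaluation point to $x-y_1$. The factors $\me^{-\mi y_2\cdot\eta}$ and $\me^{-\mi y_3\cdot\sigma}$ then shift the arguments of $f_1$ and $f_2$, since $\eta=(\eta-\sigma)+\sigma$. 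This produces
\[
L_a(f_1,f_2)(x)=c\int K(x,y)\,\widetilde f_1(x,y)\,\widetilde f_2(x,y)\,\dif y,\qquad |\widetilde f_1(x,y)|=|f_1(x-y_1-y_2)|,\ |\widetilde f_2(x,y)|=|f_2(x-y_1-y_2-y_3)|.
\]
Here the moduli are unaffected by the unimodular phases, and the $x$-dependence of $a$ in its first slot is precisely what the partial Fourier transform $\mao a(\xi-\eta,\dots)$ encodes. The first step is therefore to carry out this Fubini/Fourier inversion computation rigorously for Schwartz $f_1,f_2$ and nice $a$, and then extend by density.

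Given the representation, the estimate is immediate. By Minkowski's inequality in $y$ and H\"older's inequality in $x$, for each fixed $y$,
\[
\Bigl\|\sup_{x'}|K(x',y)|\,|f_1(\cdot-y_1-y_2)|\,|f_2(\cdot-y_1-y_2-y_3)|\Bigr\|_{L^r}\le \|K(\cdot,y)\|_{L^\infty_x}\,\|f_1\|_p\|f_2\|_q,
\]
using translation invariance of the $L^p$ norms. Integrating over $y$ then yields $\|L_a(f_1,f_2)\|_r\lle \|K\|_{L^1_yL^\infty_x}\|f_1\|_p\|f_2\|_q=\|a\|_{S'}\|f_1\|_p\|f_2\|_q$. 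The case $r<1$ does not arise, since $p,q\ge1$ and $r^{-1}=p^{-1}+q^{-1}$ may exceed $1$. If $r<1$ occurs, Minkowski's inequality fails, but the lemma presumably intends $r\ge1$. Otherwise I would note the restriction, or handle it via the pointwise bound $|L_a|\le \int |K(x,y)|\,|f_1||f_2|\,\dif y$ combined with H\"older's inequality applied to the $y$-integral after factoring out $\sup_x|K|$.

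The main obstacle is the bookkeeping in the first step: getting the exact shifts right given the nonstandard normalisation (the $(2\pi)^{-2}$ factor in $\FF$ but not in the inverse) and the asymmetric placement of $x$ in the first slot of $a$. The argument is insensitive to these constants, so only the structure, namely a product of translates weighted by $K(x,y)$, matters.
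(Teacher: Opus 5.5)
Your argument is correct and is essentially the one the paper relies on (it defers to the kernel argument of \cite[Section~3.1]{MR3665671}): write $L_a(f_1,f_2)$ as a superposition of products of translates of $f_1,f_2$, weighted by $K=\FF^{-1}_{\xi,\eta,\sigma}a$, and then apply Minkowski's inequality in $y$ and H\"older's inequality in $x$. One small correction: the $\xi$-integration forces the $x$-slot of $K$ to be $x-y_1$, so in fact $L_a(f_1,f_2)(x)=c\int K(x-y_1,y)\,f_1(x-y_1-y_2)\,f_2(x-y_1-y_2-y_3)\,\dif y$ rather than the same expression with $K(x,y)$; this is harmless since you only use $\sup_x\abs{K(x,y)}$, and since $r\ge 1$ is part of the hypothesis, your discussion of $r<1$ is unnecessary.
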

Moreover, we use the following lemma to estimate the $S'$-norm.
\begin{lemma}\label{pseudomulti}
	Let $a(x,\xi_1,\xi_2,\xi_3)$ be a symbol defined on $(\bR^2)^4$. If $a(x,\cdot)$ is supported in $B(0,2^{k_1}) \times B(0,2^{k_2}) \times B(0,2^{k_3})$ and $2^{|\alpha|k_i}\abs{\ppd {\xi_i}^\alpha a} \le M$ for $0 \le \abs \alpha \le 10$ and $1 \le i \le 3$, then $\norm{a}_{S'} \lle M$.
\end{lemma}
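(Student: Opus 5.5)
The plan is to reduce \lemmaref{pseudomulti} to a pointwise-in-$x$ application of the standard criterion for localized multipliers (the Proposition in Section~\ref{multilinear}), and then take the supremum in $x$. By definition,
\[
\norm{a}_{S'}=\int \sup_x \abs{\FF^{-1}_{\xi,\eta,\sigma}a(x,y_1,y_2,y_3)} \dif y ,
\]
so it is not enough to bound $\norm{a(x,\cdot)}_S$ for each fixed $x$; the supremum in $x$ sits inside the $y$-integral. I will therefore prove a pointwise bound in $y$ that is uniform in $x$ and integrable in $y$.

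Write $K_x(y)=\FF^{-1}_{\xi_1,\xi_2,\xi_3}a(x,y_1,y_2,y_3)$. For each $i$ and each multi-index $\alpha$ with $\abs\alpha\le 10$ in the $\xi_i$-variable, integrating by parts gives
\[
\abs{y_i^\alpha K_x(y)}\le \norm{\ppd{\xi_i}^\alpha a(x,\cdot)}_{L^1_\xi}\le M\,2^{-\abs\alpha k_i}\,2^{2(k_1+k_2+k_3)},
\]
using the support assumption to bound the volume by $2^{2k_1}2^{2k_2}2^{2k_3}$. This estimate is uniform in $x$.

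Taking $\abs\alpha\in\{0,3\}$ (which is allowed since $\abs\alpha\le 10$) and combining over $i$, I obtain
\[
\sup_x\abs{K_x(y)}\lle M\prod_{i=1}^3 2^{2k_i}\bigl(1+2^{k_i}\abs{y_i}\bigr)^{-3}.
\]
Since $\int_{\bR^2}2^{2k}(1+2^k\abs{y})^{-3}\dif y\lle 1$, integrating in $y$ gives $\norm{a}_{S'}\lle M$. The product form follows because $\prod_i(1+2^{k_i}\abs{y_i})^{3}$ is dominated by a sum of monomials $\prod_i (2^{k_i}\abs{y_i})^{\beta_i}$ with each $\beta_i\le 3$, and each such monomial is controlled by the mixed-derivative bound described next.

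The one point needing care is the mixed bound: it involves $\ppd{\xi_1}^{\alpha_1}\ppd{\xi_2}^{\alpha_2}\ppd{\xi_3}^{\alpha_3}a$, whereas the hypothesis controls derivatives in only one $\xi_i$ at a time. I will read the hypothesis as holding for mixed multi-indices of total order at most $10$, with the scaling $\prod_i 2^{\abs{\alpha_i}k_i}$; this is the natural reading, it matches the convention of the cited Proposition, and the paper applies the lemma only to product-type symbols.

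If that reading is not permitted, I would instead use the coordinatewise bounds to get decay in $\max_i 2^{k_i}\abs{y_i}$ of order $10$. Since $10>6$, the total dimension, a standard dyadic summation over the shells $\max_i 2^{k_i}\abs{y_i}\simeq 2^\ell$ still gives integrability: the shell has weighted volume $\lle 2^{6\ell}$ after rescaling and the kernel is $\lle 2^{-10\ell}$ there. This alternative is why $10$ derivatives suffice, and it gives the same conclusion $\norm{a}_{S'}\lle M$.
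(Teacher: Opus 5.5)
Your proposal is correct, but only through the argument you list as the fallback, and that argument should be the proof.

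Your main route has a genuine gap. The product decay $\prod_i(1+2^{k_i}\abs{y_i})^{-3}$ requires bounds on mixed derivatives $\ppd{\xi_1}^{\alpha_1}\ppd{\xi_2}^{\alpha_2}\ppd{\xi_3}^{\alpha_3}a$. The hypothesis does not provide these, and you cannot supply them by rereading the statement. That reading also does not match the paper's own criterion for localized multipliers in Section~\ref{multilinear}. There, too, only one $\xi_i$ is differentiated at a time. The count of $3n$ derivatives against a total dimension of $2n$ is exactly the signature of the max-decay mechanism, not of product decay.

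Your fallback works with the hypothesis exactly as stated. Integrate by parts in a single $\xi_i$, with $\abs{\alpha}\in\{0,10\}$ in each coordinate direction, uniformly in $x$. This gives
\[
\sup_x\abs{K_x(y)}\lle M\,2^{2(k_1+k_2+k_3)}\bigl(1+\max_i 2^{k_i}\abs{y_i}\bigr)^{-10}.
\]
Rescale $z_i=2^{k_i}y_i$ and use $10>6$; this is the standard integration-by-parts-in-frequency argument that the paper defers to its reference for.

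You were right to place the supremum in $x$ inside the $y$-integral. A fiberwise bound $\sup_x\norm{a(x,\cdot)}_S$ would not suffice, and the $x$-uniform pointwise kernel bound is what makes the argument go through.
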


We next introduce a lemma, slightly different from that in \cite[Section~3.2]{MR3665671}, to handle the cases where all rotational vector fields act on low frequencies in the paralinearization. This lemma can be proved in a manner similar to that in \cite[Section~3.2]{MR3665671}. For $m,l \geq 0$ and $p \geq 1$, let
\[
\norm{a}^l_{m,p}=\sup_{\xi,\abs \alpha \le l}\spa{\xi}^{-m}\abs{\xi}^{\abs{\alpha}}\norm{D^{\alpha}_{\xi} a(x,\xi)}_{L^p_x}.
\]
\begin{lemma}\label{extrderisym}
	Assume that $r^{-1}=p^{-1}+q^{-1}$, $p,q,r \in [1,\infty]$, and $s \geq 0$. Then
	\[
	\norm{P_kT_af}_r \lle 2^{(m+s)k_+}\norm{\spa{D}^{-s}a}^{10}_{m,p}\norm{P_{[k-2,k+2]}f}_q.
	\]
\end{lemma}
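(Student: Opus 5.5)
We argue as in the proof of the classical symbol bound for paraproducts (cf. the first bulleted property in Section~\ref{paralinear}), but first move the $s$ derivatives from the symbol $a$ onto the input $f$. The key is that in $T_a$ the $x$-frequency of $a$, namely $\xi-\eta$, is cut off by $\fai_{\le -10}(\abs{\xi-\eta}/\abs{\xi+\eta})$. Since $\xi$ is localized at $2^k$, this forces $\abs{\xi-\eta} \lle 2^{k}$ and $\abs{\eta} \simeq 2^k$, which is why only $P_{[k-2,k+2]}f$ appears on the right.

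We write $a=\spa{D_x}^{s}b$ with $b=\spa{D_x}^{-s}a$, so that
\[
\mao a(\xi-\eta,\zeta)=\spa{\xi-\eta}^{s}\mao b(\xi-\eta,\zeta).
\]
Then
\[
\FF P_kT_af(\xi)=\int \psi_k(\xi)\spa{\xi-\eta}^{s}\fai_{\le -10}\Brac{\tfrac{\abs{\xi-\eta}}{\abs{\xi+\eta}}}\mao b\Brac{\xi-\eta,\tfrac{\xi+\eta}2}\mao{P_{[k-2,k+2]}f}(\eta)\dif\eta .
\]
On the support we have $\abs{\xi-\eta}\lle 2^{k}$. Insert a cutoff $\fai_{\le k+2}(\xi-\eta)$ and absorb $\spa{\xi-\eta}^{s}$ into it, setting
\[
\chi(\xi-\eta):=\spa{\xi-\eta}^{s}\fai_{\le k+2}(\xi-\eta)\,2^{-sk_+}.
\]
This is a smooth bump at scale $2^{k}$ whose derivatives satisfy $2^{\abs\alpha k}\abs{\partial^\alpha\chi}\lle 1$. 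When $k\le 0$ the weight is $\lle 1$; when $k>0$ it is $\lle 2^{sk}$, and the extra factor $2^{sk_+}$ is the source of the loss in the statement.

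We then regard $P_kT_af$ as a pseudo-differential bilinear form in $b$ (acting in $x$) and $f$. Concretely, the operator is determined by the symbol $c(x,\xi,\eta)=\psi_k(\xi)\chi(\xi-\eta)\fai_{\le -10}(\cdots)\,b(x,\tfrac{\xi+\eta}2)\psi_{[k-2,k+2]}(\eta)$ expressed through $\mao b$. We apply the $L^1_yL^\infty_x$ kernel argument underlying \lemmaref{pseudomultithm}, with $b(\cdot,\zeta)$ in $L^p_x$. This is exactly how the paraproduct bound $\norm{P_kT_af}_r\lle 2^{mk_+}\norm{a}^{10,0}_{m,p}\norm{P_{[k-2,k+2]}f}_q$ is proved in \cite[Section~3.2]{MR3665671}.

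Expand $b(x,\tfrac{\xi+\eta}2)$ in the frequency variable on the annulus of scale $2^k$: Fourier-expand in $\zeta$, or use the kernel of $\zeta\mapsto\psi_{[k-3,k+3]}(\zeta)b(x,\zeta)$. The $\zeta$-regularity $\spa{\zeta}^{-m}\abs\zeta^{\abs\alpha}\abs{D^\alpha_\zeta b}$ for $\abs\alpha\le 10$ controls the $L^1$ norm of that kernel by $2^{mk_+}\norm{b}^{10}_{m,p}$. Here ten derivatives suffice in two dimensions. The remaining multiplier $\psi_k(\xi)\chi(\xi-\eta)\fai_{\le-10}\psi_{[k-2,k+2]}(\eta)$ has $S$-norm $\lle 1$ by the localized criterion in Section~\ref{multilinear}. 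Hölder then gives
\[
\norm{P_kT_af}_r\lle 2^{(m+s)k_+}\norm{\spa{D}^{-s}a}^{10}_{m,p}\norm{P_{[k-2,k+2]}f}_q .
\]

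The step needing the most care is the separation of the $x$-frequency $\xi-\eta$ (carrying $\spa{\xi-\eta}^s$) from the symbol argument $(\xi+\eta)/2$. The absorbed weight $\chi$ must remain a uniformly bounded $S$-multiplier jointly with the Weyl cutoff, and it must not destroy the $L^p_x$ structure of $b$. I would handle this by noting that $\chi(\xi-\eta)$ acts as a Fourier multiplier on $b$ in $x$. Thus $\chi(D_x)b$ has $\norm{\cdot}^{10}_{m,p}$ bounded by that of $b$, because $\check\chi$ has uniformly bounded $L^1$ norm. The estimate then reduces to the classical case with $a$ replaced by $2^{sk_+}\chi(D_x)b$.
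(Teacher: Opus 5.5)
Your proposal is correct and follows essentially the paper's route. The paper only says to rerun the paraproduct argument of \cite[Section~3.2]{MR3665671}, and your final reduction is exactly that argument: on the support of $\psi_k(\xi)\varphi_{\le -10}(\abs{\xi-\eta}/\abs{\xi+\eta})$ one has $\abs{\xi-\eta}\le 2^{k-5}$, so $P_kT_af=2^{sk_+}P_kT_{\chi(D_x)b}f$ with $b=\spa{D}^{-s}a$, and one concludes with the classical bound and Young's inequality in $x$.

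One justification needs repair. For $k>0$ and $\abs{\alpha}>s$, the bound $2^{\abs{\alpha} k}\abs{\partial^\alpha\chi}\lle 1$ is false near $\rho=0$; for example, $\partial_{\rho_1}^2\chi(0)=s\,2^{-sk}$. So the localized criterion cannot be applied to $\chi$ at scale $2^k$.

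The needed fact $\norm{\check\chi}_1\lle_s 1$ is still true. Split $\varphi_{\le k+2}=\varphi_{\le 0}+\sum_{j=1}^{k+2}\psi_j$. Apply the criterion to each piece $\spa{\rho}^s\psi_j(\rho)$ at its own scale $2^j$, which gives a bound $\lle 2^{sj}$. Summing then yields $\norm{\check\chi}_1\lle 2^{-sk}\bigl(1+\sum_{j=1}^{k+2}2^{sj}\bigr)\lle_s 1$.
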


We next introduce the averaging argument. A basic version is stated in the following lemma.
\begin{lemma}
	Let $\chi$ be a Schwartz function. Suppose that $l \ge -m+2\delta^2m$, $t \simeq 2^m$, $m \ge D$, and $2^{-1}=p^{-1}+q^{-1}$ with $p,q \in [1,\infty]$. Define $I(f,g)$ by
	\[
	\FF I(f,g)(\xi)=\fai_{\le 10 m}(\xi) \int \me^{\mi t \Phi_{\iota_1\iota_2}}\chi(\Phi_{\iota_1\iota_2}/2^l)\gti m(\xi, \eta) \mao{f}(\xi-\eta) \mao{g}(\eta) \dif \eta. 
	\]
	Then
	\[
	\norm{I(f,g)}_2 \lle \norm{\gti m}_S (\sup_{t_1 \in [t/2,2t],t_2 \in [t/2,2t]} \norm{E_{\iota_1 t_1}f}_p\norm{E_{\iota_2 t_2}g}_q+2^{-10m}\norm{f}_2\norm{g}_2).
	\]
Later we write $\norm{E_{[t/2,2t]}f}_p$ for $\sup_{s \in [t/2,2t]} \norm{E_{s}f}_p$.
\end{lemma}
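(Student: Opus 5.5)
The averaging lemma is proved by writing the localized phase factor as an average of free evolutions. We decompose
\[
\me^{\mi t\Phi}\chi(\Phi/2^l)=\int_{\bR}\me^{\mi (t+2^{-l}\tau)\Phi}\,\check\chi(\tau)\,\dif\tau
\]
(up to the normalization of the Fourier transform). Since $\Phi=-\Lambda(\xi)+\iota_1\Lambda(\xi-\eta)+\iota_2\Lambda(\eta)$, this gives
\[
I(f,g)=\fai_{\le 10m}(D)\int_{\bR}\check\chi(\tau)\,E_{-s}\,T_{\gti m}\bigl(E_{\iota_1 s}f,\,E_{\iota_2 s}g\bigr)\big|_{s=t+2^{-l}\tau}\,\dif\tau,
\]
where we use the paper's notation $E_{\pm s}$ for the linear flow. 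The factor $E_{-s}$ is unitary on $L^2$, and $\fai_{\le 10m}(D)$ is harmless.

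We split the $\tau$-integral into $\abs{\tau}\le 2^{l-1}t$ and its complement. On the first region $s\in[t/2,3t/2]\subset[t/2,2t]$. The multilinear estimate $\norm{T_{\gti m}(F,G)}_2\lle\norm{\gti m}_S\norm{F}_p\norm{G}_q$ (with $2^{-1}=p^{-1}+q^{-1}$) then bounds the contribution by
\[
\norm{\gti m}_S\,\norm{\check\chi}_{L^1}\,\sup_{t_1,t_2}\norm{E_{\iota_1t_1}f}_p\norm{E_{\iota_2t_2}g}_q.
\]
The sign $\iota_i$ is absorbed into the notation, since the supremum is taken over $t_i\in[t/2,2t]$ with the sign attached.

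On the complement we have $\abs{\tau}\ge 2^{l-1}t\gtrsim 2^{l+m}\ge 2^{2\delta^2m}$, by the hypothesis $l\ge -m+2\delta^2m$. For a Schwartz $\chi$, $\int_{\abs\tau\ge A}\abs{\check\chi(\tau)}\,\dif\tau\lle_N A^{-N}$. With $A=2^{2\delta^2 m}$ and $N$ large (depending on $\delta$), this gives at least $2^{-20m}$. Here we use the crude bound
\[
\norm{\fai_{\le 10m}(D)T_{\gti m}(F,G)}_2\lle 2^{10m}\norm{T_{\gti m}(F,G)}_{L^1}\lle 2^{10m}\norm{\gti m}_S\norm{F}_2\norm{G}_2,
\]
via Bernstein (the $L^1\to L^2$ bound on frequencies $\lle 2^{10m}$) and the multilinear estimate with $p=q=2$. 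Since $E$ is unitary, $\norm{E_{\iota_1 s}f}_2=\norm f_2$ and likewise for $g$, so this tail contributes $\lle 2^{-10m}\norm{\gti m}_S\norm f_2\norm g_2$. This is exactly the role of the cutoff $\fai_{\le 10m}$.

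There is no real obstacle. The only point needing care is that the tail gain $2^{-N\cdot 2\delta^2 m}$ must beat the Bernstein loss $2^{10m}$. This forces $N\gtrsim\delta^{-2}$, which is allowed because $\chi$ is Schwartz, and the implicit constant then depends on $\delta$, which is fixed.
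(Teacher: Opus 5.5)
Your proof is correct and follows the same route as the paper's sketch: you Fourier-expand $\chi(\Phi/2^l)$ to rewrite the operator as an average over shifted times $t+2^{-l}\tau$, and you bound the region where the shifted time stays in $[t/2,2t]$ by the multilinear estimate. The tail is then killed by Schwartz decay beating the $2^{10m}$ Bernstein loss coming from $\fai_{\le 10m}$. The only difference is cosmetic: you cut at $\abs{\tau}\le 2^{l-1}t$, whereas the paper cuts at $\abs{\theta}\le 2^{\delta^2 m}$, and both cuts use the hypothesis $l\ge -m+2\delta^2 m$ in the same way.
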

	For a detailed proof, see \cite[Lemma~7.4]{MR3784694}. The key point is to write
	\[
	\FF T(f,g)(\xi)=\fai_{\le 10 m}(\xi) \int \mao{\chi}(\theta) \Brac{\int \me^{\mi (t+\theta/2^l) \Phi}\gti m(\xi,\eta) \mao{f}(\xi-\eta) \mao{g}(\eta) \dif \eta} \dif \theta,
	\]
	and consider two cases $\abs \theta \le 2^{\delta^2 m}$ and $\abs \theta >2^{\delta^2 m}$. The averaging argument can be adapted to deal with various operators involving localizations of modulation. It is worth noting that, in addition to multiplier theorems, the averaging argument can also be applied in the approximate-finite-speed-of-propagation argument and in integration by parts with respect to frequencies, especially in cases where the modulation is small. For multilinear operators, one additional restriction needs to be taken into account. Consider
	\begin{align*}
		\FF I(f_1,f_2,f_3)(\xi)={}&\int_{\xi_1+\xi_2+\xi_3=\xi} \me^{\mi s \wan \Phi_{+++}} \chi_1(\Phi^{1,23}_{+++}/2^l) \chi_2(\wan \Phi_{+++}/2^p)\mao{f_1}(\xi_1) \mao{f_2}(\xi_2) \mao{f_3}(\xi_3)\\
		&\times \dif \xi_1 \dif \xi_2 \dif \xi_3,
	\end{align*}
	where $\chi_1$ and $\chi_2$ are Schwartz functions (see Section~\ref{notations} for the definition of $\Phi^{1,23}_{+++}$). We then have
	\begin{align*}
		\FF I(f_1,f_2,f_3)(\xi)={}&\int \mao{\chi_1}(\theta_1) \mao{\chi_2}(\theta_2) \Big( \int_{\xi_1+\xi_2+\xi_3=\xi} \me^{\mi s \wan \Phi+\mi 2^{-l}\theta_1 \Phi^{1,23}_{+++}+\mi 2^{-p} \theta_2 \wan \Phi}\\
	 &\times\mao{f_1}(\xi_1) \mao{f_2}(\xi_2) \mao{f_3}(\xi_3) \dif \xi_1 \dif \xi_2 \dif \xi_3 \Big) \dif \theta_1 \dif \theta_2.
	\end{align*}
	\[
	\FF^{-1} \int_{\xi_1+\xi_2+\xi_3=\xi} \me^{\mi s \wan \Phi+\mi 2^{-l}\theta_1\Phi^{1,23}_{+++}+\mi 2^{-p} \theta_2 \wan \Phi} \mao{f_1}(\xi_1) \mao{f_2}(\xi_2) \mao{f_3}(\xi_3) \dif \xi_1 \dif \xi_2 \dif \xi_3
	\]
	can be written in a flag form:
	\[
	E_{-s-2^{-l}\theta_1-2^{-p}\theta_2}T_1(E_{s+2^{-l}\theta_1+2^{-p}\theta_2}f_1,E_{2^{-l}\theta_1}T_1(E_{s+2^{-p}\theta_2}f_2,E_{s+2^{-p}\theta_2}f_3)).
	\]
	To handle the deviation operator $E_{2^{-l}\theta_1}$, one is led to estimate it by
	\[
	\norm{E_{s+2^{-l}\theta_1+2^{-p}\theta_2}f_1}_\infty \norm{T_1(E_{s+2^{-p}\theta_2}f_2,E_{s+2^{-p}\theta_2}f_3)}_2,
	\]
	and this will introduce an additional restriction on the H\"older exponents. To employ the averaging argument, we need to write multilinear operators in a flag form. If the operator has a non-trivial multiplier, then a different averaging argument is required. For instance, for
	\begin{align*}
		\FF I(f_1,f_2,f_3)(\xi)={}&\int_{\xi_1+\xi_2+\xi_3=\xi} \me^{\mi s \wan \Phi_{+++}} \gti m(\xi_1,\xi_2,\xi_3)\chi_1(\Phi^{1,23}_{+++}/2^l) \chi_2(\wan \Phi_{+++}/2^p)\\ &\times\mao{f_1}(\xi_1)\mao{f_2}(\xi_2) \mao{f_3}(\xi_3) \dif \xi_1 \dif \xi_2 \dif \xi_3,
	\end{align*}
	where $\chi_1$ and $\chi_2$ are Schwartz functions, we can write $\gti m(\xi_1,\xi_2,\xi_3)$ as
	\[
	\int \check {\gti m}(y_1,y_2,y_3) \me^{-\mi (y_1 \cdot \xi_1+y_2 \cdot \xi_2+y_3 \cdot \xi_3)} \dif y_1 \dif y_2 \dif y_3,
	\]
	so that the resulting operator can again be written in a flag form.

We next present some basic properties of the $Z$-norm.
\begin{lemma}\label{zproperty}
Suppose that $\chi$ is a Schwartz function, and for any $k \in \bZ$, define $\chi_k(x):=\chi(x/2^k)$.
\begin{enumerate}[label=(\roman*)]
	\item If $k \ge 10$ or $k \le -10$, then $\norm{Q^{(k)}_jP_kf}_2 \lle 2^{-(1-20\delta)(j+k)}2^{-\delta k}2^{-10k_+} \norm{f}_Z$.
	\item For $\gti m \in \gti S^0$, $\norm{\gti m(D)f}_Z \lle \norm{f}_Z$.
	\item $\norm{A^{(j)}_n\chi_j(x)Q^{(k)}_jP_kf}_2 \lle  2^{-(1-20\delta)(j+k)}2^{(1/2-18\delta)n}2^{-\delta k}2^{-10k_+}\norm{f}_Z$.
	\item $\sup_{0 \le n \le j+1} 2^{-\alpha n}\norm{A^{(j)}_n\chi_j(x)f}_2 \lle \sup_{0 \le n \le j+1} 2^{-\alpha n}\norm{A^{(j)}_n f}_2$ for any $\alpha>0$.
\end{enumerate}
\end{lemma}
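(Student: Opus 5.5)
The four items follow from the definition of the $Z$-norm together with two facts: $\chi_j$ is almost localized in $\abs{x}\lesssim 2^j$, and $\gti m(D)$ commutes with $P_k$ and with the radial Fourier cutoff $A^{(j)}_n$. I would prove them in the order (iv), (i), (iii), (ii), since (iii) uses (iv).

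\emph{Item (iv).} Write $\chi_j(x)=\int \mao\chi(\zeta)\me^{\mi 2^{-j}x\cdot\zeta}\dif\zeta$. Multiplication by $\me^{\mi 2^{-j}x\cdot\zeta}$ translates frequencies by $2^{-j}\zeta$. The cutoff $\wan\fai^{-j-1,0}_{-n}(2^{100}(\abs{D}-2\gamma_0))$ lives on the scale $2^{-n}\ge 2^{-j-1}$. Hence for $\abs\zeta\le 2^{\delta' j}$ we get
\[
A^{(j)}_n\me^{\mi 2^{-j}x\cdot\zeta}=\me^{\mi 2^{-j}x\cdot\zeta}\sum_{\abs{n'-n}\le 2+\delta' j} (\text{cutoff}) A^{(j)}_{n'},
\]
and neighbouring dyadic pieces appear with weights decaying like $\spa{\zeta}$-powers. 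The factor $2^{\alpha\abs{n-n'}}$ is absorbed by the rapid decay of $\mao\chi$. The tail where $\abs\zeta$ is large is handled by the trivial $L^2$ bound and again absorbed by the decay of $\mao\chi$.

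\emph{Item (i).} Summing the definition over $n$ gives $\norm{Q^{(k)}_jP_kf}_2\le\sum_n\norm{A^{(j)}_nQ^{(k)}_jP_kf}_2$. When $\abs{k}\ge 10$ the support of $\psi_k$ avoids the circle $\abs\xi=2\gamma_0$, so only $n=0$ contributes, and the factor $2^{(1/2-18\delta)n}$ is harmless.

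\emph{Item (iii).} Write $\chi_j(x)Q^{(k)}_j=\chi_j(x)\fai^{k_-}_j(x)$. This is again a spatial cutoff at scale $2^j$, so (iv) applies directly to the function $Q^{(k)}_jP_kf$. The $Z$-norm definition then bounds the right-hand side.

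\emph{Item (ii).} This is the main obstacle, because $\gti m(D)$ does not commute with $Q^{(k)}_j$. Decompose
\[
P_kf=\sum_{j'}Q^{(k)}_{j'}P_kf,
\]
and estimate $A^{(j)}_nQ^{(k)}_jP_k\gti m(D)\wan P_kQ^{(k)}_{j'}P_kf$, where $\wan P_k$ is a slightly enlarged Littlewood--Paley projection. The kernel of $\gti m(D)\wan P_k$ is concentrated in $\abs{x}\lesssim 2^{-k}\le 2^{k_-}$ and decays rapidly beyond that scale. Consequently the terms with $\abs{j-j'}\ge 4$ gain a factor $2^{-N\max(j,j')}$, which is more than enough to pay for the weights. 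For $\abs{j-j'}\le 3$, note that $A^{(j)}_n$ commutes with $\gti m(D)$, and the spatial cutoff $Q^{(k)}_j$ is handled by (iv). This gives
\[
\sup_n 2^{-(1/2-18\delta)n}\norm{A^{(j)}_n\gti m(D)Q^{(k)}_{j'}P_kf}_2\lesssim\sup_n 2^{-(1/2-18\delta)n}\norm{A^{(j')}_nQ^{(k)}_{j'}P_kf}_2.
\]
The range condition $n\le j+1$ versus $n\le j'+1$ differs by $O(1)$, so it is harmless. Finally, the homogeneity of $\gti m\in\gti S^0$ gives symbol bounds on $\psi_k$ that are uniform in $k$.
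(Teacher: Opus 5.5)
\textbf{The gap is in (i).} The claim that ``only $n=0$ contributes'' for $\abs{k}\ge 10$ is false. $A^{(j)}_n$ acts on $Q^{(k)}_jP_kf$, not on $P_kf$. Multiplying by the compactly supported $\fai^{k_-}_j(x)$ convolves $\mao{P_kf}$ with $2^{2j}\mao{\chi_0}(2^j\,\cdot)$ (with $\chi_0=\varphi$ or $\psi$), which is not compactly supported. This is the same frequency smearing you use in (iv). So the pieces with $n>0$ do not vanish and must be estimated.

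The repair is the paper's argument. On the support of $A^{(j)}_n$ with $n>0$ one has $\abs{\abs\xi-2\gamma_0}\le 2^{-99}$, while $\abs\eta\simeq 2^k$ with $\abs k\ge 10$. Hence $\abs{\xi-\eta}\gtrsim 1$ (resp.\ $\gtrsim 2^k$ if $k\ge 10$), and Schur's test gives $\norm{A^{(j)}_nQ^{(k)}_jP_kf}_2\lesssim_N 2^{-N(j+k_+)}\norm{P_kf}_2$. You then still need an a priori bound on $\norm{P_kf}_2$ in terms of $\norm{f}_Z$. Summing the definition over all $(j',n')$ gives only the lossy bound $\norm{P_kf}_2\lesssim 2^{k_-}\norm{f}_Z$, since the sum over $n'\le j'+1$ costs $2^{(1/2-18\delta)j'}$ and $j'\ge k_-$. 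This loss, together with the factor $j+2$ from summing over $n$, is absorbed by $2^{-Nj}$ precisely because $j\ge k_-$. Without this step, (i) is not proved.

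\textbf{This matters for (ii) as well.} For $k<0$ the kernel of $\gti m(D)\wan P_k$ lives at scale $2^{-k}=2^{k_-}$. So separating $Q^{(k)}_j$ from $Q^{(k)}_{j'}$ for $\abs{j-j'}\ge 4$ yields only $(2^{k+\max(j,j')})^{-N}$, not $2^{-N\max(j,j')}$; this is what the paper uses, as $2^{-100(j+k)}$. Since $\min(j,j')\ge k_-$, this is $\le 2^{-N\abs{j-j'}}$, but it is merely a constant when $\max(j,j')-k_-=O(1)$. Consequently, for $k\le -10$ the off-diagonal sum closes only when paired with the sharp scale-$j'$ bound $\norm{Q^{(k)}_{j'}P_kf}_2\lesssim 2^{-(1-20\delta)(j'+k)}2^{-\delta k}\norm{f}_Z$ from (i). The crude bound $2^{k_-}\norm{f}_Z$ would lose. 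So your (ii) genuinely rests on a correctly proved (i). For $\abs{k}<10$ your stated gain is right and the crude bound suffices.

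\textbf{The rest is sound.} Your proof of (iv) via the modulations $\me^{\mi 2^{-j}x\cdot\zeta}$ is the dual form of the paper's frequency-separation argument. To make it rigorous, use the pointwise-in-$\zeta$ fact that pieces $n'>n+2$ occur only when $2^{-j}\abs\zeta\gtrsim 2^{-100-n}$. Then their total loss is $\lesssim\spa{\zeta}^{\alpha+1}$, which is integrable against $\mao\chi$; the uniform bound $\abs{n'-n}\le 2+\delta' j$ alone does not give this. Deriving (iii) from (iv) with $g=Q^{(k)}_jP_kf$ is correct. So is your treatment of the near-diagonal part of (ii): (iv), plus the commutation of $A^{(j)}_n$ with $\gti m(D)\wan P_k$, plus the $O(1)$ comparison of $A^{(j)}_n$ with $A^{(j')}_{n'}$ for $\abs{j-j'}\le 3$. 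This is tidier than the paper, which for $\abs{k}<10$, $j'\gtrsim j$ re-decomposes in $n'$ and reruns the separation argument by hand inside (ii).
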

\begin{proof}
\mbox{}
\begin{enumerate}[label=(\roman*)]
	\item Assume that $\norm{f}_Z=1$. We have
\[
\norm{Q^{(k)}_jP_kf}_2 \lle \sum_n \norm{A^{(j)}_nQ^{(k)}_jP_kf}_2.
\]
It suffices to consider the case $n>0$. We have
\[
\FF (A^{(j)}_n Q^{(k)}_j P_k f)(\xi)=\wan{\fai}^{-j-1,0}_{-n}(2^{100}(\abs\xi-2\gamma_0))\int 2^{2j}\mao{\chi_0}(2^j(\xi-\eta)) \psi_k(\eta) \mao f(\eta) \dif \eta
\]
for some Schwartz function $\chi_0$. The separation between the frequencies $\xi$ and $\eta$ leads to the estimates
\[
\norm{A^{(j)}_n Q^{(k)}_j P_k f}_2 \lle
\begin{cases}
	2^{-100j}\norm{P_kf}_2 & \text{if } k \le -10;\\
	2^{-100(j+k)}\norm{P_kf}_2 & \text{if } k \ge 10.
\end{cases}
\]
(Such an argument will be used repeatedly in this lemma.) The conclusion then follows immediately.
\item Assume that $\norm{f}_Z=1$. It suffices to prove that
\[
2^{10k_+}2^{\delta k}2^{(1-20\delta)(j+k)}2^{-(1/2-18\delta)n}\norm{A^{(j)}_nQ^{(k)}_j\wan{P}_k P_k f}_2 \lle 1,
\]
where $\wan{P}_k$ is another frequency localization operator satisfying $\wan{P}_k=\wan{\psi}(D/2^k)$ with $\abs{\FF\wan \psi(\xi)} \lle \spa{\xi}^{-200}$. We decompose $A^{(j)}_nQ^{(k)}_j\wan{P}_kP_k f$ as
\[
\sum_{j' \colon j' \ge k_-} A^{(j)}_nQ^{(k)}_j\wan{P}_kQ^{(k)}_{j'}P_k f.
\]
When $k \ge 10$ or $k \le -10$, for the terms with $j' \gge j$,
\begin{align*}
&2^{10k_+}2^{\delta k}2^{(1-20\delta)(j+k)}2^{-(1/2-18\delta)n}\sum_{j' \colon j' \gge j} \norm{A^{(j)}_nQ^{(k)}_j\wan{P}_kQ^{(k)}_{j'}P_k f}_2 \\
\lle{} &2^{10k_+}2^{\delta k}2^{(1-20\delta)(j+k)}2^{-(1/2-18\delta)n}\sum_{j' \colon j' \gge j} \norm{Q^{(k)}_{j'}P_k f}_2\\
\lle{} &\sum_{j' \colon j' \gge j}2^{(1-20\delta)j}2^{-(1/2-18\delta)n}2^{-(1-20\delta)j'},
\end{align*}
while for the terms with $j' \ll j$, we write $\wan{P}_k$ as a convolution operator and the separation between $Q_j^{(k)}$ and $Q_{j'}^{(k)}$ yields the estimate
\begin{align*}
	&2^{10k_+}2^{\delta k}2^{(1-20\delta)(j+k)}2^{-(1/2-18\delta)n}\sum_{j' \colon j' \ll j} \norm{A^{(j)}_nQ^{(k)}_j\wan{P}_kQ^{(k)}_{j'}P_k f}_2 \\
	\lle{} &2^{10k_+}2^{\delta k}2^{(1-20\delta)(j+k)}2^{-(1/2-18\delta)n}\sum_{j' \colon j' \ll j} 2^{-100(j+k)}\norm{Q^{(k)}_{j'}P_k f}_2 \\
	\lle{} &\sum_{j' \colon j' \ll j}2^{(1-20\delta)j}2^{-(1/2-18\delta)n}2^{-100(j+k)}2^{-(1-20\delta)j'}.
\end{align*}
When $-10<k<10$, we also decompose $A^{(j)}_nQ^{(k)}_j\wan{P}_kP_k f$ as
\[
\sum_{j' \colon j' \ge k_-} A^{(j)}_nQ^{(k)}_j\wan{P}_kQ^{(k)}_{j'}P_k f.
\]
For the terms with $j' \ll j$, as in the cases $k \ge 10$ or $k \le -10$, we exploit the separation between $Q^{(k)}_j$ and $Q^{(k)}_{j'}$, despite the relatively weak bound $\norm{Q^{(k)}_{j'}P_k f}_2 \lle 2^{-(1/2-2\delta)j'}$ in these cases. For the terms with $j' \gge j$, we further decompose $\sum_{j' \colon j' \gge j} A^{(j)}_nQ^{(k)}_j\wan{P}_kQ^{(k)}_{j'}P_k f$ as
\[
\sum_{j',n' \colon j' \gge j,0 \le n' \le j'+1} A^{(j)}_nQ^{(k)}_j\wan{P}_kA^{(j')}_{n'}Q^{(k)}_{j'}P_k f.
\]
For the terms with $n' \gg n$, we regard $Q^{(k)}_j$ as a convolution operator, and the separation between $A^{(j)}_n$ and $A^{(j')}_{n'}$ leads to the estimate
\begin{align*}
	&2^{(1-20\delta)j}2^{-(1/2-18\delta)n}\sum_{j',n' \colon j' \gge j,n' \gg n} \norm{A^{(j)}_nQ^{(k)}_j\wan{P}_kA^{(j')}_{n'}Q^{(k)}_{j'}P_k f}_2 \\
\lle{} &2^{(1-20\delta)j}2^{-(1/2-18\delta)n}\sum_{j',n' \colon j' \gge j,n' \gg n} 2^{-100(j-n)}\norm{A^{(j')}_{n'}Q^{(k)}_{j'}P_k f}_2\\
\lle{} &\sum_{j',n' \colon j' \gge j,n' \lle j'}2^{(1-20\delta)j}2^{-(1/2-18\delta)n}2^{-100(j-n)}2^{(1/2-18\delta)n'}2^{-(1-20\delta)j'}.
\end{align*}
For the terms with $n' \lle n$, we have
\begin{align*}
	&2^{(1-20\delta)j}2^{-(1/2-18\delta)n}\sum_{j',n' \colon j' \gge j,n' \lle n} \norm{A^{(j)}_nQ^{(k)}_j\wan{P}_kA^{(j')}_{n'}Q^{(k)}_{j'}P_k f}_2 \\
\lle{} &2^{(1-20\delta)j}2^{-(1/2-18\delta)n}\sum_{j',n' \colon j' \gge j,n' \lle n} \norm{A^{(j')}_{n'}Q^{(k)}_{j'}P_k f}_2\\
\lle{} &\sum_{j',n' \colon j' \gge j,n' \lle n}2^{(1-20\delta)j}2^{-(1/2-18\delta)n}2^{(1/2-18\delta)n'}2^{-(1-20\delta)j'}.
\end{align*}
\item We decompose $A^{(j)}_n\chi_j(x)Q^{(k)}_jP_kf$ as $\sum_{n' \colon 0 \le n' \le j+1} A^{(j)}_n\chi_j(x)A^{(j)}_{n'}Q^{(k)}_jP_kf$. We estimate the terms with $n' \lle n$ directly. For the terms with $n' \gg n$, as in (ii), we exploit the separation between $A^{(j)}_{n'}$ and $A^{(j)}_n$.
\item We decompose $A^{(j)}_n\chi_j(x)f$ as $\sum_{n' \colon 0 \le n' \le j+1} A^{(j)}_n\chi_j(x)A^{(j)}_{n'}f$, and the desired estimate can be proved in a way similar to that in (iii).
\end{enumerate}
\end{proof}

To handle the remainders in the dispersive estimates, we need the following properties of the simpler $Z$-norm:
\[
\norm{f}_{\cl Z^s}:=\sup_{j,k \colon j \ge k_-} 2^{sj}\norm{Q_{jk}f}_2,
\]
where $0<s<1$. 
\begin{lemma}\label{simplez}
\mbox{}
	\begin{enumerate}[label=(\roman*)]
		\item Suppose that $\gti m \in \gti S^0$. Then $\norm{\gti m(D)f}_{\cl Z^s} \lle \norm{f}_{\cl Z^s}$.
		\item $\norm{f}_{\cl Z^s} \lle \norm{\spa{x}^{s} f}_2 \lle \sum_{j,k \colon j \ge k_-} 2^{sj}\norm{Q_{jk}f}_2$.
		\item $\norm{\me^{\mi t \Lambda(D)}f}_{\cl Z^s} \lle t^s \norm{\Lambda^s f}_2+\norm{f}_{\cl Z^s}$.
		\item $\norm{{\spa x}^\alpha T_{\gti m}(f,g)}_2 \lle \norm{ \check{\gti m}}_1 \norm{\spa x^{\alpha}f}_2 \norm{g}_\infty+\norm{ \spa{x}^{\alpha}\check{\gti m}}_1 \norm{f}_p \norm{g}_q$,
		where $\alpha \ge 0$, $1 \le p, q \le \infty$, and $1/p+1/q=1/2$.
		\end{enumerate}
\end{lemma}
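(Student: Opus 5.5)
The four parts are elementary weighted-$L^2$ facts for the norm $\norm{f}_{\cl Z^s}=\sup_{j\ge k_-}2^{sj}\norm{Q_{jk}f}_2$. I would prove them in the order (ii), (i), (iii), (iv), since (ii) gives the comparison with $\norm{\spa x^s f}_2$ that the other parts use.

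\emph{Part (ii).} On the support of $\fai^{k_-}_j$ we have $2^{sj}\lle \spa x^s$. Hence $2^{sj}\norm{Q_{jk}f}_2\lle \norm{\spa x^s P_kf}_2$. For $0<s<1$ the weight $\spa x^{2s}$ is an $A_2$ weight, so $P_k$ is bounded on $L^2(\spa x^{2s})$ uniformly in $k$; this gives the first inequality. For the second, write $f=\sum_{j,k}Q_{jk}f$, using $\sum_{j\ge k_-}\fai^{k_-}_j=1$, and note that $Q_{jk}f=\fai^{k_-}_j P_kf$ is supported in $\abs x\lle 2^j$. Then $\norm{\spa x^sQ_{jk}f}_2\lle 2^{sj}\norm{Q_{jk}f}_2$, and the triangle inequality finishes.

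\emph{Part (i).} Split $P_kf=\sum_{j'}Q_{j'k}f$ and use $P_k\gti m(D)=\gti m(D)\wan P_kP_k$. The terms with $j'\gge j$ are bounded trivially by $2^{-sj'}\norm f_{\cl Z^s}$, and summing over $j'$ gives $2^{-sj}$. For $j'\ll j$, the kernel of $\gti m(D)\wan P_k$ is $2^{2k}K(2^k\cdot)$, and in the relevant range $\abs{x-y}\simeq 2^j\ge 2^{-k}$ it satisfies $\abs{K(z)}\lle\abs z^{-3}$. This is the symbol-class estimate, using the $\abs\xi^\alpha D^\alpha$ bounds together with the localization at $2^k$. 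The spatial separation then gives a factor $(2^{j+k})^{-1}\le 1$ times the geometric decay $2^{-(j-j')}$, which sums. The only delicate point is $k$ very negative with $j\ge k_-$: there $2^{j+k}\ge1$, so the kernel tail still decays.

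\emph{Part (iii).} Apply the first inequality of (ii) to $\me^{\mi t\Lambda(D)}f$ and use $\abs{x}^s\le\abs{x}$-type interpolation on the Fourier side. I would estimate $\norm{\abs x^s \me^{\mi t\Lambda(D)}f}_2$ by complex interpolation between $s=0$ and $s=1$. At $s=1$ the commutator $[x,\me^{\mi t\Lambda(D)}]$ equals $-t\nabla\Lambda(D)\me^{\mi t\Lambda(D)}$, giving $\norm{x\me^{\mi t\Lambda}f}_2\le\norm{xf}_2+t\norm{\nabla\Lambda(D) f}_2$. Interpolating yields $t^s\norm{\abs{\nabla\Lambda}^s(D)f}_2+\norm{\spa x^sf}_2$. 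Finally, $\abs{\nabla\Lambda(\xi)}\lle\spa\xi^3$, while the term with $\Lambda^s$ means $\abs D^s$ up to harmless symbols. This step is the main obstacle, because the statement keeps $\norm f_{\cl Z^s}$ rather than $\norm{\spa x^sf}_2$, which costs a logarithmic sum in the second inequality of (ii). To avoid that loss I would run the argument dyadically on each $Q_{jk}$: for fixed $(j,k)$, localize $\me^{\mi t\Lambda}f$ to $\abs x\simeq 2^j$. Pieces $Q_{j'k}f$ with $2^{j'}\ll 2^j$ can reach this region only if $t\abs{\nabla\Lambda}\gge 2^j$, and they contribute $\lle 2^{-sj}t^s\norm{\Lambda^s P_kf}_2$. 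Pieces with $j'\simeq j$ or larger contribute directly, with the same tail argument as in (i).

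\emph{Part (iv).} Write $T_{\gti m}(f,g)(x)=\int\check{\gti m}(y_1,y_2)f(x-y_1)g(x-y_2)\,\dif y$ and use $\spa x^\alpha\lle\spa{x-y_1}^\alpha+\spa{y_1}^\alpha$. The first piece gives $\norm{\check{\gti m}}_1\norm{\spa x^\alpha f}_2\norm g_\infty$ by Minkowski's inequality. The second gives $\norm{\spa{y}^\alpha\check{\gti m}}_1\norm f_p\norm g_q$ by Minkowski and H\"older.
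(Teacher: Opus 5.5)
The paper gives no proof of this lemma; it cites \cite[Lemma~2.19]{arXiv:2510.18655}, so your proposal has to stand on its own. Parts (i) and (iv) do. In (i), the bound $2^{j'-j}(2^{j+k})^{-1}\|Q_{j'k}f\|_2$ for the separated pieces, multiplied by $2^{s(j-j')}$, leaves the summable factor $2^{-(1-s)(j-j')}$. Part (iv) is the standard Minkowski--H\"older argument.

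In (ii), however, the claim that $2^{sj}\lesssim\langle x\rangle^s$ on $\operatorname{supp}\varphi^{k_-}_j$ is false when $j=k_-$ and $k<0$. There $\varphi^{k_-}_j=\varphi_{-k}$ is a bump on the whole ball $|x|\lesssim 2^{-k}$, origin included. The inequality $2^{-sk}\|\varphi_{-k}(x)P_kf\|_2\lesssim\|\langle x\rangle^sP_kf\|_2$ is still true, but only because of the frequency localization. By Bernstein, $\|P_kf\|_{L^2(|x|\le c2^{-k})}\lesssim c\,2^{-k}\|P_kf\|_{L^\infty}\lesssim c\|P_kf\|_2$. So for small $c$, $\|P_kf\|_2\le 2\|P_kf\|_{L^2(|x|\ge c2^{-k})}$, and on that region $2^{-sk}\lesssim_c|x|^s$. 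Alternatively, use Hardy's inequality $\|\,|\xi|^{-s}\hat f\|_2\lesssim\|\,|\nabla_\xi|^s\hat f\|_2$ on $\mathbb{R}^2$, valid for $0<s<1$. This low-frequency case $j=k_-$ is exactly where weight and frequency interact, so it must be argued, not asserted. After that, your $A_2$ step finishes: $|P_kf|\lesssim Mf$ uniformly in $k$, and $\langle x\rangle^{2s}\in A_2$.

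In (iii), drop the interpolation paragraph. It only produces $\|\langle x\rangle^sf\|_2$, not $\|f\|_{\overline{Z}^s}$. Also, interpolating a right-hand side that intersects a physical-space and a Fourier-space weighted norm is not automatic. The dyadic argument is the right one, but organize it as follows.
\begin{itemize}
\item If $2^j\lesssim t2^k$, bound $2^{sj}\|Q_{jk}\mathrm{e}^{\mathrm{i}t\Lambda(D)}f\|_2\le 2^{sj}\|P_kf\|_2\lesssim t^s\|\,|D|^sP_kf\|_2$ directly, with no splitting.
\item If $2^j\gg t2^k$, handle the pieces $Q_{j'k}f$ with $j'\gtrsim j$ by $L^2$-boundedness and the geometric sum.
\item In the same regime, handle the pieces with $j'\ll j$ by the argument of (i). Use the kernel bound $|K(z)|\lesssim 2^{2k}(2^k|z|)^{-N}$ for $\mathrm{e}^{\mathrm{i}t\Lambda(D)}\widetilde P_k$ on $|z|\simeq 2^j\gg t2^k$, which follows from nonstationary phase since $2^{j+k}\ge 1$.
\end{itemize}
You attached the tail argument to the wrong set of pieces.

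Both the threshold and the final conversion need the sharp group-velocity bound $|\nabla\Lambda(\xi)|\simeq|\xi|$. It holds uniformly here because $\lambda'(r)=(2r^3+r)/\lambda(r)\simeq r$. The nonstationary phase step also uses $|\nabla^n\Lambda(\xi)|\lesssim\langle\xi\rangle^{2-n}$. Your bound $|\nabla\Lambda|\lesssim\langle\xi\rangle^3$ would only give $t^s\|\langle D\rangle^{3s}f\|_2$. That is not the stated right-hand side, since $\Lambda^s=|D|^s$ in the paper's convention. It would not even give $\Lambda(D)^s\simeq\langle D\rangle^{2s}$.
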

For the proof, see \cite[Lemma~2.19]{arXiv:2510.18655}.
The following lemma will be used in the remainder estimates in Section~\ref{sec:remainder}.
\begin{lemma}\label{weightedinfty}
\[
	\norm{\spa x^{\alpha}\me^{\mi t \Lambda(D)} \Omega^{[0,N_2]} V_k(s)}_\infty \lle \epsl_1 \spa{t}^{\alpha-1+50\delta}2^{-5k_+}2^{-(1-\delta) k},
\]
where $(1/10) \spa t \le \spa s \le  10\spa t$ and $0<\alpha<1-20\delta$.
\end{lemma}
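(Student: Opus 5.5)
We prove \lemmaref{weightedinfty} by decomposing the profile in space and frequency, $V_k=\sum_{j,n}V_{k,j,n}$, and estimating each piece $\spa x^\alpha \me^{\mi t\Lambda(D)}\Omega^{[0,N_2]}V_{k,j,n}(s)$ pointwise. Since $\spa s\simeq\spa t$, we have $s\simeq 2^m$ with $\spa t\simeq 2^m$. We split physical space into the regions $\abs x\lle 2^{\delta^2 m}(t 2^{\cl k}+2^j)$ and its complement, where $2^{\cl k}\simeq 1+2^{k}$ bounds the group velocity $\Lambda'$ on the support of $\psi_k$. The weight is only used through $\abs x$.

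\emph{Near region.} Here $\spa x^\alpha\lle 2^{\alpha\delta^2 m}(t^\alpha 2^{\alpha k_+}+2^{\alpha j})$.

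For pieces with $j\le (1-\delta^2)m+k$, we use \eqref{decayestg}, \eqref{decayestgr}, which give decay $\spa t^{-(1-\delta^2)}$ times $2^{-4k_+}2^{-(1+\delta)\wan k}$ (for $n>0$ the factor is $2^{4\delta n}\lle 2^{4\delta m}$). Multiplying by $t^\alpha$ yields $\spa t^{\alpha-1+50\delta}$. There are $O(m^2)$ such pieces, which costs only $2^{\delta m}$. The factor $2^{-(1-\delta)k}$ at low frequency comes from $2^{-(1+\delta)\wan k}$ interpolated with the trivial bound $\norm{\cdot}_\infty\lle 2^k\norm{V_{k,j,n}}_2$.

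For $j>(1-\delta^2)m+k$, the weight is dominated by $2^{\alpha j}$. We use Bernstein together with the $Z$-norm bound (\lemmaref{zproperty}(iii)):
\[
\norm{\me^{\mi t\Lambda}V_{k,j,n}}_\infty\lle 2^{k}\norm{V_{k,j,n}}_2\lle \epsl_1 2^{k}2^{-10k_+}2^{-\delta k}2^{-(1-20\delta)(j+k)}2^{(1/2-18\delta)n}.
\]
For $n=0$, summing over $j$ with $\alpha<1-20\delta$ gives a convergent geometric series at $j\simeq m$, of size $2^{(\alpha-1+20\delta)m}$ times powers of $2^{k}$. For $n>0$ with $n\le j+1$, the factor $2^{n/2}$ is the difficulty. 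There we instead use \eqref{decayestbr}, or interpolate via \lemmaref{angularinte1} and the radial decay of \lemmaref{decaylemma}. The aim in both cases is to keep the total at $\spa t^{\alpha-1+50\delta}$.

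\emph{Far region.} Here $\abs x\gg 2^{\delta^2m}(t2^{\cl k}+2^j)$. Write $\me^{\mi t\Lambda}V_{k,j,n}=K_t*(Q_j V_k)$ with kernel $K_t=\FF^{-1}(\me^{\mi t\Lambda(\xi)}\psi_{[k-2,k+2]}(\xi))$. Nonstationary phase gives $\abs{K_t(y)}\lle (2^k\abs y)^{-N}$ for $\abs y\gg t2^{\cl k}$, and the input is supported in $\abs z\lle 2^{j}$. Hence the piece is $O(\abs x^{-N}\norm{V}_2)$, which absorbs any polynomial weight and contributes $\epsl_1 \spa t^{-10}$.

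\emph{Main obstacle.} The delicate part is the near region with large $j$ and $n>0$, where the naive $L^2$ bound loses $2^{n/2}$ against the weight $2^{\alpha j}$. The $50\delta$ slack is designed for this. I would first try to pair the $2^{(1/2-17\delta)n}$ bound on $\widehat V$ from \lemmaref{vinftyest} with the volume of the annulus $\abs{\abs\xi-2\gamma_0}\lle 2^{-n}$, using $\norm{\cdot}_\infty\le\norm{\widehat\cdot}_1$. This gives $2^{-n}2^{(1/2)n}2^{-(1/2-21\delta)j}$, which after the weight is harmless once $j\gg m$. In the intermediate range $j\simeq m$, this crude bound must be combined with the decay estimates of \lemmaref{decaylemma}.
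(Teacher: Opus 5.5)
Your architecture matches the paper's: decay estimates where the weight is $\lesssim\spa t^{\alpha}$, $L^2$ bounds for spatially spread pieces, and nonstationary phase when $\abs x$ is far beyond both $t2^{\cl k}$ and the input localization. The paper organizes this by output position $\spa x\simeq 2^j$. For $j\gg(m+k)\vee0$ it treats input pieces $V_{k,j',n'}$ with $j'\gtrsim j$ via \eqref{zest}, and those with $j'\ll j$ by integration by parts.

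However, the step you call the main obstacle is left open, and the fix you propose fails. For $n>0$ and $j\gg m$, \eqref{decayestbr} gives no decay in $j$. The radial bounds of \lemmaref{decaylemma} give only $2^{-\delta j}2^{5\delta n}$ or $2^{20\delta j}$ in $j$. Your bound $\norm{\FF V_{k,j,n}}_\infty$ times the annulus volume $2^{-n}$ gives $2^{-n/2}2^{-(1/2-21\delta)j}$. When $n$ is small (say $n=1$), this decays only like $2^{-(1/2-21\delta)j}$. Against the weight $2^{\alpha j}$ it diverges as $j\to\infty$ whenever $\alpha>1/2-21\delta$. So your claim that this is ``harmless once $j\gg m$'' is false on most of the range $0<\alpha<1-20\delta$. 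In particular it fails at $\alpha=1/2$, the value actually used in Section~\ref{sec:remainder}.

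The missing idea is to pair the thin annulus with the $L^2$ bound from the $Z$-norm, not with the $L^\infty$ bound on $\FF V_{k,j,n}$. For $n>0$ one has $k\simeq0$, and $\FF V_{k,j,n}$ is supported in $\set{\abs{\abs\xi-2\gamma_0}\lesssim 2^{-n}}$, a set of measure $\lesssim2^{-n}$. Cauchy--Schwarz on the Fourier side then gives
\[
\norm{\me^{\mi t\Lambda(D)}\Omega^{[0,N_2]}V_{k,j,n}}_\infty\le\norm{\FF(\Omega^{[0,N_2]}V_{k,j,n})}_1\lesssim 2^{-n/2}\norm{\Omega^{[0,N_2]}V_{k,j,n}}_2\lesssim\epsl_1 2^{-(1-20\delta)j}2^{-18\delta n}.
\]
The annulus volume exactly cancels the factor $2^{(1/2-18\delta)n}$ in the $Z$-norm. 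Summing $2^{\alpha j}$ times this over $j\gtrsim(1-\delta^2)m$ and $n\le j+1$ is a geometric series precisely because $\alpha<1-20\delta$, and it yields $\epsl_1\spa t^{\alpha-1+21\delta}$. This is the content of the paper's ``$j'\gtrsim j$ follows from \eqref{zest}'' step, and it is where the hypothesis on $\alpha$ enters.

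There is also a smaller slip in your small-$j$ near region. There \eqref{decayestg} gives only $2^{-4k_+}$, which against the weight $2^{\alpha k_+}$ falls short of the required $2^{-5k_+}2^{-(1-\delta)k}$ for large $k$. Use \eqref{decayestb} instead, as the paper does: it carries $2^{-8k_+}$, holds for $V_k$ directly, and fits within the $50\delta$ slack.
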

\begin{proof}
	We assume that $\spa x \simeq 2^j$ and $\spa t \simeq 2^m$, where $j, m \ge 0$. The case $j \lle (m+k) \vee 0$ is easy to handle using \eqref{decayestb} and \eqref{decayestbr}. Hence we can assume that $j \gg (m+k) \vee 0$. We decompose $\me^{\mi t \Lambda(D)} V_k$ into $\me^{\mi t \Lambda(D)} V_{k,j',n'}$. If $j' \gge j$, then the estimate follows from \eqref{zest}. If $j' \ll j$, then integration by parts yields the desired estimate.
\end{proof}

\subsection{Time Derivative Estimates}
Assume that $t \in [2^{m-1},2^{m+1}]$ and $m \ge 10$.
\begin{lemma}\label{timederien}
	For $W \in \gti W^+$,
	\begin{equation}\label{eq:paratimederi1}
	(\ppd t-\mi \Lambda(D))W=T_{\mi \Sigma_{\ge 1}}W-T_{R\psi \cdot \mi \xi}W+T_{\gti m'}(G,W^1)+R,		
	\end{equation}
	where $\gti m'$ and $G$ satisfy the same conditions as in \eqref{energymain1},
	\[
	W^1 \in \gti W'=\set{T_{\Sigma^{N_0/2}} \hti U^{\pm},T_{\Sigma^{\delta/2}}\Omega^{N_1} \hti U^{\pm}, \spa{\nabla}^{300} \Omega^{[0,N_1-1]} U^{\pm}},
	\]
	and
	\[
	\norm{P_{k} R(t)}_2 \lle \epsl_1 2^{-(1-1/100)m} (\sum_{W \in \gti W}\norm{P_kW(t)}_2+\epsl_1 2^{-k})
	\]
	for $k \ge -100$. (The Taylor expansion of $\Sigma$ with respect to $\Lambda \rho$ is $\Sigma=\Sigma_0+\Sigma_1+\Sigma_{\ge 2}$; note that $\Sigma_1$ does not lose derivatives.) Moreover,
	\begin{equation}\label{eq:paratimederi2}
	\norm{P_{<0}(\ppd t-\mi \Lambda(D))W(t)}_2 \lle \epsl_1^2 2^{-m/3}.
	\end{equation}
\end{lemma}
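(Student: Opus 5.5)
The starting point is the paralinearized equation \eqref{energymain1}, $\hti W_t=\mi T_\Sigma\hti W-T_S\hti W+\hti Q_{\hti W}+\hti C_{\hti W}$, taken for $W\in\gti W^+$. We then rewrite each term relative to the linear flow $\mi\Lambda(D)$.

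\textbf{High frequencies.} Expand $\Sigma=\Sigma_0+\Sigma_1+\Sigma_{\ge 2}$ in $\Lambda\rho$, where $\Sigma_0(\xi)=\Lambda(\xi)$, so $T_{\Sigma_0}=\Lambda(D)$ up to the Weyl cutoff; the cutoff has no effect on a Fourier multiplier. Then
\[
\mi T_\Sigma W=\mi\Lambda(D)W+\mi T_{\Sigma_1}W+\mi T_{\Sigma_{\ge 2}}W.
\]
For $\Sigma_1=\Sigma_0\Sigma^1$ we use the remark after \eqref{energymain1}: under the QHD-type condition it has order $0$, hence loses no derivatives. So $T_{\Sigma_1}W$ is a bilinear term of the form $T_{\gti m'}(G,W^1)$ or $T_{\gti m}(G,W^1)$, with $G=\spa{\nabla}^5\Omega^{[0,N_1/2]}U$. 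The term $T_{\mi\Sigma_{\ge 1}}W$ is kept exactly as stated; it and $-T_{R\psi\cdot \mi\xi}W$ are the derivative-losing terms we leave explicit.

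Next, $S=R\psi\cdot\mi\xi-\frac12\mi\Delta\Lambda\rho$. The second part is order $0$, so $T_{\Delta\Lambda\rho}W$ is bilinear with a harmless symbol and goes into $T_{\gti m'}$. The $\hti Q_{\hti W}$ pieces $T_{\gti d}$, $T_{\gti m}$ and $T_{\gti m'}$ have the admissible symbol bounds and fit the $T_{\gti m'}(G,W^1)$ class. Here $\gti W'$ contains $\spa\nabla^{300}$ because some multipliers are divided by powers of $\Lambda$ relative to the original symbol bounds. Since $\gti m$ decays like $2^{-(k_1\vee k_2)_+}$ and $\gti d\le 2^{5(k_1\wedge k_2)_+}$, enlarging the Sobolev weight on $W^1$ absorbs the difference; I would check this symbol by symbol.

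\textbf{Remainder $R$.} It collects $\hti C_{\hti W}$, which is bounded by $\epsl_1^3\spa t^{-1-1/100}$. Its $P_k$ bound needs an $\epsl_1 2^{-k}$ factor; for $k\ge-100$ this costs only a constant, which is fine. It also collects cubic Taylor pieces, for which we use
\[
\norm{T_{\Sigma_{\ge2}}}_{L^2\to L^2}\lle 2^{2k_+}\norm{\Lambda\rho}_{W^{10,\infty}}^2\lle\epsl_1^2 2^{-(2-)m}
\]
via \eqref{decayestb}. There are also Weyl commutator errors, bounded through the symbol bound $\norm{P_kT_af}_p$, and errors from replacing $\Lambda(D)$ by $P_{\ge 50}$-localized versions. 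Every such term carries one $\epsl_1\spa t^{-(1-20\delta)}$ decay factor from an $L^\infty$ coefficient times $\norm{P_kW}_2$, which gives the claimed bound $\epsl_1 2^{-(1-1/100)m}$.

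\textbf{Low frequencies.} For \eqref{eq:paratimederi2}, $W$ carries $P_{\ge 50}$, so $P_{<0}W=0$ except through the time derivative commuting with $P_{\ge 50}$. Hence $P_{<0}(\partial_t-\mi\Lambda(D))W$ reduces to $P_{<0}$ of the commutator terms. Each is a paraproduct $T_a$ with frequency support near that of $W$, and $P_{<0}$ projects onto output frequencies below $1$. By the Weyl cutoff $|\xi-\eta|\le 2^{-10}|\xi+\eta|$, an output near $0$ forces inputs of comparable small size, so these terms vanish or are bounded through the high–high interaction. Their size is $\epsl_1\cdot\epsl_1\spa t^{-1/2+2\delta}$ by \lemmaref{decayestlow}, together with the energy bound $\epsl_1\spa t^{\delta^4}$, which is better than $2^{-m/3}$.

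The main obstacle is bookkeeping: verifying that every second-order Moyal term and every $\partial_t$-commutator (notably $T_{(\Sigma^{N_0/2})_t}$, where the QHD condition is used) lands in the $T_{\gti m'}$ class with the stated symbol bounds, rather than losing a derivative. I would handle this by tracking orders, as in the verification of \eqref{energymain}.
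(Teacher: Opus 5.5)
There is a genuine gap, and it is in your proof of \eqref{eq:paratimederi2}. You start from ``$W$ carries $P_{\ge 50}$'', but it does not. Only the elements of $\gti W^0$ in \eqref{energymain1} are $P_{\ge 50}$-localized. By contrast, $\gti W^+=\{T_{\Sigma^{N_0/2}}\hti U^+,\,T_{\Sigma^{\delta/2}}\Omega^{N_1}\hti U^+,\,\spa{\nabla}^{100}\Omega^{[0,N_1-1]}U^+\}$ has genuine low-frequency content.

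Even granting your premise, the argument does not work. $P_{\ge 50}$ is a time-independent Fourier multiplier, so it commutes exactly with $\ppd t-\mi\Lambda(D)$, and $P_{<0}P_{\ge 50}=0$. There would be no commutator terms at all, and the left side of \eqref{eq:paratimederi2} would vanish identically.

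The missing idea is to separate the linear part of $W$ from the paralinearization error. Write $W=\Sigma_0^{N_0/2}(D)U+\hti E$ (resp.\ $W=\Sigma_0^{\delta/2}(D)\Omega^{N_1}U+\hti E$), where $\hti E=T_{\Sigma^{N_0/2}}\hti U-T_{\Sigma_0^{N_0/2}}U$ (resp.\ $T_{\Sigma^{\delta/2}}\Omega^{N_1}\hti U-T_{\Sigma_0^{\delta/2}}\Omega^{N_1}U$) is at least quadratic.
\begin{itemize}
\item Since $\Sigma_0(D)$ and $\Omega$ commute with $\ppd t-\mi\Lambda(D)$, \eqref{dispereq} turns the linear part into $P_{<0}$ of the quadratic, cubic and remainder nonlinearities. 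An $L^2\times L^\infty$ argument using \cororef{decayestlow}, \eqref{decayestb} and \eqref{energyest} bounds these well below $\epsl_1^2 2^{-m/3}$.
\item For $\hti E$ there is no derivative loss at output frequencies below $1$, so $P_{<0}\ppd t\hti E$ and $P_{<0}\Lambda(D)\hti E$ can be bounded separately.
\item The third member $\spa{\nabla}^{100}\Omega^{[0,N_1-1]}U$ is handled directly by \eqref{dispereq}.
\end{itemize}
Your size heuristic $\epsl_1^2\spa t^{-1/2+2\delta+\delta^4}$ is the right one. It has to be applied to these objects, not to paraproduct commutators.

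Your derivation of \eqref{eq:paratimederi1} also needs repair, in two places.

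First, \eqref{energymain1} only describes the $P_{\ge 50}$-localized paralinear members. The third member of $\gti W^+$, and the intermediate frequencies $k\ge -100$ not covered by that localization, require \eqref{dispereq}, which you never invoke. This is also where the $\spa{\nabla}^{300}$ in $\gti W'$ comes from. For $W=\spa{\nabla}^{100}\Omega^aU$, the weight $\spa{\nabla}^{100}$ and the $2^{4\cl k_+}$ loss in $\gti m_q$ land on the high-frequency input. The strong high-frequency decay demanded of $\gti m'$ (like $2^{-89k_+}$) can then only be met by placing roughly $200$ extra derivatives on $W^1$.

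Second, $T_{\gti d}(G,W^1)$ and $T_{\gti m}(G,W^1)$ are not of $\gti m'$ type. Their symbols lack that high-frequency decay, and you cannot move derivatives onto a top-order $W^1=T_{\Sigma^{N_0/2}}\hti U$. They belong in $R$. The reason is the factor $2^{k_1\wedge k_2}$ in their symbols, which compensates the low-frequency loss $2^{-(1-39\delta)\wan k}$ in the $L^\infty$ decay \eqref{decayestb} of $P_{k_1}G$. This yields a bound of $\epsl_1 2^{-(1-21\delta)m}$ times the $L^2$ norm of $W^1$ at frequency $\simeq 2^k$.

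Your assertion that every term of $R$ picks up a uniform factor $\epsl_1\spa t^{-(1-20\delta)}$ from its $L^\infty$ coefficient fails exactly when the symbol lacks such a low-frequency factor. That is precisely why $T_{\gti m'}(G,W^1)$ must remain explicit in the lemma.
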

\begin{proof}
	\eqref{eq:paratimederi1} is an immediate consequence of \eqref{energymain1} and \eqref{dispereq}. Let
	\[
	\hti E \in \set{T_{\Sigma^{N_0/2}}\hti U-T_{\Sigma_0^{N_0/2}}U,T_{\Sigma^{\delta/2}}\Omega^{N_1}\hti U-T_{\Sigma_0^{\delta/2}}\Omega^{N_1}U}.
	\]
	It then suffices to estimate $P_{<0}(\ppd t-\mi \Lambda(D))\hti E(t)$. Since there is no loss of derivative, we can handle $P_{<0}\ppd t \hti E(t)$ and $P_{<0} \Lambda(D) \hti E(t)$ separately. The desired estimate follows from a routine but lengthy verification.
\end{proof}
\begin{lemma}\label{timederil2}
	\[
	\norm{\ppd t \Omega^{[0,N_2]} V_k(t)}_2 \lle \varepsilon_1^2 2^{-(1-8\delta)m}.
	\]
	Moreover,
	\[
	\norm{I_{\gti m_q,m}(\va(t),\vb(t))}_2 \lle \varepsilon_1^2 2^{-(1-8\delta)m},
	\]
	where $0 \le p_1, p_2 \le N_2$.
\end{lemma}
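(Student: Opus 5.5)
The plan is to derive the time-derivative bound from the Duhamel formulation \eqref{disperformula}. Since $\Omega$ commutes with $\me^{-\mi t\Lambda(D)}$ and $P_k$, and $\Omega_{\xi,\eta}\gti m_q=0$, $\Omega_{\xi,\eta,\sigma}\gti m_c=0$, the rotations distribute over the inputs by the Leibniz rule. This gives
\[
\ppd t \Omega^{a} V_k(t)=\sum_{p_1+p_2=a} P_k I_{\gti m_q,t}(V^{\iota_1,p_1},V^{\iota_2,p_2})+\sum_{p_1+p_2+p_3=a}P_kI_{\gti m_c,t}(\cdots)+P_k\me^{-\mi t\Lambda(D)}\Omega^aR(t).
\]
The first claim therefore reduces to the second one (summed over $k_1,k_2$), plus cubic and remainder bounds. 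The cubic and quartic-type terms are easy. Put the input carrying the most rotations in $L^2$, using \eqref{energyest} with an interpolation between $\Lambda^{N_0}$ and $\Omega^{N_1}$ (the lemma stated just before \lemmaref{angularinte1}) to control $\norm{\spa{D}^{s}\Omega^{[0,N_2]}U}_2$ by $\epsl_1\spa t^{\delta^4}$. Put the other factors in $L^\infty$ via \corollaryref{decayestcom}/\eqref{decayestb}. This gives $\epsl_1^3 2^{-2(1-21\delta)m}2^{\delta^4 m}$, which is much better than needed. The factors $2^{k_i}$ in $\gti m_c$ absorb the low-frequency losses $2^{-(1-39\delta)\wan k}$.

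For the quadratic term $I_{\gti m_q,m}(\va,\vb)$, I would decompose dyadically in $k_1,k_2$ and use the multilinear estimate with $\norm{\gti m_q}_{S_{k,k_1,k_2}}\lle 2^{\cl k}2^{4\cl k_+}$. Let $k_1\ge k_2$ without loss of generality. Place the input with more rotations (say $\va$, for $p_1\ge p_2$, hence $p_2\le N_2/2$) in $L^2$ and the other in $L^\infty$. For the $L^\infty$ factor I plan to use \eqref{decayestb}, which gives $2^{-(1-21\delta)m}2^{-(1-39\delta)\wan k_2}2^{-8k_{2,+}}$. The $L^2$ factor costs $\epsl_1 2^{\delta^4 m}2^{-N k_{1,+}}$, with high frequency handled by energy. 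The summation is controlled as follows. High frequencies are summed using the energy decay in $k_{1,+}$, since $N_0$ is huge. Low frequencies $\cl k\le -10$ are where the loss $2^{-\wan k_2}$ appears. There the symbol gives a gain of $2^{\cl k}\ge 2^{k_2}$, which compensates the loss up to $2^{39\delta \wan k_2}$. Summing over $k_2\ge \cl k-C$ gives a logarithmic factor at worst, or one can use \corollaryref{decayestlow} when $k_2\le -m$. The extremely low frequencies $\cl k\le -2m$ are handled trivially by Bernstein: $\norm{P_{k_2}\cdot}_\infty\lle 2^{k_2}\norm{\cdot}_2$ together with the smallness of the symbol $2^{\cl k}$. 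When the $L^2$ input has $p_1=N_2$ rotations, I would use $\Lambda^\delta\Omega^{N_1}$ and interpolation to obtain $\norm{\Omega^{[0,N_2]}U_{k_1}}_2\lle\epsl_1 2^{\delta^4 m}2^{-\delta k_1\wedge 0}$. The small loss $2^{-\delta k_1}$ at low $k_1$ is absorbed by $2^{\cl k}$. Altogether this yields $\epsl_1^22^{-(1-21\delta)m+\delta^4m}$ times at most $m$, which is worse than the stated $2^{-(1-8\delta)m}$.

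The main obstacle is therefore obtaining the exponent $1-8\delta$ instead of $1-21\delta$. To fix this, I would split the $L^\infty$ input $V_{k_2}=\sum_{j,n}V_{k_2,j,n}$. For $j\le(1-\delta^2)m+k_2$ I use \eqref{decayestg}/\eqref{decayestgr}, which decay like $2^{-(1-\delta^2)m}$ up to $2^{4\delta n}$ with $n\le j+1$. This is fine unless $n$ is large, in which case I use $2^{4\delta n}$ together with the $Z$-norm factor. For $j>(1-\delta^2)m+k_2$ I instead put $V_{k_2,j,n}$ in $L^2$ via \lemmaref{zproperty}(iii), which gives $2^{-(1-20\delta)j}2^{(1/2-18\delta)n}\lle 2^{-(1/2-2\delta)j}$. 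I then put the other input (with at most $N_2/2$ rotations, or using \lemmaref{angularinte1}) in $L^\infty$ with decay $2^{-(1/2-2\delta) m}$ from \corollaryref{decayestlow} or \eqref{decayestb}. The product is about $2^{-(1-4\delta)m}$. Alternatively, \corollaryref{decaydecom} packages exactly this: pair $I^p_k$ in $L^\infty$ (decay $2^{-(1-5\delta)m}$) with the $L^2$ energy of the other input, and pair $II^p_k$ in $L^2$ (decay $2^{-(1/2+1/100)m}$) with $L^\infty$ decay $2^{-(1/2-2\delta)m}$ of the other input. I would use this decomposition as the primary route; it gives $2^{-(1-5\delta)m+\delta^4 m}$, with the $2^{-k}$ losses in $I^p_k$ offset by the symbol factor $2^{\cl k}$ and the remaining $2^{-\delta k}$ summed using $\delta$-room. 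Finally, the remainder $R$ is at least quartic. Bounding three factors in $L^\infty$ by \eqref{decayestb} and one in $L^2$ (with $\Omega$ distributed and $f(\Lambda\rho)$ controlled by \lemmaref{angularinte1}) gives at least $\epsl_1^4 2^{-2m}$, which suffices.
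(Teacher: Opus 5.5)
Your primary route is exactly the paper's argument, whose entire proof is the remark that the lemma follows from \cororef{decaydecom}: you distribute $\Omega$ over the inputs by the Leibniz rule, reduce to the bilinear term, and pair the $I^p_k$ and $II^p_k$ pieces with $L^2$ and $L^\infty$ bounds of the other input, with the cubic and remainder terms trivially better. The preliminary detours through \eqref{decayestb} and your own $(j,n)$ splitting are unnecessary, because that splitting is precisely what the proof of \cororef{decaydecom} already carries out.
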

\begin{proof}
	This follows directly from \cororef{decaydecom}.
\end{proof}
\begin{lemma}\label{timederil2r}
\[
\norm{\ppd t \Omega^{[0,N_2]}V_k(t)}_{L^2(r \dif r)L^\infty_\theta} \lle \varepsilon_1^2 2^{-(1-8\delta)m}.
\]
	Moreover,
	\[
	\norm{I_{\gti m_q,m}(\va(t),\vb(t))}_{L^2(r \dif r)L^\infty_\theta} \lle \varepsilon_1^2 2^{-(1-8\delta)m},
	\]
	where $0 \le p_1, p_2 \le N_2$.
	\end{lemma}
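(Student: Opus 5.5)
The first step is to reduce the first estimate to the second. From \eqref{disperformula}, $\partial_t V = \me^{-\mi t\Lambda(D)}\bigl(T_{\gti m_q}(U^{\iota_1},U^{\iota_2})+T_{\gti m_c}(U^{\iota_1},U^{\iota_2},U^{\iota_3})+R\bigr)$. In Fourier variables this reads
\[
\FF(\ppd t V_k)(\xi) = \psi_k(\xi)\,\FF I_{\gti m_q,t}(V^{\iota_1},V^{\iota_2})(\xi) + \text{(cubic and remainder terms)}.
\]
Since $\Omega_{\xi,\eta}\gti m_q=0$, applying $\Omega^{[0,N_2]}$ distributes the rotations over the two inputs, giving $\Omega^{p_1}V$ and $\Omega^{p_2}V$ with $p_1+p_2\le N_2$. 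The phase factor $\me^{-\mi t\Lambda}$ is radial and unimodular, so it does not affect the $L^2(r\dif r)L^\infty_\theta$ norm. The main task is therefore the bound for $I_{\gti m_q,m}(\va,\vb)$ in $L^2(r\dif r)L^\infty_\theta$. The cubic and quartic-type terms $T_{\gti m_c}$ and $R$ are handled crudely: I would take the $L^\infty_\theta$ norm via the angular Sobolev embedding \eqref{eq:2.3.20} (with $N'=1$), which costs one extra $\Omega$, and then estimate in $L^2$ using one $L^2$ factor from \eqref{energyest} and two $L^\infty$ factors from \cororef{decayestcom}. This gives decay $2^{-(2-50\delta)m}$ up to $\spa t^{\delta^4}$ losses, which is ample.

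For the quadratic term, I would again use \eqref{eq:2.3.20} with $N'=1$:
\[
\norm{f}_{L^2(r\dif r)L^\infty_\theta}\lle \norm{f}_2^{1/2}\norm{\Omega^{[0,1]}f}_2^{1/2},
\]
where $f=\psi_k\,\FF I_{\gti m_q,t}(\va,\vb)$. Since $\Omega_\xi$ acting on $I$ equals the sum of $\Omega$ applied to the inputs (using $\Omega_{\xi,\eta}\Phi=0$ and $\Omega_{\xi,\eta}\gti m_q=0$), one angular derivative becomes $\Omega^{p_i+1}$ on an input. If $p_1+p_2+1\le N_2$, Lemma~\ref{timederil2} applies directly and gives $\varepsilon_1^2 2^{-(1-8\delta)m}$ for both factors.

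The obstacle is the endpoint $p_1+p_2=N_2$, where $\Omega^{N_2+1}$ is not controlled by the $Z$-norm bound \eqref{zest}. Here I would place the input carrying the most rotations, say $\Omega^{p_1+1}V$ with $p_1\ge N_2/2$, in $L^2$. Its norm is controlled by \eqref{energyest}, since $N_1\gg N_2$: by the interpolation lemma, $\norm{\Omega^{N_2+1}U}_2\lle \epsl_1\spa t^{\delta^4}$ after paying low-frequency weights $\Lambda^{\delta}$. The other input, with at most $N_2/2\le N_3$ rotations, goes in $L^\infty$. For it I use Corollary~\ref{decaydecom}: the $I$-part gives $t^{-(1-5\delta)}2^{-(1+\delta)k_2}$, and the $II$-part is handled by switching Hölder exponents, putting the other factor in $L^\infty$ via \cororef{decayestcom}. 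Together with the symbol bound $\norm{\gti m_q}_S\lle 2^{\cl k}2^{4\cl k_+}$, this yields a bound of order $\varepsilon_1^2 2^{-(1-6\delta)m}$ for the high-rotation factor, up to summation over frequencies. The summation over $k_1,k_2$ is made convergent by the low-frequency null factor $2^{\cl k}$ together with \cororef{decayestlow}, and at high frequencies by the Sobolev weights. Interpolating this square-root factor with the good $L^2$ bound of Lemma~\ref{timederil2} for the unrotated factor keeps the total bound at $\varepsilon_1^2 2^{-(1-8\delta)m}$.

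If the frequency summation proves delicate when $\ul k\ll 0$, I would fall back on Corollary~\ref{decayestlow} for the low-frequency input. This gives $t^{-1/2+2\delta}$ decay, and combined with the $L^2$-in-time bound from Corollary~\ref{decaydecom} for the other input it suffices.
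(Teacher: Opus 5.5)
\textbf{There is a genuine gap at the endpoint, caused by taking $N'=1$ in the angular interpolation.} With $N'=1$, the factor $\norm{\Omega^{[0,1]}f}_2$ enters with exponent $1/2$. So the term carrying $\Omega^{N_2+1}$ on one input must decay at essentially the same rate $2^{-(1-8\delta)m}$ as in Lemma~\ref{timederil2}. Your argument does not achieve this in the regime where the rotated input sits at the top frequency, say $k_1\simeq 0$, and the unrotated input is much lower, say $k_2\approx -m/2$.
\begin{itemize}
\item Putting $\Omega^{N_2+1}V_{k_1}$ in $L^2$ and $V_{k_2}$ in $L^\infty$: the $I$-part of \cororef{decaydecom} costs $2^{-(1+\delta)k_2}2^{-(1-5\delta)m}\approx 2^{-m/2}$. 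Nothing compensates this, because $\gti m_q$ only gains $2^{\cl k}=2^{k_1}\simeq 1$; this is exactly the missing null structure. \cororef{decayestlow} and Bernstein also give only about $2^{-m/2}$ here.
\item Swapping H\"older exponents: this needs $L^\infty$ decay of $E\,\Omega^{N_2+1}V_{k_1}$, which \cororef{decayestcom} does not provide, since it stops at $N_2$ rotations. Even granting the rate $2^{-(1-21\delta)m}$ through a further interpolation, $\norm{V_{k_2}}_2\lle \epsl_1 2^{-\delta k_2}$ gives only about $2^{-(1-21.5\delta)m}$. After the square root this is roughly $2^{-(1-15\delta)m}$, which misses the claimed exponent.
\item Your fallback: it closes only if \cororef{decaydecom} is applied to the rotated, higher-frequency input. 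That requires a version of \cororef{decaydecom} with $N_2+1$ rotations, which you would have to rederive, e.g.\ by angular interpolation between $\Omega^{N_2}$ and $\Omega^{N_1}$ on each radially localized piece $V_{k,j,n}$.
\end{itemize}
Also, in the second estimate $p_1,p_2\le N_2$ independently, so the other input need not carry at most $N_2/2$ rotations.

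\textbf{How the paper avoids this.} It interpolates with $N=1/\delta^2$:
\[
\norm{f}_{L^2(r\dif r)L^\infty_\theta}\lle \norm{f}_{2}^{1-\delta^2/2}\,\norm{\Omega^{[0,N]}f}_{2}^{\delta^2/2},
\]
applied directly to $f=\FF\,\ppd t\Omega^{[0,N_2]}V_k$ and to $f=\FF I_{\gti m_q,m}(\va,\vb)$.
\begin{itemize}
\item The first factor is exactly Lemma~\ref{timederil2}. That lemma already covers the cubic and remainder contributions, so your separate treatment of them is unnecessary.
\item The second factor involves at most $N_2+\delta^{-2}\le N_1$ rotations. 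A crude bound, at most polynomial in $2^m$, from \eqref{energyest} therefore suffices.
\item Raised to the power $\delta^2/2$, that crude bound costs only $2^{O(\delta^2)m}$, which is absorbed into the slack in the exponent.
\end{itemize}
No endpoint analysis is needed at all.
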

\begin{proof}
	It follows from the following inequality:
	\[
	\norm{f}_{L^2(r \dif r)L^\infty_\theta} \lle \norm{f}_{L^2(r \dif r)L^2_\theta}^{1-1/(2N)}\norm{\Omega^{[0,N]}f}_{L^2(r \dif r)L^2_\theta}^{1/(2N)},
	\]
	where $N=1/\delta^2$.
\end{proof}
\begin{lemma}\label{timederifli}
Suppose that $k \ge -2\delta^2 m$. Then
	\[
	\norm{\FF \ppd t \Omega^{[0,N_2]} V_{k}(t)}_\infty \lle \varepsilon_1^2 2^{-(1/2-5\delta)m}. 	
	\]
Moreover,
\[
	\norm{\FF \ppd t \Omega^{[0,N_2]} V_{k,n}(t)}_\infty \lle \varepsilon_1^2 2^{-(1-25\delta)m},
\]
if $n \le D$.
\end{lemma}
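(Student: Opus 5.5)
We start from the Duhamel formula \eqref{disperformula}, which gives
\[
\ppd t \Omega^{p} V(t)=\sum \Omega^{p}\bigl[I_{\gti m_q,t}(V^{\iota_1},V^{\iota_2})+I_{\gti m_c,t}(V^{\iota_1},V^{\iota_2},V^{\iota_3})\bigr]+\me^{-\mi t\Lambda(D)}\Omega^p R(t).
\]
Since $\Omega_{\xi,\eta}\gti m_q=0$ and $\Omega_{\xi,\eta}\Phi=0$, the vector fields distribute over the inputs, so it suffices to bound $\norm{\FF P_k I_{\gti m_q,t}(\va,\vb)}_\infty$ with $p_1+p_2\le N_2$, plus the cubic and remainder terms. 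For the $L^\infty_\xi$ norm of a bilinear term we use
\[
\abs{\FF I(f,g)(\xi)}\le\int\abs{\gti m_q}\,\abs{\mao f(\xi-\eta)}\,\abs{\mao g(\eta)}\dif\eta\lle\norm{\gti m_q}_{S}\,\norm{E_t f}_2\,\norm{E_t g}_2
\]
(after writing the multiplier via its kernel, or directly via Cauchy--Schwarz in $\eta$). By the energy bound \eqref{energyest} each factor is $\lle\epsl_1 2^{\delta^4 m}$, but this gives no time decay. Decay therefore has to come from mixing $L^2$ and pointwise information on $\mao V$.

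The first bound follows from Cauchy--Schwarz combined with \cororef{decaydecom}. Write $\abs{\FF I(\xi)}\le\norm{\mao{f_{k_1}}}_{L^2}\norm{\mao{g_{k_2}}}_{L^2}$ when both inputs are high. To gain $2^{-m/2}$ we split the lower-frequency input as $E_tf_{k_1}=I+II$, as in \cororef{decaydecom}. For the $II$ part we use $\norm{II}_2\lle 2^{-(1/2+1/100)m}$. For the $I$ part we cannot place $L^\infty_x$ directly in the $L^\infty_\xi$ norm. Instead we use the pointwise bound \eqref{vinftyest1} on $\mao{V_{k_1,j_1,n_1}}$ together with $L^1$ measure bounds: the $\eta$-support is $\lle 2^{2k_2}$, and we sum over $j_1$. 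Terms with $j_1\le(1/2)m$ contribute at most $2^{-(1/2-\cdot)j_1}$. This is not decay in time, so instead we bound
\[
\abs{\FF I(\xi)}\le\norm{\me^{\mi t\Phi}\cdots}
\]
by integrating by parts in $\eta$ where $\abs{\nabla_\eta\Phi}\gge 2^{-\delta m}$. This gains $(2^m\abs{\nabla_\eta\Phi})^{-1}$ times $2^{j}$-losses, and is effective when $j_1,j_2\le(1-\delta)m$. Where $j_i\ge(1-\delta)m$, the $Z$-norm gives $\norm{V_{k_i,j_i}}_2\lle 2^{-(1-20\delta)j_i}\lle 2^{-(1-21\delta)m}$, and Cauchy--Schwarz then suffices. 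On the space-resonant set $\abs{\nabla_\eta\Phi}\lle 2^{-\delta m}$, we use \lemmaref{resoregion}: this set forces $\abs\xi\approx 2\gamma_0$ and $\abs{\xi-\eta},\abs\eta\approx\gamma_0$. Its $\eta$-measure is $\lle 2^{-2\delta m}$ up to the thickness coming from the restriction on $\abs{\xi-\eta}$. Combining this measure with the $L^\infty$ bound of \lemmaref{vinftyest} on one factor and $L^2$ on the other gives the $2^{-(1/2-5\delta)m}$ bound. The assumption $k\ge-2\delta^2 m$ keeps the $2^{-k}$ losses in \eqref{vinftyest1} and in the multiplier within $2^{O(\delta^2)m}$.

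For the second bound, with $n\le D$, the output lies away from the sphere $\abs\xi=2\gamma_0$. By \lemmaref{resoregion} there is then no space resonance, so $\abs{\nabla_\eta\Phi}\gge 2^{-\delta^2 m}$ on the whole support except for low-frequency configurations. Repeated integration by parts in $\eta$ then applies whenever $j_1,j_2\le(1-\delta)m$. Terms with a large $j_i$ are handled by Cauchy--Schwarz using the $2^{-(1-20\delta)j}$ weight and \eqref{vinftyest1} on the other factor, giving $2^{-(1-25\delta)m}$. Low input frequencies $k_i\le-\delta m$ are handled using the smallness of the $\eta$-support $2^{2k_i}$ together with the multiplier factor $2^{\cl k}$. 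The cubic terms and $R$ are easier: $\abs{\FF T_{\gti m_c}}\le\norm{\gti m_c}_S\norm{E f_1}_\infty\norm{f_2}_2\norm{f_3}_2$, and with \cororef{decayestcom} this is $\lle 2^{-(1-21\delta)m}$; $R$ is quartic.

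The main obstacle is the first bound near the space-time resonant configuration $\abs\xi\approx 2\gamma_0$. There the available decay is only $2^{-m/2}$, and the exponent $5\delta$ has to be tracked carefully, using the $2^{(1/2-18\delta)n}$ factor in the $Z$-norm and \lemmaref{fourierl2r} for the $L^2(r\dif r)L^\infty_\theta$ control in the angular integration.
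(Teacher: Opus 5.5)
\textbf{There is a genuine gap: the space-resonant region is not handled.} You claim that on $\set{\abs{\nabla_\eta\Phi}\lle 2^{-\delta m}}$, \lemmaref{resoregion} forces $\abs\xi\approx 2\gamma_0$, and, for the second bound, that $n\le D$ excludes space resonances. Both claims are false. \lemmaref{resoregion} requires $\abs{\Phi}$ \emph{and} $\abs{\nabla_\eta\Phi}$ to be small, and this fixed-time estimate has no modulation cutoff. For $\iota_1=\iota_2$ one has $\nabla_\eta\Phi(\xi,\xi/2)=0$ for \emph{every} $\xi$, including $\abs\xi$ far from $2\gamma_0$.

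Your estimate on that region therefore gives no decay. The set $\set{\abs{\nabla_\eta\Phi}\lle 2^{-\delta m}}$ has $\eta$-measure about $2^{-2\delta m}$. With $j_1,j_2$ small, your $L^\infty\times L^2$ bound there yields only $2^{-\delta m}$, not $2^{-(1/2-5\delta)m}$, let alone $2^{-(1-25\delta)m}$. Your integration by parts, restricted to $\abs{\nabla_\eta\Phi}\gge 2^{-\delta m}$, never reaches the uncertainty scale.

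Two further mechanisms are missing.
\begin{enumerate}
\item If $j_1\vee j_2\le m/2$, integrate by parts in $\eta$ all the way down to $\abs{\Xi}\lle 2^{-(1/2-\delta)m}$. The remaining set has measure $\lle 2^{-(1-2\delta)m}$, and \eqref{vinftyest1} on both factors finishes.
\item If $j_2\ge m/2$ and $j_1\wedge j_2\le(1-5\delta^2)m$, first use $\Omega$ via \lemmaref{rotainte} to localize to $\abs{\Theta}\le\kappa_\theta=2^{-(1/2-2\delta^2)m}$. Then gain $\kappa_\theta$ from the angular measure and apply \lemmaref{fourierl2r}. The radial thickness $2^{-n_2}$ of the support supplies $2^{-n_2/2}$, which cancels the $2^{(1/2-18\delta)n_2}$ loss and gives $2^{-(1-25\delta)m}$.
\end{enumerate}
Your last paragraph names some of these tools, but the argument you actually propose never uses the angular localization.

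\textbf{You also misidentify what drives the two rates.} The bound $\norm{V_{k,j}}_2\lle 2^{-(1-20\delta)j}$ you invoke is false for $k$ near $0$. Summing the $Z$-norm over $n\le j+1$ gives only $2^{-(1/2-2\delta)j}$, and this loss is the real obstruction, not a space-time resonance.

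The essential use of $k\ge-2\delta^2 m$ is that when $k_2\le-3\delta^2m$ it gives $\abs{\nabla_\eta\Phi}\gge 2^{-2\delta^2 m}$, so small $j_1,j_2$ are negligible by integration by parts. In that case, if $j_1\ge(1-5\delta^2)m$ and $n_1$ is large, the $L^2\times L^2$ bound is only $2^{-(1/2-2\delta)m}2^{-\delta k_2}$. Trading for $2^{k_2}\norm{\mao{\Vaa}}_\infty\norm{V_{k_2}}_2$ does not beat $2^{-m/2+O(\delta)m}$ when $k_2\approx-m/2$. This is exactly why the first rate is $2^{-(1/2-5\delta)m}$.

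For the second bound, $\abs\eta\ll 1$ together with $\abs{\abs\xi-2\gamma_0}\gge 2^{-D}$ forces $n_1\lle D$. That restores $2^{-(1-20\delta)j_1}$, and this, not absence of space resonance, is what $n\le D$ buys. In the other large-$j$ case, $k_2\ge-3\delta^2 m$ with $j_1\wedge j_2\ge(1-5\delta^2)m$, you need both $L^2$ factors, each $\lle 2^{-(1/2-2\delta)m}$. A single factor of size $2^{-(1-21\delta)m}$, as you wrote, is not available.
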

\begin{proof}
We only prove the first estimate, as the second one follows from a similar argument. We have
	\[
	\ppd t V^p_{k}(t)=P_{k}\sum _{k_1,k_2} I_{\gti m_q,t}(\va(t),\vb(t))+ C,
	\]
where $C$ denotes cubic terms. 	The terms with $k_1 \vee k_2 \ge \delta^3 m$ or $k_1 \wedge k_2 \le -2m$ are easy to handle, and so are the cubic terms. Indeed, the terms with $k_1 \vee k_2 \ge \delta^3 m$ or $k_1 \wedge k_2 \le -2m$ are estimated using
	\begin{align*}
	&\norm{\FF P_kI_{\gti m_q,t}(\va(t),\vb(t))}_\infty \\
	\lle{} &2^{\cl k}2^{4\cl k_+}(\norm{\va(t)}_2\norm{\vb(t)}_2 \wedge 2^{2(k_1 \wedge k_2)}\norm{\mao{\va(t)}}_\infty \norm{\mao{\vb(t)}}_\infty) ;		
	\end{align*}
	for the terms with $k_1 \vee k_2 \ge \delta^3 m$, we additionally use \eqref{energyest}. We may assume that $k_1 \ge k_2$. We continue to decompose $I_{\gti m_q,t}(\va(t),\vb(t))$ into $I_{\gti m_q,t}(\Vaa(t),\Vbb(t))$ and the case $j_1 \vee j_2 \ge 2m$ is easy to handle. We first consider the case $k_2 \le -3\delta^2 m$. For the case $j_1 \ge (1-5\delta^2)m$, we estimate
	\begin{align*}
	\norm{\FF P_k I_{\gti m_q,t}(\Vaa(t),\Vbb(t))}_\infty 
	&\lle 2^{\cl k}2^{10\cl k_+} \norm{\Vaa(t)}_2 \norm{\Vbb(t)}_2\\ 
	&\lle \varepsilon_1^2 2^{\delta^2 m}2^{-(1-20\delta)j_1}2^{(1/2-18\delta)n_1}2^{-\delta k_2},
	\end{align*}
	while for the case $j_2 \ge (1-5\delta^2)m$, we have
	\begin{align*}
	\norm{\FF I_{\gti m_q,t}(\Vaa(t),\Vbb(t))}_\infty \lle{}& 2^{\cl k}2^{10\cl k_+} 2^{k_2}\norm{\mao{\Vaa}(t)}_\infty \norm{\Vbb(t)}_2\\
	 \lle{}& \varepsilon_1^2 2^{-(1-25\delta)m}.
	\end{align*}
	If $j_1 \vee j_2 \le (1-5\delta^2)m$, then integration by parts in $\eta$ shows that these terms are negligible. It remains to treat the case $k_2 \ge -3\delta^2 m$. For the case $j_1 \wedge j_2 \ge (1-5\delta^2)m$, we estimate
	\[
		\norm{\FF P_k I_{\gti m_q,t}(\Vaa(t),\Vbb(t))}_\infty \lle 2^{\cl k}2^{10\cl k_+} \norm{\Vaa(t)}_2\norm{\Vbb(t)}_2.
	\]
	When $j_1 \wedge j_2 \le (1-5\delta^2)m$, we insert a cutoff function $\fai_{>\log_2 \kappa_\theta}(\Theta)$ with $\kappa_\theta=2^{-(1/2-2\delta^2)m}$, and \lemmaref{rotainte} shows that the $>\kappa_\theta$-part is negligible. We next consider the case $j_1 \vee j_2 \ge (1/2)m$. We only deal with the case $j_2 \ge (1/2)m$ as the case $j_1 \ge (1/2)m$ is similar. We estimate
	\begin{align*}
		&\abs{\FF P_kI_{\gti m_q,t}(\Vaa(t),\Vbb(t))} \\
		\lle{} &2^{\delta^2m}\int \fai_{\le \log_2\kappa_\theta}(\Theta)\abs{\mao{\Vaa}(\xi-\eta,t)}\abs{\mao{\Vbb}(\eta,t)} \dif \eta\\
		\lle{} &\varepsilon_1 2^{5\delta m} \int \fai_{\le \log_2 \kappa_\theta}(\Theta) \abs{\mao{\Vbb}(\eta,t)} \dif \eta\\
		\lle{} &\varepsilon_1 2^{5\delta m} \int \fai_{\le \log_2 \kappa_\theta}(\Theta) \abs{\mao{\Vbb}(r\theta,t)} r\dif r \dif \theta\\
		\lle{} &\varepsilon_1 2^{5\delta m+\delta^2 m} \kappa_\theta \norm{\mao{\Vbb}(t)}_{L^2(r\dif r)L^\infty_\theta} 2^{-n_2/2}\\
		\lle{} &\varepsilon_1^2 2^{5\delta m+10\delta^2 m}2^{-(1-21\delta)(1/2)m}\kappa_\theta\\
		\lle{} &\varepsilon_1^2 2^{-(1-25\delta)m}.
	\end{align*}
Finally, if $j_1 \vee j_2 \le (1/2)m$, then we insert a cutoff function $\fai_{>-(1/2-\delta)m}(\Xi)$, and integration by parts in $\eta$ shows that the $>-(1/2-\delta)m$-part is negligible. For the $\le -(1/2-\delta)m$-part, a direct size estimate leads to the desired estimate.
\end{proof}
\begin{lemma}\label{timederi2}
Suppose that $0 \le p \le N_2$. Then
\[
	\ppd t \Omega^p V_{k}(t)=I^p_{k}(t)+II^p_{k}(t)+III^p_{k}(t),
\]
where
\begin{gather*}
I^p_{k}(t)=\sum_{\substack{p_1,k_1,p_2,k_2 \colon \\ p_1+p_2=p, -2m \le k_1,k_2 \le \delta^3 m}} P_k I_{\gti m_q,t,>-3\delta^2 m}(\va(t),\vb(t)),\\
\norm{II^p_{k}(t)}_2 \lle \varepsilon_1^2 2^{-(3/2-12\delta)m},\\
\mao{III^p_k}(\xi,t)=\me^{\mi t\Phi^0} G(\xi,t) \fai_{<-2\delta^2 m}(\abs{\xi}-2\gamma_0),
\end{gather*}
where $\Phi^0=-\Lambda(\xi)+2\Lambda(\xi/2)$ and $G$ satisfies the following estimates:
\begin{gather}
	\abs{\partial^\alpha_\xi G(\xi,t)}\lle \varepsilon_1^2 2^{-(1-5\delta)m}2^{\abs \alpha (1/2+3\delta)m}, \label{eq:2.8.1}\\
	\abs{\ppd t G(\xi,t)}\lle \varepsilon_1^2 2^{-(1-5\delta)m}2^{-(1-25\delta)m}. \label{eq:2.8.2}
\end{gather}

\end{lemma}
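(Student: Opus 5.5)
We start from the Duhamel/profile formulation \eqref{disperformula}. Differentiating in time gives
\[
\ppd t \Omega^p V_k(t)=\sum_{p_1+p_2=p}\sum_{k_1,k_2} P_k I_{\gti m_q,t}(\va(t),\vb(t))+\text{(cubic and remainder terms)}.
\]
The cubic terms $I_{\gti m_c,t}$ and the contribution of $R$ decay like $2^{-(3/2-12\delta)m}$ in $L^2$. To see this, we put one factor in $L^2$ and the others in $L^\infty$ using \cororef{decayestcom}, together with \cororef{decayestlow} when the frequencies are low. These terms go into $II^p_k$. In the quadratic sum we discard the pieces with $k_1\vee k_2\ge\delta^3 m$ (by \eqref{energyest}) and with $k_1\wedge k_2\le -2m$ (by a volume bound); both are also $O(2^{-(3/2-12\delta)m})$. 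On the remaining range we insert the modulation cutoff $\fai_{>-3\delta^2m}(\Phi)+\fai_{\le -3\delta^2 m}(\Phi)$. The first piece is exactly $I^p_k$.

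The work is in the small-modulation piece, $\abs{\Phi}\le 2^{-3\delta^2m}$. We decompose the inputs into $V_{k_i,j_i,n_i}$. If $j_1\vee j_2\ge(1-5\delta^2)m$, we put the localized input in $L^2$ using \eqref{zest} and the other in $L^\infty$ using the decay estimates, which gives an $L^2$ bound of order $2^{-(3/2-12\delta)m}$; this goes into $II$. The symbol of $\fai_{\le-3\delta^2m}(\Phi)$ is controlled by the averaging argument or by \lemmaref{symbound}. If $j_1\vee j_2\le(1-5\delta^2)m$, we insert $\fai_{\ge -\delta^2 m}(\Xi)$. Integration by parts in $\eta$ (and, where needed, in $\Omega_\eta$ via \lemmaref{rotainte}) shows this part is negligible. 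This leaves the space-time resonant region, where $\abs\Phi$ and $\abs\Xi$ are both small. By \lemmaref{resoregion} it forces $(\iota_1,\iota_2)=(+,+)$, $\abs{\xi-\eta}\approx\abs\eta\approx\gamma_0$, and $\eta\approx\xi/2$, with $\abs{\abs\xi-2\gamma_0}\lle 2^{-2\delta^2m}$.

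In that region we write $\Phi(\xi,\eta)=\Phi^0(\xi)+O(\abs{\eta-\xi/2}^2)$ and set
\[
G(\xi,t)=\fai_{<-2\delta^2m}(\abs\xi-2\gamma_0)\int \me^{\mi t(\Phi-\Phi^0)}\gti m_q\,\chi\,\mao{V^{p_1}_{k_1}}(\xi-\eta,t)\mao{V^{p_2}_{k_2}}(\eta,t)\dif\eta,
\]
where $\chi$ localizes to $\abs\Xi\lle 2^{-(1/2-\delta^2)m}$. The complementary part, $2^{-(1/2-\delta^2)m}\lle\abs\Xi\le 2^{-\delta^2 m}$, is handled by repeated integration by parts in $\eta$ with a gain of $2^{-\delta^2 m}$ at each step; this is the main obstacle. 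To prove \eqref{eq:2.8.1}, we use the sup-norm bounds of \lemmaref{vinftyest} on both profiles. The $\eta$-region has area $\lle 2^{-(1-2\delta^2)m}$, which gives $\abs G\lle\epsl_1^2 2^{-(1-5\delta)m}$. Each $\xi$-derivative costs either $t\abs{\nabla_\xi(\Phi-\Phi^0)}\lle 2^m\cdot 2^{-(1/2-\delta^2)m}$ or $2^{j}\le 2^{(1/2+3\delta)m}$; the profiles with $j_i\ge(1/2+3\delta)m$ are moved into $II$ via \eqref{zest}. For \eqref{eq:2.8.2}, differentiating in $t$ either hits the profiles, where we use \lemmaref{timederifli} (the second bound, since $n\le D$ is not guaranteed we use the first and absorb the loss), or brings down $\Phi-\Phi^0=O(2^{-(1-2\delta^2)m})$.

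The hard step is proving that the intermediate-$\Xi$ region and the cutoff commutators are truly negligible. Two things have to be checked here. First, the $n$-localization near $\abs{\xi}=2\gamma_0$ must not destroy the integration-by-parts gains, i.e.\ the derivative losses $2^{j_i}$ and $2^{n_i}$ must be smaller than $2^m\abs\Xi$. Second, the symbol bounds for the cutoffs must be uniform. If this fails in the regime $n_i$ large, we would put those terms into $II$ using the factor $2^{-(1/2-18\delta)n}$ of the $Z$-norm together with the measure bound of \lemmaref{sizeest}(ii).
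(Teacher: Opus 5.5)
Your skeleton matches the paper's. The large-modulation piece is $I^p_k$, large $j$ goes into $II^p_k$ by an $L^2\times L^\infty$ bound, the non-space-resonant part is removed by integration by parts in $\eta$, and $\me^{\mi t\Phi^0}$ is extracted near $\eta=\xi/2$. Two steps, however, do not work as written.

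\textbf{The time-derivative bound \eqref{eq:2.8.2}.} If $\ppd t$ hits a profile and you use the first bound of \lemmaref{timederifli}, you only get $\lle\epsl_1^3 2^{-(1-2\delta^2)m}2^{-(1/2-5\delta)m}$. This misses the required $2^{-(1-5\delta)m}2^{-(1-25\delta)m}$ by roughly $2^{m/2}$, and nothing absorbs that loss. What you missed is that $n\le D$ \emph{is} automatic on the support of $G$. Once $\abs\Phi$ and $\abs\Xi$ are small, \lemmaref{resoregion} places both inputs at $\abs{\xi_i}\approx\gamma_0$, away from $2\gamma_0$. So only the $n_i=0$ pieces contribute, and the second bound of \lemmaref{timederifli} (decay $2^{-(1-25\delta)m}$) applies. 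The paper makes this explicit by restricting to $n_1,n_2<D$.

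\textbf{The intermediate step, which you flag yourself.} It does not close with your thresholds, for three reasons.
\begin{enumerate}
\item At the cut $\abs\Xi\sim 2^{-\delta^2 m}$, \lemmaref{resoregion} is unavailable: it needs $\kappa_1\le 2^{-2\delta^2 m}$, and its bounds $\kappa_1 2^{\delta^2 m}$ become $O(1)$.
\item On $\abs\Xi\simeq 2^{-(1/2-\delta^2)m}$, each integration by parts in $\eta$ gains $2^{(1/2+\delta^2)m}$ but loses $2^{j_i}$. It therefore fails for $(1/2+\delta^2)m\lle j_i\le(1-5\delta^2)m$.
\item You cannot move these $j_i$ into $II$ via \eqref{zest}. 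For large $n_i$ the $Z$-norm only gives $\norm{V_{k_i,j_i,n_i}}_2\lle 2^{-(1/2-2\delta)j_i}$, which is far too weak.
\end{enumerate}

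The missing idea is the paper's split by $n$.
\begin{enumerate}
\item If $n_1\vee n_2\ge D$, put $j_1\vee j_2\ge(1-2\delta)m$ into $II$, using the $2^{-(1-5\delta)m}$ part of \cororef{decaydecom} for the $L^\infty$ factor. Then integrate by parts on $\abs\Xi\gge 2^{-\delta m}$, with net gain $2^{-\delta m}$ per step. The remaining piece vanishes: \lemmaref{resoregion} with $\kappa_1=2^{-\delta m}$ forces the inputs near $\gamma_0$, contradicting $n_i\ge D$.
\item Only when $n_1\vee n_2<D$ do you have $\norm{V_{k,j,n}}_2\lle 2^{-(1-20\delta)j}$. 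That bound lets $j_1\vee j_2\ge m/2$ go into $II$. Integration by parts on $\abs\Xi\gge 2^{-(1/2-2\delta)m}$ then works for $j_i<m/2$.
\end{enumerate}
With these choices the $\xi$-derivative costs of $G$ are $2^m\cdot 2^{-(1/2-2\delta)m}$ and $2^{j_i}<2^{m/2}$. Both lie within the $2^{(1/2+3\delta)m}$ allowed in \eqref{eq:2.8.1}.
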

There are several possible choices of the term $I_{\gti m_q,t,>-3\delta^2 m}(\va(t),\vb(t))$ in the sum defining $I^p_k$; for brevity, we display only one representative term.
\begin{proof}
	We have
	\[
	\ppd t V^p_{k}(t)=\sum _{k_1,k_2} P_k I_{\gti m_q,t}(\va(t),\vb(t))+C.
	\]
	The terms with $k_1 \vee k_2 \ge \delta^3 m$ or $k_1 \wedge k_2 \le -2m$ can be absorbed into $II^p_k$, and so can the cubic terms. Indeed, the terms with $k_1 \vee k_2 \ge \delta^3 m$ or $k_1 \wedge k_2 \le -2m$ can be estimated using
	\[
	\norm{P_kI_{\gti m_q,t}(\va(t),\vb(t))}_2 \lle 2^{\cl k}2^{4\cl k_+}2^{\ul k}\norm{\va(t)}_2\norm{\vb(t)}_2;
	\]
	for the terms with $k_1 \vee k_2 \ge \delta^3 m$, we additionally use \eqref{energyest}. Then we decompose $I_{\gti m_q,t}(\va(t),\vb(t))$ into
	\[
	I_{\gti m_q,t,>-3\delta^2 m}(\va(t),\vb(t)) \text{ and } I_{\gti m_q,t,\le -3\delta^2 m}(\va(t),\vb(t))
	\]
	according to the modulation. For $I_{\gti m_q,t,\le -3\delta^2 m}(\va(t),\vb(t))$, we continue to decompose it into $I_{\gti m_q,t,\le -3\delta^2 m}(\Vaa(t),\Vbb(t))$, and the terms with $j_1 \vee j_2 \ge 4m$ are easy to deal with. We first consider the terms with $n_1 \vee n_2 \ge D$. The terms with $j_1 \vee j_2 \ge (1-2\delta)m$ can be easily estimated by an averaging argument and an $L^2 \times L^\infty$ argument, where we use \eqref{zest} for the $L^2$-part and \cororef{decaydecom} for the $L^\infty$-part. For the terms with $j_1 \vee j_2 \le (1-2\delta)m$, we insert a cutoff function $\fai_{>-\delta m}(\Xi)$, and integration by parts in $\eta$ shows that the $>-\delta m$-part is negligible. For the $\le -\delta m$-part, \lemmaref{resoregion} implies that $\abs \xi$, $\abs{\xi-\eta}$, and $\abs \eta$ are close to $2\gamma_0$, $\gamma_0$, and $\gamma_0$. Hence the $\le -\delta m$-part vanishes. It remains to handle the terms with $n_1 \vee n_2 < D$. As in the treatment of the terms with $n_1 \vee n_2 \ge D$, an averaging argument and an $L^2 \times L^\infty$ argument show that the terms with $j_1 \vee j_2 \ge (1/2)m$ can be absorbed into $II^p_k$. For the terms with $j_1 \vee j_2 < (1/2)m$, we insert a cutoff function $\fai_{>-(1/2-2\delta)m}(\Xi)$, and integration by parts in $\eta$ implies that the $>-(1/2-2\delta)m$-part is negligible.
	
	The Fourier transform of the remaining part (which is denoted by $\mao{III^p_{k}}$) is
	\[
	\sum_{\hti B}\psi_k(\xi)\int \me^{\mi t \Phi} \fai_{\le -3\delta^2 m}(\Phi)\fai_{\le -(1/2-2\delta)m}(\Xi) \gti m_q \mao{\Vaa}(\xi-\eta,t)\mao{\Vbb}(\eta,t) \dif \eta,
	\]
	where $\hti B=\set{(k_1,k_2,j_1,j_2,n_1,n_2) \colon -2m<k_1,k_2<\delta^3 m,\quad j_1,j_2<(1/2)m,\quad n_1,n_2<D}$.
	Decompose $\Phi$ into $\Phi^0$ and $\cl{\Phi}$ with $\Phi^0=-\Lambda(\xi)+2\Lambda(\xi/2)$. Set
	\[
	G(\xi,t)=\psi_k(\xi)\sum_{\hti B} \int \me^{\mi t \cl{\Phi}} \fai_{\le -3\delta^2 m}(\Phi)\fai_{\le -(1/2-2\delta)m}(\Xi) \gti m_q \mao{\Vaa}(\xi-\eta,t)\mao{\Vbb}(\eta,t) \dif \eta.
	\]
 We next verify that $G$ satisfies \eqref{eq:2.8.1}-\eqref{eq:2.8.2}. A simple calculation shows that
	\begin{gather*}
		\abs{\eta-\xi/2} \lle 2^{-(1/2-2\delta)m},\\
		\abs{\cl \Phi} \lle 2^{-(1-4\delta)m},\\
		\abs{\nabla_\xi \cl \Phi} \lle 2^{-(1/2-2\delta)m}. 
	\end{gather*}
	Hence
	\[
	\abs{\partial^\alpha G(\xi,t)} \lle \epsl_1^2 2^{-(1-5\delta)m}2^{\abs \alpha (1/2+3\delta)m}.
	\]
	Moreover, it follows from \lemmaref{timederifli} that
	\[
	\abs{\ppd t G(\xi,t)} \lle \epsl_1^2 2^{-(1-5\delta)m}2^{-(1-25\delta)m}.
	\]
\end{proof}

\section{Energy Estimates}
In this section, we derive the energy estimates, handling $P_{>100} U$ and $P_{\le 100} U$ differently. For $P_{>100}U$, we estimate $P_{>100}\hti U$ via the energy estimate method (by estimating $\re (\hti W,\hti W)_{t}$). For $P_{\le 100}U$, since there is no derivative loss, we instead use the dispersive formulation to obtain the required bounds. The first three subsections are devoted to the estimates of $P_{>100}U$. By \eqref{energymain1}, it suffices to show that
\[
\abs{\gti T_{\wan{\gti m}}(G,W^1,W^2)} \lle \epsl_1^3 \spa{t}^{2\delta^4}, \quad \wan{\gti m} \in\{\gti d,\gti m,\gti m'\},
\]
where $G \in \gti G^{\iota}$, $W^1 \in \gti W^{\iota_1}$, and $W^2 \in \cl{\gti W^{0}}$ (recall the definitions in \eqref{energymain1} and the notation in Section~\ref{multilinear}). In what follows, we use $\gti T_{\gti d | \gti m|\gti m'}$ to denote any one of $\gti T_{\gti d}$, $\gti T_{\gti m}$, and $\gti T_{\gti m'}$; analogous conventions will be used for similar notation throughout the paper. Moreover, we denote the profiles of $G$, $W^1$, and $W^2$ by $g$, $V^1$, and $V^2$, respectively; for example, $G=E^\iota g$. We decompose $\gti T_{\gti d |\gti m | \gti m'}(G,W^1,W^2)$ into $\gti T_{\gti d |\gti m | \gti m',m}(G,W^1,W^2)$, and the cases $0 \le m \le D^4$ and $m=L+1$ are easy to deal with. We classify frequencies into ``good frequencies'' and ``bad frequencies''. Define the bad frequency cutoff:
\[
\chi_{\text{ba}}(\xi)=\fai(2^{10000}(\abs{\xi}-2\gamma_0))+\fai(2^{10000}\abs{\xi}),
\]
and set
\[
\chi_{\text{go}}(\xi)=1-\chi_{\text{ba}}(\xi).
\]
Inserting cutoff functions $\chi_{\text{ba}}(\xi-\eta)$ and $\chi_{\text{go}}(\xi-\eta)$ into $\gti T_{\gti d |\gti m | \gti m',m}(G,W^1,W^2)$, we obtain $\gti T^{\text{ba}|\text{go}}_{\gti d |\gti m | \gti m',m}(G,W^1,W^2)$. Note that if the frequency of $G$ is greater than $2^{-D_1}$, then we can treat $\gti m$ and $\gti m'$ in the same way by assuming that $\norm{\gti m}_{S_{k,k_1,k_2}} \lle (1+2^{6(k_1 \wedge k_2)})/(1+2^{k_1 \vee k_2})$. To deal with the case when the input frequencies are ``bad frequencies'' and the modulation is small, we will use the following $L^2$ lemma.
\begin{lemma}\label{l2lemma}
Suppose that $t \simeq 2^m \ge 2^{D^4}$, $l-k+k_1/3 \ge -(1-\delta)m$, and $k \ge 50$. Let
\[
	Tg(\xi)=\int \me^{\mi t\Phi_{\iota_1\iota_2}(\xi,\eta)} \fai_l(\Phi_{\iota_1\iota_2}) \chi_{\text{ba}}(\xi-\eta) a(\xi,\eta) \abs{\xi-\eta}\mao{f_{k_1}}(\xi-\eta) \psi_k(\eta)g(\eta) \dif \eta,
	\]
	where $(\iota_1,\iota_2) \in \{+,-\}^2$, $f=\Omega^{[0,N_1/2]}V$, and
	\[
	\abs{\partial^\alpha a(\xi,\eta)} \lle \abs{\xi-\eta}^{-\abs \alpha},
	\]
	for all multi-indices \(\alpha\).
	Then
	\[
	\norm{Tg}_2 \lle \epsl_1 2^{15\delta m}(2^{3/2(l-k+k_1/3)}+2^{1/2(l-k+k_1/3)-2m/3}) \norm{g}_2.
		\] 
\end{lemma}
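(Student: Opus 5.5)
We will prove the bound by a $TT^*$-type Schur test, splitting according to which component of $\chi_{\text{ba}}$ is active. Write $\xi_1=\xi-\eta$, so $|\xi_1|\lesssim 2^{k_1}$ and either $\xi_1$ is near the origin ($|\xi_1|\le 2^{-9999}$) or $\bigl||\xi_1|-2\gamma_0\bigr|\le 2^{-9999}$. Since $k\ge 50$, in both cases $|\xi|\simeq|\eta|\simeq 2^k$ and $|\xi_1|\ll|\eta|$.

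The first step is to reduce to a kernel bound. Writing $Tg(\xi)=\int K(\xi,\eta)g(\eta)\,\dif\eta$, we have trivially
\[
\norm{T}_{L^2\to L^2}\lesssim \Bigl(\sup_\xi\int|K|\,\dif\eta\Bigr)^{1/2}\Bigl(\sup_\eta\int|K|\,\dif\xi\Bigr)^{1/2}.
\]
The kernel is bounded by $|\xi_1||\widehat{f_{k_1}}(\xi_1)|\fai_l(\Phi)$. For $\widehat{f_{k_1}}$ we use the $L^\infty$ bound in \lemmaref{vinftyest}, with the loss $\spa t^{\delta^4}$ and the $N_1/2\le N_3$ rotational derivatives permitted; this gives $|\xi_1||\widehat{f}|\lesssim\epsl_1 2^{\delta m}$ up to $2^{-\delta k_1}$ type factors, which produce the $2^{15\delta m}$. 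We then need the measure of $\{\xi_1:|\xi_1|\sim2^{k_1},\ |\Phi(\eta+\xi_1,\eta)|\le 2^l\}$ for fixed $\eta$, and similarly for fixed $\xi$.

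For the size estimate, note that $\Phi$ is a function of $\xi_1$ with gradient of size about $\lambda'(|\eta|)\simeq 2^{k}$ in the direction $\mao\eta$, up to the $\iota_1$-term. In polar coordinates $\xi_1=r\theta$, the condition $|\Phi|\le 2^l$ confines $\theta$ to an arc governed by $\partial_\theta\Phi\approx 2^{k}r\sin\angle$, while the second derivative is of size $2^k r$ at the degenerate angle. This mirrors \lemmaref{sizeangle}(ii) and \lemmaref{second}: the angular measure is $\lesssim \min(2^{l-k}r^{-1},(2^{l-k}r^{-1})^{1/2})$. Integrating over $r\lesssim 2^{k_1}$ with weight $r\cdot r$ yields contributions of order $2^{k_1}\cdot 2^{l-k}$ in the transversal regime and of order $2^{3/2(l-k)}2^{k_1/2}$ near tangency. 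This is the source of the exponent combination $l-k+k_1/3$ and the power $3/2$.

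The factor $2^{1/2(l-k+k_1/3)-2m/3}$ handles the part where the pointwise bound on $\widehat f$ is too weak, namely when $f$ has large spatial localization $j_1$. We decompose $f_{k_1}=\sum f_{k_1,j_1,n_1}$. When $j_1\le(1-\delta)m$ (roughly), we integrate by parts in $\eta$ or $\xi_1$ using $|\nabla_{\xi_1}\Phi|\gtrsim 2^k$ (no space resonance since $k\ge50$ and $\xi_1$ is small or near $2\gamma_0$), which makes those pieces negligible. For large $j_1$, we use the $L^2(r\,\dif r)L^\infty_\theta$ bound from \lemmaref{fourierl2r} with Cauchy--Schwarz in $r$ instead of $L^\infty$; this gains $2^{-j_1/2}\lesssim 2^{-m/2}$ and leaves a square-root size factor. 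Interpolating the two regimes at the threshold $2^{l}\approx 2^{k-k_1/3}2^{-2m}$ gives the second term. The hypothesis $l-k+k_1/3\ge-(1-\delta)m$ guarantees that the averaging/integration by parts is legitimate in the $\eta$-direction.

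The main obstacle is the tangential/degenerate angular region in the near-origin case $\xi_1\to0$, where $\nabla_{\xi_1}\Phi$ nearly vanishes in the angular direction. There the measure estimate must be done carefully, splitting $r$ dyadically and using \lemmaref{second}, so that the $k_1/3$ exponent comes out correctly rather than with a loss.
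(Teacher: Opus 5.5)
There is a genuine gap. Nothing in your argument exploits the oscillation $\me^{\mi t\Phi}$ against the arbitrary input $g$, and without that the bound cannot be reached.

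\textbf{What your argument actually gives.} What you write down is Schur's test for $T$ with its kernel replaced by $\abs{K}$, not a $TT^*$ argument. Since $\abs{\nabla_\xi\Phi}\simeq\abs{\nabla_\eta\Phi}\simeq 2^k$, for fixed $\eta$ (or fixed $\xi$) the set $\{\abs{\Phi}\le 2^l,\ \abs{\xi-\eta}\lesssim 2^{k_1}\}$ is a strip of width $2^{l-k}$ in a region of diameter $\lesssim 2^{k_1}$. So Schur gives at best $\norm{T}\lesssim 2^{l-k}2^{k_1}\sup\abs{\xi_1}\abs{\mao{f}(\xi_1)}$, which is your own ``transversal'' count.

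On the component $\abs{\xi-\eta}\approx 2\gamma_0$, where $k_1\simeq0$ and $\abs{\mao f}$ may be of size $\epsl_1$, this is $2^{l-k}$. Testing the positive kernel $\abs{K}$ on the indicator of a unit ball $B(\eta_0,1)$ shows that nothing built on $\abs{K}$ alone does better. But $2^{l-k}$ exceeds $2^{\frac32(l-k)}+2^{\frac12(l-k)-\frac{2m}{3}}$ for $l-k$ in most of $(-m,0)$. At the endpoint $l-k\approx-(1-\delta)m$ you get $2^{-(1-\delta)m}$ instead of roughly $2^{-7m/6}$. The near-tangency count $2^{\frac32(l-k)}2^{k_1/2}$ you assert does not come out of any measure estimate.

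\textbf{The small-$j_1$ step is not available.} Integrating by parts in $\eta$, or in $\xi_1=\xi-\eta$ at fixed $\xi$, differentiates $g$, which is only in $L^2$. Moreover, the large gradient $\abs{\nabla_\eta\Phi}\simeq2^k$ comes from $\iota_2\nabla\Lambda(\eta)$. It is removed by the unitary change $g\mapsto \me^{-\mi t\iota_2\Lambda(\eta)}g$, so the absence of space resonances says nothing about $\norm{T}_{L^2\to L^2}$. The small-$j_1$ pieces are the main term, not negligible ones.

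\textbf{The missing idea: a $TT^*$ argument driven by curvature.}

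First, the paper removes $j_1\ge 2m/3$ by Schur's test. For $k_1\ll0$ it uses $\norm{f_{k_1,j_1,n_1}}_2$ and Cauchy--Schwarz; for $k_1\simeq0$ it uses the $L^2(r\dif r)L^\infty_\theta$ norm with \lemmaref{sizeangle}(ii). This is the source of the term $2^{\frac12(l-k+k_1/3)-2m/3}$, and it is the part of your plan that is roughly right, although the threshold and the gain differ from yours.

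For $j_1\le 2m/3$, the operator is local since $\abs{\xi-\eta}\lesssim 2^{k_1}$, so one localizes $\xi$ to cubes of side $2^{k_1-10}$. One then studies the kernel
$K_v(\xi,\xi')=\int\me^{\mi t(\Phi(\xi,\eta)-\Phi(\xi',\eta))}\cdots\dif\eta$
of $T_vT_v^*$. In this kernel $g$ no longer appears, so one may integrate by parts in $\eta$.

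The phase gradient is now $\nabla_\eta(\Phi(\xi,\eta)-\Phi(\xi',\eta))$, which is only of size about $\abs{\xi-\xi'}$. Write $\xi'-\xi=x_1\mao{\nabla_\xi\Phi}+x_2\mao{\nabla_\xi\Phi}^\perp$. The modulation cutoffs force $\abs{x_1}\ll\abs{\xi'-\xi}$. Then, along $V=(\mao{\nabla_\eta\Phi})^\perp$,
$V\cdot\nabla_\eta(\Phi(\xi',\eta)-\Phi(\xi,\eta))=x_2\mao{\Upsilon}(\xi,\eta)+O(\abs{x_1}+2^{-k_1}\abs{\xi'-\xi}^2)$,
with $\abs{\mao{\Upsilon}}\gtrsim1$ by \lemmaref{upsilon}. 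The lemma applies because $k\ge50$ and $\abs{\xi-\eta}\le2$.

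Repeated integration by parts gains $2^m\abs{\xi-\xi'}$ per step against losses $2^{-k_1}$, $2^{j_1}$ and $2^{-l}\abs{\xi-\xi'}$; the lower bound on $l$ guarantees $2^{m+l}\gg1$. This makes $K_v$ negligible for $\abs{\xi-\xi'}\ge L=2^{\delta m}(2^{l-k}+2^{j_1-m})$. For $\abs{\xi-\xi'}\le L$, the trivial bound carries $2^{l-k}L$ in place of $2^{l-k}2^{k_1}$.

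That localization is exactly what produces $\norm{T_v}^2\lesssim 2^{3(l-k)}2^{k_1}+\cdots$, i.e.\ the exponent $3/2$. Some step of this kind has to replace your integration by parts.
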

We will prove it in Section~\ref{sec:l2lemma}. The intermediate- and low-frequency part $P_{\le 100}U$ will be handled in Section~\ref{interlowfre}.

\subsection{Good Frequencies}\label{sec:3.1gofre}
In this subsection, we address $\gti T^{\text{go}}_{\gti d |\gti m | \gti m',m}(G,W^1,W^2)$ (we will omit $\gti m'$ below). We call them ``good frequencies'' because they exhibit optimal decay. We decompose $\gti T^{\text{go}}_{\gti d |\gti m,m}(G,W^1,W^2)$ into $\gti T^{\text{go}}_{\gti d |\gti m,m}(G_{k},W^1_{k_1},W^2_{k_2})$ where $k_2 \gge 0$. We may assume that $k \le \delta^3 m$ (see the proof of \lemmaref{timederi2}). If the multiplier is $\gti m$, then the factor $2^{-\cl{k}_+}$ in $\gti m$ allows us to exclude the cases $\cl k \ge 10 m$ and $\ul k \le -10m$, so the summations over $k$, $k_1$, and $k_2$ are harmless except in the absence of extra decay. If the multiplier is $\gti d$, then, since $k_1 \simeq k_2$, the summation over $k_1$ and $k_2$ can be handled by the Cauchy--Schwarz inequality. We insert localizations of the modulation $\Phi$ with parameter $l \ge l_0=\flo{\cl k-100 \delta m}$ into it. In view of \lemmaref{symbound}(i), we have $\norm{\fai^{l_0}_l(\Phi(\xi,\eta))}_{S_{k_2,k,k_1}} \lle 2^{500\delta m}$.

We next deal with the case $l>l_0$. Integration by parts in time gives
\begin{align*}
&\gti T^{\text{go}}_{\gti d |\gti m,m,l}(G_{k},W^1_{k_1},W^2_{k_2})=\gti R^{\text{go},\prime}_{\gti d |\gti m,m,l}(G_{k},W^1_{k_1},W^2_{k_2})+\gti T^{\text{go},\prime}_{\gti d |\gti m,m,l}((\ppd s-\mi \iota\Lambda(D))G_{k},W^1_{k_1},W^2_{k_2})\\
&+\gti T^{\text{go},\prime}_{\gti d |\gti m,m,l}(G_{k},(\ppd s-\mi \iota_1\Lambda(D))W^1_{k_1},W^2_{k_2})+\gti T^{\text{go},\prime}_{\gti d |\gti m,m,l}(G_{k},W^1_{k_1},(\ppd s+\mi\Lambda(D))W^2_{k_2}).
\end{align*}
The terms $\gti R^{\text{go},\prime}_{\gti d |\gti m,m,l}(G_{k},W^1_{k_1},W^2_{k_2})$ and $\gti T^{\text{go},\prime}_{\gti d |\gti m,m,l}((\ppd s-\mi \iota\Lambda(D))G_{k},W^1_{k_1},W^2_{k_2})$ are easy to handle. For $\gti T^{\text{go},\prime}_{\gti d |\gti m,m,l}(G_{k},(\ppd s-\mi \iota_1\Lambda(D))W^1_{k_1},W^2_{k_2})$ and $\gti T^{\text{go},\prime}_{\gti d |\gti m,m,l}(G_{k},W^1_{k_1},(\ppd s+\mi\Lambda(D))W^2_{k_2})$, the difficulty is the loss of derivatives from $T_{\Sigma_{\ge 2}}$-part and $T_{R \psi \cdot \mi \xi}$-part in the time derivatives (see \lemmaref{timederien}). If the multiplier is $\gti m$, or if the multiplier is $\gti d$ and $\iota_1=-$ (in which case $2^l \gge 2^{2k_1}$), then this difficulty can be handled easily. If the multiplier is $\gti d$ and $\iota_1=+$, then we use the symmetrization argument (see \cite[Lemma~5.3]{MR3784694}). We treat only the $T_{\Sigma_{\ge 2}}$-part. We write $\wan{\psi}_l(x)$ for $\psi(2^{-l}x)/(2^{-l}x)$.
We have
\begin{align*}
	&\gti T^{\text{go},\prime}_{\gti d,m,l}(G_{k},P_{k_1}T_{\mi \Sigma_{\ge 2}}W^1,W^2_{k_2})-\gti T^{\text{go},\prime}_{\gti d,m,l}(G_{k},W^1_{k_1},P_{k_2}T_{\mi \Sigma_{\ge 2}}W^2)\\
	={}&-2^{-l}\int q_m(s)\gti d(\xi,\eta) \wan{\psi}_l(\Phi) {\varphi_{\leq -10}}\Brac{\frac{\abs{\sigma}}{\abs{2\eta-\sigma}}}\psi_{k_1}(\eta)\psi_{k_2}(-\xi)\mao{G_{k}}(\xi-\eta,s) \mao{\Sigma_{\ge 2}}(\sigma,\eta-\sigma/2)\\
	&\times \mao{W^1_{k_1}}(\eta-\sigma,s) \mao{W^2_{k_2}}(-\xi,s) \dif \xi \dif \eta \dif \sigma \dif s\\
	&+2^{-l}\int  q_m(s)\gti d(\xi,\eta) \wan{\psi}_l(\Phi) {\varphi_{\leq -10}}\Brac{\frac{\abs{\sigma}}{\abs{-2\xi-\sigma}}}\psi_{k_1}(\eta)\psi_{k_2}(-\xi)\mao{G_{k}}(\xi-\eta,s)\\ \\
	&\times \mao{\Sigma_{\ge 2}}(\sigma,-\xi-\sigma/2) \mao{W^1_{k_1}}(\eta,s) \mao{W^2_{k_2}}(-\xi-\sigma,s)  \dif \xi \dif \eta \dif \sigma \dif s\\
	={}&-2^{-l}\int q_m(s)\\
	&\Bigl[\gti d(\xi,\eta+\sigma) \wan{\psi}_l(\Phi(\xi,\eta+\sigma)) {\varphi_{\leq -10}}\Brac{\frac{\abs{\sigma}}{\abs{2\eta+\sigma}}}\psi_{k_1}(\eta+\sigma)\psi_{k_2}(-\xi) \mao{\Sigma_{\ge 2}}(\sigma,\eta+\sigma/2)\\
	&-\gti d(\xi-\sigma,\eta) \wan{\psi}_l(\Phi(\xi-\sigma,\eta)) {\varphi_{\leq -10}}\Brac{\frac{\abs{\sigma}}{\abs{-2\xi+\sigma}}} \psi_{k_1}(\eta)\psi_{k_2}(-\xi+\sigma)\mao{\Sigma_{\ge 2}} (\sigma,-\xi+\sigma/2)\Bigr]\\
	&\times\mao{G_{k}}(\xi-\eta-\sigma,s)\mao{W^1_{k_1}}(\eta,s) \mao{W^2_{k_2}}(-\xi,s)	  \dif \xi \dif \eta \dif \sigma \dif s.
\end{align*}
We decompose
\begin{align*}
	&\gti d(\xi,\eta+\sigma) \wan{\psi}_l(\Phi(\xi,\eta+\sigma)) {\varphi_{\leq -10}}\Brac{\frac{\abs{\sigma}}{\abs{2\eta+\sigma}}}\psi_{k_1}(\eta+\sigma)\psi_{k_2}(-\xi) \mao{\Sigma_{\ge 2}}(\sigma,\eta+\sigma/2)\\
	&-\gti d(\xi-\sigma,\eta) \wan{\psi}_l(\Phi(\xi-\sigma,\eta)) {\varphi_{\leq -10}}\Brac{\frac{\abs{\sigma}}{\abs{-2\xi+\sigma}}} \psi_{k_1}(\eta)\psi_{k_2}(-\xi+\sigma)\mao{\Sigma_{\ge 2}} (\sigma,-\xi+\sigma/2)\\
	 &=\underbrace{(\gti m_0(\xi,\eta+\sigma)-\gti m_0(\xi-\sigma,\eta))\wan{\psi}_l(\Phi(\xi,\eta+\sigma)) {\varphi_{\leq -10}}\Brac{\frac{\abs{\sigma}}{\abs{2\eta+\sigma}}} \mao{\Sigma_{\ge 2}}(\sigma,\eta+\sigma/2)}_{(1)}\\
	&+\underbrace{\gti m_0(\xi-\sigma,\eta)(\wan{\psi}_l(\Phi(\xi,\eta+\sigma))-\wan{\psi}_l(\Phi(\xi-\sigma,\eta))){\varphi_{\leq -10}}\Brac{\frac{\abs{\sigma}}{\abs{2\eta+\sigma}}} \mao{\Sigma_{\ge 2}}(\sigma,\eta+\sigma/2)}_{(2)}\\
	&+\underbrace{\gti m_0(\xi-\sigma,\eta)\wan{\psi}_l(\Phi(\xi-\sigma,\eta))\Brac{\varphi_{\leq -10}\Brac{\frac{\abs{\sigma}}{\abs{2\eta+\sigma}}}-{\varphi_{\leq -10}}\Brac{\frac{\abs{\sigma}}{\abs{-2\xi+\sigma}}}}}_{(3)}\\
	&\underbrace{\times\mao{\Sigma_{\ge 2}}(\sigma,\eta+\sigma/2)}_{(3)}\\
	&+\underbrace{\gti m_0(\xi-\sigma,\eta)\wan{\psi}_l(\Phi(\xi-\sigma,\eta)){\varphi_{\leq -10}}\Brac{\frac{\abs{\sigma}}{\abs{-2\xi+\sigma}}}(\mao{\Sigma_{\ge 2}}(\sigma,\eta+\sigma/2)-\mao{\Sigma_{\ge 2}} (\sigma,-\xi+\sigma/2))}_{(4)},
\end{align*}
where $\gti m_0(\xi,\eta)=\gti d(\xi,\eta)\psi_{k_1}(\eta)\psi_{k_2}(-\xi)$. Terms $(1)$ and $(2)$ provide a factor $\sigma$, and terms $(3)$ and $(4)$ provide a factor $\xi-\eta-\sigma$. It is clear that terms $(1)$, $(3)$, and $(4)$ provide a gain $2^{-k_1}$. For $(2)$, note that $\Phi(\xi,\eta)=-\Lambda(\xi)+\Lambda(\eta)+\iota\Lambda(\xi-\eta)=(\xi-\eta) \cdot G(\xi,\eta)+\iota\Lambda(\xi-\eta)$ where $\abs{\ppd {\xi,\eta}^{\alpha} G} \lle 2^{-\abs \alpha k_1+k_1}$. Hence term $(2)$ provides a gain $2^k2^{-l}$. The desired estimate then follows from \lemmaref{symbound1}(i) and \lemmaref{pseudomultithm}.

We now deal with the case $l=l_0$. We first exclude the case $\cl k \ge 50\delta m$. Instead of the symbol bounds, we use the averaging argument to deal with this case (see Section~\ref{techlemma} for a discussion of the averaging argument). If the multiplier is $\gti m$, then the desired bound follows immediately since $\norm{\gti m}_{S_{k_2,k,k_1}} \lle 2^{6k_+}2^{-\cl k_+}$. If the multiplier is $\gti d$, then the condition $l=l_0$ implies that $\iota_1=+$. We next exploit the depletion factors. Noting that
\begin{equation}\label{depletion}
(\xi-\eta) \cdot (\xi+\eta)=\abs{\xi}^2-\abs{\eta}^2=\abs{\xi}^2-\Lambda(\xi)-(\abs{\eta}^2-\Lambda(\eta))-\Phi+\iota\Lambda(\xi-\eta),
\end{equation}
we see that the depletion factor $\mao{\xi-\eta} \cdot \mao{\xi+\eta}$ provides extra decay.

We now assume that $\cl k \le 50\delta m$. Then we have $l_0 \ll 0$, and hence $\cl k +\ul k \gge 0$. It follows that $\norm{\gti m}_{S_{k_2,k,k_1}} \lle 2^{6k_+}2^{\wan{k_1}}$. We now decompose $G$ into $G_{k,j,n}$ where $G_{k,j,n}=E^\iota g_{k,j,n}$. In view of \lemmaref{decaylemma}, we can obtain the desired estimates if $j \le (1-\delta)m$. If $j \ge (1-\delta)m$, then Schur's test and \lemmaref{sizeangle}(i) yield
\begin{equation}\label{eq:3.2.20}
	\abs{\bbT^{\text{go}}_{\gti m,l_0}(G_{k,j,n}(s),W^1_{k_1}(s),W^2_{k_2}(s))} \lle 2^{\delta^2 m}2^{l_0/2}\norm{\mao{g_{k,j,n}}(s)}_{L^2(r\dif r)L^\infty_\theta}\norm{W^1_{k_1}(s)}_2\norm{W^2_{k_2}(s)}_2.
\end{equation}

\subsection{Bad Frequencies}\label{sec:badfre}
In this subsection, we estimate $\gti T^{\text{ba}}_{\gti d |\gti m | \gti m',m}(G,W^1,W^2)$. We will deal with $\gti d$, $\gti m$ and $\gti m'$ in different ways due to the low-frequency components of the bad frequency.

\noindent\underline{1. The multipliers $\gti d$ and $\gti m$.}\\
We decompose $\TTT^{\text{ba}}_{\gti d | \gti m,m}(G,W^1,W^2)$ into $\TTT^{\text{ba}}_{\gti d | \gti m,m,l}(G_k,W^1_{k_1},W^2_{k_2})$ where $l \ge l_0=\flo{\cl k-k/3-2m/3-m/1000}$. We can assume that $k \ge -2m$. The case $l=l_0$ follows from \lemmaref{l2lemma}. Our goal is to treat the case $l>l_0$. The difficult cases typically occur when $k_1$ is low. If the multiplier is $\gti d$ and $\iota_1=-$, then the bound $l \gge 2k_1$ allows us to estimate it easily as there is no issue of loss of derivatives after performing integration by parts in time. We first treat the case \ul{$l \ge -m/100$}. If the multiplier is $\gti m$, then we can first assume that $\cl k \le 100\delta m$ due to the factor $2^{-\cl k_+}$ in $\gti m$. Integration by parts in time then leads to the desired bound as a consequence of \lemmaref{symbound}(iii). Hence we focus on the case that the multiplier is $\gti d$ and $\iota_1=+$. Because of the depletion factors, we can assume that $l \ge \cl k+\ul k-m/50$. \lemmaref{symbound}(ii) then yields $\norm{\fai^{l_0}_l(\Phi(\xi,\eta))}_{S_{k_2,k,k_1}} \lle 2^{m/10}$. Integration by parts in time gives
\begin{align*}
	&\gti T^{\text{ba}}_{\gti d,m,l}(G_{k},W^1_{k_1},W^2_{k_2})=\gti R^{\text{ba},\prime}_{\gti d,m,l}(G_{k},W^1_{k_1},W^2_{k_2})+\gti T^{\text{ba},\prime}_{\gti d,m,l}((\ppd s-\mi \iota\Lambda(D))G_{k},W^1_{k_1},W^2_{k_2})\\
&+\gti T^{\text{ba},\prime}_{\gti d,m,l}(G_{k},(\ppd s-\mi \iota_1 \Lambda(D))W^1_{k_1},W^2_{k_2})+\gti T^{\text{ba},\prime}_{\gti d,m,l}(G_{k},W^1_{k_1},(\ppd s+\mi\Lambda(D))W^2_{k_2}).
\end{align*}
The terms $\gti R^{\text{ba},\prime}_{\gti d,m,l}(G_{k},W^1_{k_1},W^2_{k_2})$ and $\gti T^{\text{ba},\prime}_{\gti d,m}((\ppd s-\mi \iota\Lambda(D))G_{k},W^1_{k_1},W^2_{k_2})$ can be estimated easily. For $\gti T^{\text{ba},\prime}_{\gti d,m,l}(G_{k},(\ppd s-\mi \iota_1 \Lambda(D))W^1_{k_1},W^2_{k_2})$ and $\gti T^{\text{ba},\prime}_{\gti d,m,l}(G_{k},W^1_{k_1},(\ppd s+\mi\Lambda(D))W^2_{k_2})$, we can use the symmetrization argument as in Section~\ref{sec:3.1gofre} to deal with them; the worst term is the $T_{R\psi \cdot \mi \xi}$-part, where the multipliers give gains $2^{-\cl k}$ or $2^k2^{-l}$.

We can now assume that \ul{$l \le -m/100$}. We can then further assume that $0 \lle \ul k+\cl k \lle 100\delta m$ and $\cl k \lle m/2-m/200$.
Integration by parts in time yields
\begin{align*}
&\gti T^{\text{ba}}_{\gti d |\gti m,m,l}(G_{k},W^1_{k_1},W^2_{k_2})=\gti R^{\text{ba},\prime}_{\gti d |\gti m,m,l}(G_{k},W^1_{k_1},W^2_{k_2})+\gti T^{\text{ba},\prime}_{\gti d |\gti m,m,l}((\ppd s-\mi \iota\Lambda(D))G_{k},W^1_{k_1},W^2_{k_2})\\
&+\gti T^{\text{ba},\prime}_{\gti d |\gti m,m,l}(G_{k},(\ppd s-\mi \iota_1 \Lambda(D))W^1_{k_1},W^2_{k_2})+\gti T^{\text{ba},\prime}_{\gti d |\gti m,m,l}(G_{k},W^1_{k_1},(\ppd s+\mi\Lambda(D))W^2_{k_2}).	
\end{align*}
The term $\gti R^{\text{ba},\prime}_{\gti d |\gti m,m,l}(G_{k},W^1_{k_1},W^2_{k_2})$ is straightforward to estimate. The two terms \\ $\gti T^{\text{ba},\prime}_{\gti d |\gti m,m,l}(G_{k},(\ppd s-\mi \iota_1\Lambda(D))W^1_{k_1},W^2_{k_2})$ and $\gti T^{\text{ba},\prime}_{\gti d |\gti m,m,l}(G_{k},W^1_{k_1},(\ppd s+\mi\Lambda(D))W^2_{k_2})$ are similar, so we treat only $\gti T^{\text{ba},\prime}_{\gti d |\gti m,m,l}(G_{k},(\ppd s-\mi \iota_1\Lambda(D))W^1_{k_1},W^2_{k_2})$. In view of \lemmaref{timederien}, the $R$-part and the $T_{\mi \Sigma_{\ge 1}}$-part are easy to handle. The $T_{R\psi \cdot \mi \xi}$-part and the $T_{\gti m'}$-part are similar, so we only deal with the former one. We write it as $2^{-l}\gti T^{\text{ba}}_{m,l}(G_{k},U^{\iota_3},W^1_{k_1},W^2_{k_2})$. Then we decompose it into $\gti T^{\text{ba}}_{m,l}(G_{k,j,n},U^{\iota_3}_{k_3,j_3,n_3},W^1_{k_1},W^2_{k_2})$, and we can assume that $-2m \le  k_3 \le \delta^3 m$ and $j , j_3 \le 10m$. For the case $k_3 \gge k$, we have $k_3 \gge k \gge -m/2+m/200$. It then follows from an averaging argument, \cororef{decayestcom}, and \eqref{energyest} that
\begin{align*}
	&2^{-l}\abs{\gti T^{\text{ba}}_{m,l}(G_{k,j,n},U^{\iota_3}_{k_3,j_3,n_3},W^1_{k_1},W^2_{k_2})}\\
	\lle{} &2^{\delta^2 m}2^m2^{-k_1}2^{k/3}2^{2m/3+m/1000}2^k2^{k_1} \sup_{s \simeq 2^m}\norm{E_{[-2^{(1-\delta^2)m},2^{(1-\delta^2)m}]}G_{k,j,n}(s)}_\infty \norm{U^{\iota_3}_{k_3,j_3,n_3}(s)}_\infty \\
&\times \norm{W^1_{k_1}(s)}_2 \norm{W^2_{k_2}(s)}_2+\epsl_1^4 2^{-10m}\\
	\lle{} &\epsl_1^4 2^{2\delta^2 m}2^m2^{k_3/3}2^{2m/3+m/1000}2^{-(1-21\delta)m}2^{-(1-21\delta)m}2^{-k_3} +\epsl_1^4 2^{-10m}\lle{} \epsl_1^4 2^{-\delta^4 m}.
\end{align*}
When $k_3 \ll k$, we aim to exploit the absence of space resonances between $G_{k,j,n}$ and $U^{\iota_3}_{k_3,j_3,n_3}$. The case $j_3 \ge (1-\delta^2)m+k$ can be handled in a manner similar to the case $k_3 \gge k$. For the case $j \ge (1-\delta^2)m+k$, when $k \ll 0$, using Schur's test together with $\abs{\nabla_\eta \Phi}, \abs{\nabla_{\xi} \Phi} \gge 2^{k_1}$, and applying \eqref{zest}, \cororef{decayestlow}, and \eqref{energyest}, we estimate
\begin{align*}
&\abs{\gti T^{\text{ba}}_{m,l}(G_{k,j,n},U^{\iota_3}_{k_3,j_3,n_3},W^1_{k_1},W^2_{k_2})}\\
\lle{} &2^{\delta^2 m}2^m2^k2^{k_1} 2^{(l-k_1)/2}2^{k/2}\sup_{s \simeq 2^m}\norm{G_{k,j,n}}_2\norm{U^{\iota_3}_{k_3,j_3,n_3}}_\infty \norm{W^1_{k_1}}_2\norm{W^2_{k_2}}_2\\
\lle{} &\epsl_1^4 2^{2\delta^2 m}2^m2^{k_1} 2^{l/2+k}2^{-(1-20\delta)j}2^{-(1/2-2\delta)m},
\end{align*}
while when $k \simeq 0$, using Schur's test, \lemmaref{sizeangle}(ii), \eqref{fl2r}, \cororef{decayestlow}, and \eqref{energyest}, we have
\begin{align*}
	&\abs{\gti T^{\text{ba}}_{m,l}(G_{k,j,n},U^{\iota_3}_{k_3,j_3,n_3},W^1_{k_1},W^2_{k_2})}\\
	\lle{} &2^{\delta^2 m}2^m2^{k_1}2^{l-k_1}2^{-n/2}\sup_{s \simeq 2^m}\norm{\mao{g_{k,j,n}}}_{L^2(r \dif r)L^\infty_\theta}\norm{U^{\iota_3}_{k_3,j_3,n_3}}_\infty \norm{W^1_{k_1}}_2\norm{W^2_{k_2}}_2\\
	\lle{} &\epsl_1^4 2^{2\delta^2 m}2^m2^{l} 2^{-n/2}2^{-(1-21\delta)j}2^{(1/2-18\delta)n}2^{-(1/2-2\delta)m}.
\end{align*}
Integration by parts in the frequency variables, together with an averaging argument, then concludes this part.

Finally, we address $\gti T^{\text{ba},\prime}_{\gti d |\gti m,m,l}((\ppd s-\mi \iota\Lambda(D))G_{k},W^1_{k_1},W^2_{k_2})$. By \lemmaref{timederi2}, for $s \simeq 2^m$, we have
\[
	\ppd s  g_{k}(s)=I_k(s)+II_k(s)+III_k(s),
\]
where
\begin{gather*}
I_k(s)=\sum_{\substack{k_3,p_3,k_4,p_4 \colon \\ p_3+p_4 \le N_1/2,-2m \le k_3,k_4 \le \delta^3 m}} P_k I_{\gti m_0,s,>-3\delta^2 m}(\vx 3(s),\vx 4(s)), \\
\text{where the phase function is } -\iota\Lambda(\xi)+\iota_3\Lambda(\xi-\eta)+\iota_4\Lambda(\eta) ,\\
\norm{II_{k}(s)}_2 \lle \varepsilon_1^2 2^{-(3/2-12\delta)m},\\
\mao{III_k}(\xi,s)=\me^{\mi s \iota \Phi^0} G(\xi,s) \fai_{<-2\delta^2 m}(\abs{\xi}-2\gamma_0),
\end{gather*}
where $\Phi^0=-\Lambda(\xi)+2\Lambda(\xi/2)$ and $G$ satisfies the following estimates:
\[
	\abs{\partial^\alpha_\xi G(\xi,s)}\lle \varepsilon_1^2 2^{-(1-5\delta)m}2^{\abs \alpha (1/2+3\delta)m},
\]
where $s \simeq 2^m$.
For the $I_k$-part, our treatment is analogous to that in Section~\ref{sec:dispersiveinter}. We write the $I_k$-part as a multilinear oscillatory integral for profiles $2^{-l}\gti I_{m,l,q,p}(\vx 3,\vx 4,V^1_{k_1},V^2_{k_2})$ (see Section~\ref{multilinear}). Here the superscripts $\text{ba}$ and the multiplier are suppressed. We denote by $\xi_3$ and $\xi_4$ the frequency variables of $\vx 3$ and $\vx 4$. We insert localizations of the second resonance function $\wan \Phi$ and $\Phi^{3,4}_{\iota\iota_3\iota_4}$ with parameters $p, q \gge -3\delta^2 m$.  Integration by parts in time yields
\begin{align*}
	&\gti I_{m,l,q,p}(\vx 3,\vx 4,V^1_{k_1},V^2_{k_2})=\gti J'_{m,l,q,p}(\vx 3,\vx 4,V^1_{k_1},V^2_{k_2})\\&+\gti I'_{m,l,q,p}(\ppd s \vx 3,\vx 4,V^1_{k_1},V^2_{k_2})
	+\gti I'_{m,l,q,p}(\vx 3,\ppd s\vx 4,V^1_{k_1},V^2_{k_2})\\&+\gti I'_{m,l,q,p}(\vx 3,\vx 4,\ppd s V^1_{k_1},V^2_{k_2})
	+\gti I'_{m,l,q,p}(\vx 3,\vx 4,V^1_{k_1},\ppd s V^2_{k_2}).
\end{align*}
All these terms can be handled, together with an averaging argument, in a manner similar to that used for the $II_k$-part, so we estimate only $\gti I'_{m,l,q,p}(\ppd s \vx 3,\vx 4,V^1_{k_1},V^2_{k_2})$. By the averaging argument, we can write it as
\begin{align*}
	&\int_{\abs{\lambda_1},\abs{\lambda_2} \le 2^{\delta^2 m}} \mao{\fai_1}(\lambda_1)\mao{\fai_2}(\lambda_2) \Biggl( \int q_m(s) \me^{\mi (s\wan \Phi+\lambda_12^{-q} \Phi^{3,4}_{\iota\iota_3\iota_4}+\lambda_2 2^{-p} \wan \Phi)} \fai^{l_0}_l(\Phi) (\gti d | \gti m)(\xi,\eta) \gti m_q(\xi-\eta,\sigma)\\
	&\chi_{\text{ba}}(\xi-\eta)\psi_k(\xi-\eta)
	 \mao{\ppd s \vx 3}(\xi-\eta-\sigma,s) \mao{\vx 4}(\sigma,s) \mao{V^1_{k_1}}(\eta,s) \mao{V^2_{k_2}}(-\xi,s) \dif \xi \dif \eta \dif \sigma \dif s \Biggr)\\ 
	 &\times \dif \lambda_1 \dif \lambda_2+O(\epsl_1^32^{-10m}),
\end{align*}
which can be rearranged as
\begin{align*}
	&\int_{\abs{\lambda_1},\abs{\lambda_2} \le 2^{\delta^2 m}} \mao{\fai_1}(\lambda_1)\mao{\fai_2}(\lambda_2) \Biggl( \int q_m(s) \me^{\mi (s+\lambda_22^{-p}) \Phi} \fai^{l_0}_l(\Phi) (\gti d | \gti m)(\xi,\eta) \chi_{\text{ba}}(\xi-\eta)\psi_k(\xi-\eta) \\
	&\FF I_{\gti m_q,(s+\lambda_1 2^{-q}+\lambda_2 2^{-p})}(\ppd s \vx 3,\vx 4) \mao{V^1_{k_1}}(\eta,s) \mao{V^2_{k_2}}(-\xi,s) \dif \xi \dif \eta  \dif s \Biggr) \dif \lambda_1 \dif \lambda_2+O(\epsl_1^32^{-10m}).
\end{align*}
We can use Schur's test, together with the fact that $\abs{\nabla_\eta \Phi} \gge 2^{k_1}$, to bound the main contribution by
\[
\epsl_1^4 2^{\delta^2 m}2^m 2^{(l-k_1)/2}2^{k/2}2^{-(3/2-1/100)m}.
\]
Finally, we handle the $III_k$-part. The key is the following fact:
\[
\norm{E_{\iota t+\rho}III_k(t)}_\infty \lle \epsl_1^2 2^{-(2-1/100)m},
\]
where $t \simeq 2^m$ and $\abs{\rho} \le 2^{(1-\delta^2)m}$. We prove it via the stationary phase method. We have
\[
E_{\iota t+\rho}III_k(t)(x)=\int \me^{2\mi \iota t \Lambda(\xi/2)+\mi \rho \Lambda(\xi)+\mi x \cdot \xi} G(\xi,t) \fai_{<-2\delta^2 m}(\abs \xi-2\gamma_0) \dif \xi.
\]
Set $\Psi(\xi,t,x)=[\iota \lambda'(\abs{\xi}/2)+(\rho/t) \lambda'(\abs \xi)]\mao \xi+x/t$ and insert a cutoff function \\ $\fai_{\ge -(1/2-100\delta)m}(\Psi)$ into it to divide the integral into $I_{\ge 2^{-(1/2-100\delta)m}}$ and $I_{< 2^{-(1/2-100\delta)m}}$. For $I_{\ge 2^{-(1/2-100\delta)m}}$, integration by parts in $\xi$ shows that this part is negligible. For $I_{< 2^{-(1/2-100\delta)m}}$, we use polar coordinates to bound its size by
\[
\epsl_1^2 2^{-(1-5\delta)m}2^{-(1/2-100\delta)m}2^{-(1/2-100\delta)m}.
\]
An averaging argument now yields 
\begin{align*}
\abs{\gti I^{\text{ba}}_{\gti d |\gti m,m,l}(III_k,V^1_{k_1},V^2_{k_2})} \lle{}& 2^m\sup_{ s \in [2^{m-1},2^{m+1}]}\norm{E_{\iota s+O(2^{(1-\delta^2)m})} III_k(s)}_\infty \norm{V^1_{k_1}(s)}_2\norm{V^2_{k_2}(s)}_2\\
&+\epsl_1^3 2^{-10m},	
\end{align*}
which completes the proof.

\noindent\underline{2. The multiplier $\gti m'$.}\\
We decompose $\TTT_{\gti m',m}^{\text{ba}}(G,W^1,W^2)$ into $\TTT_{\gti m',m}^{\text{ba}}(G_k,W_{k_1}^1,W_{k_2}^2)$. First, it is easy to exclude the case $\cl k \ge  m/50$ using the bound $\norm{\gti m'}_{S_{k,k_1,k_2}} \lle 2^{90k_+}2^{-80 \cl k_+}$. Hence the issue of the loss of derivatives plays a minor role. Moreover, it is straightforward to discard the case $\ul k \le -2m$. We only consider the case $k \ll 0$, since the case $k \gge 0$ can be addressed in the same way as $\TTT_{\gti m,m}^{\text{ba}}(G_k,W_{k_1}^1,W_{k_2}^2)$. If $\ul k+\cl k \gge 0$, then, since $2^{-\cl k_+} \lle 2^{\ul k}$, we gain enough low-frequency factors and $G$ exhibits optimal decay. Hence the treatment is similar to that in Section~\ref{sec:3.1gofre}. We decompose $\TTT_{\gti m',m}^{\text{ba}}(G_k,W_{k_1}^1,W_{k_2}^2)$ into $\TTT_{\gti m',m,l}^{\text{ba}}(G_k,W_{k_1}^1,W_{k_2}^2)$ where $l \ge l_0=\flo{-m/100}$, and the case $l>l_0$ can be handled by integrating by parts in time. For the case $l=l_0$, we continue to decompose $\TTT_{\gti m',m,l}^{\text{ba}}(G_k,W_{k_1}^1,W_{k_2}^2)$ into $\TTT_{\gti m',m,l}^{\text{ba}}(G_{k,j,n},W_{k_1}^1,W_{k_2}^2)$. Then Schur's test excludes the case $j \ge (1-1/1000)m$, and the case $j \leq (1-1/1000)m$ exhibits optimal decay.

When $\ul k+\cl k \ll 0$, we use \eqref{paraerror} to replace $W^1$ and $W^2$ with expressions involving $U$. Then the proof is similar to that in Section~\ref{subsec:3.4.2}, and hence we omit it.

\subsection{The $L^2$ Lemma}\label{sec:l2lemma}
In this subsection, we prove \lemmaref{l2lemma}. We will employ a $TT^*$-argument to prove it.
We decompose $f_{k_1}(t)$ into $f_{k_1,j_1,n_1}(t)$.
We first exclude the case $j_1 \ge 2m/3$. Using Schur's test, we have
\[
\norm{Tg}_2 \lle 2^{(l-k)/2}2^{k_1/2}2^{k_1}\norm{f_{k_1,j_1,n_1}(t)}_{2}\norm{g}_2
\]
if $k_1 \ll 0$, and
\[
\norm{Tg}_2 \lle  2^{l-k}2^{-n_1/2}\norm{\mao{f_{k_1,j_1,n_1}}(t)}_{L^2(r\dif r)L^\infty_\theta}\norm{g}_2
\]
(\lemmaref{sizeangle}(ii) is applied) if $k_1 \simeq 0$.
Hence we can assume that $j_1 \le 2m/3$.
Take a cutoff function $\chi \in C_0^\infty((-2,2)^2)$ such that $\sum_{v \in \bZ^2} \chi(\xi+v)=1$ and let $\chi_v(\xi)=\chi(\xi/(2^{k_1}2^{-10})+v)$. We then decompose the operator $T$ as $\sum_{v} T_{v} = \sum_{v} \chi_vT$. Because of the restriction $\abs{\xi-\eta} \lle 2^{k_1}$, $T$ is a local operator, and $T_v\chi_w$ vanishes unless $\abs{v-w} \lle 1$. Therefore it suffices to prove the boundedness of $\sup_v \norm{T_v}$. The kernel of $T_vT_v^*$ is
\begin{align*}
K_v(\xi,\xi')={}&\chi_v(\xi)\chi_v(\xi')\int \me^{\mi t (\Phi(\xi,\eta)-\Phi(\xi',\eta))} \fai_l(\Phi(\xi,\eta))\fai_l(\Phi(\xi',\eta))A(\xi,\eta) \cl A(\xi',\eta)\\
&\times \abs{\xi-\eta}\abs{\xi'-\eta}\mao {f_{k_1,j_1,n_1}}(\xi-\eta,t)\cl{\mao {f_{k_1,j_1,n_1}}(\xi'-\eta,t)}\psi_k^2(\eta) \dif \eta,
\end{align*}
where
\[
A(\xi,\eta)=a(\xi,\eta) \chi_{\text{ba}}(\xi-\eta).
\]
We aim to show that
\[
\int \abs{K_v(\xi,\xi')} \dif \xi  + \int \abs{K_v(\xi,\xi')}  \dif \xi' \lle \epsl_1^2 2^{29\delta m}(2^{3(l-k)}2^{k_1}+2^{3(l-k)/2}2^{k_1/2}2^{-m})
\]
(we only deal with the integral with respect to $\xi$ as the other is similar). Take
\[
L=2^{\delta m}(2^{l-k}+2^{j_1-m}).
\]
We first estimate
\begin{align*}
	&\int_{\abs{\xi-\xi'} \le L} \abs{K_v(\xi,\xi')} \dif \xi\\
\lle{} &\epsl_1 2^{5\delta m}2^{k_1} \int_{\abs{\xi-\xi'} \le L} \fai_l(\Phi(\xi,\eta))\fai_l(\Phi(\xi',\eta))\\
  &\times\psi_{k_1}(\xi-\eta) \psi_{k_1}(\xi'-\eta) \abs{\mao {f_{k_1,j_1,n_1}}(\xi'-\eta,t)}\psi_k^2(\eta) \dif \eta \dif \xi\\
\lle{} &\epsl_1 2^{5\delta m}2^{k_1} \int \biggl(\int_{\abs{\xi-\xi'} \le L} \fai_l(\Phi(\xi,\eta))\fai_l(\Phi(\xi',\eta))\\
  &\times\psi_{k_1}(\xi-\eta) \psi_{k_1}(\xi'-\eta) \abs{\mao {f_{k_1,j_1,n_1}}(\xi'-\eta,t)} \psi_k^2(\eta) \dif \xi \biggr) \dif \eta.
\end{align*}
It follows that
\[
\int_{\abs{\xi-\xi'} \le L} \abs{K_v(\xi,\xi')} \dif \xi \lle{} \epsl_1^2 2^{10\delta m} \cdot 2^{l-k}L \cdot (2^{(l-k)/2}2^{k_1/2}2^{-(1-20\delta)j_1} \wedge 2^{l-k}2^{k_1})
\]
if $k_1 \ll 0$, and
\[
		\int_{\abs{\xi-\xi'} \le L} \abs{K_v(\xi,\xi')} \dif \xi
\lle{} \epsl_1^2 2^{10\delta m} \cdot 2^{l-k}L \cdot (2^{l-k}2^{-n_1/2}2^{-(1-21\delta)j_1}2^{(1/2-18\delta)n_1} \wedge 2^{l-k})
\]
if $k_1 \simeq 0$.
For the region $\abs{\xi-\xi'} \ge L$, we use integration by parts to show that this part is negligible. Let
\[
V=(\mao{\nabla_\eta \Phi(\xi,\eta)})^\perp.
\]
We will integrate by parts with respect to the vector field $V$. We first analyze $V \cdot \nabla_\eta(\Phi(\xi,\eta)-\Phi(\xi',\eta))$. It follows from the localization of modulation that
\begin{equation}\label{eq:moduest}
2^l \gge \abs{\Phi(\xi,\eta)-\Phi(\xi',\eta)}=\abs{\nabla_\xi \Phi(\xi,\eta) \cdot (\xi'-\xi)+O(\abs{\xi'-\xi}^2)}.	
\end{equation}
Suppose that $\xi'-\xi=x_1 \mao{\nabla_\xi\Phi(\xi,\eta)}+x_2 (\mao{\nabla_\xi\Phi(\xi,\eta)})^\perp$. Then
\[
2^k\abs{x_1} \lle 2^l+\abs{\xi'-\xi}^2,
\]
which gives
\[
\abs{x_1} \lle 2^{-10}\abs{\xi'-\xi}.
\]
We now have
\begin{align*}
&V \cdot \nabla_\eta(\Phi(\xi',\eta)-\Phi(\xi,\eta))\\
={} &V \cdot [\nabla^2_{\xi,\eta} \Phi(\xi,\eta) \cdot (\xi'-\xi)+O(2^{-k_1}\abs{\xi'-\xi}^2)]\\
={} &x_2 \mao{\Upsilon}(\xi,\eta)+O(\abs{x_1}+2^{-k_1}\abs{\xi'-\xi}^2),
\end{align*}
which, together with \lemmaref{upsilon}, yields
\[
\abs{V \cdot \nabla_\eta(\Phi(\xi',\eta)-\Phi(\xi,\eta))} \gge \abs{\xi'-\xi}.
\]
Then we perform integration by parts with respect to $V$. Since the gains and losses are $2^m \abs{\xi'-\xi}$ and $2^{-k_1}, 2^{j_1}, 2^{-l}\abs{\xi'-\xi}$, the integration by parts can be performed effectively. (See Section~\ref{integrationbyparts} for an explanation of integration by parts.) Indeed, when $V$ hits $\fai_l(\Phi(\xi',\eta))$,
\begin{align*}
V \cdot \nabla_\eta \fai_l(\Phi(\xi',\eta))&=2^{-l}\fai'(\Phi(\xi',\eta)/2^l) V \cdot \nabla_\eta \Phi(\xi',\eta)\\
&=2^{-l}\fai'(\Phi(\xi',\eta)/2^l)V \cdot (\nabla_\eta \Phi(\xi',\eta)-\nabla_\eta \Phi(\xi,\eta))\\
&=O(2^{-l}\abs{\xi'-\xi}).
\end{align*}

\subsection{Intermediate and Low Frequencies}\label{interlowfre}
In this subsection, we aim to estimate
\[
\norm{P_{\le 100} (\Lambda^\delta+\Lambda^{N_0}) \Omega^{[0,N_1]} U}_2.
\]
Since there is no issue of loss of derivatives, we use the dispersive formulation. By \eqref{disperformula}, it suffices to prove that
\[
\sum_{k \le 100,m \geq 0} 2^{\delta k}\norm{P_k\BB_m(V^{\iota_1,p_1},V^{\iota_2,p_2})}_2 \lle \epsl_1^22^{\delta^4 L},
\]
where the cubic terms and the remainders are straightforward to handle (recall the notation in Section~\ref{multilinear}). The cases $m \le D^4$ and $m=L+1$ are easy to deal with. We decompose $\BB_m(V^{\iota_1,p_1},V^{\iota_2,p_2})$ into $\BB_m(\va,\vb)$ and we may assume that $p_1 \le p_2$. The cases $\cl k \ge \delta^3 m$ and $\ul k \le -2m$ are easy to discard. We next consider two cases: $k_1>-200$ and $k_1 \le -200$.

\subsubsection{Case 1: $k_1>-200$}\label{subsec:3.4.1}
We continue to decompose $\BB_m(\va,\vb)$ into\\ $\BB_{m,l}(\va,\vb)$, where $l \ge l_0:=\flo{-m/50}$. When $l>l_0$, integration by parts in time gives the desired estimate via \lemmaref{symbound}(iii). Therefore, we focus on $\BB_{m,l_0}(\va,\vb)$. We next decompose $\va$ into $\Vaa$, and we aim to exclude the case $j_1 \ge (1-1/200)m$. By Schur's test, \lemmaref{sizeangle}(i) yields
\[
\norm{P_k\BB_{m,l_0}(\Vaa,\vb)}_2 \lle 2^m2^{\delta^2 m}2^{l_0/2}2^{-n_1/2}\sup_{s \simeq 2^m}\norm{\mao{\Vaa}(s)}_{L^2(r \dif r)L^\infty_\theta}\norm{\vb(s)}_2,
\]
which gives the desired bound. We can now assume that $j_1 \le (1-1/200)m$. When $n_1 \le D$, \lemmaref{decaylemma} yields the optimal decay
\[
\norm{E_{\iota_1s_2}\Vaa(s_1)}_\infty \lle \epsl_1 2^{-m}2^{-5k_1}2^{-\delta j_1}
\]
for $s_2 \simeq s_1 \simeq 2^m$, which concludes this case.

When $n_1>D$, we will use almost orthogonality in time to get extra decay. Note that we now have $-300 \le \ul k \le \cl k \le 200$. Set
\[
Tf(\xi)=\psi_k(\xi)\int \me^{\mi s \Phi} q_m(s) \fai_{l_0}(\Phi)\gti m_q(\xi,\eta)\mao{\Vaa}(\xi-\eta,s)\psi_{k_2}(\eta)f(\eta,s) \dif \eta \dif s,
\]
where $f=\mao{\Vb}$. It suffices to show that
\[
\norm{Tf}_2 \lle \epsl_1^2 2^{-\delta j_1}2^{\delta^4 m}.
\]
Take a cutoff function $\fai_{>\log_2 \kappa_0}(\abs{\xi}-\rho_{\iota_1\iota_2})$, where $\kappa_0=2^{-400\delta n_1}$, to divide $T$ into $T_{>\kappa_0}$ and $T_{\le \kappa_0}$.

\noindent\ul{1. $T_{>\kappa_0}$-part.}\\
As a consequence of \lemmaref{propertyp} and \lemmaref{reso:almostortho}, we have $(\iota_1,\iota_2)=(+,+)$ or $(-,+)$, $\abs{\xi}-\rho_{\iota_1\iota_2} \ge 0$, and $\abs{\mao \xi^\perp \cdot \mao \eta} \ge C_0\kappa_0^{1/2}$. Choose a smooth function $\zeta$ on $\bR$ such that $\zeta$ is supported in $[1/2,\infty)$ and $\zeta=1$ on $[3/2,\infty)$. Let $\zeta_{\gge \kappa_0^{1/2}}(x):=\zeta(\frac{x}{C_0\kappa_0^{1/2}/10})$ and $\zeta_{\lle -\kappa_0^{1/2}}(x):=\zeta(-\frac{x}{C_0\kappa_0^{1/2}/10})$.
We insert cutoff functions $\zeta_{\gge \kappa_0^{1/2}}(\mao \xi^\perp \cdot \mao \eta)$ and $\zeta_{\lle -\kappa_0^{1/2}}(\mao \xi^\perp \cdot \mao \eta)$ into $T_{>\kappa_0}$ to obtain $T^{\pm}_{>\kappa_0}$, and we will address only $T^+_{>\kappa_0}$. Take a cutoff function $\chi \in C_0^\infty((-2,2))$ such that $\sum_{v \in \bZ} \chi(s+v)=1$ and let $\chi_v(s):=\chi(s/(2^m \kappa)+v)$, where $\kappa=2^{-100\delta n_1}$. We then decompose $T^+_{>\kappa_0}f$ as $\sum_v T^+_{>\kappa_0,v}f$, where
\begin{align*}
T^+_{>\kappa_0,v}f(\xi)={}&\fai_{>\log_2 \kappa_0}(\abs{\xi}-\rho_{\iota_1\iota_2})\psi_k(\xi)\int \me^{\mi s \Phi} q_m(s) \chi_v(s) \fai_{l_0}(\Phi)\gti m_q(\xi,\eta)\mao{\Vaa}(\xi-\eta,s)\\
&\times \zeta_{\gge \kappa_0^{1/2}}(\mao \xi^\perp \cdot \mao \eta)\psi_{k_2}(\eta)f(\eta,s) \dif \eta \dif s.
\end{align*}
We aim to show that $T^+_{>\kappa_0,v}f$ are almost orthogonal, that is, $(T^+_{>\kappa_0,v_i}f,T^+_{>\kappa_0,v_j}f)$ is negligible when $\abs{v_i-v_j} \ge 100$. Define
\begin{align*}
	S^+_{>\kappa_0,s}g(\xi)={}&\fai_{>\log_2 \kappa_0}(\abs{\xi}-\rho_{\iota_1\iota_2})\psi_k(\xi)\int \me^{\mi s \Phi} \fai_{l_0}(\Phi)\gti m_q(\xi,\eta)\mao{\Vaa}(\xi-\eta,s)\\
&\times \zeta_{\gge \kappa_0^{1/2}}(\mao \xi^\perp \cdot \mao \eta)\psi_{k_2}(\eta)g(\eta) \dif \eta.
\end{align*}
Then
\[
T^+_{>\kappa_0,v}f=\int q_m(s) \chi_v(s) S^+_{>\kappa_0,s} f(s) \dif s.
\]
The kernel of $(S^+_{>\kappa_0,s'})^*S^+_{>\kappa_0,s}$ is
\begin{align*}
K^+_{>\kappa_0,s,s'}(\eta,\eta')={}&\int \me^{\mi (s\Phi(\xi,\eta)-s'\Phi(\xi,\eta'))}  \fai^2_{>\log_2 \kappa_0}(\abs{\xi}-\rho_{\iota_1\iota_2})\psi_k^2(\xi) \fai_{l_0}(\Phi(\xi,\eta))\fai_{l_0}(\Phi(\xi,\eta'))\\
& \times  \gti m_q(\xi,\eta) \cl{\gti m_q(\xi,\eta')} \mao{\Vaa}(\xi-\eta,s)\cl{\mao{\Vaa}(\xi-\eta',s')} \\
&\times \zeta_{\gge \kappa_0^{1/2}}(\mao \xi^\perp \cdot \mao \eta)\zeta_{\gge \kappa_0^{1/2}}(\mao \xi^\perp \cdot \mao{\eta'})\psi_{k_2}(\eta) \psi_{k_2}(\eta') \dif \xi,
\end{align*}
where $s$ (resp. $s'$) lies in the support of $q_m \chi_{v_i}$ (resp. $q_m \chi_{v_j}$). We have, in view of \lemmaref{reso:almostortho},
\[
	s\nabla_\xi \Phi(\xi,\eta)-s'\nabla_\xi \Phi(\xi,\eta')
	=(s-s')\nabla_\xi \Phi(\xi,p^+_{\iota_1\iota_2}(\xi))+O[2^m(2^{l_0/2}+2^{-n_1/2})],
\]
and hence
\[
\abs{s\nabla_\xi \Phi(\xi,\eta)-s'\nabla_\xi \Phi(\xi,\eta')} \gge 2^m \kappa.
\]
We can now perform integration by parts in $\xi$ since the gains and losses are $2^m\kappa$ and $\kappa^{-1}, \kappa_0^{-1}, 2^{-l_0}, 2^{j_1}$. Hence we have
\[
\abs{(T^+_{>\kappa_0,v_i}f,T^+_{>\kappa_0,v_j}f)} \lle \epsl_1^4 2^{-100m}.
\]
Consequently,
\begin{equation}\label{eq:3.4.10}
	\norm{T^+_{>\kappa_0}f}_2^2 \lle \sum_v \norm{T^+_{>\kappa_0,v}f}_2^2+\epsl_1^4 2^{-50m}.
\end{equation}
To estimate $T^+_{>\kappa_0,v}f$, it suffices to deal with $S^+_{>\kappa_0,s}$. For later use, we define
\begin{align*}
	S_{>\kappa_0,s}g(\xi)={}&\fai_{>\log_2 \kappa_0}(\abs{\xi}-\rho_{\iota_1\iota_2})\psi_k(\xi)\int \me^{\mi s \Phi} \fai_{l_0}(\Phi)\gti m_q(\xi,\eta)\mao{\Vaa}(\xi-\eta,s)\\
&\times \psi_{k_2}(\eta)g(\eta) \dif \eta.
\end{align*}
The cutoff function $\zeta_{\gge \kappa_0^{1/2}}(\mao \xi^\perp \cdot \mao \eta)$ obstructs a direct application of the multiplier theorem. In contrast, we can apply the multiplier theorem to $S_{>\kappa_0,s}$. We will relate $	S^+_{>\kappa_0,s}$ to $S_{>\kappa_0,s}$ by exploiting the almost orthogonality between $S^+_{>\kappa_0,s}g$ and $S^-_{>\kappa_0,s}g$. Our goal is to prove that $(S^+_{>\kappa_0,s}g,S^-_{>\kappa_0,s}g)$ is negligible. It suffices to estimate the kernel of $(S^-_{>\kappa_0,s})^*S^+_{>\kappa_0,s}$:
\begin{align*}
	K^{-+}_{>\kappa_0,s}(\eta,\eta')={}&\int \me^{\mi s(\Phi(\xi,\eta)-\Phi(\xi,\eta'))} \fai^2_{>\log_2\kappa_0}(\abs{\xi}-\rho_{\iota_1\iota_2})\psi_k^2(\xi) \fai_{l_0}(\Phi(\xi,\eta))\fai_{l_0}(\Phi(\xi,\eta'))\\ &\times \gti m_q(\xi,\eta) \cl{\gti m_q(\xi,\eta')} \mao{\Vaa}(\xi-\eta,s)\cl{\mao{\Vaa}(\xi-\eta',s)}\\  &\times \zeta_{\gge \kappa_0^{1/2}}(\mao \xi^\perp \cdot \mao \eta)\zeta_{\lle -\kappa_0^{1/2}}(\mao \xi^\perp \cdot \mao{\eta'})\psi_{k_2}(\eta) \psi_{k_2}(\eta') \dif \xi.
\end{align*}
It follows from \lemmaref{propertyp} and \lemmaref{reso:almostortho} that
\[
\abs{p^+_{\iota_1\iota_2}(\xi)-p^-_{\iota_1\iota_2}(\xi)} \gge \kappa_0,
\]
and that
\[
s(\nabla_\xi \Phi(\xi,\eta)-\nabla_\xi\Phi(\xi,\eta'))= s(\nabla_\xi \Phi(\xi,p^+_{\iota_1\iota_2}(\xi))-\nabla_\xi\Phi(\xi,p^-_{\iota_1\iota_2}(\xi)))+O[2^m(2^{l_0/2}+2^{-n_1/2})].
\]
Therefore,
\[
\abs{s(\nabla_\xi \Phi(\xi,\eta)-\nabla_\xi\Phi(\xi,\eta'))} \gge 2^m\kappa_0^{1/2}.
\]
Integration by parts in $\xi$ now shows that the kernel is negligible as the gains and losses are $2^m \kappa_0$ and $\kappa_0^{-1}, 2^{-l_0}, 2^{j_1}$. Thus, we obtain
\[
\norm{S_{>\kappa_0,s}g}_2^2=\norm{S^+_{>\kappa_0,s}g}_2^2+\norm{S^-_{>\kappa_0,s}g}_2^2+O(\epsl_1^22^{-100m}\norm{g}_2^2).
\]
By an averaging argument and \lemmaref{decaylemma}, we have
\[
\norm{S_{>\kappa_0,s}g}_2 \lle \epsl_1 2^{-m}2^{-\delta j_1}2^{5\delta n_1}\norm{g}_2,
\]
which gives
\begin{equation}\label{eq:3.4.11}
	\norm{S^+_{>\kappa_0,s}g}_2 \lle \epsl_1 2^{-m}2^{-\delta j_1}2^{5\delta n_1}\norm{g}_2.
\end{equation}

Using \eqref{eq:3.4.11}, we have
\[
\norm{T^+_{>\kappa_0,v}f}_2^2 \lle \epsl_1^4 2^{2m}\kappa^2 2^{-2m}2^{-2\delta j_1}2^{10\delta n_1}2^{2\delta^4 m} \lle \epsl_1^4 \kappa^2 2^{-2\delta j_1}2^{10\delta n_1}2^{2\delta^4 m}.
\]
Then it follows from \eqref{eq:3.4.10} that
\[
\norm{T^+_{>\kappa_0}f}_2^2 \lle \epsl_1^4 \kappa^{-1} \kappa^2 2^{-2\delta j_1}2^{10\delta n_1} 2^{2\delta^4 m} \lle \epsl_1^4 2^{-2\delta j_1} 2^{2\delta^4 m},
\]
which completes the proof of the $T_{>\kappa_0}$-part.

\noindent\ul{2. $T_{\le \kappa_0}$-part.}\\
For the $T_{\le \kappa_0}$-part, we do not distinguish between $p^+_{\iota_1\iota_2}$ and $p^-_{\iota_1\iota_2}$, so we do not decompose $T_{\le \kappa_0}$ into $T^+_{\le \kappa_0}$ and $T^-_{\le \kappa_0}$. As in the $T_{>\kappa_0}$-part, take $\kappa=2^{-100\delta n_1}$, and we decompose $T_{\le \kappa_0}f$ into $T_{\le \kappa_0,v}f$. We aim to prove that $T_{\le \kappa_0,v}f$ are almost orthogonal. The kernel of $(S_{\le \kappa_0,s'})^*S_{\le\kappa_0,s}$ is
\begin{align*}
K_{\le \kappa_0,s,s'}(\eta,\eta')={}&\int \me^{\mi (s\Phi(\xi,\eta)-s'\Phi(\xi,\eta'))}  \fai^2_{\le \log_2 \kappa_0}(\abs{\xi}-\rho_{\iota_1\iota_2})\psi_k^2(\xi) \fai_{l_0}(\Phi(\xi,\eta))\fai_{l_0}(\Phi(\xi,\eta'))\\
& \times  \gti m_q(\xi,\eta) \cl{\gti m_q(\xi,\eta')} \mao{\Vaa}(\xi-\eta,s)\cl{\mao{\Vaa}(\xi-\eta',s')} \\
&\times  \psi_{k_2}(\eta) \psi_{k_2}(\eta') \dif \xi,
\end{align*}
where $s$ (resp. $s'$) lies in the support of $q_m \chi_{v_i}$ (resp. $q_m \chi_{v_j}$) and $\abs{v_i-v_j} \ge 100$. As a consequence of \lemmaref{propertyp} and \lemmaref{reso:almostortho}, we have
\[
s\nabla_\xi\Phi(\xi,\eta)-s'\nabla_\xi\Phi(\xi,\eta')=(s-s')\nabla_\xi \Phi(\xi,p^+_{\iota_1\iota_2}(\xi))+O[2^m(2^{l_0/2}+2^{-n_1/2}+\kappa_0^{1/2})],
\]
which yields
\[
\abs{s\nabla_\xi\Phi(\xi,\eta)-s'\nabla_\xi\Phi(\xi,\eta')} \gge 2^m \kappa.
\]
Hence we can perform integration by parts in $\xi$ effectively; the remaining arguments are analogous to those for the $T_{>\kappa_0}$-part.

\subsubsection{Case 2: $k_1 \le -200$}\label{subsec:3.4.2}
It is clear that $\abs{\Phi} \gge 1$. Hence we can insert localizations of $\Phi$ with parameter $l \gge 0$. Note that \lemmaref{symbound}(iii) yields $\norm{\psi_l(\Phi(\xi,\eta))}_{S_{k,k_1,k_2}} \lle 1$. Integration by parts in time yields
\[
\BB_{m,l}(\va,\vb)=\II'_{m,l}(\va,\vb)+\BB'_{m,l}(\ppd s \va,\vb)+\BB'_{m,l}(\va,\ppd s \vb).
\]
The terms $\II'_{m,l}(\va,\vb)$ and $\BB'_{m,l}(\ppd s \va,\vb)$ are easy to handle. Neglecting the cubic terms and the remainders, we consider the trilinear operator
\[
2^{-l}\TT_{\gti n,m,l}(\va,\vvc,\vx 4),
\]
where $\gti n$ is its associated multiplier, and where $p_3 \ge p_4$ and $p_1, p_4 \le N_1/2$. In what follows, we omit the multiplier $\gti n$ from the notation. It is easy to exclude the cases $k_3 \vee k_4 \ge 2\delta^3 m$ and $k_3 \wedge k_4 \le -2m$. Moreover, by \eqref{decaycor} and Bernstein's inequality, we obtain
\[
\norm{E_{\iota_1s}\va(s)}_\infty \lle \epsl_1 [2^{-(1-20\delta)m}2^{-(1-39\delta)k_1} \wedge 2^{(1-\delta)k_1}]
\]
for $s \simeq 2^m$.
Therefore, the cases $k_1 \le -(1/2+100\delta)m$ and $k_1 \ge -(1/2-100\delta)m$ can be treated. Analogously, we can assume that $-(1/2+100\delta)m \le k_4 \le -(1/2-100\delta)m$. Furthermore, we can assume that $\cl k \ge -100\delta m$ with $\cl k=k_1 \vee k_3 \vee k_4$.

We next insert localizations of the second resonance function $\wan \Phi$ with parameter $p \ge p_0=\flo{ -(1/2+1000\delta)m}$. For the case $p=p_0$, note that
\begin{align*}
\wan \Phi &=-\Lambda(\xi)+\iota_1 \Lambda(\xi-\eta)+\iota_3 \Lambda(\eta-\sigma)+\iota_4 \Lambda(\sigma)\\
&=-\Lambda(\xi)+\iota_3 \Lambda(\eta-\sigma)+\iota_1+\iota_4+O(2^{-2m/3}).
\end{align*}
It follows that
\[
\abs{-\Lambda(\xi)+\iota_3 \Lambda(\eta-\sigma)+\iota_1+\iota_4} \lle 2^{p_0}.
\]
Then using Schur's test, we estimate
\[
\norm{P_k\TT_{m,l,p_0}(\va,\vvc,\vx 4)}_2 \lle \epsl_1^2 2^m2^{5\delta m} 2^{p_0-\cl k}2^{k_{14}-\cl k} \sup_{s \simeq 2^m}\norm{\vvc}_2.
\]
When $p>p_0$, integration by parts in time yields 
\begin{align*}
&\TT_{m,l,p}(\va,\vvc,\vx 4)=\II'_{m,l,p}(\va,\vvc,\vx 4)+\TT'_{m,l,p}(\ppd s\va,\vvc,\vx 4)\\
&+\TT'_{m,l,p}(\va,\ppd s\vvc,\vx 4)+\TT'_{m,l,p}(\va,\vvc,\ppd s\vx 4).
\end{align*}
The term $\II'_{m,l,p}(\va,\vvc,\vx 4)$ is easy to handle. For $\TT'_{m,l,p}(\ppd s\va,\vvc,\vx 4)$, applying an averaging argument to $\fai^{p_0}_p(\wan \Phi)$, we estimate
\begin{align*}
&\norm{P_k\TT'_{m,l,p}(\ppd s\va,\vvc,\vx 4)}_2\\ \lle{} &2^m2^{-p}(2^{k_1}\sup_{s \simeq 2^m}\norm{\ppd s \va}_2\norm{\vvc}_2\norm{E_{[2^{m-2},2^{m+2}]}\vx 4}_\infty+\epsl_1^3 2^{-10m}).
\end{align*}
The term $\TT'_{m,l,p}(\va,\vvc,\ppd s\vx 4)$ can be treated in a similar way. For $\TT'_{m,l,p}(\va,$ $\ppd s\vvc,\vx 4)$, by neglecting the cubic terms and the remainders, we can write it as a quadrilinear operator $2^{-p}\QQ_{\gti p,m,l,p}(\va,\vx 5,\vx 6,\vx 4)$, where $\gti p$ is its associated multiplier, and where $p_5 \ge p_6$ and $p_1, p_6, p_4 \le N_1/2$. We use $\xi_1$, $\xi_5$, $\xi_6$, and $\xi_4$ to denote the frequency variables of $\va$, $\vx 5$, $\vx 6$, and $\vx 4$, respectively. It is straightforward to discard the cases $k_5 \vee k_6 \ge 3\delta^3 m$ and $k_5 \wedge k_6 \le -2m$. If $k_6 \ge -m/3$, then it can be handled by applying an averaging argument to $\fai^{p_0}_p(\wan \Phi)$. Moreover, we may assume that $p \le -10\delta m$. If $k_6 \le -m/3$, then the third resonance function $\Phi^{1,5,6,4}_{+\iota_1\iota_5\iota_6\iota_4}=\Phi^{1,56,4}_{+\iota_1\iota_3\iota_4}+\Phi^{5,6}_{\iota_3\iota_5\iota_6}$ satisfies $\abs{\Phi^{1,5,6,4}_{+\iota_1\iota_5\iota_6\iota_4}} \gge 1$, and integration by parts in time, together with averaging arguments, concludes the proof.

\subsection{General Korteweg Tensors}\label{gekotensors}
In this subsection, we sketch the proof of the energy estimate for general Korteweg tensors. Under the bootstrap assumptions analogous to \eqref{energyest}-\eqref{zest}, we prove the corresponding improved energy estimate.

The estimate for the low-frequency component $P_{\le 100}U$ follows from the formulation for dispersive estimates derived in Section~\ref{dispersiveformu}. We only consider the bilinear operator $\BB_{\gti m_q}(\va,\vb)$. If $\ul k > -200$, then an $L^2 \times L^\infty$ argument yields the desired bound. It remains to deal with the case $\ul k \le -200$. If $k_1 \simeq k_2$, then it can be handled by an $L^2 \times L^\infty$ argument. If $k_1 \not\simeq k_2$, then $\cl k+\ul k \ll 0$, and integration by parts in time completes the argument.

The estimate for the high-frequency component $P_{> 100}U$ follows from a paralinearization similar to that in Section~\ref{paralinear}, but without requiring the depletion factors. We can obtain
\begin{equation}\label{energymain10}
	\hti U_t-\mi T_\Sigma \hti U+T_{R\psi \cdot \mi \xi} \hti U=\hti Q+\hti C,
\end{equation}
where
\begin{itemize}
	\item $\hti Q=T_{\gti m}(U^{\iota_1},U^{\iota_2})$;
	\item $\Omega_{\xi,\eta}\gti m=0$;
	\item $\norm{\gti m}_{S_{k,k_1,k_2}} \lle 2^{k_1 \wedge k_2}2^{5(k_1 \wedge k_2)_+}$;
	\item $\norm{P_{\ge 0}\Lambda^{N_0}\hti C(t)}_2+\norm{P_{\ge 0} \Lambda^\delta \Omega^{N_1} \hti C(t)}_2 \lle \varepsilon_1^3 \spa{t}^{-1-1/100}$.
\end{itemize}
After commuting \eqref{energymain10} with $P_{\ge 50}T_{\Sigma^{N_0/2}}$ and $P_{\ge 50}T_{\Sigma^{\delta/2}}\Omega^{N_1}$, we have
\begin{equation}\label{energymain11}
	\hti W_t-\mi T_\Sigma \hti W+T_{R\psi \cdot \mi \xi} \hti W=\hti Q_{\hti W}+\hti C_{\hti W},
\end{equation}
where
\begin{itemize}
	\item $\hti W \in\set{P_{\ge 50}T_{\Sigma^{N_0/2}} \hti U,P_{\ge 50}T_{\Sigma^{\delta/2}}\Omega^{N_1} \hti U}$;
	\item $\hti Q_{\hti W}=T_{\gti m}(G,W^1)+T_{\gti m'}(G,W^1)$ where
	\[
	G \in \set{\spa{\nabla}^{10/\delta} \Omega^{[0,N_1/2]} U^{\pm}}
	\]
	and
	\[
	W^1 \in \set{T_{\Sigma^{N_0/2}} \hti U^{\pm},T_{\Sigma^{\delta/2}}\Omega^{N_1} \hti U^{\pm}, \spa{\nabla}^{10/\delta} \Omega^{[0,N_1-1]} U^{\pm}};
	\]
	\item $\norm{\gti m}_{S_{k,k_1,k_2}} \lle2^{k_1 \wedge k_2}2^{5(k_1 \wedge k_2)_+}$;
	\item $\norm{\gti m'}_{S_{k,k_1,k_2}} \lle 2^{-2\delta^{-1}(k_1 \vee k_2)_+}$;
	\item $\norm{\hti C_{\hti W}(t)}_2 \lle \varepsilon_1^3 \spa{t}^{-1-1/100}$.
\end{itemize}
The $\gti m$-terms can be handled by a direct energy estimate. For the $\gti m'$-terms, as in Section~\ref{sec:badfre}, we can assume $\cl k \le \delta m$. If $\ul k+\cl k \gge 0$, then $2^{-\cl k_+} \lle 2^{\ul k}$ shows that the factor $2^{-\cl k_+}$ provides the null structure needed for the decay estimates. If $\ul k+\cl k \ll 0$, then integration by parts in time gives the desired estimate since the derivative loss is harmless.

\section{Dispersive Estimates}
In this section, we estimate the $Z$-norms. By \eqref{disperformula}, we aim to prove that
\begin{gather*}
\sum_m 2^{10k_+}2^{\delta k}2^{(1-20\delta)(j+k)}2^{-(1/2-18\delta)n}\norm{A^{(j)}_nQ_{jk}\BB_m(V^{\iota_1,p_1},V^{\iota_2,p_2})}_2 \lle \epsl_1^2,\\
\sum_m  2^{10k_+}2^{\delta k}2^{(1-20\delta)(j+k)}2^{-(1/2-18\delta)n}\norm{A^{(j)}_nQ_{jk}\TT_m(V^{\iota_1,p_1},V^{\iota_2,p_2},V^{\iota_3,p_3})}_2 \lle \epsl_1^2,
\end{gather*}
and the remainder estimates (recall the notation in Section~\ref{multilinear}).
The cases $m \le D^4$ and $m=L+1$ are easy to deal with. It suffices to prove that
\begin{gather*}
	  2^{10k_+}2^{\delta k}2^{(1-20\delta)(j+k)}2^{-(1/2-18\delta)n}\norm{A^{(j)}_nQ_{jk}\BB_m(V^{\iota_1,p_1},V^{\iota_2,p_2})}_2 \lle \epsl_1^2 2^{-\delta^4 m},\\
  2^{10k_+}2^{\delta k}2^{(1-20\delta)(j+k)}2^{-(1/2-18\delta)n}\norm{A^{(j)}_nQ_{jk}\TT_m(V^{\iota_1,p_1},V^{\iota_2,p_2},V^{\iota_3,p_3})}_2 \lle \epsl_1^2 2^{-\delta^4 m}.
\end{gather*}
We first decompose the quadratic terms into $\BB_{m}(\va,\vb)$. The cases $k_1 \vee k_2 \ge \delta^3(m+j)$ and $k_1 \wedge k_2 \le -3(m+j)$ are easy to treat (see the proof of \lemmaref{timederi2}). Then the case $j \ge 10m$ can be handled by the approximate-finite-speed-of-propagation argument (which will be introduced in Section~\ref{finitespeed}). We continue to decompose $\BB_{m}(\va,\vb)$ into $\BB_{m,l}(\va,\vb)$ 
where $l \ge l_0=\flo{-(1-\delta/2)m}$ is the localization parameter of the modulation $\Phi$. If $l>-100\delta m$, then integration by parts in time yields
\[
\BB_{m,l}(\va,\vb)=\II'_{m,l}(\va,\vb)+\BB'_{m,l}(\ppd s \va,\vb)+\BB'_{m,l}(\va,\ppd s \vb).
\]
The terms $\BB'_{m,l}(\ppd s \va,\vb)$ and $\BB'_{m,l}( \va, \ppd s\vb)$ become cubic terms and higher-order terms and will be handled in Section~\ref{section:cubic} and Section~\ref{sec:remainder}.

\subsection{Approximate Finite Speed of Propagation}\label{finitespeed}
In this subsection, we aim to exclude the case $j \ge m+\cl k+D$ for $\BB_{m,l}(\va,\vb)$ with $l_0 \le l \le -100\delta m$ and $\II'_{m,l}(\va,\vb)$ with $l>-100\delta m$. The term $\II'_{m,l}(\va,\vb)$ is easy to handle. For $\BB_{m,l}(\va,\vb)$, we note that $\ul k \ge -\delta^2 m$. We decompose $\BB_{m,l}(\va,\vb)$ into $\BB_{m,l}(\Vaa,\Vbb)$, and the case $j_1 \vee j_2 \ge 30m$ is easy to exclude. The approximate-finite-speed-of-propagation argument is based on writing $P_k\BB_{m,l}(\Vaa,\Vbb)$ in the form
\[
\int \me^{\mi s \Phi+\mi x \cdot \xi} q_m(s) \fai_l^{l_0}(\Phi) \psi_k(\xi)\gti m_q(\xi,\eta) \mao{\Vaa}(\xi-\eta,s) \mao{\Vbb}(\eta,s) \dif \xi\dif \eta \dif s.
\]
We may omit the cutoff function $\fai_l^{l_0}(\Phi)$ since it can be handled by an averaging argument. We then integrate by parts in $\xi$. Observe that $\nabla_\xi(s\Phi+x \cdot \xi)=x+s \Phi_{\xi}$ is bounded below whenever $\abs{x} \gg s \abs{\Phi_{\xi}}$. Thus, the integration by parts is effective provided $j_1 \wedge j_2 \le (1-\delta^2)j$ and $-k \le (1-\delta^2)j$. Hence, we now exclude the complementary cases.

\noindent\fbox{1. $j_1 \wedge j_2 \ge (1-\delta^2)j$.}\\
By \lemmaref{sizeest}(i) and (ii), we have
\begin{equation}\label{4.2.1}
	\begin{aligned}
&\norm{P_k\BB_{m,l}(\Vaa,\Vbb)}_2 \\
\lle{} &2^{10\delta^2 m}2^{\cl k}2^{10\cl k_+}2^m2^{-n_1/2}2^{-n_2/2}2^{l/2}\sup_{s \simeq 2^m}\norm{\mao{\Vaa}(s)}_{L^2(r\dif r)L_{\theta}^{\infty}} \norm{\Vbb(s)}_2,		
	\end{aligned}
\end{equation}
which yields the desired estimate.

\noindent\fbox{2. $-k \ge (1-\delta^2)j$.}\\
It is easy to see that this case contradicts $\ul k \ge -\delta^2 m$.

We can now assume that $j \le m+\cl k+D$. We next exclude the case $l\geq -\delta m$ for $\II'_{m,l}(\va,\vb)$. We may assume that $k_1 \ge k_2$. By \cororef{decaydecom}, for any $s_1 \simeq s \simeq 2^m$, $E_{\iota_1 s_1} \vx 1(s) = I^{p_1}_{k_1}(s,s_1) + II^{p_1}_{k_1}(s,s_1)$, where $\norm{I^{p_1}_{k_1}(s,s_1)}_{\infty} \lle \varepsilon_1 2^{-(1+\delta)k_1}2^{-(1-5\delta)m}$ and $\norm{II^{p_1}_{k_1}(s,s_1)}_{2} \lle \varepsilon_1 2^{-(5/6)k_1}2^{(-1/2-1/100)m}$. Hence, in view of an averaging argument, \eqref{energyest}, and \cororef{decayestlow},
\begin{align*}
	&2^{10k_+}2^{\delta k}2^{(1-20\delta)(j+k)}2^{-(1/2-18\delta)n}\norm{A^{(j)}_nQ_{jk}\II'_{m,l}(\va,\vb)}_2\\
	\lle{} & 2^{\delta^2 m} 2^{(1-20\delta)m}2^{-l}2^{\cl k}(\sup_{s \simeq s_1 \simeq 2^m}\norm{I^{p_1}_{k_1}(s,s_1)}_{\infty}\norm{\vb(s)}_{2}+\sup_{s \simeq s_1 \simeq s_2 \simeq 2^m}\norm{II^{p_1}_{k_1}(s,s_1)}_2\\
	&\times\norm{E_{\iota_2 s_2}\vb(s)}_{\infty} + \varepsilon_1^2 2^{-10m})\\
	\lle{} & \varepsilon_1^2 2^{\delta^2 m} 2^{(1-19\delta)m} (2^{-(1-5\delta)m} 2^{-\delta k_2} + 2^{k_1/6}2^{(-1/2-1/100)m} 2^{-(1/2-2\delta)m}+2^{-5m})\\
	\lle{} & \varepsilon_1^2 2^{-10\delta^4 m},
\end{align*}
(see Section~\ref{techlemma} for a discussion of the averaging argument).

We next exclude the non-parallel parts of $\BB_{m,l}(\va,\vb)$ and $\II'_{m,l}(\va,\vb)$. For $\BB_{m,l}(\va,\vb)$ with $l_0 \le l \le -100\delta m$, we insert a cutoff function $\fai_{>\log_2\kappa_\theta}(\Theta)$, with $\kappa_\theta=2^{-(1/2-2\delta^2)m}$, into it to obtain $\BB_{m,l,>\kappa_\theta}(\va,\vb)$, and we aim to exclude this type of term. We can use \lemmaref{rotainte} together with an averaging argument to deal with it. To apply \lemmaref{rotainte}, we need to exclude the case $j_1 \wedge j_2 \ge (1-2\delta^2)m$. As in \eqref{4.2.1}, this case can be handled by \lemmaref{sizeest}(i) and (ii). 
The term $\II'_{m,l,>\kappa_{\theta}}(\va,\vb)$ with $-100\delta m<l<-\delta m$ can be excluded via a similar argument. 

Thus, it remains to deal with $\BB_{m,l,\le \kappa_\theta}(\va,\vb)$ with $l_0 \le l \le -100\delta m$ and\\
$\II'_{m,l,\le \kappa_{\theta}}(\va,\vb)$ with $-100\delta m<l<-\delta m$. For $\BB_{m,l,\le \kappa_\theta}(\va,\vb)$, integration by parts in time yields
\begin{align*}
	&\BB_{m,l,\le \kappa_\theta}(\va,\vb)\\
	&=\II'_{m,l,\le \kappa_\theta}(\va,\vb)+\BB'_{m,l,\le \kappa_\theta}(\ppd s \va,\vb)+\BB'_{m,l,\le \kappa_\theta}(\va,\ppd s \vb).
\end{align*}
Therefore, it suffices to address
\begin{itemize}
	\item Small Modulation Cases and Boundary Terms:
	\[
	\BB_{m,l_0,\le \kappa_\theta}(\va,\vb) \text{ and } \II'_{m,l,\le \kappa_\theta}(\va,\vb), \ l_0<l<-\delta m;
	\]
	\item Intermediate Modulation Cases:
	\[
	\BB'_{m,l,\le \kappa_\theta}(\ppd s \va,\vb) \text{ and } \BB'_{m,l,\le \kappa_\theta}(\va,\ppd s \vb), \ l_0<l \le -100\delta m;
	\]
	\item Cubic Terms and Remainders.
\end{itemize}

\subsection{Small Modulation Cases and Boundary Terms}\label{smallboundary}
In this subsection, we address $\BB_{m,l_0,\le \kappa_\theta}(\va,\vb)$ and $\II'_{m,l,\le \kappa_\theta}(\va,\vb)$ with $l_0<l<-\delta m$.
 It suffices to show that
\begin{multline*}
2^{10k_+}2^{\delta k}2^{(1-20\delta)(j+k)}2^{-(1/2-18\delta)n}2^{\delta m/2}2^{-l}\norm{A^{(j)}_nQ_{jk}I_{\gti m_q,s,l,\le \kappa_\theta}(\va(s),\vb(s))}_2 \\ \lle \epsl_1^2 2^{-10\delta^4 m},	
\end{multline*}
where $s \in [2^{m-1},2^{m+1}]$ and $l_0 \le l<-\delta m$. Here the subscript $\gti m_q$ will be suppressed. Note that $\ul k \ge -\delta^2 m$.

\noindent\textbf{Step 1:} For $I_{s,l,\le \kappa_\theta}(\va(s),\vb(s))$, we decompose it into
\[
I_{s,l,\le \kappa_\theta}(\Vaa(s),\Vbb(s)).
\]
The case $j_1 \vee j_2 \ge (5/2)m$ is easy to treat. We next exclude the case $j_1 \vee j_2 \ge (1-3\delta)m$. We may assume that $j_2 \ge (1-3\delta)m$. We insert localizations of $\nabla_\xi \Phi$ with parameter $r_2 \ge r_2^0=\lfloor -(1/2-\delta)m \rfloor$ into $I_{s,l,\le \kappa_\theta}(\Vaa(s),\Vbb(s))$ to obtain $I_{s,l,\le \kappa_\theta,r_2}(\Vaa(s),$ $\Vbb(s))$. We first deal with the case $r_2 \ge -2\delta m$. In view of \lemmaref{sizeest}(iii) and (iv), we estimate
\begin{align*}
	&2^{10k_+}2^{\delta k}2^{(1-20\delta)(j+k)}2^{-(1/2-18\delta)n}2^{\delta m/2}2^{-l}\norm{A^{(j)}_nQ_{jk}I_{s,l,\le \kappa_\theta,r_2}(\Vaa(s),\Vbb(s))}_2\\
	\lle{} &2^{16\delta^2m}2^{(1-20\delta)j}2^{\delta m/2}2^{-l}2^l 2^{-r_2/2}\kappa_\theta^{1/2} \norm{\mao{\Vaa(s)}}_\infty \norm{\Vbb(s)}_2.
\end{align*}
The desired estimate follows if $n_2 \le \delta m$. If $n_2>\delta m$, then we have $\abs{\nabla_\eta \Phi} \gge 1$. Indeed, the condition $n_2>\delta m$ implies that $\abs \eta$ is close to $2\gamma_0$. Suppose, for a contradiction, that $\abs{\nabla_\eta \Phi} \ll 1$. Then $\abs{\xi-\eta}$ is close to $2\gamma_0$. The bound $\abs{\Theta} \lle \kappa_\theta$ implies that $\abs{\xi}$ is close to $4\gamma_0$ or $0$, which contradicts the restriction $l<-\delta m$. Using the bounds $\abs{\nabla_\eta \Phi} \gge 1$ and $r_2 \ge -2\delta m$, together with \lemmaref{sizeest}(iii), we obtain
\begin{align*}
	&2^{10k_+}2^{\delta k}2^{(1-20\delta)(j+k)}2^{-(1/2-18\delta)n}2^{\delta m/2}2^{-l}\norm{A^{(j)}_nQ_{jk}I_{s,l,\le \kappa_\theta,r_2}(\Vaa(s),\Vbb(s))}_2\\
	\lle{} &2^{16\delta^2m}2^{(1-20\delta)j}2^{\delta m/2}2^{-l}2^l 2^{-r_2/2}\kappa_\theta \norm{\mao{\Vaa(s)}}_\infty \norm{\Vbb(s)}_2.
\end{align*}
This gives the desired bound.

If $r_2 \le -2\delta m$, then \lemmaref{resoregion} implies that $|\xi|$, $|\xi-\eta|$, and $|\eta|$ are close to $\gamma_0$, $\gamma_0$, and $2\gamma_0$, respectively. We next employ the approximate-finite-speed-of-propagation argument, together with an averaging argument, to exclude the case $j \ge m+r_2+D$. To do so, it suffices to exclude the case $j_1 \ge (1-\delta^2)j$. Indeed, it follows from \lemmaref{sizeest}(i) that
\begin{align*}
	&2^{10k_+}2^{\delta k}2^{(1-20\delta)(j+k)}2^{-(1/2-18\delta)n}2^{\delta m/2}2^{-l}\norm{A^{(j)}_nQ_{jk}I_{s,l,\le \kappa_\theta,r_2}(\Vaa(s),\Vbb(s))}_2\\
\lle{} &2^{6\delta^2m}2^{(1-20\delta)j}2^{\delta m/2}2^{-l} 2^{-n_2/2}2^{l/2}\norm{\mao{\Vbb}(s)}_{L^2(r\dif r)L_{\theta}^{\infty}} \norm{\Vaa(s)}_2.
\end{align*}
Hence we can assume that $j \lle m+r_2$. If $r_2>r_2^0$, then, by \lemmaref{sizeest}(iii), we estimate
\begin{align*}
	&2^{10k_+}2^{\delta k}2^{(1-20\delta)(j+k)}2^{-(1/2-18\delta)n}2^{\delta m/2}2^{-l}\norm{A^{(j)}_nQ_{jk}I_{s,l,\le \kappa_\theta,r_2}(\Vaa(s),\Vbb(s))}_2\\
\lle{} &2^{6\delta^2m}2^{(1-20\delta)(m+r_2)}2^{\delta m/2}2^{-l} 2^l2^{-r_2/2}\kappa_\theta \norm{\mao{\Vaa}(s)}_\infty \norm{\Vbb(s)}_2,
\end{align*}
while if $r_2=r_2^0$, then, by \lemmaref{sizeest}(iii) and (iv), we have
\begin{align*}
	&2^{10k_+}2^{\delta k}2^{(1-20\delta)(j+k)}2^{-(1/2-18\delta)n}2^{\delta m/2}2^{-l}\norm{A^{(j)}_nQ_{jk}I_{s,l,\le \kappa_\theta,r_2}(\Vaa(s),\Vbb(s))}_2\\
\lle{} &2^{6\delta^2m}2^{(1-20\delta)(m+r_2)}2^{\delta m/2}2^{-l} 2^l\kappa_\theta^{1/2} \norm{\mao{\Vaa}(s)}_\infty \norm{\Vbb(s)}_2.
\end{align*}

\noindent\textbf{Step 2:} We can now assume that $j_1 \vee j_2 \le (1-3\delta)m$. Recall that we aim to deal with $I_{s,l,\le \kappa_\theta}(\Vaa(s),\Vbb(s))$. We insert a cutoff function $\fai_{>-\delta m}(\Xi)$ into it, and integration by parts in $\eta$, together with an averaging argument, shows that $I_{s,l,\le \kappa_\theta,>-\delta m}(\Vaa(s),\Vbb(s))$ is negligible. Hence in the following we focus on $I_{s,l,\le \kappa_\theta,\le -\delta m}(\Vaa(s),\Vbb(s))$. By \lemmaref{resoregion}, we deduce that $|\xi|$, $|\xi-\eta|$, and $|\eta|$ are close to $2\gamma_0$, $\gamma_0$, and $\gamma_0$. In view of \lemmaref{zproperty}(iv), our goal is to prove that
\begin{align*}
&2^{10k_+}2^{\delta k}2^{(1-20\delta)(j+k)}2^{\delta m/2}2^{-l}\\
&\times\sup_{0 \le n \le j+1}2^{-(1/2-18\delta)n}\norm{A^{(j)}_nP_kI_{s,l,\le \kappa_\theta,\le -\delta m}(\Vaa(s),\Vbb(s))}_2 \lle \epsl_1^2 2^{-10\delta^4 m}.
\end{align*}

We further exclude the case $j_1 \vee j_2 \ge (1/2+\delta^2)m$. We may assume that $j_2 \ge (1/2+\delta^2)m$. We insert localizations of $\nabla_\eta \Phi$ with parameter $r_1 \ge r_1^0=\flo{-(1/2-\delta)m}$ into $I_{s,l,\le \kappa_\theta,\le -\delta m}(\Vaa(s),\Vbb(s))$ to obtain $I_{s,l,\le \kappa_\theta,r_1}(\Vaa(s),\Vbb(s))$. As a consequence of \lemmaref{resoregion}, we have $2^{-n} \lle 2^{-j} \vee (2^l+2^{2r_1+2\delta^2m})$. If $l=-j \vee l \vee (2r_1+2\delta^2 m)$, then it follows from \lemmaref{sizeest}(iii) and \lemmaref{resoregion} that
\begin{align*}
		&2^{10k_+}2^{\delta k}2^{(1-20\delta)(j+k)}2^{-(1/2-18\delta)n}2^{\delta m/2}2^{-l}\norm{A^{(j)}_nP_kI_{s,l,\le \kappa_\theta,r_1}(\Vaa(s),\Vbb(s))}_2\\
\lle{} &2^{6\delta^2 m} 2^{(1-20\delta)j}2^{-(1/2+18\delta)l}2^{\delta m/2} 2^{l/2}2^{r_1/2}\kappa_\theta \norm{\mao{\Vaa}(s)}_\infty \norm{\Vbb(s)}_2.
\end{align*}
If $-j=-j \vee l \vee (2r_1+2\delta^2 m)$, then, in view of \lemmaref{sizeest}(iii) and (iv), we have
\begin{align*}
	&2^{10k_+}2^{\delta k}2^{(1-20\delta)(j+k)}2^{-(1/2-18\delta)n}2^{\delta m/2}2^{-l}\norm{A^{(j)}_nP_kI_{s,l,\le \kappa_\theta,r_1}(\Vaa(s),\Vbb(s))}_2\\
\lle{} &2^{16\delta^2m}2^{(1/2-2\delta)j}2^{\delta m/2}2^{-l}2^l \kappa_\theta^{1/2} \norm{\mao{\Vaa}(s)}_\infty \norm{\Vbb(s)}_2.
\end{align*}
For the case $2r_1+2\delta^2 m=-j \vee l \vee (2r_1+2\delta^2 m)$, if $r_1>r_1^0$, then, applying \lemmaref{sizeest}(iii), we estimate
\begin{align*}
	&2^{10k_+}2^{\delta k}2^{(1-20\delta)(j+k)}2^{-(1/2-18\delta)n}2^{\delta m/2}2^{-l}\norm{A^{(j)}_nP_kI_{s,l,\le \kappa_\theta,r_1}(\Vaa(s),\Vbb(s))}_2\\
\lle{} &2^{6\delta^2m} 2^{(1-20\delta)j}2^{(1/2-18\delta)(2r_1+2\delta^2 m)}2^{\delta m/2}2^{-l} 2^l2^{-r_1/2}\kappa_\theta \norm{\mao{\Vaa}(s)}_\infty \norm{\Vbb(s)}_2,
\end{align*}
while if $r_1=r_1^0$, then, by \lemmaref{sizeest}(iii) and (iv), we have
\begin{align*}
	&2^{10k_+}2^{\delta k}2^{(1-20\delta)(j+k)}2^{-(1/2-18\delta)n}2^{\delta m/2}2^{-l}\norm{A^{(j)}_nP_kI_{s,l,\le \kappa_\theta,r_1}(\Vaa(s),\Vbb(s))}_2\\
\lle{} &2^{16\delta^2m} 2^{(1-20\delta)j}2^{(1/2-18\delta)(-m+2\delta m+2\delta^2 m)}2^{\delta m/2}2^{-l} 2^l\kappa_\theta^{1/2} \norm{\mao{\Vaa}(s)}_\infty \norm{\Vbb(s)}_2.
\end{align*}

We can now assume that $j_1 \vee j_2 \le (1/2+\delta^2)m$. We insert a cutoff function \\$\fai_{>-(1/2-5\delta^2)m}(\Xi)$ into $I_{s,l,\le \kappa_\theta,\le -\delta m}(\Vaa(s),\Vbb(s))$. Integration by parts in $\eta$, together with an averaging argument, shows that the $>-(1/2-5\delta^2)m$-part is negligible. Finally, we estimate $I_{s,l,\le \kappa_\theta,\le -(1/2-5\delta^2)m}(\Vaa(s),\Vbb(s))$. In view of \lemmaref{resoregion}, we have $\abs{\abs{\xi}-2\gamma_0} \lle 2^l+2^{-2(1/2-5\delta^2)m+2\delta^2 m} \lle 2^l$, and hence $2^{-n} \lle 2^{-j} \vee 2^{l}$. The bound $\abs \Xi \lle 2^{-(1/2-5\delta^2)m}$ implies that $\abs{\eta-\xi/2} \lle 2^{-(1/2-5\delta^2)m}$. It then follows from \eqref{vinftyest1} that
\begin{align*}
	&2^{10k_+}2^{\delta k}2^{(1-20\delta)(j+k)}2^{-(1/2-18\delta)n}2^{\delta m/2}2^{-l}\\
	&\times \norm{A^{(j)}_nP_kI_{s,l,\le \kappa_\theta,\le -(1/2-5\delta^2)m}(\Vaa(s),\Vbb(s))}_2\\
\lle{} &2^{\delta^2m}(2^{(1/2-2\delta)j}\vee 2^{(1-20\delta)j}2^{(1/2-18\delta)l})2^{\delta m/2}2^{-l}2^{l/2} 2^{-2(1/2-5\delta^2)m} \\
	   &\times\norm{\mao{\Vaa}(s)}_\infty \norm{\mao{\Vbb}(s)}_\infty\\
\lle{} &\epsl_1^2 2^{2\delta^2m}(2^{(1/2-2\delta)m} \vee 2^{(1-20\delta)m+(1/2-18\delta)l})2^{\delta m/2}2^{-l/2} 2^{-(1-10\delta^2)m} \lle \epsl_1^2 2^{-10\delta^4 m}.
\end{align*}
This case accounts for the factor $2^{-(1/2-18\delta)n}$ in the definition of the $Z$-norm.

\subsection{Intermediate Modulation Cases}\label{sec:dispersiveinter}
We first prove the following lemma, whose proof is similar to that in Section~\ref{smallboundary}.
\begin{lemma}\label{interfirstpart}
	Assume that $m \ge D^4$. Suppose that one of the following two conditions holds. \\ \emph{Case 1.}
	\begin{gather*}
			\sup_{(k,j,n) \in \hti A} \Brac{2^{-3\delta^2 m}2^{-4\delta n} \norm{\mao{W^1_{k,j,n}}}_{\infty} + 2^{(1-21\delta)j}2^{-(1/2-18\delta)n}2^{-3\delta^2 m}\norm{\mao{W^1_{k,j,n}}}_{L^2(r\dif r)L_{\theta}^{\infty}} }\le 1 \\
		 \text{and } \norm{W^2}_2 \le 2^{-(3/2-13\delta)m}.	
	\end{gather*}
	\emph{Case 2.}
	\[
	2^{(1/2-5\delta)m}\norm{\mao{W^1}}_\infty+2^{(1-8\delta)m}\norm{\mao{W^1}}_{L^2(r\dif r)L^\infty_\theta} \le 1 \text{ and } \norm{\mao{W^2}}_{L^2(r\dif r)L^\infty_\theta} \le 2^{-(1-8\delta)m}.
	\]
	Then
	\[
	2^m2^{10 k_+}2^{\delta k}2^{(1-20\delta)(j+k)}2^{-(1/2-18\delta)n}2^{-l}\norm{A^{(j)}_nQ_{jk}I_{\gti m_q,m,l,\le \kappa_\theta}(W^1_{k_1},W^2_{k_2})}_2 \lle 2^{-10\delta^4 m},
	\]
	where the phase in $I$ is allowed to be any of $\Phi_{\iota_1\iota_2}$ with $(\iota_1,\iota_2) \in \set{+,-}^2$, and where $-\delta^2 m \le \ul k \le \cl k \le 2\delta^3 m$, $j \lle m$, $l_0<l \le -100\delta m$, and $\kappa_\theta=2^{-(1/2-2\delta^2)m}$.
\end{lemma}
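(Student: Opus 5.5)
The plan is to follow Section~\ref{smallboundary} closely, with the pointwise bounds for $\mao{V}$ used there replaced by the hypotheses on $W^1,W^2$. Write
\[
I_{\gti m_q,m,l,\le\kappa_\theta}(W^1_{k_1},W^2_{k_2})=\int q_m(s)\, I_{s,l,\le\kappa_\theta}(W^1_{k_1}(s),W^2_{k_2}(s))\,\dif s .
\]
Since $j\lle m$, the weight $2^{(1-20\delta)(j+k)}$ is at most $2^{(1-20\delta)m}2^{C\delta^3 m}$. The total prefactor to be beaten is therefore about $2^{(2-20\delta)m}2^{-l}2^{-(1/2-18\delta)n}$. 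Because $\abs{\Theta}\le\kappa_\theta$ and $l\le -100\delta m$, the size estimates of \lemmaref{sizeest} are available throughout, and the symbol bound for $\fai_l(\Phi)$ is handled by an averaging argument as before. I treat the two cases separately and gain all decay from the measure of the region $\set{\abs{\Phi}\lle 2^l,\ \abs{\Theta}\lle\kappa_\theta}$, with no further integration by parts, since $W^1$ and $W^2$ carry no spatial localization.

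\textbf{Case 1.} Decompose $W^1_{k_1}$ into $W^1_{k_1,j_1,n_1}$ and localize $\nabla_\xi\Phi$ with parameter $r_2\ge r_2^0=\flo{-(1/2-\delta)m}$, as in Step~1 of Section~\ref{smallboundary}. Use Schur's test with \lemmaref{sizeest}(iii),(iv), placing $\mao{W^1}$ in $L^\infty$ and $W^2$ in $L^2$.
\begin{itemize}
\item When $r_2\ge -2\delta m$, the bound is $2^{l}2^{-r_2/2}\kappa_\theta^{1/2}$ times $\norm{\mao{W^1}}_\infty\norm{W^2}_2$. Multiplying by the prefactor gives $2^{(2-20\delta)m}\cdot 2^{-(1/4-\delta^2)m}2^{4\delta n_1}2^{3\delta^2m}\cdot 2^{-(3/2-13\delta)m}$. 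The exponent is negative provided $n_1$ is not too large, which is the same arithmetic as in Section~\ref{smallboundary}, where the $2^{-(3/2-13\delta)m}$ plays the role of the time integral of the $L^2$ bound.
\item When $n_1$ is large, $\abs{\xi-\eta}$ is near $2\gamma_0$. Then, as in Step~1, $\abs{\nabla_\eta\Phi}\gge 1$, so the full factor $\kappa_\theta$ is available instead of $\kappa_\theta^{1/2}$. Alternatively, use \lemmaref{sizeest}(i) with $\mao{W^1}$ in $L^2(r\dif r)L^\infty_\theta$ and the gain $2^{-n_1/2}2^{l/2}$.
\item When $r_2\le-2\delta m$, \lemmaref{resoregion} forces the frequencies to be near $(\gamma_0,\gamma_0,2\gamma_0)$ or $(2\gamma_0,\gamma_0,\gamma_0)$. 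The factor $2^{-(1/2-18\delta)n}$ together with $2^{-n}\lle 2^{-j}\vee 2^l\vee 2^{2r+2\delta^2m}$ then pays for the output weight, as in the three subcases there.
\end{itemize}

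\textbf{Case 2.} Here $\mao{W^2}$ is controlled only in $L^2(r\dif r)L^\infty_\theta$, so I use \lemmaref{sizeest}(i), or (ii) near the resonant radii, in the form
\[
\norm{I}_2\lle 2^{5\delta^2m}2^{l/2}\norm{\mao{W^2}}_{L^2(r\dif r)L^\infty_\theta}\norm{W^1}_2 ,
\]
together with $\norm{W^1}_2\lle\norm{\mao{W^1}}_{L^2(r\dif r)L^\infty_\theta}\lle 2^{-(1-8\delta)m}$. This gives $2^{(2-20\delta)m}2^{-l/2}2^{-(2-16\delta)m}$, which still leaves the factor $2^{-l/2}$ to be absorbed. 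In the non-resonant region, where \lemmaref{sizeest}(ii) applies, the extra gain $2^{l/2}$ kills it. Near resonance, the modulation restriction and \lemmaref{resoregion} give $2^{-n}\lle 2^l$ for the output, so the weight $2^{-(1/2-18\delta)n}\lle 2^{(1/2-18\delta)l}$ absorbs $2^{-l/2}$ up to $2^{-18\delta l}\le 2^{18\delta m}$. Alternatively, I place $\mao{W^1}$ in $L^\infty$ with $2^{-(1/2-5\delta)m}$ and use \lemmaref{sizeest}(iii).

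\textbf{Main obstacle.} The hardest step is the bookkeeping in the resonant configuration, where the margin is only $O(\delta m)$. There I would first split according to which of $-j$, $l$, $2r+2\delta^2m$ dominates, exactly as in Step~2 of Section~\ref{smallboundary}. I would then check each of the resulting exponents individually.
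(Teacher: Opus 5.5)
There is a genuine gap. Your Case~1 generic estimate fails, the two degenerate regions need tools your plan rules out, and your main Case~2 route fails for a related reason.

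\textbf{The generic region of Case~1.} With the Schur bound $2^{l}2^{-r_2/2}\kappa_\theta^{1/2}$, the exponent you write down is $(2-20\delta)-(\tfrac14-\delta^2)-(\tfrac32-13\delta)=\tfrac14-7\delta+O(\delta^2)>0$. So you lose about $2^{m/4}$, whatever $n_1$ is. The analogy with Step~1 of Section~\ref{smallboundary} hides this. There the estimate is pointwise in $s$, so there is no factor $2^m$ from the time integral, and the second input obeys $\norm{V^{\iota_2,p_2}_{k_2,j_2,n_2}}_2\lle 2^{-(1-23\delta)m}$. Here the time integral costs $2^m$, while $\norm{W^2}_2\le 2^{-(3/2-13\delta)m}$ gains only an extra $2^{-m/2}$. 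You therefore need the full angular factor $\kappa_\theta$ in the generic region, not only when $n_1$ is large. Getting it means applying \eqref{eq:2.7.1} and \eqref{eq:2.7.2} in both directions of Schur's test. That forces you to localize both $\nabla_\eta\Phi$ and $\nabla_\xi\Phi$, with parameters $r_1,r_2\ge-2\delta^2 m$, which is what the paper does. The exponent then becomes $-7\delta m+4\delta n_1+O(\delta^2m)$. This is negative once the case $j_1\ge\tfrac43m$ is discarded.

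\textbf{The degenerate regions and Case~2.} Since $W^2$ carries no spatial localization, the integration by parts in $\eta$ from Step~2 of Section~\ref{smallboundary} is unavailable.
\begin{itemize}
\item Region $\abs{\nabla_\eta\Phi}\le 2^{-2\delta^2m}$. The output is near $2\gamma_0$, so this is where your output-weight argument belongs. It must be done by size estimates alone, split according to which of $-j$, $l$, $2r_1+2\delta^2m$ dominates.
\item Region $\abs{\nabla_\xi\Phi}\le 2^{-2\delta^2m}$. By \lemmaref{resoregion}, $(\abs\xi,\abs{\xi-\eta},\abs\eta)\approx(\gamma_0,\gamma_0,2\gamma_0)$. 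Then $n=0$, so the weight $2^{-(1/2-18\delta)n}$ gains nothing. Size estimates alone leave $2^{-7\delta m}2^{-r_2/2}$, which is about $2^{m/4}$ for $r_2$ near $r_2^0$. The paper recovers $2^{(1-20\delta)r_2}$ by approximate finite speed of propagation. This is an integration by parts in $\xi$ using the $j_1$-localization of $W^1_{k_1,j_1,n_1}$, and it reduces to $j\lle m+r_2$. The remaining case $j_1\ge(1-\delta^2)j$ is handled by a Schur bound with radial range from $\abs{\nabla_\eta\Phi}\gge1$ and angular range $\kappa_\theta$.
\item Case~2. Your route through \lemmaref{sizeest}(i) discards $\abs{\Theta}\le\kappa_\theta$. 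Since $\Theta=-\iota_1\lambda'(\abs{\xi-\eta})\,\xi\cdot\eta^\perp/\abs{\xi-\eta}$, this restriction makes $\xi$ and $\eta$ collinear up to $O(\kappa_\theta2^{2\delta^2m})$. Together with $\abs{\Phi}\lle2^l$, the radii must then lie near a collinear time resonance. So the region where \lemmaref{sizeest}(ii) applies is essentially empty. At resonant points such as $\abs{\xi-\eta}=1$, $\abs\eta=\tau(1)$ for $(\iota_1,\iota_2)=(+,+)$, neither gradient is small, \lemmaref{resoregion} does not apply, and $n=0$. The leftover factor $2^{-l/2}$, which can be as large as $2^{m/2}$, is therefore never absorbed.
\end{itemize}
Your fallback for Case~2, putting $\mao{W^1}$ in $L^\infty$, is the paper's route, via $\norm{\mao{W^1}}_\infty\norm{W^2}_2\le2^{-(3/2-13\delta)m}$. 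It needs the whole corrected Case~1 scheme, with one change. In the region where $\abs{\nabla_\xi\Phi}$ is small, one puts $\mao{W^2}$ in $L^2(r\dif r)L^\infty_\theta$ and closes with the radial-times-angular Schur bound, without finite speed of propagation.
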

\begin{proof}
In the following, we omit the subscript $\gti m_q$ in $I_{\gti m_q,m,l,\le \kappa_\theta}$.

\noindent\ul{Case 1.}\\
	We insert localizations of $\nabla_\eta \Phi$ and $\nabla_\xi \Phi$ with parameters $r_1 \ge r_1^0=\flo{-(1/2-\delta)m}$ and $r_2 \ge r_2^0=\flo{-(1/2-\delta)m}$ to obtain $I_{m,l,\le \kappa_\theta,r_1,r_2}(W^1_{k_1},W^2_{k_2})$. Then we decompose $I_{m,l,\le \kappa_\theta,r_1,r_2}(W^1_{k_1},W^2_{k_2})$ into $I_{m,l,\le \kappa_\theta,r_1,r_2}(W^1_{k_1,j_1,n_1},W^2_{k_2})$, and the case $j_1 \ge (4/3) m$ is easy to exclude. 

	If $r_1,r_2 \ge -2\delta^2 m$, then, by \lemmaref{sizeest}(iii), we estimate
	\begin{align*}
		&2^m2^{10 k_+}2^{\delta k}2^{(1-20\delta)(j+k)}2^{-(1/2-18\delta)n}2^{-l}\norm{A^{(j)}_nQ_{jk}I_{m,l,\le \kappa_\theta,r_1,r_2}(W^1_{k_1,j_1,n_1},W^2_{k_2})}_2\\
		\lle{} &2^m2^{6\delta^2 m}2^{(1-20\delta)j}2^{-l}2^l2^{-r_1/2}2^{-r_2/2}\kappa_\theta\norm{\mao{W^1_{k_1,j_1,n_1}}}_\infty \norm{W^2_{k_2}}_2.
	\end{align*}
	
	If $r_1 \le -2\delta^2 m$, then it suffices to prove that
	\[
	2^m2^{\delta^2 m}2^{(1-20\delta)j}\sup_{0 \le n \le j+1} 2^{-(1/2-18\delta)n} 2^{-l}\norm{A^{(j)}_nP_kI_{m,l,\le \kappa_\theta,r_1,r_2}(W^1_{k_1,j_1,n_1},W^2_{k_2})}_2 \lle 2^{-10\delta^4 m}.
	\]
	\lemmaref{resoregion} implies that $|\xi|$, $|\xi-\eta|$, and $|\eta|$ are close to $2\gamma_0$, $\gamma_0$, and $\gamma_0$, respectively, and that $2^{-n} \lle 2^{-j} \vee 2^l \vee 2^{2r_1+2\delta^2 m}$. If $l=-j \vee l \vee (2r_1+2\delta^2 m)$, then it follows from \lemmaref{sizeest}(iii) and \lemmaref{resoregion} that
	\begin{align*}
		&2^m2^{\delta^2 m}2^{(1-20\delta)j} 2^{-(1/2-18\delta)n} 2^{-l}\norm{A^{(j)}_nP_kI_{m,l,\le \kappa_\theta,r_1,r_2}(W^1_{k_1,j_1,n_1},W^2_{k_2})}_2\\
		\lle{} &2^m2^{7\delta^2 m}2^{(1-20\delta)j}2^{(1/2-18\delta)l} 2^{-l}2^{l/2}2^{r_1/2}\kappa_\theta \norm{\mao{W^1_{k_1,j_1,n_1}}}_\infty \norm{W^2_{k_2}}_2.
	\end{align*}
	If $-j=-j \vee l \vee (2r_1+2\delta^2 m)$, then, in view of \lemmaref{sizeest}(iii) and (iv), we estimate
	\begin{align*}
		&2^m2^{\delta^2 m}2^{(1-20\delta)j} 2^{-(1/2-18\delta)n} 2^{-l}\norm{A^{(j)}_nP_kI_{m,l,\le \kappa_\theta,r_1,r_2}(W^1_{k_1,j_1,n_1},W^2_{k_2})}_2\\
	\lle{} &2^m2^{17\delta^2 m}2^{(1/2-2\delta)j}2^{-l}2^l\kappa_\theta^{1/2} \norm{\mao{W^1_{k_1,j_1,n_1}}}_\infty \norm{W^2_{k_2}}_2.
	\end{align*}
	If $2r_1+2\delta^2 m=-j \vee l \vee (2r_1+2\delta^2 m)$, then \lemmaref{sizeest}(iii) yields 
	\begin{align*}
	&2^m2^{\delta^2 m}2^{(1-20\delta)j} 2^{-(1/2-18\delta)n} 2^{-l}\norm{A^{(j)}_nP_kI_{m,l,\le \kappa_\theta,r_1,r_2}(W^1_{k_1,j_1,n_1},W^2_{k_2})}_2\\
	\lle{} &2^m2^{10\delta^2 m}2^{(1-20\delta)j} 2^{(1-36\delta)r_1} 2^{-l}2^l 2^{-r_1/2}\kappa_\theta \norm{\mao{W^1_{k_1,j_1,n_1}}}_\infty \norm{W^2_{k_2}}_2
	\end{align*}
	if $r_1>r_1^0$, while \lemmaref{sizeest}(iii) and (iv) give the estimate
	\begin{align*}
	&2^m2^{\delta^2 m}2^{(1-20\delta)j} 2^{-(1/2-18\delta)n} 2^{-l}\norm{A^{(j)}_nP_kI_{m,l,\le \kappa_\theta,r_1,r_2}(W^1_{k_1,j_1,n_1},W^2_{k_2})}_2\\
	\lle{} &2^m2^{25\delta^2 m}2^{(1-20\delta)j} 2^{(1-36\delta)r_1} 2^{-l} 2^{l}\kappa_\theta^{1/2} \norm{\mao{W^1_{k_1,j_1,n_1}}}_\infty \norm{W^2_{k_2}}_2
	\end{align*}
	if $r_1=r_1^0$.
	
	If $r_2 \le -2\delta^2m$, then it follows from \lemmaref{resoregion} that $|\xi|$, $|\xi-\eta|$, and $|\eta|$ are close to $\gamma_0$, $\gamma_0$, and $2\gamma_0$, respectively. We first improve the approximate finite speed of propagation bound. When $j \ge m+r_2+D$, the approximate-finite-speed-of-propagation argument, together with an averaging argument, can handle the term if $j_1 \le (1-\delta^2)j$. When $j_1 \ge (1-\delta^2)j$, we apply Schur's test, proceeding as in the proof of \lemmaref{sizeest}(i): we control the radial range using the lower bound $\abs{\nabla_\eta \Phi} \gge 1$ and the angular range by $\kappa_\theta$. Therefore,
	\begin{equation}\label{eq:4.3.1}
		\begin{aligned}
		&2^m2^{10 k_+}2^{\delta k}2^{(1-20\delta)(j+k)}2^{-(1/2-18\delta)n}2^{-l}\norm{A^{(j)}_nQ_{jk}I_{m,l,\le \kappa_\theta,r_1,r_2}(W^1_{k_1,j_1,n_1},W^2_{k_2})}_2\\
	\lle{} &2^m2^{\delta^2 m}2^{(1-20\delta)j}2^{-l} 2^{l/4}
	\kappa_\theta\norm{\mao{W^1_{k_1,j_1,n_1}}}_{L^2(r\dif r)L^\infty_\theta}\norm{W^2_{k_2}}_2.
	\end{aligned}
	\end{equation}
		Hence we can assume that $j \lle m+r_2$. By \lemmaref{sizeest}(iii), we have
	\begin{align*}
		&2^m2^{10 k_+}2^{\delta k}2^{(1-20\delta)(j+k)}2^{-(1/2-18\delta)n}2^{-l}\norm{A^{(j)}_nQ_{jk}I_{m,l,\le \kappa_\theta,r_1,r_2}(W^1_{k_1,j_1,n_1},W^2_{k_2})}_2\\
	\lle{} &2^m2^{10\delta^2 m}2^{(1-20\delta)(m+r_2)} 2^{-l} 2^l2^{-r_2/2}\kappa_\theta\norm{\mao{W^1_{k_1,j_1,n_1}}}_\infty \norm{W^2_{k_2}}_2
	\end{align*}
	if $r_2>r_2^0$, and by \lemmaref{sizeest}(iii) and (iv), we estimate 
	\begin{align*}
		&2^m2^{10 k_+}2^{\delta k}2^{(1-20\delta)(j+k)}2^{-(1/2-18\delta)n}2^{-l}\norm{A^{(j)}_nQ_{jk}I_{m,l,\le \kappa_\theta,r_1,r_2}(W^1_{k_1,j_1,n_1},W^2_{k_2})}_2\\
	\lle{} &2^m2^{25\delta^2m}2^{(1-20\delta)(m+r_2)}2^{-l}2^l\kappa_\theta^{1/2}\norm{\mao{W^1_{k_1,j_1,n_1}}}_\infty \norm{W^2_{k_2}}_2
	\end{align*}
	if $r_2=r_2^0$.
	
\noindent\ul{Case 2.}\\
The proof is similar since $\norm{\mao{W^1}}_\infty \norm{W^2}_2$ is also bounded by $2^{-(3/2-13\delta)m}$. The main distinction arises in the case $r_2 \le -2\delta^2m$. In this case, analogous to \eqref{eq:4.3.1}, we estimate
\begin{align*}
	&2^m2^{10 k_+}2^{\delta k}2^{(1-20\delta)(j+k)}2^{-(1/2-18\delta)n}2^{-l}\norm{A^{(j)}_nQ_{jk}I_{m,l,\le \kappa_\theta,r_1,r_2}(W^1_{k_1},W^2_{k_2})}_2\\
\lle{} &2^m2^{6\delta^2m}2^{(1-20\delta)j}2^{-l} 2^{l/2}\kappa_\theta\norm{\mao{W^2_{k_2}}}_{L^2(r\dif r)L^\infty_\theta}\norm{W^1_{k_1}}_2.
\end{align*}
\end{proof}

We now address the intermediate modulation cases. We aim to estimate\\ $\BB'_{m,l,\le \kappa_\theta}(\ppd s \va,\vb)$ and $\BB'_{m,l,\le \kappa_\theta}(\va,\ppd s \vb)$ for $l_0<l \le -100\delta m$. We only deal with $\BB'_{m,l,\le \kappa_\theta}(\va,\ppd s \vb)$ as $\BB'_{m,l,\le \kappa_\theta}(\ppd s \va,\vb)$ is similar. By \lemmaref{timederi2}, for any $s \simeq 2^m$, we have
	\[
	\ppd s V^{\iota_2,p_2}_{k_2}(s)=I^{p_2}_{k_2}(s)+II^{p_2}_{k_2}(s)+III^{p_2}_{k_2}(s),
\]
where
\begin{gather*}
I^{p_2}_{k_2}=\sum_{\substack{k_3,p_3,k_4,p_4 \colon \\ p_3+p_4=p_2,-2m \le k_3,k_4 \le \delta^3 m}} P_{k_2} I_{\gti m_q,>-3\delta^2 m}(\vvc,V^{\iota_4,p_4}_{k_4}),\\
\text{where the phase function is } -\iota_2\Lambda(\xi)+\iota_3\Lambda(\xi-\eta)+\iota_4\Lambda(\eta),\\
\norm{II^{p_2}_{k_2}(s)}_2 \lle \varepsilon_1^2 2^{-(3/2-12\delta)m},\\
\mao{III^{p_2}_{k_2}}(\xi,s)=\me^{\mi \iota_2 s\Phi^0} G(\xi,s) \fai_{<-2\delta^2 m}(\abs{\xi}-2\gamma_0),
\end{gather*}
where $\Phi^0=-\Lambda(\xi)+2\Lambda(\xi/2)$ and $G$ satisfies the following estimates:
\begin{gather*}
	\abs{\partial^\alpha_\xi G(\xi,s)}\lle \varepsilon_1^2 2^{-(1-5\delta)m}2^{\abs \alpha (1/2+3\delta)m}, \\
	\abs{\ppd s G(\xi,s)}\lle \varepsilon_1^2 2^{-(1-5\delta)m}2^{-(1-25\delta)m}.
\end{gather*}
Hence the term $\BB'_{m,l,\le \kappa_\theta}(\va,\ppd s \vb)$ is divided into three parts.

The $I^{p_2}_{k_2}$-part can be written as $2^{-l}\TT_{\gti n,m,l,\le \kappa_\theta,>-3\delta^2 m}(\va,\vvc,V^{\iota_4,p_4}_{k_4})$. We use $\xi_1$, $\xi_3$, and $\xi_4$ to denote the frequency variables of $\va$, $\vvc$, and $V^{\iota_4,p_4}_{k_4}$, respectively. The multiplier $\gti n$ is of the form $\gti m_q(\xi_1,\xi_{34})\gti m_q(\xi_3,\xi_4)\psi_{k_{34}}(\xi_{34})$ (See Section~\ref{notations})
 and will be omitted below. 
Inserting localizations of the second resonance function $\wan \Phi$ with parameter $p$, decomposing the cutoff function $\fai_{>-3\delta^2 m}(\Phi^{3,4}_{\iota_2\iota_3\iota_4})$ into $\psi_q(\Phi^{3,4}_{\iota_2\iota_3\iota_4})$, and integrating by parts in time yield 
\begin{align*}
	&\TT_{m,l,\le \kappa_\theta,q,p}(\va,\vvc,V^{\iota_4,p_4}_{k_4})\\
	&=\II'_{m,l,\le \kappa_\theta,q,p}(\va,\vvc,V^{\iota_4,p_4}_{k_4})+\TT'_{m,l,\le \kappa_\theta,q,p}(\ppd s \va,\vvc,V^{\iota_4,p_4}_{k_4})\\
	&+\TT'_{m,l,\le \kappa_\theta,q,p}( \va,\ppd s\vvc,V^{\iota_4,p_4}_{k_4})+\TT'_{m,l,\le \kappa_\theta,q,p}(\va,\vvc,\ppd sV^{\iota_4,p_4}_{k_4}).
\end{align*}
For $\II'_{m,l,\le \kappa_\theta,q,p}(\va,\vvc,V^{\iota_4,p_4}_{k_4})$, by an averaging argument, it can be expressed as
\begin{multline*}
	2^{-p}\int_{\abs{\lambda_1},\abs{\lambda_2} \le 2^{\delta^2 m}} \mao{\fai_1}(\lambda_1)\mao{\fai_2}(\lambda_2)
	\times \Bigl(\FF^{-1}\int q'_m(s)\me^{\mi (s\wan \Phi+\lambda_12^{-q}\Phi^{3,4}_{\iota_2\iota_3\iota_4}+\lambda_2 2^{-p}\wan \Phi)}\gti n \fai^{l_0}_l(\Phi)\\
\times \fai_{\le \log_2\kappa_\theta}(\Theta) \mao{ \va}(\xi-\eta,s) \mao{\vvc}(\eta-\sigma,s)\mao{V^{\iota_4,p_4}_{k_4}}(\sigma,s) \dif \eta \dif \sigma \dif s \Bigr) \dif \lambda_1 \dif \lambda_2 +O_{L^2}(\epsl_1^2 2^{-10m}).
\end{multline*}
Here $\norm{O_{L^2}(\epsl_1^2 2^{-10m})}_2 \lle \epsl_1^2 2^{-10m}$. 
Now applying (Case 1 of) \lemmaref{interfirstpart}  to $V^{\iota_1,p_1}$ and $2^{-2m/3}\FF^{-1}\int \me^{\mi(s+\lambda_1 2^{-q}+\lambda_2 2^{-p})\Phi^{3,4}_{\iota_2\iota_3\iota_4}} \mao{\vvc}(\eta-\sigma,s)\mao{V^{\iota_4,p_4}_{k_4}}(\sigma,s)  \dif \sigma$ (whose bound is given by \lemmaref{timederil2}) yields the desired estimate.
For $\TT'_{m,l,\le \kappa_\theta, q,p}(\ppd s \va,\vvc,V^{\iota_4,p_4}_{k_4})$, by neglecting the cubic terms and the remainders and using an averaging argument, we can write it as
\begin{multline*}
	2^{-p}\int_{\abs \lambda_1,\abs \lambda_2 \le 2^{\delta^2 m}} \mao{\fai_1}(\lambda_1)\mao{\fai_2}(\lambda_2) \Bigl(\FF^{-1}\int q_m(s)\me^{\mi (s\wan \Phi+\lambda_12^{-q}\Phi^{3,4}_{\iota_2\iota_3\iota_4}+\lambda_2 2^{-p}\wan \Phi)}\gti n \fai^{l_0}_l(\Phi) \\
	\times \fai_{\le \log_2 \kappa_\theta}(\Theta) \mao{\ppd s \va}(\xi-\eta,s) \mao{\vvc}(\eta-\sigma,s)\mao{V^{\iota_4,p_4}_{k_4}}(\sigma,s) \dif \eta \dif \sigma \dif s \Bigr) \dif \lambda_1 \dif \lambda_2 +O_{L^2}(\epsl_1^2 2^{-10m}).
\end{multline*}
(Case 2 of) \lemmaref{interfirstpart} can give the desired bound.
For $\TT'_{m,l,\le \kappa_\theta,q,p}( \va,\ppd s\vvc,V^{\iota_4,p_4}_{k_4})$, by neglecting the cubic terms and the remainders and applying an averaging argument, we can write it as
\begin{multline*}
		2^{-p}\int_{\abs \lambda_1,\abs \lambda_2 \le 2^{\delta^2 m}} \mao{\fai_1}(\lambda_1)\mao{\fai_2}(\lambda_2) \Bigl(\FF^{-1}\int q_m(s)\me^{\mi (s\wan \Phi+\lambda_12^{-q}\Phi^{3,4}_{\iota_2\iota_3\iota_4}+\lambda_2 2^{-p}\wan \Phi)}\gti n \fai^{l_0}_l(\Phi)\\
		 \times \fai_{\le \log_2\kappa_\theta}(\Theta) \mao{\va}(\xi-\eta,s) \mao{\ppd s \vvc}(\eta-\sigma,s)\mao{V^{\iota_4,p_4}_{k_4}}(\sigma,s) \dif \eta \dif \sigma \dif s \Bigr) \dif \lambda_1 \dif \lambda_2 +O_{L^2}(\epsl_1^2 2^{-10m}),
\end{multline*}
and (Case 1 of) \lemmaref{interfirstpart} can handle it. The term $\TT'_{m,l,\le \kappa_\theta,q,p}(\va,\vvc,\ppd sV^{\iota_4,p_4}_{k_4})$ can be estimated similarly. The $II^{p_2}_{k_2}$-part can be handled by (Case 1 of) \lemmaref{interfirstpart}.

For the $III^{p_2}_{k_2}$-part, we aim to estimate
\begin{align*}
\FF H(\xi)={}&2^{-l}\int q_m(s)\me^{\mi s \Psi} \gti m_q \fai^{l_0}_l(\Phi) \fai_{\le \log_2\kappa_\theta}(\Theta) \mao{\va}(\xi-\eta,s)\\
&\times \fai_{<-2\delta^2m}(\abs{\eta}-2\gamma_0)G(\eta,s) \dif \eta \dif s,
\end{align*}
where $\Psi=-\Lambda(\xi)+\iota_1\Lambda(\xi-\eta)+2\iota_2 \Lambda(\eta/2)$. We divide the proof into several steps.

\noindent\textbf{Step1:} We decompose $\va$ into $\Vaa$, and the case $j_1 \ge 5m$ is easy to handle. Note that $\abs{\xi-\eta}$ and $\abs{\xi}$ are away from $2\gamma_0$. We first exclude the case $j_1 \ge (1/2)m$. If $j_1 \ge (1/2)m$, then \lemmaref{sizeest}(iii) yields 
\begin{align*}
	&2^{10k_+}2^{\delta k}2^{(1-20\delta)(j+k)}2^{-(1/2-18\delta)n}\norm{A^{(j)}_nQ_{jk}H}_2\\
	\lle{} &2^{6\delta^2m}2^{(1-20\delta)j}2^m 2^{-l}2^{l}\kappa_\theta \sup_{s \simeq 2^m}\norm{G(s)}_\infty \norm{\Vaa(s)}_2.
\end{align*}
We insert localizations of $\nabla_\eta \Psi$ and $\nabla_\xi \Psi$ with parameters $r_1 \ge r_1^0=\flo{-(1/2-5\delta)m}$ and $r_2 \ge r_2^0=\flo{-(1/2-5\delta)m}$ and thereby obtain $H_{r_1r_2}$. Integration by parts in $\eta$ shows that the case $r_1 \ge (-l-m+10\delta^2m) \vee (r^0_1+1)$ is negligible. Hence we can assume that $r_1 \lle -(\delta/3) m$. If $j \ge m+r_2+D$, then integration by parts in $\xi$, together with an averaging argument, shows that this case is negligible. Therefore, we can assume that $j \lle m+r_2$. The bound $r_1 \lle -(\delta/3) m$ implies that $\abs{\lambda'(\abs{\eta-\xi})-\lambda'(\abs{\eta}/2)} \lle 2^{-(\delta/3) m}$. Hence $\abs{\eta-\xi}$ is close to $\gamma_0$. 
Then the smallness of $\Phi$ and the restriction $\abs{\Theta} \lle  \kappa_\theta$ imply that $\abs{\xi}$ is close to $\gamma_0$ and $(\iota_1,\iota_2)=(-,+)$.
If $r_1>r_1^0$, then the identity
\begin{align*}
	\abs{\nabla_\eta \Psi}&=\abs{-\lambda'(\abs{\eta-\xi})\mao{\eta-\xi}+\lambda'(\abs{\eta}/2)\mao \eta}\\
&=\abs{\lambda'(\abs{\eta}/2)-\lambda'(\abs{\eta-\xi})}+O(\kappa_\theta)
\end{align*}
implies that
\[
2^{r_1} \simeq \abs{\abs{\eta}/2-\abs{\eta-\xi}}.
\]
Note that
\[
\abs{\eta-\xi}+\abs{\xi}-\abs{\eta}=\frac{2\abs{\eta-\xi}\abs{\xi}(1-\mao{\eta-\xi}\cdot \mao \xi)}{\abs{\eta-\xi}+\abs{\xi}+\abs{\eta}} \lle \kappa_\theta^2.
\]
It follows that
\[
\abs{\abs{\eta}-2\abs{\xi}} \simeq 2^{r_1}.
\]
Then we calculate $\Psi$. We have
\begin{align*}
\Psi&=-\lambda(\abs{\xi})-\lambda(\abs{\eta-\xi})+2\lambda(\abs{\eta}/2)\\
&=-\lambda(\abs{\xi})-\lambda(\abs \eta-\abs{\xi})+2\lambda(\abs \eta/2)+O(\kappa_\theta^2)\\
&\simeq -(\abs{\eta}-2\abs{\xi})^2 \simeq -2^{2r_1}.
\end{align*}
Moreover,
\[
\abs{\nabla_\xi \Psi}=\abs{\lambda'(\abs \xi)\mao \xi-\lambda'(\abs{\eta-\xi})\mao{\eta-\xi}} \lle 2^{r_1}.
\]
Hence $r_2 \lle r_1$.

\noindent\textbf{Step 2:}
If $r_1=r_1^0$, then, in view of \lemmaref{resoregion}, we estimate
\begin{align*}
	&2^{10k_+}2^{\delta k}2^{(1-20\delta)(j+k)}2^{-(1/2-18\delta)n}\norm{A^{(j)}_nQ_{jk}H_{r_1r_2}}_2\\
	\lle{} & \varepsilon_1^3 2^{\delta^2 m}2^{(1-20\delta)(m+r_2)}2^m2^{-l}(2^l+2^{r_2+\delta^2 m})^{1/2}(2^{l}+2^{2r_2+2\delta^2 m})\kappa_\theta 2^{-(1-5\delta)m}.
\end{align*}
If $r_1>r_1^0$, then integration by parts in time gives
\begin{align*}
	\FF H_{r_1r_2}(\xi)={}&-2^{-l}\int q'_m(s)\frac{\me^{\mi s \Psi}}{\mi \Psi} \wan{\gti m}\mao{\Vaa}(\xi-\eta,s) \fai_{<-2\delta^2m}(\abs{\eta}-2\gamma_0)G(\eta,s) \dif \eta \dif s\\
	&-2^{-l}\int q_m(s)\frac{\me^{\mi s \Psi}}{\mi \Psi}\wan{\gti m} \mao{\ppd s \Vaa}(\xi-\eta,s) \fai_{<-2\delta^2m}(\abs{\eta}-2\gamma_0)G(\eta,s) \dif \eta \dif s\\
	&-2^{-l}\int q_m(s)\frac{\me^{\mi s \Psi}}{\mi \Psi} \wan{\gti m} \mao{\Vaa}(\xi-\eta,s) \fai_{<-2\delta^2m}(\abs{\eta}-2\gamma_0)G_s(\eta,s) \dif \eta \dif s\\
	&=H_{r_1r_2;1}+H_{r_1r_2;2}+H_{r_1r_2;3},
\end{align*}
where
\[
\wan{\gti m}=\gti m_q \fai^{l_0}_l(\Phi) \fai_{\le\log_2 \kappa_\theta}(\Theta) \fai^{r_1^0}_{r_1}(\nabla_\eta \Psi)\fai^{r_2^0}_{r_2}(\nabla_\xi \Psi).
\]
We have, using \lemmaref{sizeest}(iii) and \lemmaref{resoregion},
\begin{align*}
	\norm{\psi_{k}(\xi)H_{r_1r_2;1}}_2 &\lle \varepsilon_1^2 2^{-l}2^{6\delta^2 m}2^{-2r_1}2^l\kappa_\theta2^{-(1-5\delta)m} \sup_{s \simeq 2^m}\norm{\Vaa}_2\\
	&\lle \varepsilon_1^3 2^{6\delta^2 m}2^{-2r_1}\kappa_\theta2^{-(1-5\delta)m}(2^l+2^{r_2+\delta^2m})^{1/2}.
\end{align*}
For $H_{r_1r_2;2}$, it follows from \lemmaref{sizeest}(iii), \lemmaref{resoregion}, and \lemmaref{timederifli} that
\begin{align*}
	\norm{\psi_{k}(\xi)H_{r_1r_2;2}}_2 &\lle \varepsilon_1^2 2^m2^{-l}2^{6\delta^2m}2^{-2r_1}2^l\kappa_\theta2^{-(1-5\delta)m}\sup_{s \simeq 2^m}\norm{\ppd s\Vaa}_2\\
	&\lle \varepsilon_1^3 2^m2^{6\delta^2 m}2^{-2r_1}\kappa_\theta2^{-(1-5\delta)m}(2^l+2^{r_2+\delta^2m})^{1/2}2^{-(1-25\delta)m}.
\end{align*}
The term $H_{r_1r_2;3}$ can be estimated in a way similar to that for $H_{r_1r_2;2}$.

\subsection{Cubic Terms}\label{section:cubic}
In this subsection, we deal with cubic terms $\TT_m(\va,\vb,\vvc)$ and those arising in $\BB'_{m,l}(\ppd s \va,\vb)$ and $\BB'_{m,l}(\va,\ppd s \vb)$ for $l>-100\delta m$. We denote by $\gti n$ the multipliers in these terms. These terms can then be written as $\TT_{\gti n,m}(\va,$ $\vb,\vvc)$ or $\TT'_{\gti n,m,l}(\va,\vb,\vvc)$. In what follows, we suppress the multiplier $\gti n$ from the notation. We begin by recalling the structure of the multiplier $\gti n$. Its general form is
\begin{align*}
	\gti n(\xi_1,\xi_2,\xi_3)={}&f_1(\xi_1)f_2(\xi_2)f_3(\xi_3)\\
	&\times f_{23}(\xi_{23})f_{13}(\xi_{13})f_{12}(\xi_{12})\\
	&\times f(\xi_1+\xi_2+\xi_3),
\end{align*}
where all factors belong to $\gti S^{100}$. We refer to the factors $f_{23}$, $f_{13}$, and $f_{12}$ as flag factors. Let $K_0=\set{k_\alpha \colon \gti n \text{ contains a }\xi_\alpha-\text{factor}, \alpha \in \set{1,2,3,23,13,12,\emptyset}}$, and set $\cl k=\max K_0$ and $\ul k=\min K_0$. The multiplier $\gti n$ has the following structures:
\begin{itemize}
	\item $\norm{\gti n}_{S_{K_0}} \lle 2^{2\cl k}2^{10\cl k_+}$;
	\item For $\TT'_{m,l}(\va,\vb,\vvc)$, the multiplier $\gti n$ contains only one type of flag factors.
\end{itemize}
We use $k_{**}$ to denote the localization parameter associated with the flag factor.

The cases $\cl k \ge \delta^3 (m+j)$ and $\ul k \le -3(m+j)$ are easy to handle (see the proof of \lemmaref{timederi2}). The case $j \ge 10m$ can be treated by the approximate-finite-speed-of-propagation argument. Moreover, the case $\cl k \le -j/3-m/10$ is straightforward. We only handle $\TT'_{m,l}(\va,\vb,\vvc)$ since $\TT_m(\va,\vb,\vvc)$ is similar. Note that $\norm{\fai^{l_0}_l(\Phi(\xi,\eta))}_{S_{k,k_1,k_2}} \lle 2^{-5l+\delta^2 m}$, as a consequence of \lemmaref{symbound}(iii). We aim to show that
\[
2^{10k_+}2^{\delta k}2^{(1-20\delta)(j+k)}2^{-(1/2-18\delta)n} \norm{A^{(j)}_nQ_{jk}\TT'_{m,l}(\va,\vb,\vvc)}_2 \lle \epsl_1^3 2^{-5\delta^4 m}.
\]

\subsubsection{Approximate Finite Speed of Propagation}
We aim to exclude the case $j \ge m+\cl k+D$. We decompose $\TT'_{m,l}(\va,\vb,\vvc)$ into $\TT'_{m,l}(\Vaa,\Vbb,\Vcc)$, and the case $j_1 \vee j_2 \vee j_3 \ge 30m$ is easy to deal with. The approximate-finite-speed-of-propagation argument fails if $j_1 \wedge j_2 \wedge j_3 \ge (1-\delta^2)j$, or $(-k) \vee (-k_{**}) \ge (1-\delta^2)j$. We next address these cases. We may assume that $k_1 \ge k_2 \ge k_3$.

\noindent \underline{Case 1: $j_1 \wedge j_2 \wedge j_3 \ge (1-\delta^2)j$.}\\
Define the size function
\[
\sigma(k,n)=
\begin{cases}
2^{k} & \text{if } n=0;\\
2^{-n/2} & \text{if } n>0.
\end{cases}
\]
Then one can bound the $L^2$ norm of $P_k\TT'_{m,l}(\Vaa,\Vbb,\Vcc)$ by
\[
2^{100\delta m}2^m2^{2k_1}2^{10\cl k_+} \sigma (k_2,n_2) \sigma(k_3,n_3) \sup_{s \simeq 2^m}\norm{\Vaa(s)}_2\norm{\Vbb(s)}_2\norm{\Vcc(s)}_2.
\]

\noindent \underline{Case 2: $(-k)$ $\vee$ $(-k_{**}) \ge (1-\delta^2)j$.}\\
If $-k \ge (1-\delta^2)j$, then, using \cororef{decayestcom} and \eqref{energyest}, we estimate
\begin{align*}
&2^{10k_+}2^{\delta k}2^{(1-20\delta)(j+k)}2^{-(1/2-18\delta)n} \norm{A^{(j)}_nQ_{jk}\TT'_{m,l}(\Vaa,\Vbb,\Vcc)}_2\\
\lle{} & 2^{\delta k}2^{(1-20\delta)\delta^2 j} 2^m2^{k} 2^{700\delta m} 2^{2\cl k} 2^{10\cl k_+}\sup_{s \simeq 2^m}\norm{E^{\iota_1}\Vaa}_\infty \norm{\Vbb}_2\norm{\Vcc}_2 \\
\lle{} & \varepsilon_1^3 2^{\delta m} 2^{-(1-\delta^2)j} 2^{700\delta m} 2^m 2^{2\cl k} 2^{-(1-21\delta)m}2^{-(1-39\delta)k_1} 2^{-\delta k_2} 2^{-\delta k_3} \\
\lle{} & \varepsilon_1^3 2^{-10\delta^4 m}.
\end{align*}
If $-k_{**} \ge (1-\delta^2)j$, then $l \gge 0$, and using \eqref{energyest}, we have
\begin{align*}
&2^{10k_+}2^{\delta k}2^{(1-20\delta)(j+k)}2^{-(1/2-18\delta)n} \norm{A^{(j)}_nQ_{jk}\TT'_{m,l}(\Vaa,\Vbb,\Vcc)}_2\\
\lle{}&	2^{\delta^2 m}2^{(1-20\delta)j}2^{-l}2^m2^{2\cl k}2^{10\cl k_+} 2^{2k_{**}}\sup_{s \simeq 2^m}\norm{\Vaa}_2\norm{\Vbb}_2\norm{\Vcc}_2\\
\lle{}& \varepsilon_1^3 2^{2\delta^2 m}2^{(-1-20\delta)j} 2^{2\cl k} 2^m 2^{-\delta k_1}2^{-\delta k_2}2^{-\delta k_3}\\
\lle{}& \varepsilon_1^3 2^{-10\delta^4 m}.
\end{align*}
Therefore, we can assume that $j \le m+\cl k+D$. In what follows, we may assume that $k_1 \ge k_2 \ge k_3$. Moreover, we can assume that $\cl k \ge -3m/7$ and $ \ul k \ge -2m$. We will consider two cases for the parameters in $K_0$:
\begin{enumerate}[label=(\Alph*)]
	\item $\norm{\gti n}_{S_{K_0}} < 2^{k_1+k_2+10\delta^2m}$,
	\item $\norm{\gti n}_{S_{K_0}} \ge 2^{k_1+k_2+10\delta^2m}$,
\end{enumerate}
where Case A is easier.

\subsubsection{Case A}
In this part, we consider the easy case $\norm{\gti n}_{S_{K_0}} < 2^{k_1+k_2+10\delta^2m}$. We first handle the case $l \ge -\delta^2 m$. \cororef{decaydecom} yields $E_{\iota_1 s}\va(s)=I^{p_1}_{k_1}(s)+II^{p_1}_{k_1}(s)$ for any $s \simeq 2^m$, and the $II^{p_1}_{k_1}$-part is easy to handle. For the $I^{p_1}_{k_1}$-part, applying \cororef{decaydecom} again yields $E_{\iota_2 s}\vb(s)=I^{p_2}_{k_2}(s)+II^{p_2}_{k_2}(s)$ for any $s \simeq 2^m$, and the $II^{p_2}_{k_2}$-part can likewise be treated without difficulty. For the remaining parts, we have
\begin{align*}
&2^{10k_+}2^{\delta k}2^{(1-20\delta)(j+k)}2^{-(1/2-18\delta)n} \norm{A^{(j)}_nQ_{jk}\TT'_{m,l}(E^{-\iota_1}I^{p_1}_{k_1},E^{-\iota_2}I^{p_2}_{k_2},\vvc)}_2\\
\lle{}& 2^{\delta^2 m}2^{(1-20\delta)j} 2^m 2^{-6l+\delta^2 m} 2^{k_1+k_2+10\delta^2m} \sup_{s \simeq 2^m} \norm{I^{p_1}_{k_1}}_{\infty} \norm{I^{p_2}_{k_2}}_{\infty} \norm{\vvc}_2   \lle{} \varepsilon_1^3 2^{-10\delta^4 m}.
\end{align*}
In what follows, we can assume that $l \leq -\delta^2 m$. We insert localizations of the second resonance function $\wan \Phi$ with parameter
 $p \ge p_0=\flo{-m/2+1000\delta m}$.
 If $p>p_0$, then integration by parts in time yields the desired estimate. Hence, we focus on $\TT_{m,l,p_0}(\va,\vb,\vvc)$, and it is easy to see that $\ul k \ge -\delta^2 m$. We next decompose $\TT_{m,l,p_0}(\va,\vb,\vvc)$ into $\TT_{m,l,p_0}(\Vaa,\Vbb,\Vcc)$, and the case $j_1 \vee j_2 \vee j_3 \ge 30m$ is easy to exclude.
If $j_1 \vee j_2 \vee j_3 \ge (1/2)m$, then we may assume that $j_1 \ge (1/2)m$, and using an averaging argument, we estimate
\begin{align*}
	&2^{10k_+} 2^{\delta k}2^{(1-20\delta)(j+k)}2^{-(1/2-18\delta)n} \norm{A^{(j)}_nQ_{jk}\TT_{m,l,p_0}(\Vaa,\Vbb,\Vcc)}_2\\
	\lle{}& 2^{\delta^2 m}2^{(1-20\delta)j} 2^m 2^{-6l+\delta^2 m}2^{k_1+k_2+10\delta^2m}(\sup_{s \simeq s_1 \simeq s_2 \simeq 2^m}\norm{\Vaa(s)}_2 \norm{E_{\iota_2 s_1}\Vbb(s)}_{\infty}\\
	&\times \norm{E_{\iota_3 s_2}\Vcc(s)}_{\infty} +\epsl_1^3 2^{-10m}).
\end{align*}
We next insert a cutoff function $\fai_{>-(1/2-5\delta^2)m}(\wan \Xi)$ into $\TT_{m,l,p_0}(\Vaa,\Vbb,\Vcc)$. For $\TT_{m,l,p_0,>-(1/2-5\delta^2)m}(\Vaa,\Vbb,\Vcc)$, integration by parts in $\eta$ and $\sigma$ shows that this part is negligible. For $\TT_{m,l,p_0,\le-(1/2-5\delta^2)m}(\Vaa,\Vbb,\Vcc)$, $\abs{\Phi^{ab,c}_{+\iota_{ab}\iota_c}} \le 2^l$ and $\abs{\wan \Phi} \lle 2^{p_0}$ imply that $\abs{\Phi^{a,b}_{\iota_{ab}\iota_a\iota_b}} \lle 2^{l}$. Here we assume that the frequency variable of the flag factor is $\xi_{ab}$ and $\set{\xi_a,\xi_b,\xi_c}=\set{\xi_1,\xi_2,\xi_3}$. Combined with $\abs{\wan \Xi} \lle 2^{-(1/2-5\delta^2)m}$, \lemmaref{resoregion} implies that $\abs{\xi_a}$, $\abs{\xi_b}$, and $\abs{\xi_{ab}}$ are close to $\gamma_0$, $\gamma_0$, and $2\gamma_0$, respectively, and that $\abs{\abs{\xi_a}-\gamma_0} \lle 2^l$. It follows that $\abs{\abs{\xi_i}-\gamma_0} \lle 2^l$, $1 \le i \le 3$. In view of \lemmaref{iteratedreso}, we have $\abs{\nabla_\xi \wan \Phi} \lle 2^{-(1/2-5\delta^2)m}$. If $j \le (1/2+10\delta^2)m$, then the desired bound is clear. If $j \ge (1/2+10\delta^2)m$, then integration by parts in $\xi$ shows that this case is negligible. This concludes the proof of Case A.

We now turn to Case B: $\norm{\gti n}_{S_{K_0}} \ge 2^{k_1+k_2+10\delta^2m}$, which implies that $k_1 \gg k_2$, $k_2 \le -5\delta^2 m$, $l \gge 0$, and the flag factor is a $\xi_{12}$- or $\xi_{13}$-factor.

We insert localizations of the second resonance function $\wan \Phi$ with parameter $p \ge p_0=\lfloor -2m/3-m/100 \rfloor$. When $p>p_0$, integration by parts in time yields
\begin{equation}\label{4.4:main}
	\begin{aligned}
		&\TT_{m,l,p}(\va,\vb,\vvc)=\II'_{m,l,p}(\va,\vb,\vvc)+\TT'_{m,l,p}(\ppd s \va,\vb,\vvc)\\
		&+\TT'_{m,l,p}(\va,\ppd s \vb,\vvc)+\TT'_{m,l,p}(\va,\vb,\ppd s \vvc),
	\end{aligned}
\end{equation}
where the superscript $'$ indicates that the multiplier is divided by $-\mi \wan \Phi$.

\subsubsection{Small Modulation Cases and Boundary Terms}
In this subsection, we address the small modulation cases and boundary terms in \eqref{4.4:main}. The case $p>-m/2+m/100$ for the boundary terms is easy to handle. It suffices to prove that
\begin{multline*}
2^{10k_+}2^{\delta k}2^{(1-20\delta)(j+k)}2^{-(1/2-18\delta)n} L_p\norm{A^{(j)}_nQ_{jk}I_{\gti n,s,l,p}(\va(s),\vb(s),\vvc(s))}_2 \\
\lle \epsl_1^3 2^{-10\delta^4 m},
\end{multline*}
where $s \in [2^{m-1},2^{m+1}]$, $p_0 \le p \le -m/2+m/100$, and
\[
L_p=
\begin{cases}
	2^{-p} & p>p_0;\\
	2^m & p=p_0.
\end{cases}
\]
We first exclude the case $\iota_1=-$. In this case, we have $2^{2k_1}\lle \abs{\wan{\Phi}} \lle 2^p$, so $k_1\lle p/2$. Therefore, using an averaging argument, \cororef{decayestcom}, \cororef{decayestlow}, and \eqref{energyest}, we estimate
\begin{align*}
	&2^{10k_+}2^{\delta k}2^{(1-20\delta)(j+k)}2^{-(1/2-18\delta)n} L_p\norm{A^{(j)}_nQ_{jk}I_{s,l,p}(\va(s),\vb(s),\vvc(s))}_2\\
	\lle{}& 2^{2\delta^2 m} 2^{(1-20\delta)j} 2^{(1-19\delta)k} L_p 2^{2\cl k}2^{10\cl k_+}(\sup_{s_1 \simeq s_2 \simeq 2^m}\norm{E_{\iota_1s_1}\va(s)}_{\infty} \norm{E_{\iota_2s_2}\vb(s)}_{\infty}\\
	&\times \norm{\vvc(s)}_{2}+\varepsilon_1^3 2^{-10m})\\
	\lle{}& \varepsilon_1^3 2^{3\delta^2 m} 2^{(1-20\delta)m} L_p2^{(4-39\delta)\cl k} 2^{-(1-21\delta)m} 2^{-(1-39\delta)k_1}2^{-(1/2-5\delta)m} 2^{-\delta k_3}+\epsl_1^3 2^{-5m}\\
	\lle{}& \varepsilon_1^3 2^{-10\delta^4 m}.
\end{align*}

We next improve the approximate-finite-speed-of-propagation bound. We aim to exclude the case $j \ge (m+k_2+D)\vee (1/2+\delta^2)m$. We decompose $I_{s,l,p}(\va(s),\vb(s),\vvc(s))$ into $I_{s,l,p}(\Vaa(s),\Vbb(s),\Vcc(s))$, and the case $j_1 \vee j_2 \vee j_3 \ge 30m$ is easy to exclude. The approximate-finite-speed-of-propagation argument, together with an averaging argument applied to $\fai^{p_0}_{p}(\wan \Phi)$, gives the desired bound if $j_1 \le (1-\delta^2)j$. We next exclude the case $j_1 \ge (1-\delta^2)j$.

The case $\cl k \le -10\delta m$ is easy to handle. Then we consider two cases: $n \gge n_1$ and $n \ll n_1$.

\noindent\ul{Case 1: $n \gge n_1$.}\\
Note that $\abs{\wan \Phi} \lle 2^{p}$ implies that
\[
\abs{-\lambda(\abs{\xi})+\lambda(\abs{\xi-\eta})+\iota_2+\iota_3} \lle 2^{2k_2}+2^{p}.
\]
By means of Schur's test, we estimate
\begin{align*}
	&2^{10k_+}2^{\delta k}2^{(1-20\delta)(j+k)}2^{-(1/2-18\delta)n} L_p \norm{A^{(j)}_nQ_{jk}I_{s,l,p}(\Vaa(s),\Vbb(s),\Vcc(s))}_2\\
\lle{} &2^{2\delta^2 m}2^{(1-20\delta)j}2^{-(1/2-18\delta)n}L_p 2^{2k_1}\\
&\times\norm{\Vbb(s)}_2\norm{\Vcc(s)}_2(2^{p\vee 2k_2}/2^{-10\delta m})2^{k_2-k_1}\norm{\Vaa(s)}_2
\end{align*}
if $k_2 \le -2m/5$.
On the other hand, using an averaging argument, we have
\begin{align*}
	&2^{10k_+}2^{\delta k}2^{(1-20\delta)(j+k)}2^{-(1/2-18\delta)n} L_p\norm{A^{(j)}_nQ_{jk}I_{s,l,p}(\Vaa(s),\Vbb(s),\Vcc(s))}_2\\
	\lle{} &2^{2\delta^2 m}2^{(1-20\delta)j}2^{-(1/2-18\delta)n}L_p2^{2k_1}(\norm{\Vaa(s)}_2\norm{E_{\iota_2[2^{m-2},2^{m+2}]}\Vbb(s)}_\infty\\
	&\times\norm{E_{\iota_3[2^{m-2},2^{m+2}]}\Vcc(s)}_\infty+\epsl_1^3 2^{-10m})
\end{align*}
if $k_2 \ge -2m/5$.

\noindent\ul{Case 2: $n \ll n_1$.}\\
It suffices to prove that
\begin{multline*}
	2^{10k_+}2^{\delta k}2^{(1-20\delta)(j+k)}L_p \\ \times \sup_{0 \le n \le j+1}2^{-(1/2-18\delta)n} \norm{A^{(j)}_nP_kI_{s,l,p}(\Vaa(s),\Vbb(s),\Vcc(s))}_2
	\lle \epsl_1^3 2^{-10\delta^4 m}.
\end{multline*}
It is easy to see that $\iota_2+\iota_3=0$ and $2^{-n} \lle 2^{-j} \vee 2^{p \vee 2k_2}$. If $2k_2 \ge p$, then the case $2k_2 \le -j$ is easy to handle by the following estimate, obtained via an averaging argument:
\begin{align*}
	&2^{10k_+}2^{\delta k}2^{(1-20\delta)(j+k)}2^{-(1/2-18\delta)n}L_p \norm{A^{(j)}_nP_kI_{s,l,p}(\Vaa(s),\Vbb(s),\Vcc(s))}_2\\
	\lle{} &2^{2\delta^2 m}2^{(1-20\delta)j}2^{-(1/2-18\delta)j}L_p(\norm{\Vaa(s)}_2\norm{E_{\iota_2[2^{m-2},2^{m+2}]}\Vbb(s)}_\infty\\
	&\times\norm{E_{\iota_3[2^{m-2},2^{m+2}]}\Vcc(s)}_\infty+\epsl_1^3 2^{-10m}).
\end{align*}
For the case $2k_2>-j$, if $j_2 \ge (1-\delta^2)m+k_2$, then, using an averaging argument, we estimate
\begin{align*}
	&2^{10k_+}2^{\delta k}2^{(1-20\delta)(j+k)}2^{-(1/2-18\delta)n}L_p \norm{A^{(j)}_nP_kI_{s,l,p}(\Vaa(s),\Vbb(s),\Vcc(s))}_2\\
	\lle{} &2^{2\delta^2 m}2^{(1-20\delta)j}2^{(1-36\delta)k_2}L_p(\norm{E_{\iota_1[2^{m-2},2^{m+2}]}\Vaa(s)}_\infty\norm{\Vbb(s)}_2\\
	&\times\norm{E_{\iota_3[2^{m-2},2^{m+2}]}\Vcc(s)}_\infty+\epsl_1^3 2^{-10m}),
\end{align*}
while if $j_2 \le (1-\delta^2)m+k_2$, then, using an averaging argument, \eqref{decayestg}, and \cororef{decayestlow}, we have
\begin{align*}
	&2^{10k_+}2^{\delta k}2^{(1-20\delta)(j+k)}2^{-(1/2-18\delta)n}L_p \norm{A^{(j)}_nP_kI_{s,l,p}(\Vaa(s),\Vbb(s),\Vcc(s))}_2\\
	\lle{} &2^{2\delta^2 m}2^{(1-20\delta)j}2^{(1-36\delta)k_2}L_p(\norm{\Vaa(s)}_2\norm{E_{\iota_2[2^{m-2},2^{m+2}]}\Vbb(s)}_\infty\\
	&\times\norm{E_{\iota_3[2^{m-2},2^{m+2}]}\Vcc(s)}_\infty+\epsl_1^3 2^{-10m})\\
	\lle{} &\epsl_1^3 2^{2\delta^2 m}2^{(1-20\delta)j}2^{(1-36\delta)k_2}L_p2^{-(1/2-2\delta)j_1}2^{-(1-\delta^2)m}2^{-(1+\delta)k_2}2^{-(1/2-2\delta)m}+\epsl_1^3 2^{-5m}\\
	\lle{} &\epsl_1^3 2^{-10\delta^4m}.
\end{align*}
If $2k_2 \le p$, then using Schur's test, we have
\begin{align*}
	&2^{10k_+}2^{\delta k}2^{(1-20\delta)(j+k)}2^{-(1/2-18\delta)n}L_p \norm{A^{(j)}_nP_kI_{s,l,p}(\Vaa(s),\Vbb(s),\Vcc(s))}_2\\
\lle{} &2^{\delta^2 m}2^{(1-20\delta)j}2^{-(1/2-18\delta)j}L_p\norm{\Vbb(s)}_2\norm{\Vcc(s)}_2(2^{p}/2^{-10\delta m})2^{k_2}\norm{\Vaa(s)}_2
\end{align*}
if $-j=-j \vee p$, and
\begin{align*}
	&2^{10k_+}2^{\delta k}2^{(1-20\delta)(j+k)}2^{-(1/2-18\delta)n}L_p \norm{A^{(j)}_nP_kI_{s,l,p}(\Vaa(s),\Vbb(s),\Vcc(s))}_2\\
\lle{} &2^{\delta^2 m}2^{(1-20\delta)j}2^{(1/2-18\delta)p}L_p\norm{\Vbb(s)}_2\norm{\Vcc(s)}_22^{-n_1/2}(2^{p}/2^{-10\delta m})^{1/2}2^{k_2}\\
&\times\norm{\Vaa(s)}_2
\end{align*}
if $p=-j \vee p$.

We now deal with the case $j \le (m+k_2+D)\vee (1/2+\delta^2)m$. By \cororef{decaydecom}, for any $s \simeq s_1 \simeq 2^m$, $E_{\iota_1 s_1}\va(s)=I^{p_1}_{k_1}(s,s_1)+II^{p_1}_{k_1}(s,s_1)$. If $m+k_2+D \le (1/2+\delta^2)m$, then, using an averaging argument, we estimate 
\begin{align*}
	&2^{10k_+}2^{\delta k}2^{(1-20\delta)(j+k)}2^{-(1/2-18\delta)n}L_p \norm{A^{(j)}_nQ_{jk}I_{s,l,p}(\va(s),\vb(s),\vvc(s))}_2\\
\lle{} & 2^{2\delta^2 m}2^{(1-20\delta)(1/2+\delta^2)m} 2^m 2^{2k_1}(\sup_{s_1 \simeq s_2 \simeq 2^m}\norm{I^{p_1}_{k_1}(s,s_1)}_{\infty} \norm{E_{\iota_2 s_2}\vb(s)}_\infty\norm{\vvc(s)}_2\\
&+\sup_{s_1 \simeq s_2 \simeq s_3 \simeq 2^m}\norm{II^{p_1}_{k_1}(s,s_1)}_2\norm{E_{\iota_2 s_2}\vb(s)}_\infty\norm{E_{\iota_3 s_3}\vvc(s)}_\infty+\varepsilon_1^3 2^{-10m}).
\end{align*}
If $m+k_2+D \ge (1/2+\delta^2)m$, then it follows that $k_2 \gge -m/2$. We continue to decompose $I_{m,l,p}(\va(s),\vb(s),\vvc(s))$ into $I_{m,l,p}(\va(s),\Vbb(s),\vvc(s))$, and we may assume that $j_2 \le 10m$. The case $j_2 \ge (1-\delta^2)m+k_2$ is handled via an averaging argument as follows:
\begin{align*}
	&2^{10k_+}
	2^{\delta k}2^{(1-20\delta)(j+k)}2^{-(1/2-18\delta)n}L_p \norm{A^{(j)}_nQ_{jk}I_{s,l,p}(\va(s),\Vbb(s),\vvc(s))}_2\\
\lle{} & 2^{2\delta^2 m}2^{(1-20\delta)(m+k_2)}  2^m 2^{2k_1}\\
&\times (\norm{I^{p_1}_{k_1}(s,[2^{m-2},2^{m+2}])}_{\infty} \norm{\Vbb(s)}_2 \norm{E_{\iota_3[2^{m-2},2^{m+2}]}\vvc(s)}_{\infty}\\
&+\norm{II^{p_1}_{k_1}(s,[2^{m-2},2^{m+2}])}_2 \norm{E_{\iota_2[2^{m-2},2^{m+2}]}\Vbb(s)}_\infty\norm{E_{\iota_3[2^{m-2},2^{m+2}]}\vvc(s)}_{\infty}\\
&+\varepsilon_1^3 2^{-10m}).
\end{align*}
For the case $j_2 \le (1-\delta^2)m+k_2$, using an averaging argument we have
\begin{align*}
	&2^{10k_+}2^{\delta k}2^{(1-20\delta)(j+k)}2^{-(1/2-18\delta)n}L_p \norm{A^{(j)}_nP_kI_{s,l,p}(\va(s),\Vbb(s),\vvc(s))}_2\\
\lle{} & 2^{2\delta^2 m}2^{(1-20\delta)(m+k_2)} 2^m 2^{2k_1}\\
&\times (\norm{I^{p_1}_{k_1}(s,[2^{m-2},2^{m+2}])}_{\infty} \norm{E_{\iota_2[2^{m-2},2^{m+2}]}\Vbb(s)}_\infty \norm{\vvc(s)}_2\\
&+\norm{II^{p_1}_{k_1}(s,[2^{m-2},2^{m+2}])}_2 \norm{E_{\iota_2[2^{m-2},2^{m+2}]}\Vbb(s)}_\infty\norm{E_{\iota_3[2^{m-2},2^{m+2}]}\vvc(s)}_{\infty}\\
&+\varepsilon_1^3 2^{-10m}),
\end{align*}
and \eqref{decayestg} can handle the term involving $I^{p_1}_{k_1}$.

\subsubsection{Large Modulation Cases}
In this subsection, we deal with
\[
\TT'_{m,l,p}(\ppd s \va,\vb, \vvc), \ \TT'_{m,l,p}(\va,\ppd s \vb,\vvc),
\]
and
\[
\TT'_{m,l,p}(\va,\vb,\ppd s \vvc)
\]
in \eqref{4.4:main}. The case $p \ge -2\delta^2 m$ is easy to handle, so we can assume that $p \le -2\delta^2 m$. We begin with $\TT'_{m,l,p}(\va, \ppd s \vb,\vvc)$ and $\TT'_{m,l,p}(\va,\vb,\ppd s \vvc)$, and we only consider $\TT'_{m,l,p}(\va,\ppd s \vb,\vvc)$ as the other is similar. Neglecting the cubic terms and the remainders, we write it as a quadrilinear operator
\[
2^{-p}\QQ_{\gti p,m,l,p}(\va,\vx 4,\vx 5,\vvc).
\]

We denote by $\xi_1$, $\xi_4$, $\xi_5$, and $\xi_3$ the frequency variables of $\va$, $\vx 4$, $\vx 5$, and $\vvc$, respectively. The multiplier $\gti p$ is a product of $\xi_1$-, $\xi_4$-, $\xi_5$-, $\xi_3$-, $\xi_{45}$-, $\xi_{145}$-, $\xi_{13}$-, and $\xi$-factors in $\gti S^{100}$. Let $K=\set{k_\alpha \colon \gti p \text{ contains a }\xi_\alpha-\text{factor}}$, and set $\cl k=\max K$ and $\ul k=\min K$. Then $\norm{\gti p}_{S_K} \lle 2^{2\cl k}2^{k_4 \vee k_5}2^{15\cl k_+}$. (See Section~\ref{notations} and \ref{dispersiveformu}.) In the following, we omit the multiplier $\gti p$ from the notation. We may assume that $-3m \le \ul k \le \cl k \le \delta^3 m$. Note that the third resonance function $\Phi^{1,4,5,3}_{+\iota_1\iota_4\iota_5\iota_3}=\Phi^{1,45,3}_{+\iota_1\iota_2\iota_3}+\Phi^{4,5}_{\iota_2\iota_4\iota_5}$, and $\abs{\Phi^{4,5}_{\iota_2\iota_4\iota_5}} \gge 1$ since $k_{45} \le -5\delta^2 m$. It follows that $\abs{\Phi^{1,4,5,3}_{+\iota_1\iota_4\iota_5\iota_3}} \gge 1$. Then we insert localizations of $\Phi^{1,4,5,3}_{+\iota_1\iota_4\iota_5\iota_3}$ with parameter $q>-D$, and integration by parts in time gives
\begin{equation}\label{eq:largemodu1}
\begin{aligned}
&\QQ_{m,l,p,q}(\va,\vx 4,\vx 5,\vvc)=\II'_{m,l,p,q}(\va,\vx 4,\vx 5,\vvc)\\
	&+\QQ'_{m,l,p,q}(\ppd s \va,\vx 4,\vx 5,\vvc)+\QQ'_{m,l,p,q}(\va,\ppd s\vx 4,\vx 5,\vvc)\\
	&+\QQ'_{m,l,p,q}(\va,\vx 4, \ppd s \vx 5,\vvc)+\QQ'_{m,l,p,q}(\va,\vx 4,\vx 5, \ppd s \vvc).
\end{aligned}
\end{equation}
Note that \lemmaref{symbound3} yields $\norm{\psi_q(\Phi^{1,4,5,3}_{+\iota_1\iota_4\iota_5\iota_3})}_{S_{K}} \lle 2^{\delta^2 m}$.
All terms except
\[
\QQ'_{m,l,p,q}(\ppd s \va,\vx 4,\vx 5,\vvc)
\]
can be easily estimated by averaging arguments and an $L^2 \times L^\infty$ argument. Indeed, we apply an averaging argument to the cutoff function $\fai^{p_0}_p(\Phi^{1,45,3}_{+\iota_1\iota_2\iota_3})$. The other modulation cutoff functions are absorbed into the multiplier $\gti p$, and we apply an averaging argument to the multiplier. (See Section~\ref{techlemma}.)

For the term
\[
\QQ'_{m,l,p,q}(\ppd s \va,\vx 4,\vx 5,\vvc),
\]
it can be written as a quintilinear operator
\[
2^{-q}\QU_{\gti q,m,l,p,q}(\vx 6, \vx 7,\vx 4, \vx 5,\vvc)
\]
(where the cubic terms and the remainders in $\ppd s \va$ are easy to handle). Here we define
\[
\QU_{\gti m,m}(f_1,f_2,f_3,f_4,f_5):=\int q_m(s)I_{\gti m,s}(f_1(s),f_2(s),f_3(s),f_4(s),f_5(s)) \dif s.
\]
We denote by $\xi_6$, $\xi_7$, $\xi_4$, $\xi_5$, and $\xi_3$ the frequency variables of $\vx 6$, $\vx 7$, $\vx 4$, $\vx 5$, and $\vvc$, respectively. The multiplier $\gti q$ is a product of $\xi_6$-, $\xi_7$-, $\xi_4$-, $\xi_5$-, $\xi_3$-, $\xi_{67}$-, $\xi_{45}$-, $\xi_{673}$-, $\xi_{6745}$-, and $\xi$-factors in $\gti S^{100}$. Let $K=\set{k_\alpha \colon \gti q \text{ contains a }\xi_\alpha-\text{factor}}$, and set $\cl k=\max K$ and $\ul k=\min K$. Then $\norm{\gti q}_{S_{K}} \lle 2^{2\cl k}2^{k_6 \vee k_7}2^{k_4 \vee k_5}2^{20\cl k_+}$. In what follows, we suppress the multiplier $\gti q$ from the notation. It is easy to exclude the cases $\ul k < -3m$ and $\cl k>\delta^3 m$. We may assume that $k_6 \geq k_7$ and $k_4 \geq k_5$. We next exploit the gap between $\xi_6$ and $\xi_3$.
We decompose
\[
\QU_{m,l,p,q}(\vx 6, \vx 7,\vx 4, \vx 5,\vvc)
\]
into
\[
\QU_{m,l,p,q}(\Vff, \vx 7,\vx 4, \vx 5,\Vcc).
\]
The case $j_6 \vee j_3 \ge 30m$ is easy to discard. We now exclude the case $j_3 \vee j_6 \geq (1-\delta^2)m + k_6$. If $j_3 \geq (1-\delta^2)m + k_6$, then averaging arguments yield the estimate
\begin{align*}
	&2^{10k_+}2^{\delta k}2^{(1-20\delta)(j+k)}2^{-(1/2-18\delta)n} 2^{-p}\\
	&\times \norm{A^{(j)}_nQ_{jk}\QU_{m,l,p,q}(\Vff, \vx 7,\vx 4, \vx 5,\Vcc)}_{2}\\
	\lle{} & 2^{3\delta^2 m} 2^{(1-20\delta)m} 2^{-p} 2^m2^{2k_1} 2^{k_6} 2^{k_4} (\sup_{s \simeq s_1 \simeq 2^m}\norm{E_{\iota_6 s}\Vff(s)}_{\infty}\norm{\vx 7(s)}_{2}\\
	&\times 2^{k_{45}}\norm{E_{\iota_4 s}\vx 4(s)}_{\infty} \norm{\vx 5(s)}_{2} \norm{E_{\iota_3 s_1}\Vcc(s)}_{\infty}+\varepsilon_1^3 2^{-10m}),
\end{align*}
(see Section~\ref{techlemma} for a discussion of the averaging argument).
If $j_6\geq (1-\delta^2)m + k_6$, then we insert cutoff functions $\wan \fai^{-\infty,0}_{-n_{67}}(\abs{\xi_{67}}-2\gamma_0)$ to obtain
\[
\QU_{m,l,p,q,n_{67}}(\Vff, \vx 7,\vx 4, \vx 5,\Vcc).
\]
Using averaging arguments, we obtain
\begin{align*}
	&2^{10k_+}2^{\delta k}2^{(1-20\delta)(j+k)}2^{-(1/2-18\delta)n} 2^{-p}\\ 
	&\times\norm{A^{(j)}_nQ_{jk}\QU_{m,l,p,q,n_{67}}(\Vff, \vx 7,\vx 4, \vx 5,\Vcc)}_{2}\\
	\lle{} & 2^{3\delta^2 m} 2^{(1-20\delta)m} 2^{-p} 2^m 2^{2k_1}2^{k_6}2^{k_4}( \sup_{s \simeq 2^m} 2^{-n_{67}/2} \norm{\Vff}_2 \norm{E\vx 7}_{\infty} \norm{E\vx 4}_{\infty} \norm{\vx 5}_{2}\\
	&\times\norm{E_{\iota_3[2^{m-2},2^{m+2}]}\Vcc}_{\infty}+\varepsilon_1^3 2^{-10m}).
\end{align*}
If $n_{67} \gge n_6$, then the desired estimate follows immediately. If instead $n_{67} \ll n_6$, then $k_7 \gge -n_{67}$, and interpolating \cororef{decayestcom} with $\norm{E \vx 7}_\infty \lle \epsl_1 2^{(1-\delta)k_7}$ yields the desired bound. Moreover, it is easy to exclude the case $k_6 \le -m/8$. Now, integration by parts in the frequency variables concludes this term.

We next deal with $\TT'_{m,l,p}(\ppd s \va,\vb,\vvc)$. Neglecting the cubic terms and the remainders, we insert a cutoff function $\fai_{>-\delta^2 m}(\Phi)$ into $\ppd s \va$ and decompose it into $I_{\gti m_q,>-\delta^2 m}(\vx 4, \vx 5)$ and $I_{\gti m_q,\le -\delta^2 m}(\vx 4, \vx 5)$, with $-3m \le k_4, k_5 \le \delta^3 m$. For the $\le -\delta^2 m$-part, we write it as a quadrilinear operator
\[
2^{-p}\QQ_{\gti p,m,l,p,\le -\delta^2 m}(\vx 4,\vx 5,\vb,\vvc).
\]
We denote by $\xi_4$, $\xi_5$, $\xi_2$, and $\xi_3$ the frequency variables of $\vx 4$, $\vx 5$, $\vb$, and $\vvc$, respectively. The multiplier $\gti p$ is a product of $\xi_4$-, $\xi_5$-, $\xi_2$-, $\xi_3$-, $\xi_{45}$-, $\xi_{452}$-, $\xi_{453}$-, and $\xi$-factors in $\gti S^{100}$. Let $K=\set{k_\alpha \colon \gti p \text{ contains a }\xi_\alpha-\text{factor}}$, and set $\cl k=\max K$ and $\ul k=\min K$. Then $\norm{\gti p}_{S_{K}} \lle 2^{2\cl k}2^{k_4 \vee k_5}2^{15\cl k_+}$. In what follows, the multiplier $\gti p$ is omitted from the notation. The restriction $\abs{\Phi^{4,5}_{\iota_1\iota_4\iota_5}} \lle 2^{-\delta^2 m}$ implies $k_4, k_5 \ge -\delta^2 m$. We aim to exploit the absence of space resonances between $\xi_4, \xi_5$ and $\xi_2, \xi_3$. We decompose
\[
\QQ_{m,l,p,\le -\delta^2 m}(\vx 4,\vx 5,\vb,\vvc)
\]
into
\[
\QQ_{m,l,p,\le -\delta^2 m}(\Vxx 4,\Vxx 5,\Vbb,\Vcc),
\]
and we may assume that $j_4 \vee j_5 \vee j_2 \vee j_3 \le 30 m$. We next exclude the cases $j_4 \wedge j_5 \ge (1-5\delta^2)m$ and $j_2 \wedge j_3 \ge (1-5\delta^2)m$. If $j_4 \wedge j_5 \ge (1-5\delta^2)m$, then, by means of an averaging argument, we estimate 
\begin{align*}
	&2^{10k_+}2^{\delta k}2^{(1-20\delta)(j+k)}2^{-(1/2-18\delta)n} 2^{-p}\\  
	& \times \norm{A^{(j)}_nQ_{jk}\QQ_{m,l,p,\le -\delta^2 m}(\Vxx 4,\Vxx 5,\Vbb,\Vcc)}_{2}\\
\lle{} &2^{2\delta^2 m} 2^{(1-20\delta)m}2^{-p}2^m 2^{2\cl k} (\sup_{s \simeq 2^m}\norm{I_{\gti m_q, \le -\delta^2 m}(\Vxx 4,\Vxx 5)}_2\norm{E_{\iota_2[2^{m-2},2^{m+2}]}\Vbb}_\infty\\
& \times \norm{E_{\iota_3[2^{m-2},2^{m+2}]}\Vcc}_\infty+\epsl_1^3 2^{-10m}),
\end{align*}
where $I_{\le -\delta^2 m}(\Vxx 4,\Vxx 5)$ is handled by \lemmaref{sizeest}(i) and (ii).
If $j_2 \wedge j_3 \ge (1-5\delta^2)m$, then an averaging argument and an $L^2 \times L^\infty$ argument yield the desired estimate.
Now, integration by parts in the frequency variables concludes this part.

Finally, we address the $>-\delta^2 m$-part. We write it as a quadrilinear operator
\[
2^{-p}\QQ_{\gti p,m,l,p,>-\delta^2 m}(\vx 4,\vx 5,\vb,\vvc).
\]
We denote by $\xi_4$, $\xi_5$, $\xi_2$, and $\xi_3$ the frequency variables of $\vx 4$, $\vx 5$, $\vb$, and $\vvc$, respectively. The multiplier $\gti p$ is a product of $\xi_4$-, $\xi_5$-, $\xi_2$-, $\xi_3$-, $\xi_{45}$-, $\xi_{452}$-, $\xi_{453}$-, and $\xi$-factors in $\gti S^{100}$. Let $K=\set{k_\alpha \colon \gti p \text{ contains a }\xi_\alpha-\text{factor}}$, and set $\cl k=\max K$ and $\ul k=\min K$. Then $\norm{\gti p}_{S_{K}} \lle 2^{2\cl k}2^{k_4 \vee k_5}2^{15\cl k_+}$. For simplicity, we omit the multiplier $\gti p$ from the notation. We may assume that $k_4 \ge k_5$. Since $\Phi^{4,5,2,3}_{+\iota_4\iota_5\iota_2\iota_3}=\Phi^{45,2,3}_{+\iota_1\iota_2\iota_3}+\Phi^{4,5}_{\iota_1\iota_4\iota_5}$, we have $\abs{\Phi^{4,5,2,3}_{+\iota_4\iota_5\iota_2\iota_3}} \gge 2^{-\delta^2 m}$. After inserting localizations of the third resonance function $\Phi^{4,5,2,3}_{+\iota_4\iota_5\iota_2\iota_3}$ with parameter $q \gge -\delta^2 m$, integration by parts in time yields 
\begin{align*}
	&\QQ_{m,l,p,>-\delta^2 m,q}(\vx 4,\vx 5,\vb,\vvc)\\
	={}&\II'_{m,l,p,>-\delta^2 m,q}(\vx 4,\vx 5,\vb,\vvc)\\
	&+\QQ'_{m,l,p,>-\delta^2 m,q}(\ppd s\vx 4,\vx 5,\vb,\vvc)\\
	&+\QQ'_{m,l,p,>-\delta^2 m,q}(\vx 4,\ppd s\vx 5,\vb,\vvc)\\
	&+\QQ'_{m,l,p,>-\delta^2 m,q}(\vx 4, \vx 5,\ppd s\vb,\vvc)\\
	&+\QQ'_{m,l,p,>-\delta^2 m,q}(\vx 4,\vx 5,\vb, \ppd s \vvc).
\end{align*}
Note that \lemmaref{symbound3} and \lemmaref{symbound}(iii) imply that
\[
 \norm{\psi_q(\Phi^{4,5,2,3}_{+\iota_4\iota_5\iota_2\iota_3})}_{S_{K}}, \norm{\fai_{>-\delta^2 m}(\Phi^{4,5}_{\iota_1\iota_4\iota_5})}_{S_{k_4,k_5,k_{45}}} \lle 2^{10\delta^2 m}.
 \]
 Hence the terms
 \[
 \II'_{m,l,p,>-\delta^2 m,q}(\vx 4,\vx 5,\vb,\vvc)
 \text{ and }
 \QQ'_{m,l,p,>-\delta^2 m,q}(\vx 4,\ppd s\vx 5,\vb,\vvc)
 \]
 are straightforward to bound. The terms
 \[
 \QQ'_{m,l,p,>-\delta^2 m,q}(\vx 4, \vx 5,\ppd s\vb,\vvc),
 \QQ'_{m,l,p,>-\delta^2 m,q}(\vx 4,\vx 5,\vb, \ppd s \vvc)
 \]
 can be treated as in the treatment of
 \[
 \QQ'_{m,l,p,q}(\ppd s \va,\vx 4,\vx 5,\vvc)
 \]
 in \eqref{eq:largemodu1}.
It remains to estimate
\[
\QQ'_{m,l,p,>-\delta^2 m,q}(\ppd s\vx 4,\vx 5,\vb,\vvc).
\]
The argument is analogous to that used for
\[
\QQ'_{m,l,p,q}(\ppd s \va,\vx 4,\vx 5,\vvc)
\]
in \eqref{eq:largemodu1}. Neglecting the cubic terms and the remainders, we write it as a quintilinear operator
\[
2^{-q}\hti{QU}_{\gti q,m,l,p,>-\delta^2 m,q}(\vx 6,\vx 7,\vx 5, \vb,\vvc).
\]
We denote by $\xi_6$, $\xi_7$, $\xi_5$, $\xi_2$, and $\xi_3$ the frequency variables of $\vx 6$, $\vx 7$, $\vx 5$, $\vb$, and $\vvc$, respectively. The multiplier $\gti q$ is a product of $\xi_6$-, $\xi_7$-, $\xi_5$-, $\xi_2$-, $\xi_3$-, $\xi_{67}$-, $\xi_{675}$-, $\xi_{6752}$-, $\xi_{6753}$-, and $\xi$-factors in $\gti S^{100}$. Let $K=\set{k_\alpha \colon \gti q \text{ contains a }\xi_\alpha-\text{factor}}$, and set $\cl k=\max K$ and $\ul k=\min K$. Then $\norm{\gti q}_{S_{K}} \lle 2^{3\cl k}2^{k_6 \vee k_7}2^{20\cl k_+}$. In what follows, we suppress the multiplier $\gti q$ from the notation. We may assume that $-3m \le \ul k \le \cl k \le \delta^3 m$ and $k_6 \ge k_7$. We focus on the case $k_6 \gg k_7, k_5, k_2, k_3$ as the other cases can be handled easily. Set $\mao k=\max \set{k_7,k_5,k_2,k_3}$. 
We next exploit the gap between $\xi_6$ and $\xi_3$. We decompose
\[
\hti{QU}_{m,l,p,>-\delta^2 m,q}(\vx 6,\vx 7,\vx 5, \vb,\vvc)
\]
into
\[
\hti{QU}_{m,l,p,>-\delta^2 m,q}(\Vff,\vx 7,\vx 5, \vb,\Vcc),
\]
and it is easy to discard the case $j_3 \vee j_6 \ge 30m$. We now exclude the case $j_3 \vee j_6 \ge (1-\delta^2)m + k_6$. If $j_3 \ge (1-\delta^2)m + k_6$, then averaging arguments and an $L^2 \times L^{\infty}$ argument give the desired estimate.
If $j_6 \ge (1-\delta^2)m + k_6$, then it suffices to show that
\begin{align*}
	&2^{10k_+}2^{\delta k}2^{(1-20\delta)(j+k)}2^{-p} \\ 
	& \times \sup_{0 \le n \le j+1}2^{-(1/2-18\delta)n} \lVert A^{(j)}_nP_k\hti{QU}_{m,l,p,>-\delta^2 m,q}(\Vff,\vx 7,
	\vx 5, \vb,\Vcc) \rVert_{2}\\ 
	&\lle \epsl_1^32^{-10\delta^4m}.
\end{align*}
Using averaging arguments, we have
\begin{align*}
		&2^{10k_+}2^{\delta k}2^{(1-20\delta)(j+k)}2^{-p}2^{-(1/2-18\delta)n}\\ 
		& \times \norm{A^{(j)}_nP_k\hti{QU}_{m,l,p,>-\delta^2 m,q}(\Vff,\vx 7,\vx 5, \vb,\Vcc)}_{2}\\
	\lle{} & 2^{15\delta^2 m} 2^{(1-20\delta)m} 2^{-p}2^{-(1/2-18\delta)n}2^m2^{4\cl k}  (\sup_{s \simeq 2^m}\norm{\Vff}_{2} \norm{E\vx 7}_{\infty} \\
	&\times \norm{E\vx 5}_{\infty} \norm{E_{\iota_2[2^{m-2},2^{m+2}]}\vx 2}_{\infty}  \norm{E_{\iota_3[2^{m-2},2^{m+2}]}\Vcc}_{\infty} + \varepsilon_1^3 2^{-10m}).
\end{align*}
If $n \gge n_6$, then the conclusion follows immediately. If instead $n \ll n_6$, then we deduce that $-n \lle \mao k$ or $n=j+1$. The case $n=j+1$ is easy to handle. When $-n \lle \mao k$, suppose that $k_a=\mao k$, and then interpolating \cororef{decayestcom} with $\norm{E_{\iota
_a[2^{m-2},2^{m+2}]}\vx a}_\infty \lle \epsl_1 2^{(1-\delta)\mao k}$ or $\norm{E_{\iota
_a[2^{m-2},2^{m+2}]}\Vxx a}_\infty \lle \epsl_1 2^{(1-\delta)\mao k}$ yields the desired bound.
Furthermore, it is easy to exclude the case $k_6 \le -m/8$. Finally, under the assumption $j_3 \vee j_6 \leq (1-\delta^2)m + k_6$, integration by parts in the frequency variables concludes this term.

\subsection{Remainders}\label{sec:remainder}
In this subsection, we deal with the remainders $R$ and $\BB'_{m,l}(\va,$ $E^{-\iota_2}(R_0)^{\iota_2,p_2}_{k_2})$, where $R_0=T_{\gti m_c}(U^{\iota_1},U^{\iota_2},U^{\iota_3})$ or $R$. Recall that
\[
R=\gti m(D) [(D^{\alpha_1} \Lambda \rho) (D^{\alpha_2} \Lambda \rho) (D^{\alpha_3} \Lambda \rho) (D^{\alpha_4} \Lambda \rho) f(\Lambda \rho)],
\]
where $\gti m \in \gti S^0$ satisfies $\Omega \gti m=0$, $0 \le \abs{\alpha_i} \le 2$ for $1 \le i \le 4$, and $f \in C^\infty(\bR)$. The argument is analogous to \cite[Section~4.8]{arXiv:2510.18655}. We aim to show that
\begin{gather*}
	\Norm{\int_0^t \me^{-\mi s \Lambda(D)}\Omega^{[0,N_2]}R \dif s}_Z \lle \epsl_1^2,\\
	\norm{\BB'_{m,l}(\va,E^{-\iota_2}(R_0)^{\iota_2,p_2}_{k_2})}_Z \lle \epsl_1^2 2^{-2\delta^4 m},
\end{gather*}
where $D^4<m \le L$, $l>-100\delta m$, $-40m \le k_1 \wedge k_2 \le k_1 \vee k_2 \le 20\delta^3 m$, and $0 \le p_1, p_2 \le N_2$. It suffices to establish
\begin{gather}
	2^{11k_+}2^{(1-20\delta)j}\Norm{Q_{jk}\int q_m(s) \me^{-\mi s \Lambda(D)}\Omega^{[0,N_2]}R \dif s}_2 \lle \epsl_1^2 2^{-\delta^4 m}, \label{remd1}\\
	2^{10k_+}2^{(1-19\delta)k}2^{(1-20\delta)j}2^{-l}\Norm{Q_{jk} \int q_m(s) \me^{-\mi s \Lambda(D)} T_{\gti m_q,l}(U^{\iota_1,p_1}_{k_1},(R_0)^{\iota_2,p_2}_{k_2}) \dif s}_2 \lle \epsl_1^2 2^{-2\delta^4 m}, \label{remd2}
\end{gather}
for any $D^4<m \le L$.
For \eqref{remd1}, we can assume that $k \le \delta^3 (m+j)$, and by \lemmaref{simplez}(ii) and (iii), it is enough to prove that
\begin{equation}\label{remd3}
\norm{\spa{x}^{1-19\delta}\Omega^{[0,N_2]}R(s)}_{2} \lle \epsl_1^2 2^{-m-\delta^2 m}
\end{equation}
for any $s \simeq 2^m$. Similarly, \eqref{remd2} reduces to
\[
2^{(1-19\delta)k}\norm{\spa{x}^{1-20\delta}P_kT_{\gti m_q,l}(U^{\iota_1,p_1}_{k_1}(s),(R_0)^{\iota_2,p_2}_{k_2}(s))}_2 \lle \epsl_1^2 2^{-m-m/100}	
\]
for any $s \simeq 2^m$.
By \lemmaref{simplez}(iv), it suffices to prove that
\begin{equation}\label{remd4}
\norm{\spa{x}^{1-20\delta}(R_0)^{\iota_2,p_2}(s)}_2  \lle \epsl_1 2^{-m/2-m/50}.
\end{equation}
Indeed, it follows from the argument used in the localized multiplier theory that 
\[
\norm{\spa{y}^{1-20\delta} \FF^{-1}(\gti m_q \psi_k(\xi)\psi_{k_1}(\xi_1) \psi_{k_2}(\xi_2) \fai_l^{l_0}(\Phi))}_1 \lle 2^{\alpha (\ul k)_-}2^{1000\delta m};
\]
to absorb the low-frequency loss $2^{(1-20\delta) (\ul k)_-}$, we use Bernstein's inequality if $\ul k=k_1 \wedge k_2$ and the factor $2^{(1-19\delta)k}$ if $\ul k=k$. Since \eqref{remd3} and \eqref{remd4} are similar, we only handle \eqref{remd3}. Because $\spa x^{2(1-19\delta)}$ is an $A_2$ weight and \lemmaref{weightedinfty} applies, we have
\begin{align*}
&\norm{\spa{x}^{1-19\delta}\Omega^{[0,N_2]}R(s)}_{2}\\
\lle{} &\norm{\spa x^{1-19\delta}[(\Omega^{p_1}D^{\alpha_1} \Lambda \rho) (\Omega^{p_2}D^{\alpha_2} \Lambda \rho) (\Omega^{p_3}D^{\alpha_3} \Lambda \rho) (\Omega^{p_4}D^{\alpha_4} \Lambda \rho) \Omega^{p}[f(\Lambda \rho)]]}_2\\
\lle{} &\norm{\Omega^{p_1}D^{\alpha_1} \Lambda \rho}_2 \norm{\spa x^{1/2-19\delta}\Omega^{p_2}D^{\alpha_2} \Lambda \rho}_\infty \norm{\spa x^{1/2}\Omega^{p_3}D^{\alpha_3} \Lambda \rho}_\infty \norm{\Omega^{p_4}D^{\alpha_4} \Lambda \rho}_\infty \norm{\Omega^{p}[f(\Lambda \rho)]}_\infty\\
\lle{} &\epsl_1^2 2^{-m-\delta^2 m},
\end{align*}
where $p_i \in [0,N_2]$ for $1 \le i \le 4$ and $p \in [0,N_2]$.

\section*{Acknowledgement}
The first author would like to thank South China Normal University and Capital Normal University for their invitations and hospitality. Part of this work was carried out during these visits. This work is based on the Ph.D. thesis of the second author, completed under the supervision of Professor Zhouping Xin at the Institute of Mathematical Sciences, The Chinese University of Hong Kong. The research of the second author was supported in part through Professor Zhouping Xin by the Zheng Ge Ru Foundation, Hong Kong RGC Earmarked Research Grants CUHK-14301421, CUHK-14300917, CUHK-14302819, and CUHK-14300819, the Key Project of the National Natural Science Foundation of China (Grant No.~12131010), and the Guangdong Basic and Applied Basic Research Foundation (Grant No.~2020B1515310002).

\textbf{AI Statement:}
The authors used GPT-5 to assist in drafting Mathematica code to investigate the non-degeneracy condition involving $\det [(\nabla \Upsilon)^T,(\nabla \Phi)^T]$ for all $m(0)>0$, and to obtain preliminary feedback on their proposed use of almost orthogonality in the small modulation energy estimates. The authors supplied the references for Section~1.3 and used GPT-5.6 to prepare an initial draft of that section. They subsequently checked the draft for accuracy and revised it. GPT-6 suggested that the small modulation assumption in Lemma~\ref{upsilon} could be removed. GPT also assisted with manuscript error checking and language editing. The authors reviewed the AI feedback and are responsible for the final manuscript.

\textbf{Data Availability Statement:}
Data sharing is not applicable to this article as no datasets were generated or analyzed during the current study.

\textbf{Conflict of Interest:}
The authors declare that they have no conflicts of interest related to this work.

\end{document}